\newtheorem{theorem}{Theorem}[section]
\newtheorem{lemma}[theorem]{Lemma}
\newtheorem{corollary}[theorem]{Corollary}
\newtheorem{proposition}[theorem]{Proposition}
\theoremstyle{definition}
\newtheorem{remark}[theorem]{Remark}
\newtheorem{definition}[theorem]{Definition}
\numberwithin{equation}{section}
\begin{document}

\title{\bf\Large Extension Theorem and
Bourgain--Brezis--Mironescu-Type
Characterization of
Ball Banach Sobolev Spaces on Domains
\footnotetext{\hspace{-0.35cm} 2020
{\it Mathematics Subject Classification}.
Primary 46E35; Secondary 35A23, 42B25, 26D10.\endgraf
{\it Key words and phrases.}
ball Banach function space, ball Banach Sobolev space,
extension, $(\varepsilon,\delta)$-domain,
Bourgain--Brezis--Mironescu-type characterization.
\endgraf
This project is supported by the National Key Research
and Development Program of China
(Grant No. 2020YFA0712900) and the National
Natural Science Foundation of China
(Grant Nos. 12122102, 11971058 and 12071197).}}
\date{}
\author{Chenfeng Zhu,
Dachun Yang\footnote{Corresponding author,
E-mail: \texttt{dcyang@bnu.edu.cn}/{\color{red}
August 1, 2023}/Final version.}
\ and Wen Yuan}

\maketitle

\vspace{-0.7cm}

\begin{center}
\begin{minipage}{13cm}
{\small {\bf Abstract}\quad
Let $\Omega\subset\mathbb{R}^n$ be a bounded
$(\varepsilon,\infty)$-domain
with $\varepsilon\in(0,1]$,
$X(\mathbb{R}^n)$ a ball Banach function space
satisfying some extra mild assumptions,
and $\{\rho_\nu\}_{\nu\in(0,\nu_0)}$
with $\nu_0\in(0,\infty)$
a $\nu_0$-radial decreasing approximation of the identity
on $\mathbb{R}^n$.
In this article,
the authors establish two extension theorems, respectively,
on the inhomogeneous ball Banach Sobolev space
$W^{m,X}(\Omega)$ and the homogeneous ball Banach Sobolev space
$\dot{W}^{m,X}(\Omega)$ for any $m\in\mathbb{N}$.
On the other hand, the authors
prove that, for any $f\in\dot{W}^{1,X}(\Omega)$,
$$
\lim_{\nu\to0^+}
\left\|\left[\int_\Omega\frac{|f(\cdot)-f(y)|^p}{
|\cdot-y|^p}\rho_\nu(|\cdot-y|)\,dy
\right]^\frac{1}{p}\right\|_{X(\Omega)}^p
=\frac{2\pi^{\frac{n-1}{2}}\Gamma
(\frac{p+1}{2})}{\Gamma(\frac{p+n}{2})}
\left\|\,\left|\nabla f\right|\,\right\|_{X(\Omega)}^p,
$$
where $\Gamma$ is the Gamma function
and $p\in[1,\infty)$ is related to $X(\mathbb{R}^n)$.
Using this asymptotics, the authors further
establish a characterization of $W^{1,X}(\Omega)$
in terms of the above limit.
To achieve these,
the authors develop a machinery via using
a method of the extrapolation,
two extension theorems on
weighted Sobolev spaces,
and some recently found profound properties of $W^{1,X}(\mathbb{R}^n)$
to overcome those difficulties caused by
that the norm of $X(\mathbb{R}^n)$ has no explicit expression
and that $X(\mathbb{R}^n)$ might be neither the
reflection invariance nor the
translation invariance.
This characterization has a wide range of generality
and can be applied
to various Sobolev-type spaces,
such as Morrey
[Bourgain--Morrey-type,
weighted (or mixed-norm or variable),
local (or global) generalized Herz,
Lorentz, and Orlicz (or Orlicz-slice)] Sobolev spaces,
all of which are new.
Particularly, when
$X:=L^p$ with $p\in(1,\infty)$, this
characterization coincides with the
celebrated results of J. Bourgain, H. Brezis, and P. Mironescu
in 2001 and 2002 on smooth bounded domains;
moreover, this characterization is also new even when $X:=L^q$
with $1\leq p<q<\infty$.
}
\end{minipage}
\end{center}

\vspace{0.2cm}

\tableofcontents

\vspace{0.2cm}

\section{Introduction}
Let $\Omega$ be an open subset of $\mathbb{R}^n$.
It is well known that
the \emph{homogeneous fractional Sobolev space}
$\dot{W}^{s,p}(\Omega)$,
with both $s\in(0,1)$ and $p\in[1,\infty)$,
plays a vital role in solving many problems arising in
both partial
differential equations
and harmonic analysis (see, for instance,
\cite{bbm2002,crs2010,cv2011,npv2012,h2008,ht2008,
ma2011,m2011}),
which is defined to be the set of
all the measurable functions $f$
on $\Omega$ having the
following finite \emph{Gagliardo semi-norm}
\begin{align}\label{Gnorm}
\|f\|_{\dot{W}^{s,p}(\Omega)}
:=\left[\int_{\Omega}\int_{\Omega}
\frac{|f(x)-f(y)|^p}{|x-y|^{sp+n}}\,dx
\,dy\right]^{\frac{1}{p}}.
\end{align}
Recall that, if $\alpha:=(\alpha_1,\ldots,\alpha_n)
\in\mathbb{Z}_+^n$ is a multi-index and $f$
is a locally integral function on $\Omega$,
then $D^\alpha f$,
the \emph{$\alpha^\mathrm{th}$-weak partial derivative}
of $f$, is defined by setting,
for any $\phi\in C_{\mathrm{c}}^\infty(\Omega)$
(the set of all the infinitely differentiable
functions on $\Omega$ with compact support in $\Omega$),
\begin{align}\label{wpd}
\int_{\Omega}f(x)D^\alpha\phi(x)\,dx
=(-1)^{|\alpha|}\int_{\Omega}D^\alpha f(x)\phi(x)\,dx,
\end{align}
where $|\alpha|:=\sum_{j=1}^n\alpha_j$ and
$D^\alpha\phi:=\frac{\partial^{\alpha_1}}{\partial x_1^{\alpha_1}}
\ldots\frac{\partial^{\alpha_n}}{\partial x_n^{\alpha_n}}\phi$.
Recall that the \emph{homogeneous Sobolev space}
$\dot{W}^{1,p}(\Omega)$ with $p\in[1,\infty)$
is defined to be the
set of all the locally integrable functions $f$ on $\Omega$
having the following finite \emph{homogeneous Sobolev semi-norm}
$$
\|f\|_{\dot{W}^{1,p}(\Omega)}
:=\|\,|\nabla f|\,\|_{L^p(\Omega)},
$$
where
$\nabla f:=(\frac{\partial f}{\partial x_1},\ldots,
\frac{\partial f}{\partial x_n})$.
As is well known,
$\|\cdot\|_{\dot{W}^{s,p}(\Omega)}$ does not converge
to $\|\cdot\|_{\dot{W}^{1,p}(\Omega)}$
as $s\to1^-$; here and thereafter,
$s\to1^-$ means $s\in(0,1)$ and $s\to1$.
Indeed,
as was proved in \cite{bbm2000,b2002},
the integral in \eqref{Gnorm} when $s=1$ is infinite
unless $f$ is a constant function on $\Omega$.
In 2001, Bourgain et al. \cite{bbm2001}
gave a suitable way to
recover $\|\cdot\|_{\dot{W}^{1,p}(\Omega)}$
as the limit of the expression
$(1-s)^{1/p}\|\cdot\|_{\dot{W}^{s,p}(\Omega)}$
as $s\to1^-$,
where $\Omega\subset\mathbb{R}^n$ is a bounded
smooth domain.
This result in \cite{bbm2001} was further extended
by Brezis \cite{b2002} to $\mathbb{R}^n$.
Indeed, Brezis \cite{b2002} proved that,
for any $p\in[1,\infty)$ and $f\in\dot{W}^{1,p}(\mathbb{R}^n)
\cap L^p(\mathbb{R}^n)$,
\begin{align}\label{1055}
\lim_{s\to1^-}(1-s)
\|f\|_{\dot{W}^{s,p}(\mathbb{R}^n)}^p
=\frac{\kappa(p,n)}{p}\|f\|_{\dot{W}^{1,p}(\mathbb{R}^n)}^p,
\end{align}
where
\begin{align}\label{kappaqn}
\kappa(p,n):=\int_{\mathbb{S}^{n-1}}
\left|e\cdot\omega\right|^p\,d\sigma(\omega)
=\frac{2\pi^{\frac{n-1}{2}}\Gamma
(\frac{p+1}{2})}{\Gamma(\frac{p+n}{2})}
\end{align}
with $e$ being any unit vector in $\mathbb{R}^n$,
$d\sigma(\omega)$ the surface Lebesgue measure on the unit
sphere $\mathbb{S}^{n-1}$ of $\mathbb{R}^n$,
and $\Gamma$ the Gamma function.
The identity \eqref{1055} is nowadays referred to
as the Bourgain--Brezis--Mironescu formula on $\mathbb{R}^n$.
It is also worth mentioning that another
alternative way to amend the aforementioned deficiency
is via letting $s=1$ in \eqref{Gnorm} and
replacing the product $L^p$ norm
by the weak product $L^p$ quasi-norm simultaneously.
Such a striking novel way was given recently
by Brezis et al. \cite{bvy2021}.
The formula \cite[(1.3)]{bvy2021}
is nowadays referred to
as the Brezis--Van Schaftingen--Yung formula.
This result in \cite{bvy2021} was further generalized
by Brezis et al. \cite{bsvy.arxiv}.
We refer the reader to \cite{bsy2023,bn2016,
dm.arXiv,gh2023,hp.arXiv,ls2011,l2014,m2005,n2006}
for more studies
and applications of the Bourgain--Brezis--Mironescu formula
and to \cite{bsvy,
dlyyz2022,dlyyz.arxiv,dm.arXiv1,dssvy,dt.arXiv,frank,zyy2023}
for more related works about
recovering $\|\cdot\|_{\dot{W}^{1,p}(\mathbb{R}^n)}$.

On the other hand,
Sawano et al. \cite{shyy2017}
introduced
the concept of
ball quasi-Banach function spaces to
unify several important function spaces in
both harmonic analysis and partial
differential equations.
There are a lot of examples of ball quasi-Banach function spaces,
such as Morrey,
Bourgain--Morrey-type,
mixed-norm (or variable or weighted) Lebesgue,
local (or global) generalized Herz,
Lorentz, and Orlicz (or Orlicz-slice) spaces,
but some of which might not be
the classical Banach function spaces
in the sense of Bennett and Sharpley \cite{bs1988};
see \cite[Section~7]{shyy2017}.
Function spaces based on ball quasi-Banach
function spaces have attracted a large amount of attention and
made considerable progress in recent years;
we refer the reader
to \cite{h2021,wyy2020,zwyy2021} for the
boundedness of operators on ball quasi-Banach function spaces,
to \cite{cwyz2020,lyh2320,s2018,shyy2017,wyy.arxiv,yhyy2022-1,
yhyy2022-2,yyy2020,zhyy2022} for Hardy spaces
associated with ball quasi-Banach function spaces,
and to \cite{hcy2021,ins2019,is2017,tyyz2021,wyyz2021}
for further applications of ball quasi-Banach function spaces.

Observe that both the translation invariance and the explicit
expression of the $L^p(\mathbb{R}^n)$ norm play an
essential role in the proof of the
Bourgain--Brezis--Mironescu formula
\eqref{1055} in \cite{b2002}
and that both the rotation invariance and the explicit
expression of the $L^p(\mathbb{R}^n)$ norm
play a vital role in the proof of
the Brezis--Van Schaftingen--Yung formula
\cite[(1.3)]{bvy2021}.
However, it is known that
some of the aforementioned ball Banach function spaces
are obviously neither the translation invariance
nor the rotation invariance.
Thus, these two absences
make the generalization of both \eqref{1055} and \cite[(1.3)]{bvy2021}
from $L^p(\mathbb{R}^n)$ to ball Banach function spaces
become essentially difficult.
Recently, Dai et al. \cite{dlyyz.arxiv} subtly
used the Poincar\'e inequality,
the exquisite geometry of $\mathbb{R}^n$,
a method of extrapolation,
and the exact norm
of the Hardy--Littlewood
maximal operator
on $[X^\frac{1}{p}(\mathbb{R}^n)]'$
[the associate
space of the $1/p$-convexification of $X(\mathbb{R}^n)$
with $p\in[1,\infty)$]
to successfully extend
\cite[Theorems~1.1 and~1.2]{bvy2021} to ball Banach function spaces,
which was further generalized by Zhu et al. \cite{zyy2023}.
In the meantime, Dai et al. \cite{dgpyyz2022}
established an analogue of \eqref{1055} on
ball Banach function spaces by borrowing some ideas from
both \cite{dlyyz.arxiv} and \cite{bbm2001}.
Inspired by these recent progress, a natural question here
is: does \eqref{1055} also
hold true in the setting of ball Banach function spaces
on a connected and open set $\Omega\subset\mathbb{R}^n$
satisfying some extra mild assumptions?
We give a positive answer to this
problem in this article.

Motivated by the aforementioned works,
let $\Omega\subset\mathbb{R}^n$ be a bounded
$(\varepsilon,\infty)$-domain
with $\varepsilon\in(0,1]$,
$X(\mathbb{R}^n)$ a ball Banach function space
having an absolutely continuous norm
and satisfying that the Hardy--Littlewood maximal operator
is bounded on the associate
space of its convexification,
and $\{\rho_\nu\}_{\nu\in(0,\nu_0)}$ with $\nu_0\in(0,\infty)$
being a $\nu_0$-radial decreasing approximation of the identity
on $\mathbb{R}^n$.
In this article, we establish the extension theorems, respectively,
on the inhomogeneous ball Banach Sobolev space
$W^{m,X}(\Omega)$ and the homogeneous ball Banach Sobolev space
$\dot{W}^{m,X}(\Omega)$ for any $m\in\mathbb{N}$.
On the other hand,
we prove that, for any $f\in\dot{W}^{1,X}(\Omega)$,
\begin{align}\label{1942}
\lim_{\nu\to0^+}
\left\|\left[\int_\Omega\frac{|f(\cdot)-f(y)|^p}{
|\cdot-y|^p}\rho_\nu(|\cdot-y|)\,dy
\right]^\frac{1}{p}\right\|_{X(\Omega)}
=\left[\kappa(p,n)\right]^\frac{1}{p}
\left\|\,\left|\nabla f\right|\,\right\|_{X(\Omega)},
\end{align}
where $p\in[1,\infty)$ is related to $X(\mathbb{R}^n)$
and $\kappa(p,n)$ is the same as in \eqref{kappaqn}.
Here and thereafter, $\nu\to0^+$ means that there
exists $\nu_0\in(0,\infty)$ such that $v\in(0,v_0)$
and $\nu\to0$.
Using this asymptotics, we further
establish a characterization of $W^{1,X}(\Omega)$
in terms of the limit in the left hand side of \eqref{1942}.
Preciously,
under the extra assumption that $X'(\mathbb{R}^n)$
has an absolutely continuous norm, we show that
$f\in W^{1,X}(\Omega)$ if and only if $f\in X(\Omega)$
and
$$
\liminf_{\nu\to0^+}
\left\|\left[\int_\Omega\frac{|f(\cdot)-f(y)|^p}{
|\cdot-y|^p}\rho_\nu(|\cdot-y|)\,dy
\right]^\frac{1}{p}\right\|_{X(\Omega)}<\infty;
$$
moreover,
\eqref{1942} holds true for such $f$.
To achieve these, we develop a machinery via using
a method of the extrapolation,
two extension theorems on
weighted Sobolev spaces,
and some recently found
profound properties of $W^{1,X}(\mathbb{R}^n)$
to overcome those difficulties caused by
that the norm of $X(\mathbb{R}^n)$ has no explicit expression
and that $X(\mathbb{R}^n)$ might be
neither the reflection invariance nor the
translation invariance.
This characterization has a wide range of generality
and can be applied
to various Sobolev-type spaces,
including Morrey
[Bourgain--Morrey-type,
weighted (or mixed-norm or variable),
local (or global) generalized Herz,
Lorentz, and Orlicz (or Orlicz-slice)] Sobolev spaces,
all of which are new.
Particularly, when
$X:=L^p$ with $p\in(1,\infty)$, this
characterization coincides with the
celebrated result of Bourgain, Brezis, and Mironescu \cite[Theorem~2]{bbm2001}
in which $\Omega\subset\mathbb{R}^n$ is
assumed to be a smooth bounded domain
which is stronger than the hypothesis on $\Omega$
in the present article;
moreover, this characterization is also new even when $X:=L^q$
with $1\leq p<q<\infty$.

Let $p\in[1,\infty)$, $\nu_0:=\min\{n/p,\,1\}$, and
$R\in(0,\infty)$ be such that
$\Omega\subset B(\mathbf{0},R/2)$.
For any $\nu\in(0,\nu_0)$ and $r\in(0,\infty)$, take
\begin{align*}
\rho_\nu(r):=\nu p(2R)^{-\nu p}
r^{-n+\nu p}\mathbf{1}_{(0,2R]}(r).
\end{align*}
In this case, \eqref{1942} becomes
\begin{align*}
\lim_{s\to1^-}(1-s)^\frac{1}{p}\left\|\left[\int_{\Omega}
\frac{|f(\cdot)-f(y)|^p}{|\cdot-y|^{n+sp}}
\,dy\right]^{\frac{1}{p}}\right\|_{X(\Omega)}
=\left[\frac{\kappa(p,n)}{p}\right]^\frac{1}{p}
\left\|\,\left|\nabla f\right|\,\right\|_{X(\Omega)}
\end{align*}
(see Corollary~\ref{2001} below)
which gives the asymptotics of fractional
ball Banach Sobolev semi-norm involving
differences as $s\to1^-$.
Recall that,
in approximation theory,
establishing such asymptotices
via finite differences
is quite difficult,
even for some simple weighted Lebesgue spaces
(see \cite{k2015,mt2001} and the references therein).
This is because the difference operator
$\Delta_hf:=f(\cdot+h)-f(\cdot)$ with
$h\in\mathbb{R}^n\setminus\{\mathbf{0}\}$
might not be bounded on weighted Lebesgue spaces.
As it turns out, Dai et al. \cite{dgpyyz2022} overcame
those difficulties caused by that the norm of $X(\mathbb{R}^n)$
has no explicit expression and that $X(\mathbb{R}^n)$
might not be translation invariance
by exploiting the locally doubling property of $X(\mathbb{R}^n)$
(see \cite[Definition~2.1]{dgpyyz2022})
and using a method of extrapolation,
the Poincar\'e inequality,
and the fine geometric properties of systems of adjacent
dyadic cubes in $\mathbb{R}^n$.
These useful tools and techniques and
some profound properties in \cite{dgpyyz2022}
are taken full advantage of to obtain the main results of
this article.
On the other hand, it is remarkable that
\cite[Theorem~1]{bbm2001},
a key upper estimate to establish
\eqref{1055} on a bounded smooth domain $\Omega$,
relies on a standard extension theorem for $W^{1,p}(\Omega)$.
The proof of this extension theorem
depends on the reflection invariance
of $L^p(\mathbb{R}^n)$
(see, for instance, the proof of
\cite[p.\,270, Theorem~1]{evans}),
which might not be true for ball Banach function spaces.
To overcome this difficulty,
we use a method of extrapolation,
an upper estimate on $\mathbb{R}^n$
from \cite[(3.12)]{dgpyyz2022}, and an extension theorem
on weighted Sobolev spaces $W^{1,p}_\omega(\Omega)$
to establish the corresponding key upper estimate
in the setting of ball Banach function spaces
(see Proposition~\ref{1537} below).
Moreover, we should point out that
Theorem~\ref{1622},
the main result of this article,
is built under the assumption that
$\Omega\subset\mathbb{R}^n$ is a
bounded $(\varepsilon,\infty)$-domain
which is weaker than the assumption that
$\Omega\subset\mathbb{R}^n$ is a smooth bounded domain
in \cite{bbm2001}.
This is thanks to both
a density argument
and the extension theorem
on weighted Sobolev spaces
of Chua \cite{c1992}.

The organization of the remainder of this article is as follows.

In Section~\ref{section2},
we first give some preliminaries
on ball quasi-Banach function spaces and
introduce the concept
of the restrictive space of
a ball quasi-Banach function space.
Then we give some basic properties related to
the restrictive space.
Finally, we recall the concept of
the inhomogeneous and the homogeneous
ball Banach Sobolev spaces,
the Muckenhoupt weight class,
the weighted Lebesgue space,
and the inhomogeneous and the homogeneous
weighted Sobolev spaces.

In Section~\ref{yantuo},
we first recall the concept of
$(\varepsilon,\delta)$-domains.
Then, in Subsection~\ref{sub3.1}, we prove the
density of $C^\infty(\overline{\Omega})\cap W^{m,X}(\Omega)$
[resp. $C^\infty(\overline{\Omega})\cap\dot{W}^{m,X}(\Omega)$]
functions in $W^{m,X}(\Omega)$ [resp. $\dot{W}^{m,X}(\Omega)$]
for any $m\in\mathbb{N}$ (see Theorem~\ref{2247} below).
Using this, a method of extrapolation,
and two extension theorems on weighted Sobolev spaces
of Chua \cite{c1992},
we further establish the extension theorems
for $W^{m,X}(\Omega)$ on
$(\varepsilon,\delta)$-domains
in Subsection~\ref{sub3.2} and
for $\dot{W}^{m,X}(\Omega)$ on
$(\varepsilon,\infty)$-domains
in Subsection~\ref{sub3.3} respectively
(see Theorems~\ref{extension} and~\ref{extension2} below).

Section~\ref{S3} is devoted to establishing \eqref{1942}
for any $f\in\dot{W}^{1,X}(\Omega)$.
In Subsection~\ref{sub4.1}, we
prove \eqref{1942} for any $f\in C^\infty(\overline{\Omega})$
(see Proposition~\ref{2043} below)
via using the Taylor expansion for such $f$,
the properties of the
radial decreasing approximation of the identity,
and the dominated convergence theorem on
ball Banach function spaces.
In Subsection~\ref{sub4.2},
we establish
a key upper estimate for any $f\in\dot{W}^{1,X}(\Omega)$
(see Proposition~\ref{1537} below).
To obtain this, we first prove an upper estimate
on the weighted Sobolev space $\dot{W}^{1,p}_\omega(\Omega)$ via using
both an upper estimate on
$\dot{W}^{1,p}_\omega(\mathbb{R}^n)$ from \cite[(3.12)]{dgpyyz2022}
and an extension theorem on $\dot{W}^{1,p}_\omega(\Omega)$,
where $p\in[1,\infty)$ and $\omega\in A_1(\mathbb{R}^n)$.
This weighted estimate for $\dot{W}^{1,p}_\omega(\Omega)$,
together with a method of extrapolation,
further implies the desired upper
estimate for any $f\in\dot{W}^{1,X}(\Omega)$.
In Subsection~\ref{sub4.3},
we use the aforementioned  upper estimate and
a standard density argument
to extend \eqref{1942} from
any $f\in C^\infty(\overline{\Omega})$
to any $f\in\dot{W}^{1,X}(\Omega)$
(see Theorem~\ref{2045} below).

In Section~\ref{S4},
borrowing some ideas from both \cite{bbm2001}
and \cite{dgpyyz2022},
we first give a
sufficient condition for any $f\in X(\Omega)$
such that $f\in W^{1,X}(\Omega)$
(see Theorem~\ref{BBMdomain} below).
Combining this and the asymptotics
of $W^{1,X}(\Omega)$ proved in
Theorem~\ref{2045},
we further establish the
Bourgain--Brezis--Mironescu-type characterization
of $W^{1,X}(\Omega)$
(see Theorem~\ref{1622} below).

In Section~\ref{S5},
we apply Theorem~\ref{1622}
to various specific Sobolev-type spaces, such as
the weighted Sobolev space
and the Morrey--Sobolev space (see Subsection~\ref{5.4} below),
the Bourgain--Morrey-type Sobolev space
(see Subsection~\ref{BBMspace} below),
the local (or global) generalized Herz--Sobolev space
(see Subsection~\ref{Herz} below),
the mixed-norm Sobolev space (see Subsection~\ref{5.2} below),
the variable Sobolev space (see Subsection~\ref{5.3} below),
the Lorentz--Sobolev space (see Subsection~\ref{5.7} below),
the Orlicz--Sobolev space (see Subsection~\ref{5.5} below),
and the generalized amalgam Sobolev
space (see Subsection~\ref{5.6} below).
All of these results are new.
Obviously, due to the flexibility and the operability,
more applications of the main results of this
article to some newfound function spaces are predictable.

Finally, we point out that   the characterization of $W^{1,X}(\Omega)$
(see Corollary~\ref{2001} below)
can be further used to establish
the Brezis--Seeger--Van Schaftingen--Yung-type
characterization
of the homogeneous ball Banach Sobolev
space $\dot{W}^{1,X}(\mathbb{R}^n)$,
which is presented in the forthcoming article \cite{zyy20231}.

At the end of this section, we make some conventions on notation.
We always let $\mathbb{N}:=\{1,2,\ldots\}$,
$\mathbb{Z}_+:=\mathbb{N}\cup\{0\}$, and $\mathbb{S}^{n-1}$
be the unit sphere of $\mathbb{R}^n$.
We denote by $C$ a \emph{positive constant} which is independent
of the main parameters involved, but may vary from line to line.
We use $C_{(\alpha,\dots)}$ to denote a positive constant depending
on the indicated parameters $\alpha,\, \dots$.
The symbol $f\lesssim g$ means $f\le Cg$
and, if $f\lesssim g\lesssim f$, we then write $f\sim g$.
If $f\le Cg$ and $g=h$ or $g\le h$,
we then write $f\lesssim g=h$ or $f\lesssim g\le h$.
For any index $q\in[1,\infty]$,
we denote its \emph{conjugate index} by $q'$,
that is, $1/q+1/q'=1$.
We use $\mathbf{0}$ to denote the \emph{origin} of $\mathbb{R}^n$.
If $\Omega\subset\mathbb{R}^n$, we denote by ${\mathbf{1}}_\Omega$ its
\emph{characteristic function} and
by $\Omega^\complement$ the set $\mathbb{R}^n\setminus\Omega$.
For any measurable
set $E\subset\mathbb{R}^n$, $|E|$
denotes its $n$-dimensional Lebesgue measure.
Moreover, we denote by $d\sigma$
the surface Lebesgue measure on the unit sphere
$\mathbb{S}^{n-1}$ of $\mathbb{R}^n$.
For any $x\in\mathbb{R}^n$ and $r\in(0,\infty)$,
let $B(x,r):=\{y\in\mathbb{R}^n:\ |x-y|<r\}$.
For any $\lambda\in(0,\infty)$
and any ball $B:=B(x_B,r_B)$ in
$\mathbb{R}^n$ with both center $x_B\in\mathbb{R}^n$
and radius $r_B\in(0,\infty)$, let $\lambda B:=B(x_B,\lambda r_B)$.
For any measurable function $f$ on $\Omega\subset\mathbb{R}^n$,
its \emph{support} $\mathrm{supp\,}(f)$ is
defined by setting
$\mathrm{supp\,}(f):=\overline{\{x\in\Omega:\ f(x)\neq0\}}$.
The \emph{distance} $\mathrm{dist\,}(E,F)$ between two
arbitrary and nonempty sets $E,F\subset\mathbb{R}^n$
is defined by setting
$$
\mathrm{dist\,}(E,F):=\inf\left\{|x-y|:\
x\in E,\,y\in F\right\};
$$
if $E=\{x\}$ for some $x\in\mathbb{R}^n$,
we simply write
$\mathrm{dist\,}(x,F):=\mathrm{dist\,}(\{x\},F)$.
For any $p\in(0,\infty]$ and any
measurable set $\Omega\subset\mathbb{R}^n$,
the \emph{Lebesgue space}
$L^p(\Omega)$
is defined to be the set of
all the measurable functions $f$
on $\Omega$ such that
\begin{align*}
\|f\|_{L^p(\Omega)}:=
\left[\int_{\Omega}|f(x)|^p\,dx\right]^\frac{1}{p}<\infty.
\end{align*}
For any $p\in(0,\infty)$,
the set $L_{\mathrm{loc}}^p(\Omega)$ of all the locally $p$-integrable
functions on $\Omega$
is defined to be the set of all the
measurable functions $f$
on $\Omega$ such that,
for any open set $V\subset\Omega$
satisfying that $V$ is compactly contained in $\Omega$,
$f\in L^p(V)$.
For any $f\in L^1_{\mathrm{loc}}(\mathbb{R}^n)$,
its \emph{Hardy--Littlewood maximal function} $\mathcal{M}(f)$
is defined by setting, for any $x\in\mathbb{R}^n$,
\begin{align*}
\mathcal{M}(f)(x):=\sup_{B\ni x}
\frac{1}{|B|}\int_B\left|f(y)\right|\,dy,
\end{align*}
where the supremum is taken over all the
balls $B\subset\mathbb{R}^n$ containing $x$.
Finally, when we prove a theorem (or the like),
in its proof we always use the same symbols as in the
statement itself of that theorem (or the like).

\section{Ball Banach Function Spaces}
\label{section2}

Throughout this section,
let $\Omega\subset\mathbb{R}^n$ be an open set.
In this section, we first give some preliminaries
on ball quasi-Banach function spaces and
then introduce the concepts
of both the restrictive space of
a ball quasi-Banach function space and
the (in)homogeneous ball Banach Sobolev space on $\Omega$.
Finally, we recall the concepts of
the Muckenhoupt weight class,
the weighted Lebesgue space,
and the (in)homogeneous weighted Sobolev space.
We begin with the following definition
of ball quasi-Banach function
spaces, which was introduced in \cite[Definition~2.2]{shyy2017}.
For $E\in\{\mathbb{R}^n,\,\Omega\}$,
the \emph{symbol} $\mathscr{M}(E)$ denotes
the set of all measurable functions
on $E$.

\begin{definition}\label{1659}
A quasi-Banach space $X(\mathbb{R}^n)
\subset\mathscr{M}(\mathbb{R}^n)$,
equipped with a quasi-norm $\|\cdot\|_{X(\mathbb{R}^n)}$
which makes sense for all functions
in $\mathscr{M}(\mathbb{R}^n)$,
is called a \emph{ball quasi-Banach function space} if
\begin{enumerate}
\item[\textup{(i)}]
for any $f\in\mathscr{M}(\mathbb{R}^n)$,
if $\|f\|_{X(\mathbb{R}^n)}=0$, then $f=0$
almost everywhere in $\mathbb{R}^n$;
\item[\textup{(ii)}]
if $f,g\in\mathscr{M}(\mathbb{R}^n)$
with $|g|\leq|f|$ almost everywhere in $\mathbb{R}^n$,
then $\|g\|_{X(\mathbb{R}^n)}\leq\|f\|_{X(\mathbb{R}^n)}$;
\item[\textup{(iii)}]
if a sequence $\{f_m\}_{m\in\mathbb{N}}
\subset\mathscr{M}(\mathbb{R}^n)$
and $f\in\mathscr{M}(\mathbb{R}^n)$ satisfy
that $0\leq f_m\uparrow f$ almost everywhere in $\mathbb{R}^n$
as $m\to\infty$, then $\|f_m\|_{X(\mathbb{R}^n)}
\uparrow\|f\|_{X(\mathbb{R}^n)}$ as $m\to\infty$;
\item[\textup{(iv)}]
for any ball $B\subset\mathbb{R}^n$,
$\mathbf{1}_{B}\in X(\mathbb{R}^n)$.
\end{enumerate}
Moreover, a ball quasi-Banach function
space $X(\mathbb{R}^n)$ is called a
\emph{ball Banach function space} if
$X(\mathbb{R}^n)$ satisfies the following extra conditions:
\begin{enumerate}
\item[\textup{(v)}]
for any $f,g\in X(\mathbb{R}^n)$,
$\|f+g\|_{X(\mathbb{R}^n)}\leq
\|f\|_{X(\mathbb{R}^n)}+\|g\|_{X(\mathbb{R}^n)}$;
\item[\textup{(vi)}]
for any ball $B\subset\mathbb{R}^n$,
there exists a positive constant $C_{(B)}$, depending on $B$,
such that, for any $f\in X(\mathbb{R}^n)$,
$$
\int_{B}\left|f(x)\right|\,dx\leq C_{(B)}\|f\|_{X(\mathbb{R}^n)}.
$$
\end{enumerate}
\end{definition}

\begin{remark}\label{1052}
\begin{enumerate}
\item[\textup{(i)}]
Let $X(\mathbb{R}^n)$ be a ball quasi-Banach function space.
By \cite[Remark~2.5(i)]{yhyy2022-1},
we find that, for any $f\in\mathscr{M}(\mathbb{R}^n)$,
$\|f\|_{X(\mathbb{R}^n)}=0$ if and only if
$f=0$ almost everywhere in $\mathbb{R}^n$.
\item[\textup{(ii)}]
As was mentioned in \cite[Remark~2.5(ii)]{yhyy2022-1},
we obtain an equivalent formulation
of Definition~\ref{1659} via replacing any
ball $B\subset\mathbb{R}^n$ therein
by any bounded measurable set $S\subset\mathbb{R}^n$.
\item[\textup{(iii)}]
In Definition~\ref{1659}, if we replace any ball $B$ by any
measurable set $E$ with $|E|<\infty$,
then we obtain the definition of
quasi-Banach function spaces, which was originally introduced
by Bennett and Sharpley in
\cite[Chapter 1, Definitions~1.1 and~1.3]{bs1988}.
Thus, a quasi-Banach function space is always a ball
quasi-Banach function space, but the converse is
not necessary to be true.
\item[\rm(iv)]
In Definition~\ref{1659},
if we replace (iv)
by the following \emph{saturation property}:
\begin{enumerate}
\item[\rm(a)]
for any measurable set $E\subset\mathbb{R}^n$
of positive measure, there exists a measurable set $F\subset E$
of positive measure satisfying that $\mathbf{1}_F\in X(\mathbb{R}^n)$,
\end{enumerate}
then we obtain the definition of quasi-Banach function spaces
in Lorist and Nieraeth
\cite{lz20231}.
Moreover, by \cite[Proposition~2.5]{zyy20231}
(see also \cite[Proposition~4.22]{narxiv}),
we find that, if the
quasi-normed vector space $X(\mathbb{R}^n)$ satisfies
the extra assumption that
the Hardy--Littlewood maximal operator is weakly bounded on
its convexification,
then the definition of quasi-Banach function spaces in \cite{lz20231} coincides with
the definition of ball quasi-Banach function spaces. Thus,
under this extra assumption,
working with ball quasi-Banach function
spaces in the sense of Definition~\ref{1659}
or quasi-Banach function spaces in
the sense of \cite{lz20231} would yield
exactly the same results.
\item[\textup{(v)}]
From \cite[Proposition~1.2.36]{lyh2320},
we infer that both (ii) and (iii) of
Definition~\ref{1659} imply that any ball quasi-Banach function
space is complete.
\end{enumerate}
\end{remark}

Now, we recall the
concept of the $p$-convexification of a ball
quasi-Banach function space;
see, for instance, \cite[Definition~2.6]{shyy2017}.

\begin{definition}\label{tuhua}
Let $p\in(0,\infty)$.
The \emph{$p$-convexification} $X^p(\mathbb{R}^n)$ of
the ball quasi-Banach function space
$X(\mathbb{R}^n)$ is defined by setting
$$
X^p(\mathbb{R}^n):=\{f\in\mathscr{M}(\mathbb{R}^n):\
|f|^p\in X(\mathbb{R}^n)\}
$$
equipped with the quasi-norm
$\|f\|_{X^p(\mathbb{R}^n)}:=\|\,|f|^p\|_{X(\mathbb{R}^n)}^\frac{1}{p}$
for any $f\in X^p(\mathbb{R}^n)$.
\end{definition}

We recall the
concept of the associate space of a ball
Banach function space, which can be found in \cite[p.\,9]{shyy2017};
see \cite[Chapter 1, Section 2]{bs1988} for more details.

\begin{definition}\label{associte}
The \emph{associate space}
(also called the \emph{K\"othe dual}) $X'(\mathbb{R}^n)$ of
the ball Banach function space $X(\mathbb{R}^n)$
is defined by setting
\begin{align*}
X'(\mathbb{R}^n):=\left\{f\in\mathscr{M}(\mathbb{R}^n):\
\|f\|_{X'(\mathbb{R}^n)}:=\sup_{\{g\in X(\mathbb{R}^n):\
\|g\|_{X(\mathbb{R}^n)}=1\}}
\left\|fg\right\|_{L^1(\mathbb{R}^n)}<\infty\right\},
\end{align*}
where $\|\cdot\|_{X'(\mathbb{R}^n)}$ is called the
\emph{associate norm} of $\|\cdot\|_{X(\mathbb{R}^n)}$.
\end{definition}

\begin{remark}\label{dual}
Let $X(\mathbb{R}^n)$ be a ball Banach function space.
\begin{enumerate}
\item[\rm(i)]
From \cite[Proposition~2.3]{shyy2017},
we infer that $X'(\mathbb{R}^n)$ is also a ball Banach function space.
\item[\rm(ii)]
By \cite[Lemma~2.6]{zwyy2021},
we find that $X(\mathbb{R}^n)$ coincides with its second
associate space $X''(\mathbb{R}^n)$.
\end{enumerate}
\end{remark}

Next, we introduce the concept of the restrictive
space of a ball quasi-Banach function space.
For any $g\in\mathscr{M}(\mathbb{R}^n)$,
denote by $g|_\Omega$ the restriction of $g$ on $\Omega$.

\begin{definition}\label{1635}
The \emph{restrictive space} $X(\Omega)$ of
the ball quasi-Banach function space $X(\mathbb{R}^n)$
is defined by setting
\begin{align*}
X(\Omega):=\left\{f\in\mathscr{M}(\Omega):\
f=g|_\Omega\text{ for some }g\in X(\mathbb{R}^n)\right\};
\end{align*}
moreover, for any $f\in X(\Omega)$, let
\begin{align}\label{1035}
\left\|f\right\|_{X(\Omega)}:=\inf\left\{\|g\|_{X(\mathbb{R}^n)}:\
f=g|_\Omega,\,g\in X(\mathbb{R}^n)\right\}.
\end{align}
\end{definition}

The following proposition shows that
the infimum in \eqref{1035} can be attained.

\begin{proposition}\label{norm}
Let $X(\mathbb{R}^n)$ be a ball quasi-Banach function space
and $X(\Omega)$ its restrictive space.
Then, for any $f\in X(\Omega)$,
\begin{align*}
\left\|f\right\|_{X(\Omega)}=
\left\|\widetilde{f}\right\|_{X(\mathbb{R}^n)},
\end{align*}
where
\begin{align}\label{1448}
\widetilde{f}(x):=
\begin{cases}
f(x)&\textup{ if }x\in\Omega\\
0&\textup{ if }x\in\Omega^\complement.
\end{cases}
\end{align}
\end{proposition}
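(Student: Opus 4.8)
The plan is to show that the zero-extension $\widetilde{f}$ from \eqref{1448} is itself an \emph{optimal} competitor in the infimum \eqref{1035}; the identity then follows from two inequalities, each nearly immediate from the axioms of a ball quasi-Banach function space. First I would check that $\widetilde{f}\in X(\mathbb{R}^n)$. By Definition~\ref{1635}, there exists $g\in X(\mathbb{R}^n)$ with $f=g|_\Omega$; reading this equality in $\mathscr{M}(\Omega)$, i.e. almost everywhere on $\Omega$, and recalling that $\widetilde{f}=0$ on $\Omega^\complement$, one gets $|\widetilde{f}|\le|g|$ almost everywhere in $\mathbb{R}^n$. Since $\widetilde{f}$ is plainly measurable on $\mathbb{R}^n$, property (ii) in Definition~\ref{1659} (the lattice property) yields $\widetilde{f}\in X(\mathbb{R}^n)$ together with the bound $\|\widetilde{f}\|_{X(\mathbb{R}^n)}\le\|g\|_{X(\mathbb{R}^n)}$.

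From here the two inequalities come out. On the one hand, $\widetilde{f}$ is an admissible extension of $f$, that is, $f=\widetilde{f}|_\Omega$ with $\widetilde{f}\in X(\mathbb{R}^n)$, so the very definition \eqref{1035} of $\|f\|_{X(\Omega)}$ gives $\|f\|_{X(\Omega)}\le\|\widetilde{f}\|_{X(\mathbb{R}^n)}$. On the other hand, the bound $\|\widetilde{f}\|_{X(\mathbb{R}^n)}\le\|g\|_{X(\mathbb{R}^n)}$ established above holds for \emph{every} $g\in X(\mathbb{R}^n)$ with $f=g|_\Omega$; taking the infimum over all such $g$ yields $\|\widetilde{f}\|_{X(\mathbb{R}^n)}\le\|f\|_{X(\Omega)}$. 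Combining the two inequalities proves $\|f\|_{X(\Omega)}=\|\widetilde{f}\|_{X(\mathbb{R}^n)}$, and in particular shows that the infimum in \eqref{1035} is attained at $\widetilde{f}$.

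I do not expect a genuine obstacle here: the argument invokes only Definition~\ref{1659}(ii) and the definition of the restrictive space. The single point that merits a line of care is the passage from ``$f=g|_\Omega$ in $\mathscr{M}(\Omega)$'' to ``$|\widetilde{f}|\le|g|$ almost everywhere in $\mathbb{R}^n$'', which rests on the standing convention that elements of $\mathscr{M}$ are identified up to sets of measure zero, together with the measurability of $\Omega$ (it is open) so that $\widetilde{f}$ is measurable on all of $\mathbb{R}^n$.
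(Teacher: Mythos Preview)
Your proposal is correct and follows essentially the same approach as the paper's proof: both use the lattice property Definition~\ref{1659}(ii) to show $\|\widetilde{f}\|_{X(\mathbb{R}^n)}\le\|g\|_{X(\mathbb{R}^n)}$ for every admissible extension $g$, then take the infimum for one inequality and use that $\widetilde{f}$ itself is admissible for the other. The only cosmetic difference is the order in which the two inequalities are presented.
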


\begin{proof}
Let $f\in X(\Omega)$.
We first show that
\begin{align}\label{1041}
\left\|f\right\|_{X(\Omega)}\ge
\left\|\widetilde{f}\right\|_{X(\mathbb{R}^n)}.
\end{align}
Indeed, for any given $g\in X(\mathbb{R}^n)$
satisfying that $f=g|_\Omega$, it is easy to show that,
for any $x\in\mathbb{R}^n$,
$|\widetilde{f}(x)|\leq|g(x)|$,
which, together with Definition~\ref{1659}(ii),
further implies that
\begin{align}\label{1053}
\left\|\widetilde{f}\right\|_{X(\mathbb{R}^n)}
\leq\|g\|_{X(\mathbb{R}^n)}.
\end{align}
Taking the infimum over all $g\in X(\mathbb{R}^n)$
satisfying that $g|_\Omega=f$,
we conclude that \eqref{1041} holds true.

Next, we show that
\begin{align}\label{1056}
\left\|f\right\|_{X(\Omega)}\leq
\left\|\widetilde{f}\right\|_{X(\mathbb{R}^n)}.
\end{align}
From \eqref{1053}, we deduce that
$\|\widetilde{f}\|_{X(\mathbb{R}^n)}<\infty$ and hence
$\widetilde{f}\in X(\mathbb{R}^n)$.
Notice that $\widetilde{f}$ satisfies $\widetilde{f}|_\Omega=f$.
By this and \eqref{1035}, we conclude that \eqref{1056} holds true,
which, combined with \eqref{1041},
then completes the proof of Proposition~\ref{norm}.
\end{proof}

The following proposition shows that $X(\Omega)$
has similar properties to those in Definition~\ref{1659}.

\begin{proposition}\label{1533}
Let $X(\mathbb{R}^n)$ be a ball quasi-Banach function space
and $X(\Omega)$ its restrictive space.
Then $X(\Omega)$ is a quasi-Banach space satisfying that
\begin{enumerate}
\item[\rm(i)]
for any $f\in\mathscr{M}(\Omega)$,
if $\|f\|_{X(\Omega)}=0$, then $f=0$
almost everywhere in $\Omega$;
\item[\rm(ii)]
if $f,g\in\mathscr{M}(\Omega)$
with $|g|\leq|f|$ almost everywhere in $\Omega$,
then $\|g\|_{X(\Omega)}\leq\|f\|_{X(\Omega)}$;
\item[\rm(iii)]
if a sequence $\{f_m\}_{m\in\mathbb{N}}
\subset\mathscr{M}(\Omega)$
and $f\in\mathscr{M}(\Omega)$ satisfy
that $0\leq f_m\uparrow f$ almost everywhere in $\Omega$
as $m\to\infty$, then $\|f_m\|_{X(\Omega)}
\uparrow\|f\|_{X(\Omega)}$ as $m\to\infty$;
\item[\rm(iv)]
for any bounded measurable set $E\subset\Omega$,
$\mathbf{1}_{E}\in X(\Omega)$;
particularly, for any ball $B\subset\mathbb{R}^n$,
$\mathbf{1}_{B\cap\Omega}\in X(\Omega)$.
\end{enumerate}
Assume further that $X(\mathbb{R}^n)$ is a ball Banach function space,
then $X(\Omega)$ also satisfies that
\begin{enumerate}
\item[\rm(v)]
for any $f,g\in X(\Omega)$,
$\|f+g\|_{X(\Omega)}\leq\|f\|_{X(\Omega)}+\|g\|_{X(\Omega)}$;
\item[\rm(vi)]
for any bounded measurable set $E\subset\Omega$,
there exists a positive constant $C_{(E)}$, depending on $E$,
such that, for any $f\in X(\Omega)$,
$$
\int_{E}\left|f(x)\right|\,dx\leq C_{(E)}\|f\|_{X(\Omega)};
$$
particularly, for any ball $B\subset\mathbb{R}^n$,
there exists a positive constant $C_{(B)}$, depending on $B$,
such that, for any $f\in X(\Omega)$,
$$
\int_{B\cap\Omega}\left|f(x)\right|\,dx\leq C_{(B)}\|f\|_{X(\Omega)}.
$$
\end{enumerate}
\end{proposition}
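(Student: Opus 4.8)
The plan is to deduce all the assertions from the corresponding properties of $X(\mathbb{R}^n)$ via the zero-extension map $\iota\colon f\mapsto\widetilde f$, with $\widetilde f$ as in \eqref{1448}. By Proposition~\ref{norm}, for every $f\in X(\Omega)$ we have $\widetilde f\in X(\mathbb{R}^n)$ and $\|\widetilde f\|_{X(\mathbb{R}^n)}=\|f\|_{X(\Omega)}$; since obviously $\widetilde{f+g}=\widetilde f+\widetilde g$ and $\widetilde{\lambda f}=\lambda\widetilde f$ for all $f,g\in\mathscr{M}(\Omega)$ and all scalars $\lambda$, the map $\iota$ is a linear isometry of $X(\Omega)$ onto the linear subspace $Y:=\{h\in X(\mathbb{R}^n):\ h=0\ \text{almost everywhere in}\ \Omega^\complement\}$ of $X(\mathbb{R}^n)$ (surjectivity onto $Y$ holds because, given such an $h$, we have $h|_\Omega\in X(\Omega)$ by Definition~\ref{1635} and $\iota(h|_\Omega)=h$ almost everywhere in $\mathbb{R}^n$). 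Consequently, homogeneity and the quasi-triangle inequality for $\|\cdot\|_{X(\Omega)}$ (with the quasi-triangle constant being the same as that of $\|\cdot\|_{X(\mathbb{R}^n)}$) are immediate, so $\|\cdot\|_{X(\Omega)}$ is a quasi-norm on $X(\Omega)$.

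For completeness, let $\{f_m\}_{m\in\mathbb{N}}$ be Cauchy in $X(\Omega)$. Then $\{\widetilde{f_m}\}_{m\in\mathbb{N}}$ is Cauchy in $X(\mathbb{R}^n)$, hence, by the completeness recalled in Remark~\ref{1052}(v), converges in $X(\mathbb{R}^n)$ to some $g\in X(\mathbb{R}^n)$; moreover, the standard argument proving this completeness (see \cite[Proposition~1.2.36]{lyh2320}) furnishes a subsequence $\{\widetilde{f_{m_k}}\}_{k\in\mathbb{N}}$ with $\widetilde{f_{m_k}}\to g$ almost everywhere in $\mathbb{R}^n$. As each $\widetilde{f_{m_k}}$ vanishes almost everywhere in $\Omega^\complement$, so does $g$; thus $g\in Y$, so $f:=g|_\Omega\in X(\Omega)$ and $\widetilde f=g$ almost everywhere in $\mathbb{R}^n$. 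By Proposition~\ref{norm}, $\|f_m-f\|_{X(\Omega)}=\|\widetilde{f_m}-g\|_{X(\mathbb{R}^n)}\to0$ as $m\to\infty$, so $X(\Omega)$ is complete.

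The remaining items (i)--(vi) are transcriptions through $\iota$ of their analogues in Definition~\ref{1659} (with bounded measurable sets in place of balls, as allowed by Remark~\ref{1052}(ii)). Indeed: (i) if $\|f\|_{X(\Omega)}=0$ then $\|\widetilde f\|_{X(\mathbb{R}^n)}=0$, so $\widetilde f=0$ almost everywhere in $\mathbb{R}^n$ by Remark~\ref{1052}(i), i.e., $f=0$ almost everywhere in $\Omega$; (ii) and (iii) follow since $|g|\le|f|$ almost everywhere in $\Omega$ implies $|\widetilde g|\le|\widetilde f|$ almost everywhere in $\mathbb{R}^n$, and $0\le f_m\uparrow f$ almost everywhere in $\Omega$ implies $0\le\widetilde{f_m}\uparrow\widetilde f$ almost everywhere in $\mathbb{R}^n$, combined with Definition~\ref{1659}(ii), (iii) and Proposition~\ref{norm}; (iv) any bounded measurable $E\subset\Omega$ is a bounded measurable subset of $\mathbb{R}^n$, so $\mathbf{1}_E\in X(\mathbb{R}^n)$ by Remark~\ref{1052}(ii), whence $\mathbf{1}_E\in X(\Omega)$ (its zero extension being $\mathbf{1}_E$ again), and the case $E:=B\cap\Omega$ gives the last assertion. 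When $X(\mathbb{R}^n)$ is in addition a ball Banach function space: (v) follows from $\widetilde f,\widetilde g\in X(\mathbb{R}^n)$, $\widetilde{f+g}=\widetilde f+\widetilde g$, Definition~\ref{1659}(v), and Proposition~\ref{norm}; and (vi) follows by applying the bounded-measurable-set version of Definition~\ref{1659}(vi) to $\widetilde f$ and using $\widetilde f=f$ on $E\subset\Omega$, which yields $\int_E|f(x)|\,dx=\int_E|\widetilde f(x)|\,dx\le C_{(E)}\|\widetilde f\|_{X(\mathbb{R}^n)}=C_{(E)}\|f\|_{X(\Omega)}$, the ball case being the choice $E:=B\cap\Omega$.

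The only step that is not a mechanical transfer through $\iota$ is the completeness of $X(\Omega)$, and there the sole delicate point is extracting an almost-everywhere convergent subsequence from $\{\widetilde{f_m}\}$ so as to locate the limit inside $Y$; once that is granted, the rest is routine.
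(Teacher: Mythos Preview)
Your proof is correct and follows essentially the same route as the paper: reduce every item to the corresponding property of $X(\mathbb{R}^n)$ through the zero-extension isometry supplied by Proposition~\ref{norm}. The only noteworthy difference is in the completeness step. You extract an almost-everywhere convergent subsequence of $\{\widetilde{f_m}\}$ to argue that the limit $g$ vanishes on $\Omega^\complement$ and hence lies in $Y$; the paper sidesteps this by simply letting $f_0:=g|_\Omega$ and observing that $|\widetilde{f_0}-\widetilde{f_m}|\le|g-\widetilde{f_m}|$ pointwise (equality on $\Omega$, and $0\le|g|$ on $\Omega^\complement$), so Definition~\ref{1659}(ii) gives $\|f_0-f_m\|_{X(\Omega)}=\|\widetilde{f_0}-\widetilde{f_m}\|_{X(\mathbb{R}^n)}\le\|g-\widetilde{f_m}\|_{X(\mathbb{R}^n)}\to0$ directly. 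This monotonicity trick is shorter and avoids appealing to the inner mechanics of the completeness proof for $X(\mathbb{R}^n)$.
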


\begin{proof}
For any $f\in X(\Omega)$, let $\widetilde{f}$
be the same as in \eqref{1448}.
We first show that $X(\Omega)$ is complete.
Let $\{f_k\}_{k\in\mathbb{N}}$ be a Cauchy sequence in $X(\Omega)$.
By Proposition~\ref{norm}, we find that, for any $k,m\in\mathbb{N}$,
$$
\left\|\widetilde{f_k}-\widetilde{f_m}\right\|_{X(\mathbb{R}^n)}
=\left\|f_k-f_m\right\|_{X(\Omega)},
$$
which, further implies that $\{\widetilde{f_k}\}_{k\in\mathbb{N}}$
is a Cauchy sequence in $X(\mathbb{R}^n)$.
From this and Remark~\ref{1052}(iv),
we deduce that there exists $f\in X(\mathbb{R}^n)$
such that
\begin{align}\label{1637}
\left\|f-\widetilde{f_k}\right\|_{X(\mathbb{R}^n)}\to0
\end{align}
as $k\to\infty$.
Let $f_0:=f|_\Omega$. Then, by Definition~\ref{1635},
we conclude that $f_0\in X(\Omega)$.
Moreover, from Proposition~\ref{norm},
Definition~\ref{1659}(ii), and \eqref{1637},
we deduce that
\begin{align*}
\left\|f_0-f_k\right\|_{X(\Omega)}
=\left\|\widetilde{f_0}-\widetilde{f_k}\right\|_{X(\mathbb{R}^n)}
\leq\left\|f-\widetilde{f_k}\right\|_{X(\mathbb{R}^n)}\to0
\end{align*}
as $k\to\infty$. Thus, $X(\Omega)$ is complete.

By Proposition~\ref{norm} and (i), (ii), (iii), and (v)
of Definition~\ref{1659} with $f$, $g$, $f_m$ replaced,
respectively, by
$\widetilde{f}$, $\widetilde{g}$, and $\widetilde{f_m}$,
we conclude that (i), (ii), (iii), and (v) of the present proposition
hold true. Now, we show (iv).
Assume that $E$ is a bounded measurable subset of $\Omega$.
Then $E$ is also a bounded measurable subset of $\mathbb{R}^n$.
By Remark~\ref{1052}(ii) and Definition~\ref{1659}(iv),
we find that $\widetilde{\mathbf{1}}_E\in X(\mathbb{R}^n)$.
Thus, $\mathbf{1}_E=\widetilde{\mathbf{1}}_E|_\Omega\in X(\Omega)$
by Definition~\ref{1635}.
This finishes the proof of (iv).
Next, we show (vi). From  Definition~\ref{1659}(vi),
Remark~\ref{1052}(ii), and Proposition~\ref{norm},
we infer that, for any bounded measurable set $E\subset\Omega$,
\begin{align*}
\int_E|f(x)|\,dx=\int_E\left|\widetilde{f}(x)\right|\,dx
\leq C_{(E)}\left\|\widetilde{f}\right\|_{X(\mathbb{R}^n)}
=C_{(E)}\left\|f\right\|_{X(\Omega)},
\end{align*}
which completes the proof of (vi) and hence Proposition~\ref{1533}.
\end{proof}

The following concept
can be found
in \cite[Chapter 1, Definition~3.1]{bs1988};
see also \cite[Definition~3.2]{wyy2020}.

\begin{definition}\label{2029}
Let $X(\mathbb{R}^n)$ be a ball quasi-Banach function space
and $X(\Omega)$ its restrictive space.
For $E\in\{\mathbb{R}^n,\,\Omega\}$,
$X(E)$ is said
to have an \emph{absolutely continuous quasi-norm on $E$}
if, for any $f\in X(E)$ and any sequence $\{E_j\}_{j\in\mathbb{N}}$
of measurable subsets of $E$
satisfying that $\mathbf{1}_{E_j}\to0$
almost everywhere in $E$ as $j\to\infty$, one has
$\|f\mathbf{1}_{E_j}\|_{X(E)}\to0$ as $j\to\infty$.
\end{definition}

The following result shows that
the restriction preserves the
absolutely continuous quasi-norm.

\begin{proposition}\label{abs}
Let $X(\mathbb{R}^n)$ be a ball quasi-Banach function space
having an absolutely continuous quasi-norm.
Then $X(\Omega)$ also has an absolutely continuous quasi-norm.
\end{proposition}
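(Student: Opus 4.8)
The plan is to reduce the statement on $X(\Omega)$ to the already-available absolute continuity of the quasi-norm on $X(\mathbb{R}^n)$, by transplanting every measurable subset of $\Omega$ to $\mathbb{R}^n$ via the extension-by-zero map $f\mapsto\widetilde{f}$ of \eqref{1448} and invoking Proposition~\ref{norm}. Concretely, fix $f\in X(\Omega)$ and a sequence $\{E_j\}_{j\in\mathbb{N}}$ of measurable subsets of $\Omega$ with $\mathbf{1}_{E_j}\to0$ almost everywhere in $\Omega$ as $j\to\infty$. The goal is to show $\|f\mathbf{1}_{E_j}\|_{X(\Omega)}\to0$.

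The first step is the key identity: for each $j$ one has $\widetilde{f\mathbf{1}_{E_j}}=\widetilde{f}\,\mathbf{1}_{E_j}$ as functions on $\mathbb{R}^n$ (here $E_j$ is regarded as a measurable subset of $\mathbb{R}^n$, which is legitimate since $\Omega$ is open and hence measurable). Therefore, by Proposition~\ref{norm} applied to the function $f\mathbf{1}_{E_j}\in X(\Omega)$,
\begin{align*}
\left\|f\mathbf{1}_{E_j}\right\|_{X(\Omega)}
=\left\|\widetilde{f\mathbf{1}_{E_j}}\right\|_{X(\mathbb{R}^n)}
=\left\|\widetilde{f}\,\mathbf{1}_{E_j}\right\|_{X(\mathbb{R}^n)}.
\end{align*}
The second step is to check the hypothesis needed to apply absolute continuity of $\|\cdot\|_{X(\mathbb{R}^n)}$ to the right-hand side: we need $\mathbf{1}_{E_j}\to0$ almost everywhere in $\mathbb{R}^n$. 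But $\mathbf{1}_{E_j}=0$ identically on $\Omega^\complement$ for every $j$, and $\mathbf{1}_{E_j}\to0$ a.e.\ on $\Omega$ by assumption; since $\mathbb{R}^n=\Omega\cup\Omega^\complement$, the convergence holds a.e.\ on $\mathbb{R}^n$. Also $\widetilde{f}\in X(\mathbb{R}^n)$ by Proposition~\ref{norm} (it is the norming extension of $f$). Hence Definition~\ref{2029}, applied on $E=\mathbb{R}^n$ to the function $\widetilde{f}$ and the sets $\{E_j\}_{j\in\mathbb{N}}$, gives $\|\widetilde{f}\,\mathbf{1}_{E_j}\|_{X(\mathbb{R}^n)}\to0$ as $j\to\infty$. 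Combining with the displayed identity yields $\|f\mathbf{1}_{E_j}\|_{X(\Omega)}\to0$, which is exactly the assertion that $X(\Omega)$ has an absolutely continuous quasi-norm.

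There is essentially no serious obstacle here; the only point requiring a moment's care is the bookkeeping that passing from $\Omega$ to $\mathbb{R}^n$ preserves both the relevant norms (handled by Proposition~\ref{norm}) and the a.e.\ convergence $\mathbf{1}_{E_j}\to0$ (handled by the trivial extension of the indicators being zero off $\Omega$). One should also note at the outset that $X(\Omega)$ is a genuine quasi-Banach space by Proposition~\ref{1533}, so that Definition~\ref{2029} is meaningfully applicable on $E=\Omega$. With these remarks in place the argument is a short two-line computation plus the verification just described.
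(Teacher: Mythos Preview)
Your proof is correct and follows essentially the same approach as the paper's: both reduce to absolute continuity on $X(\mathbb{R}^n)$ via Proposition~\ref{norm} and the extension-by-zero map, noting that $\widetilde{f\mathbf{1}_{E_j}}=\widetilde{f}\,\mathbf{1}_{E_j}$ (equivalently $\widetilde{f}\,\widetilde{\mathbf{1}_{E_j}}$) and that the a.e.\ convergence of indicators extends trivially from $\Omega$ to $\mathbb{R}^n$. Your write-up is slightly more detailed in spelling out why $\mathbf{1}_{E_j}\to0$ a.e.\ on all of $\mathbb{R}^n$, but the argument is the same.
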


\begin{proof}
Assume that $f\in X(E)$
and $\{E_j\}_{j\in\mathbb{N}}$ is a sequence of
measurable subsets of $E$
satisfying that $\mathbf{1}_{E_j}\to0$
almost everywhere in $E$ as $j\to\infty$.
Let $\widetilde{f}$ and $\widetilde{1_{E_j}}$
be defined the same as in \eqref{1448}.
Then we find that $\widetilde{f}\in X(\mathbb{R}^n)$
and $\{E_j\}_{j\in\mathbb{N}}$ is also a sequence of
measurable subsets of $\mathbb{R}^n$
satisfying that $\widetilde{\mathbf{1}_{E_j}}\to0$
almost everywhere in $\mathbb{R}^n$ as $j\to\infty$.
By Proposition~\ref{norm} and the assumption that
$X(\mathbb{R}^n)$ has an absolutely
continuous quasi-norm,
we conclude that
$$
\left\|f\mathbf{1}_{E_j}\right\|_{X(E)}
=\left\|\widetilde{f}\widetilde{1_{E_j}}\right\|_{X(\mathbb{R}^n)}
\to0
$$
as $j\to\infty$.
This finishes the proof of Proposition~\ref{abs}.
\end{proof}

Next, we introduce the associate space of $X(\Omega)$.

\begin{definition}\label{2036}
Let $X(\mathbb{R}^n)$ be a ball Banach function space
and $X(\Omega)$ its restrictive space.
The \emph{associate space} $[X(\Omega)]'$ of
$X(\Omega)$
is defined by setting
\begin{align*}
[X(\Omega)]':=\left\{f\in\mathscr{M}(\Omega):\
\|f\|_{[X(\Omega)]'}:=\sup_{\{g\in X(\Omega):\ \|g\|_{X(\Omega)}=1\}}
\left\|fg\right\|_{L^1(\Omega)}<\infty\right\},
\end{align*}
where $\|\cdot\|_{[X(\Omega)]'}$ is called the
\emph{associate norm} of $\|\cdot\|_{X(\Omega)}$.
\end{definition}

The following proposition shows that
$[X(\Omega)]'$ coincides with $X'(\Omega)$
with same norms.

\begin{proposition}\label{2034}
Let $X(\mathbb{R}^n)$ be a ball Banach function space,
$X'(\mathbb{R}^n)$ its associate space,
and $X(\Omega)$ its restrictive space on $\Omega$.
Then $[X(\Omega)]'=X'(\Omega)$
with same norms,
where $X'(\Omega)$ denotes the restrictive space of $X'(\mathbb{R}^n)$.
\end{proposition}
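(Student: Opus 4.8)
The plan is to reduce the whole statement to Proposition~\ref{norm} together with the restriction/zero-extension correspondence $f\mapsto\widetilde f$ of \eqref{1448}. By Remark~\ref{dual}(i), $X'(\mathbb{R}^n)$ is again a ball Banach function space, hence a ball quasi-Banach function space, so Proposition~\ref{norm} applied with $X$ replaced by $X'$ shows that, for any $f\in X'(\Omega)$, one has $\widetilde f\in X'(\mathbb{R}^n)$ and $\|f\|_{X'(\Omega)}=\|\widetilde f\|_{X'(\mathbb{R}^n)}$; conversely, if $\widetilde f\in X'(\mathbb{R}^n)$, then $f=\widetilde f|_\Omega\in X'(\Omega)$ by Definition~\ref{1635}. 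Consequently, it suffices to prove the single identity
$$
\|f\|_{[X(\Omega)]'}=\bigl\|\widetilde f\bigr\|_{X'(\mathbb{R}^n)}
$$
for every $f\in\mathscr{M}(\Omega)$, with both sides allowed to be $+\infty$; the set equality $[X(\Omega)]'=X'(\Omega)$ is then obtained by reading off the finiteness of the two sides, while the equality of the norms follows by combining this identity with $\|f\|_{X'(\Omega)}=\|\widetilde f\|_{X'(\mathbb{R}^n)}$.

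To prove this identity, I would first rewrite the left-hand side. The zero-extension map $g\mapsto\widetilde g$ is a bijection from $\{g\in X(\Omega):\ \|g\|_{X(\Omega)}=1\}$ onto $\{h\in X(\mathbb{R}^n):\ \|h\|_{X(\mathbb{R}^n)}=1,\ h=0\text{ a.e. in }\Omega^\complement\}$: injectivity is clear, while for surjectivity one notes that, when $h$ vanishes a.e. off $\Omega$, the function $h|_\Omega$ lies in $X(\Omega)$ with $\widetilde{h|_\Omega}=h$ a.e. and, by Proposition~\ref{norm}, $\|h|_\Omega\|_{X(\Omega)}=\|h\|_{X(\mathbb{R}^n)}=1$. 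Moreover $\|fg\|_{L^1(\Omega)}=\|\widetilde f\,\widetilde g\|_{L^1(\mathbb{R}^n)}$ since $\widetilde f$ vanishes off $\Omega$. Hence $\|f\|_{[X(\Omega)]'}$ equals the supremum of $\|\widetilde f\,h\|_{L^1(\mathbb{R}^n)}$ taken over all $h\in X(\mathbb{R}^n)$ with $\|h\|_{X(\mathbb{R}^n)}=1$ that vanish a.e. outside $\Omega$. Since this is a subfamily of the unit sphere of $X(\mathbb{R}^n)$ defining the associate norm $\|\widetilde f\|_{X'(\mathbb{R}^n)}$, the inequality ``$\le$'' follows at once.

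For ``$\ge$'', given any $h\in X(\mathbb{R}^n)$ with $\|h\|_{X(\mathbb{R}^n)}=1$, I would replace $h$ by $h\mathbf{1}_\Omega$: this leaves $\|\widetilde f\,h\|_{L^1(\mathbb{R}^n)}$ unchanged (again because $\widetilde f=0$ off $\Omega$), produces a function that vanishes a.e. off $\Omega$, and, by Definition~\ref{1659}(ii), satisfies $\|h\mathbf{1}_\Omega\|_{X(\mathbb{R}^n)}\le1$. If $\|h\mathbf{1}_\Omega\|_{X(\mathbb{R}^n)}=0$, then $h=0$ a.e. in $\Omega$ by Remark~\ref{1052}(i), so $\widetilde f\,h=0$ a.e. and there is nothing to prove; otherwise, normalizing $h\mathbf{1}_\Omega$ to have $X(\mathbb{R}^n)$-norm one yields an admissible competitor for the reduced left-hand side whose $L^1(\mathbb{R}^n)$-pairing with $\widetilde f$ is at least $\|\widetilde f\,h\|_{L^1(\mathbb{R}^n)}$. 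Taking the supremum over $h$ completes the identity, and hence the proposition. The whole argument is essentially bookkeeping with restriction and zero-extension, so I do not expect a genuine obstacle; the one point requiring care is not to presuppose $f\in[X(\Omega)]'$ or $f\in X'(\Omega)$, since that membership is itself part of the claim — the norm identity should therefore be established for all measurable $f$ on $\Omega$ first, and the set-theoretic statement extracted from it afterwards.
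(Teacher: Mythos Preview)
Your proposal is correct and follows essentially the same approach as the paper: both arguments establish the two inequalities $\|f\|_{[X(\Omega)]'}\le\|\widetilde f\|_{X'(\mathbb{R}^n)}$ and $\|\widetilde f\|_{X'(\mathbb{R}^n)}\le\|f\|_{[X(\Omega)]'}$ via the zero-extension bijection and Proposition~\ref{norm}, using the same normalization trick to pass between suprema. Your version is organized slightly more cleanly in that you prove the norm identity for all measurable $f$ first and extract the set equality afterwards, whereas the paper treats the two inclusions separately assuming membership in one space at a time; but the mathematical content is identical.
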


\begin{proof}
On the one hand, assume that $f\in[X(\Omega)]'$.
Let $\widetilde{f}$ be the same as in \eqref{1448}.
By Definitions~\ref{associte}
and~\ref{2036}, we find that
\begin{align*}
\left\|\widetilde{f}\right\|_{X'(\mathbb{R}^n)}
&=\sup_{\{g\in X(\mathbb{R}^n):\ \|g\|_{X(\mathbb{R}^n)}=1\}}
\int_{\mathbb{R}^n}\left|\widetilde{f}(x)g(x)\right|\,dx\\
&=\sup_{\{g\in X(\mathbb{R}^n):\ \|g\|_{X(\mathbb{R}^n)}=1\}}
\int_{\Omega}\left|f(x)g(x)\right|\,dx\\
&\leq\sup_{\{h\in X(\Omega):\ \|h\|_{X(\Omega)}=1\}}
\int_{\Omega}\left|f(x)h(x)\right|\,dx=\left\|f\right\|_{[X(\Omega)]'}
<\infty.
\end{align*}
Thus, $\widetilde{f}\in X'(\mathbb{R}^n)$
and $f$, the restriction of $\widetilde{f}$ on $\Omega$,
belongs to $X'(\Omega)$.

On the other hand, assume that $f\in X'(\Omega)$.
From Proposition~\ref{norm}, we infer that
$\widetilde{f}\in X'(\mathbb{R}^n)$ and
$\|\widetilde{f}\|_{X'(\mathbb{R}^n)}=\|f\|_{X'(\Omega)}$.
By this and Definitions~\ref{2036} and~\ref{associte}, we conclude that
\begin{align*}
\left\|f\right\|_{[X(\Omega)]'}
&=\sup_{\{g\in X(\Omega):\ \|g\|_{X(\Omega)}=1\}}
\int_{\Omega}\left|f(x)g(x)\right|dx\\
&=\sup_{\{g\in X(\Omega):\ \|g\|_{X(\Omega)}=1\}}
\int_{\mathbb{R}^n}\left|\widetilde{f}(x)\widetilde{g}(x)\right|dx\\
&\leq\sup_{\{h\in X(\mathbb{R}^n):\ \|h\|_{X(\mathbb{R}^n)}=1\}}
\int_{\mathbb{R}^n}\left|\widetilde{f}(x)h(x)\right|dx\\
&=\left\|\widetilde{f}\right\|_{X'(\mathbb{R}^n)}
=\left\|f\right\|_{X'(\Omega)}<\infty.
\end{align*}
Thus, $f\in[X(\Omega)]'$.
This finishes the proof of Proposition~\ref{2034}.
\end{proof}

\begin{remark}\label{1104}
Let $X(\mathbb{R}^n)$ be a ball Banach function space
and $X(\Omega)$ its restrictive space.
Assume that $X(\mathbb{R}^n)$ has an absolutely continuous norm.
For $E\in\{\mathbb{R}^n,\,\Omega\}$,
we use $X^*(E)$ to denote the \emph{dual space} of $X(E)$.
Then, from \cite[Lemma~1.7.7]{lyh2320},
we deduce that $X'(E)$ coincides with $X^*(E)$;
see also \cite[p.\,23, Corollary~4.3]{bs1988}.
\end{remark}

We introduce the concepts of both
the inhomogeneous
ball Banach Sobolev space $W^{m,X}(\Omega)$
and the homogeneous
ball Banach Sobolev space $\dot{W}^{m,X}(\Omega)$
as follows;
see \cite[Definition~2.6]{dgpyyz2022}
for the definition of $W^{1,X}(\mathbb{R}^n)$
and \cite[Definition~2.4]{dlyyz.arxiv}
for the definition of $\dot{W}^{1,X}(\mathbb{R}^n)$.

\begin{definition}\label{2.7}
Let $X(\mathbb{R}^n)$ be a ball Banach function space
and $m\in\mathbb{N}$.
\begin{enumerate}
\item[\rm(i)]
The \emph{inhomogeneous ball Banach
Sobolev space} $W^{m,X}(\Omega)$
is defined to be the set of all the
$f\in X(\Omega)$
satisfying that, for any multi-index
$\alpha\in\mathbb{Z}_+^n$ with $|\alpha|\leq m$,
the $\alpha^\mathrm{th}$-weak partial derivative $D^\alpha f$ of $f$
exists and $D^\alpha f\in X(\Omega)$.
Moreover, for any $f\in W^{m,X}(\Omega)$, let
\begin{align*}
\|f\|_{W^{m,X}(\Omega)}:=
\sum_{\alpha\in\mathbb{Z}_+^n,\,
|\alpha|\leq m}\left\|D^\alpha f\right\|_{X(\Omega)}.
\end{align*}
\item[\rm(ii)]
The \emph{homogeneous ball Banach
Sobolev space} $\dot{W}^{m,X}(\Omega)$
is defined to be the set of all the
$f\in L^1_{\mathrm{loc}}(\Omega)$
satisfying that, for any multi-index
$\alpha\in\mathbb{Z}_+^n$ with $|\alpha|=m$,
the $\alpha^\mathrm{th}$-weak partial derivative $D^\alpha f$ of $f$
exists and $D^\alpha f\in X(\Omega)$.
Moreover, for any $f\in\dot{W}^{m,X}(\Omega)$, let
\begin{align*}
\|f\|_{\dot{W}^{m,X}(\Omega)}:=
\sum_{\alpha\in\mathbb{Z}_+^n,\,
|\alpha|=m}\left\|D^\alpha f\right\|_{X(\Omega)}.
\end{align*}
\end{enumerate}
\end{definition}

Now, we recall the concept of the
Muckenhoupt $A_p(\mathbb{R}^n)$ class
(see, for instance, \cite[Definitions~7.1.1 and~7.1.3]{g2014}).

\begin{definition}\label{1557}
An \emph{$A_p(\mathbb{R}^n)$-weight} $\omega$, with $p\in[1,\infty)$,
is a nonnegative locally integrable function
on $\mathbb{R}^n$ satisfying that,
when $p=1$,
\begin{align}\label{A1}
[\omega]_{A_1(\mathbb{R}^n)}:=\sup_{Q\subset\mathbb{R}^n}
\frac{\|\omega^{-1}\|_{L^\infty(Q)}}{|Q|}\int_Q\omega(x)\,dx<\infty
\end{align}
and, when $p\in(1,\infty)$,
\begin{align}\label{Ap}
[\omega]_{A_p(\mathbb{R}^n)}:=\sup_{Q\subset\mathbb{R}^n}
\frac{1}{|Q|}\int_Q\omega(x)\,dx
\left\{\frac{1}{|Q|}\int_Q
[\omega(x)]^{1-p'}\,dx\right\}^{p-1}<\infty,
\end{align}
where $p'$ denotes the conjugate index of $p$ and
the suprema in both \eqref{A1} and \eqref{Ap}
are taken over all cubes $Q\subset\mathbb{R}^n$.
Moreover, let
$A_\infty(\mathbb{R}^n):=\bigcup_{p\in[1,\infty)}A_p(\mathbb{R}^n).$
\end{definition}

Next, we recall the concepts of both the weighted Lebesgue space and
the weighted Sobolev space as follows.

\begin{definition}\label{1556}
Let $p\in[1,\infty]$,
$\omega\in A_p(\mathbb{R}^n)$,
and $m\in\mathbb{N}$.
\begin{enumerate}
\item[\textup{(i)}]
The \emph{weighted Lebesgue space} $L^p_\omega(\Omega)$
is defined to be the set of all the $f\in\mathscr{M}(\Omega)$ such that
\begin{align*}
\|f\|_{L^p_\omega(\Omega)}:=
\left[\int_{\Omega}\left|f(x)\right|^p
\omega(x)\,dx\right]^{\frac{1}{p}}<\infty.
\end{align*}
\item[\textup{(ii)}]
The \emph{inhomogeneous weighted Sobolev space}
$W^{m,p}_\omega(\Omega)$
is defined to be the set of all the $f\in L^p_\omega(\Omega)$
satisfying that, for any multi-index
$\alpha\in\mathbb{Z}_+^n$ with $|\alpha|\leq m$,
the $\alpha^\mathrm{th}$-weak partial derivative $D^\alpha f$ of $f$
exists and $D^\alpha f\in L^p_\omega(\Omega)$.
Moreover, for any $f\in W^{m,p}_\omega(\Omega)$, let
\begin{align*}
\|f\|_{W^{m,p}_\omega(\Omega)}:=
\sum_{\alpha\in\mathbb{Z}_+^n,\,
|\alpha|\leq m}\left\|D^\alpha f\right\|_{L^p_\omega(\Omega)}.
\end{align*}
\item[\textup{(iii)}]
The \emph{homogeneous weighted Sobolev space}
$\dot{W}^{m,p}_\omega(\Omega)$
is defined to be the set of all the $f\in L^1_{\mathrm{loc}}(\Omega)$
satisfying that, for any multi-index
$\alpha\in\mathbb{Z}_+^n$ with $|\alpha|=m$,
the $\alpha^\mathrm{th}$-weak partial derivative $D^\alpha f$ of $f$
exists and $D^\alpha f\in L^p_\omega(\Omega)$.
Moreover, for any $f\in\dot{W}^{m,p}_\omega(\Omega)$, let
\begin{align*}
\|f\|_{\dot{W}^{m,p}_\omega(\Omega)}:=
\sum_{\alpha\in\mathbb{Z}_+^n,\,
|\alpha|=m}\left\|D^\alpha f\right\|_{L^p_\omega(\Omega)}.
\end{align*}
\end{enumerate}
\end{definition}

\section{Two Extension Theorems on Ball Banach Sobolev Spaces}
\label{yantuo}

The main target of this section is to establish the
extension theorems, respectively, on the inhomogeneous
ball Banach Sobolev space $W^{m,X}(\Omega)$
and the homogeneous ball Banach Sobolev space $\dot{W}^{m,X}(\Omega)$
(see Theorems~\ref{extension} and~\ref{extension2} below),
where $m\in\mathbb{N}$,
$\Omega\subset\mathbb{R}^n$ is an $(\varepsilon,\delta)$-domain,
and $X(\mathbb{R}^n)$ is a ball Banach function space
satisfying some extra mild assumptions.
To this end, we begin with recalling the concept
of $(\varepsilon,\delta)$-domains,
which can be found in \cite[p.\,73]{j1981}.

\begin{definition}\label{2121}
Let $\varepsilon\in(0,1]$ and $\delta\in(0,\infty]$.
An open set $\Omega\subset\mathbb{R}^n$
is called an \emph{$(\varepsilon,\delta)$-domain}
if, for any $x,y\in\Omega$ with $|x-y|\in(0,\delta)$,
there exists a rectifiable curve $\Gamma$ such that
\begin{enumerate}
\item[\textup{(i)}]
$\Gamma$ lies in $\Omega$;
\item[\textup{(ii)}]
$\Gamma$ connects $x$ and $y$;
\item[\textup{(iii)}]
$\ell(\Gamma)\leq\varepsilon^{-1}|x-y|$,
where $\ell(\Gamma)$ denotes
the Euclidean arc length of $\Gamma$;
\item[\textup{(iv)}]
for any point $z$ on $\Gamma$,
\begin{align*}
\mathrm{dist\,}(z,\partial\Omega)\ge\frac{\varepsilon|x-z||y-z|}{|x-y|}.
\end{align*}
\end{enumerate}
\end{definition}

\begin{remark}\label{1638}
\begin{enumerate}
\item[\textup{(i)}]
As was pointed out in \cite[p.\,73]{j1981},
both (ii) and (iii) of Definition~\ref{2121} imply that
any $(\varepsilon,\delta)$-domain
is locally connected in some quantitative sense.
\item[\textup{(ii)}]
As was pointed out in \cite[p.\,276]{dhhr2011},
$(\varepsilon,\infty)$-domains are also known
as \emph{uniform domains} which were
introduced by Martio and Sarvas \cite{ms1979}.
We refer the reader to \cite{ge,hk1991,v3} for more
studies of uniform domains.
\item[\textup{(iii)}]
As was pointed out in \cite[p.\,73]{j1981},
any Lipschitz domain
(see \cite[Definition~4.4]{eg2015} for its definition)
is an $(\varepsilon,\delta)$-domain
for some $\varepsilon\in(0,1]$ and $\delta\in(0,\infty]$;
the classical snowflake domain
(see, for instance, \cite[pp.\,104--105]{lv1973})
is an $(\varepsilon,\infty)$-domain
for some $\varepsilon\in(0,1]$ and hence $(\varepsilon,\delta)$-domains
for the same $\varepsilon\in(0,1]$ and any $\delta\in(0,\infty)$.
As was pointed out in \cite[p.\,276]{dhhr2011},
any bounded Lipschitz domain
and the half-space are $(\varepsilon,\infty)$-domains
for some $\varepsilon\in(0,1]$
and hence $(\varepsilon,\delta)$-domains
for the same $\varepsilon\in(0,1]$ and any $\delta\in(0,\infty)$.
\end{enumerate}
\end{remark}

For any $(\varepsilon,\delta)$-domain $\Omega$,
let $\{\Omega_i\}_{i}$ be the set of
the connected components of $\Omega$.
The \emph{radius} of $\Omega$ is defined by setting
$$
\mathrm{rad\,}(\Omega):=\inf_{i}\inf_{x\in\Omega_i}
\sup_{y\in\Omega_i}|x-y|;
$$
see, for instance, \cite[(5.3)]{c1992}.

\begin{remark}\label{1711}
Let $\Omega\subset\mathbb{R}^n$
be an $(\varepsilon,\infty)$-domain
for some $\varepsilon\in(0,1]$.
Then Definition~\ref{2121}(ii) implies that
$\Omega$ is connected.
Moreover,
by this, we find
$\mathrm{rad\,}(\Omega)\in(0,\infty]$.
\end{remark}

\subsection{Approximation of Functions
in Ball Banach Sobolev Spaces
by Functions\\ in $C^\infty(\overline{\Omega})$}
\label{sub3.1}

For any open set $\Omega\subset\mathbb{R}^n$,
let $C^\infty(\Omega)$ denote the space
of all infinitely differentiable functions on $\Omega$
and let $C^\infty(\overline{\Omega})$ denote the
restriction of $C^\infty(\mathbb{R}^n)$ on $\Omega$
(see, for instance, \cite[pp.\,1--2]{ma2011}
or \cite[p.\,702]{evans}).
The main target of this subsection is to
approximate functions in
the inhomogeneous ball Banach Sobolev space $W^{m,X}(\Omega)$
[or the homogeneous ball Banach Sobolev space $\dot{W}^{m,X}(\Omega)$]
by functions in $C^\infty(\overline{\Omega})$, where $m\in\mathbb{N}$
and $\Omega$ is an $(\varepsilon,\delta)$-domain.
We present the main theorem of this subsection
as follows.

\begin{theorem}\label{2247}
Let $\Omega\subset\mathbb{R}^n$ be an $(\varepsilon,\delta)$-domain
with $\varepsilon\in(0,1]$, $\delta\in(0,\infty]$,
and $\mathrm{rad\,}(\Omega)\in(0,\infty]$.
Let $X(\mathbb{R}^n)$ be a ball Banach function space
and $m\in\mathbb{N}$. Assume that
\begin{enumerate}
\item[\rm(a)]
$X(\mathbb{R}^n)$ has an absolutely continuous norm;
\item[\rm(b)]
there exists $p\in[1,\infty)$ such that
$X^\frac{1}{p}(\mathbb{R}^n)$
is a ball Banach function space
and that the Hardy--Littlewood maximal operator
is bounded on $[X^\frac{1}{p}(\mathbb{R}^n)]'$.
\end{enumerate}
Then
\begin{enumerate}
\item[\rm(i)]
$C^{\infty}(\overline{\Omega})\cap W^{m,X}(\Omega)$
is dense in $W^{m,X}(\Omega)$.
\item[\rm(ii)]
$C^{\infty}(\overline{\Omega})\cap\dot{W}^{m,X}(\Omega)$
is dense in $\dot{W}^{m,X}(\Omega)$.
\end{enumerate}
\end{theorem}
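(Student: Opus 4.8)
The plan is to follow the classical strategy for Sobolev density theorems (as in Evans or Adams), but executed through the extrapolation machinery so that one only ever has to verify the corresponding statement for weighted Sobolev spaces $W^{m,p}_\omega(\Omega)$ and $\dot{W}^{m,p}_\omega(\Omega)$ with $\omega\in A_1(\mathbb{R}^n)$, and then transfer it to $X(\Omega)$ via hypothesis (b). Concretely, for (i) I would first invoke Chua's extension theorem for $W^{m,p}_\omega(\Omega)$ on $(\varepsilon,\delta)$-domains (from \cite{c1992}) to produce, for a given $f\in W^{m,X}(\Omega)$, an extension $Ef$ defined on all of $\mathbb{R}^n$. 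Since density of $C^\infty(\mathbb{R}^n)$ in $W^{m,X}(\mathbb{R}^n)$ is already available (this is where the absolute continuity of the norm in (a) and the boundedness of $\mathcal{M}$ on $[X^{1/p}(\mathbb{R}^n)]'$ in (b) enter, via mollification: one mollifies $Ef$ by $\phi_\varepsilon*Ef$ and uses that $\|\phi_\varepsilon*g - g\|_{X(\mathbb{R}^n)}\to0$, which reduces by extrapolation to the weighted statement $\|\phi_\varepsilon*g-g\|_{L^p_\omega}\to0$ controlled by $\mathcal{M}g$ and dominated convergence), one obtains $g_j\in C^\infty(\mathbb{R}^n)$ with $g_j\to Ef$ in $W^{m,X}(\mathbb{R}^n)$. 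Restricting to $\Omega$ and using $\|\cdot\|_{X(\Omega)}\le\|\cdot\|_{X(\mathbb{R}^n)}$ (Proposition~\ref{norm} and Definition~\ref{1659}(ii)) gives $g_j|_\Omega\to f$ in $W^{m,X}(\Omega)$, with $g_j|_\Omega\in C^\infty(\overline{\Omega})\cap W^{m,X}(\Omega)$, which is exactly (i).

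For (ii) the same scheme applies but with the homogeneous extension theorem of Chua for $\dot{W}^{m,p}_\omega(\Omega)$ on $(\varepsilon,\infty)$-domains; note that the hypothesis $\mathrm{rad}(\Omega)\in(0,\infty]$ and the $(\varepsilon,\delta)$-domain structure are precisely the conditions under which Chua's homogeneous extension operator is bounded, so one first extends $f\in\dot{W}^{m,X}(\Omega)$ to $Ef\in\dot{W}^{m,X}(\mathbb{R}^n)$, then mollifies. The subtlety here is that $\dot{W}^{m,X}$ is a semi-normed space defined on $L^1_{\mathrm{loc}}$: the mollification $\phi_\varepsilon*Ef$ need not converge to $Ef$ in $X(\mathbb{R}^n)$ itself (only its derivatives of order $m$ do), so convergence has to be measured in the homogeneous seminorm only, i.e.\ one checks $\|D^\alpha(\phi_\varepsilon*Ef)-D^\alpha Ef\|_{X(\mathbb{R}^n)}=\|\phi_\varepsilon*(D^\alpha Ef)-D^\alpha Ef\|_{X(\mathbb{R}^n)}\to0$ for $|\alpha|=m$, again by extrapolation plus the weighted mollifier estimate. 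Restricting to $\Omega$ then yields the claim.

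The key technical inputs I would line up are: (1) the extrapolation theorem in the form ``if $T$ is bounded on $L^p_\omega$ for all $\omega\in A_1(\mathbb{R}^n)$ with constant depending only on $[\omega]_{A_1}$, then $T$ is bounded on $X(\mathbb{R}^n)$'' under assumption (b) — this is the standard tool (cf.\ \cite{dlyyz.arxiv, dgpyyz2022}) that lets one pass the mollifier estimates from weighted Lebesgue spaces to $X$; (2) Chua's extension theorems \cite{c1992} for $W^{m,p}_\omega(\Omega)$ and $\dot{W}^{m,p}_\omega(\Omega)$ on $(\varepsilon,\delta)$- and $(\varepsilon,\infty)$-domains, whose operator norms are controlled uniformly in $[\omega]_{A_1}$ (one must check this uniformity, since it is what makes extrapolation applicable to the extension operator as well); (3) the dominated convergence theorem on ball Banach function spaces, valid because $X(\mathbb{R}^n)$ has an absolutely continuous norm by (a), which converts the a.e.\ convergence $\phi_\varepsilon*g\to g$ (plus the pointwise bound $\sup_\varepsilon|\phi_\varepsilon*g|\lesssim\mathcal{M}g\in X(\mathbb{R}^n)$) into norm convergence in $X(\mathbb{R}^n)$.

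The main obstacle I anticipate is establishing that Chua's extension operators are bounded with constants that depend on the weight only through $[\omega]_{A_1(\mathbb{R}^n)}$, so that extrapolation can be legitimately invoked to obtain a bounded extension on $X(\Omega)$; this forces a careful reading of \cite{c1992} to extract the quantitative dependence, or alternatively a self-contained construction of the extension (using a Whitney decomposition of $\Omega^\complement$ adapted to the $(\varepsilon,\delta)$-structure, a partition of unity, and polynomial reflection) whose bounds are then tracked through the $A_1$ theory of the Hardy--Littlewood maximal function. A secondary but more routine point is handling the homogeneous case correctly: one cannot use a global extension that is itself in $X(\mathbb{R}^n)$, only one whose $m$-th derivatives are, and one must make sure the mollification argument respects the seminorm — but once the boundedness of the homogeneous extension operator is in hand, this is standard.
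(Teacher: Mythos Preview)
Your strategy inverts the paper's logical order: the paper proves density \emph{first}, by directly adapting Chua's density argument (\cite[Theorem~6.1]{c1992}) via extrapolation, and only \emph{then} uses Theorem~\ref{2247} to establish the extension theorems (Theorems~\ref{extension} and~\ref{extension2}). You instead want to prove extension first and deduce density by mollifying the extension on $\mathbb{R}^n$.

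This reversal creates a real gap you have not closed. To produce a single extension $Ef\in W^{m,X}(\mathbb{R}^n)$ from $f\in W^{m,X}(\Omega)$, you need more than uniform bounds in $[\omega]_{A_1}$: you need Chua's operator to be a \emph{single, weight-independent} linear map on all of $W^{m,X}(\Omega)$, so that the functions $\Lambda_{\omega_g}(f)$ you get for different $g$ are literally the same. The paper only asserts this weight-independence for $f\in C^\infty(\overline{\Omega})$ (see the proof of Theorem~\ref{extension}, citing \cite[p.\,1056]{c1992}); for general $f$ it passes through density, which is precisely what you are trying to prove. Your stated ``main obstacle'' is about the quantitative dependence of the operator \emph{norms} on $[\omega]_{A_1}$, but the actual obstacle is the well-definedness of the operator itself. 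You could try to resolve this by checking that Chua's Whitney--polynomial construction is given by a concrete formula on $L^1_{\mathrm{loc}}(\Omega)$ independent of any weight, but that requires a genuine dive into \cite{c1992} that you have not carried out, and the paper's authors evidently did not find it immediate.

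There is a second issue for part (ii): Chua's homogeneous extension (Lemma~\ref{chua1992'}) is stated only for $(\varepsilon,\infty)$-domains, whereas Theorem~\ref{2247}(ii) is asserted for general $(\varepsilon,\delta)$-domains with $\mathrm{rad\,}(\Omega)>0$. Your route would therefore prove (ii) only under a strictly stronger hypothesis on $\Omega$. The paper avoids this by working intrinsically on $\Omega$: it uses absolute continuity of the norm to choose a compact $K_{(X)}\subset\Omega$ carrying most of the $W^{m,X}$-mass, mollifies near $K_{(X)}$ (Lemma~\ref{943}), and on $\Omega\setminus K_{(X)}$ glues polynomial projections $P_j=\pi_v^m(R_j)f$ along a Whitney decomposition with a partition of unity, then controls the error in each $L^p_{\omega_g}$ via Chua's estimates and pulls back to $X(\Omega)$ by Remark~\ref{waicha}. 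This argument never leaves $\Omega$ and never invokes an extension operator, so the circularity and the domain mismatch do not arise.
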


\begin{remark}
On the assumption (b) of Theorem~\ref{2247},
if $p\in(1,\infty)$, then Lorist and Nieraeth \cite[Theorem~3.1]{lz2023}
gave two equivalent characterizations on the boundedness of
the Hardy--Littlewood maximal operator $\mathcal M$ on
$[X^{\frac 1p}({\mathbb R}^n)]'$, which further implies
that $\mathcal M$ is bounded on both $X(\mathbb{R}^n)$ and $X'(\mathbb{R}^n)$.
\end{remark}

To show Theorem~\ref{2247},
we need two technical lemmas.
The following lemma
is a generalization of \cite[Lemma~4.1]{c1992}
from inhomogeneous weighted Sobolev spaces
to inhomogeneous ball Banach Sobolev spaces.
For any $\Omega\subset\mathbb{R}^n$ and $s\in(0,\infty)$,
let
\begin{align}\label{1506}
\Omega_s:=\left\{x\in\Omega:\ \mathrm{dist\,}(x,\Omega^\complement)>s\right\}.
\end{align}

\begin{lemma}\label{943}
Let $m\in\mathbb{N}$
and $X(\mathbb{R}^n)$ be a ball Banach function space
satisfying the assumptions (a) and (b) of Theorem~\ref{2247}.
Let $\xi\in C_{\mathrm{c}}^\infty(\mathbb{R}^n)$
be such that $\mathrm{supp\,}(\xi)\subset B(\mathbf{0},1)$
and $\int_{\mathbb{R}^n}\xi(x)\,dx=1$.
Then, for any given $s\in(0,\infty)$,
\begin{enumerate}
\item[\rm(i)]
for any $f\in X(\Omega)$,
$f*\xi_t\to f$ in $X(\Omega_s)$ as $t\in(0,s)$ and $t\to0^+$;
\item[\rm(ii)]
for any $f\in W^{m,X}(\Omega)$,
$f*\xi_t\to f$ in $W^{m,X}(\Omega_s)$ as $t\in(0,s)$ and $t\to0^+$,
\end{enumerate}
where $\Omega_s$ is the same as in \eqref{1506} and
$\xi_t(\cdot):=t^{-n}\xi(t^{-1}\cdot)$ for any $t\in(0,\infty)$.
\end{lemma}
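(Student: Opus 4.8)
The plan is to reduce both assertions to the classical mollification argument together with two facts already available in the excerpt: the dominated convergence property encoded in the absolute continuity of the norm of $X(\mathbb{R}^n)$ (assumption (a)) and the boundedness of the Hardy--Littlewood maximal operator on $[X^{1/p}(\mathbb{R}^n)]'$ (assumption (b)), the latter of which, via extrapolation, forces $\mathcal{M}$ to be bounded on $X(\mathbb{R}^n)$ itself and hence gives the pointwise control $|f*\xi_t|\le\|\xi\|_{L^\infty}|B(\mathbf{0},1)|\,\mathcal{M}(\widetilde{f})$ on $\Omega_s$ for $t\in(0,s)$, where $\widetilde{f}$ is the zero extension of $f$ as in \eqref{1448}. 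First I would record that, since $\mathrm{dist}(x,\Omega^\complement)>s$ for $x\in\Omega_s$ and $t\in(0,s)$, the convolution $f*\xi_t(x)=\int_{B(\mathbf{0},t)}f(x-y)\xi_t(y)\,dy$ depends only on the values of $f$ inside $\Omega$, so it is well defined on $\Omega_s$ and, by Proposition~\ref{norm}, its $X(\Omega_s)$-norm is comparable to the $X(\mathbb{R}^n)$-norm of its zero extension; thus it suffices to work on all of $\mathbb{R}^n$ with $\widetilde{f}$.

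For part (i), I would argue by a density-plus-uniform-bound scheme. By the Fatou-type property Definition~\ref{1659}(iii) and the boundedness of $\mathcal{M}$ on $X(\mathbb{R}^n)$, the family of convolution operators $f\mapsto f*\xi_t$ is uniformly bounded on $X(\mathbb{R}^n)$ for $t\in(0,\infty)$ (with constant independent of $t$). Next, for $g\in C_{\mathrm c}^\infty(\mathbb{R}^n)$ one has $g*\xi_t\to g$ uniformly with supports in a fixed compact set, hence $\|g*\xi_t-g\|_{X(\Omega_s)}\to0$ as $t\to0^+$ because $X(\Omega_s)$ controls $L^1$ on bounded sets (Proposition~\ref{1533}(vi)) and, more to the point, because multiplication by $\mathbf{1}_K$ for fixed compact $K$ together with uniform smallness gives the convergence directly from Definition~\ref{1659}(ii). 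Since $C_{\mathrm c}^\infty(\mathbb{R}^n)$ is dense in $X(\mathbb{R}^n)$ (a standard consequence of the absolute continuity of the norm, assumption (a)), an $\varepsilon/3$-argument combining the uniform bound on $\{f\mapsto f*\xi_t\}$ and the convergence on the dense class yields $f*\xi_t\to f$ in $X(\Omega_s)$.

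For part (ii), I would first note the commutation identity $D^\alpha(f*\xi_t)=(D^\alpha f)*\xi_t$ on $\Omega_s$ for any multi-index $\alpha$ with $|\alpha|\le m$ and $t\in(0,s)$, which holds because $f*\xi_t$ is smooth on $\Omega_s$ and weak derivatives pass through convolution with a test function supported in $B(\mathbf{0},t)\subset B(\mathbf{0},s)$ (this is exactly the classical interior-mollification identity; integrate by parts in \eqref{wpd} against $\phi\in C_{\mathrm c}^\infty(\Omega_s)$). Then, applying part (i) to each $D^\alpha f\in X(\Omega)$ in place of $f$, I obtain $D^\alpha(f*\xi_t)=(D^\alpha f)*\xi_t\to D^\alpha f$ in $X(\Omega_s)$ for every such $\alpha$, and summing over $|\alpha|\le m$ gives $f*\xi_t\to f$ in $W^{m,X}(\Omega_s)$ by the definition of the norm in Definition~\ref{2.7}(i).

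The main obstacle I anticipate is establishing the two ingredients that power the $\varepsilon/3$-argument in part (i): the density of $C_{\mathrm c}^\infty$ in $X(\mathbb{R}^n)$ and the uniform-in-$t$ boundedness of the mollifiers on $X(\mathbb{R}^n)$. The density is not automatic for ball Banach function spaces lacking an explicit norm; it has to be extracted from the absolute continuity of the norm (simple functions with bounded support are dense, then smooth them). The uniform bound requires the maximal function domination, and here the subtlety is that assumption (b) only gives boundedness of $\mathcal{M}$ on $[X^{1/p}(\mathbb{R}^n)]'$, so one must invoke the (off-diagonal) extrapolation machinery referenced in the introduction to transfer this to boundedness on $X(\mathbb{R}^n)$; alternatively, one can run the whole argument through the weighted spaces $L^p_\omega$ with $\omega\in A_1(\mathbb{R}^n)$, where the mollifier estimate is classical, and then extrapolate. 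Once these two facts are in hand, the rest is the routine interior-mollification computation.
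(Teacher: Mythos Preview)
Your proposal is correct, and part~(ii) matches the paper's argument exactly. For part~(i), however, you take a genuinely different route. The paper avoids the $\varepsilon/3$ density argument entirely: it establishes the pointwise bound $|f*\xi_t(x)|\lesssim\mathcal{B}_t(f)(x)\le\mathcal{M}(\widetilde{f})(x)$ on $\Omega_s$, invokes Lemma~\ref{1009} (uniform boundedness of the centered ball average operators on $X(\mathbb{R}^n)$, which is the paper's packaging of the maximal-function control you anticipated) to produce a dominating function in $X(\Omega_s)$, shows $f*\xi_t(x)\to f(x)$ almost everywhere by the Lebesgue differentiation theorem, and then applies the dominated convergence theorem for $X$ (Lemma~\ref{Ldct} / Remark~\ref{LdctOmega}). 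Your approach instead uses the density of $C_{\mathrm{c}}^\infty$ in $X(\mathbb{R}^n)$ (which the paper proves later as Lemma~\ref{dense}) together with the same uniform bound. Both are sound; the paper's route is slightly more self-contained here because it does not need the density lemma as a separate ingredient, while your route has the advantage of being the textbook template and of sidestepping any need to verify pointwise a.e.\ convergence. Your worry about transferring boundedness of $\mathcal{M}$ from $[X^{1/p}(\mathbb{R}^n)]'$ to $X(\mathbb{R}^n)$ is exactly what Lemma~\ref{1009} handles, so once you cite that lemma the obstacle dissolves.
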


To show Lemma~\ref{943}, we need the following
dominated convergence theorem on ball quasi-Banach function spaces,
which can be found in \cite[Lemma~6.3]{yhyy2022-2}; see also
\cite[Chapter 1, Proposition~3.6]{bs1988}.

\begin{lemma}\label{Ldct}
Let $X(\mathbb{R}^n)$ be a ball quasi-Banach function space
having an absolutely continuous quasi-norm.
Assume that $f\in\mathscr{M}(\mathbb{R}^n)$ and
$\{f_m\}_{m\in\mathbb{N}}\subset\mathscr{M}(\mathbb{R}^n)$
satisfy that
\begin{enumerate}
\item[\textup{(i)}]
there exists $g\in X(\mathbb{R}^n)$
such that, for any $m\in\mathbb{N}$ and
almost every $x\in\mathbb{R}^n$, $|f_m(x)|\leq|g(x)|$;
\item[\textup{(ii)}]
for almost every $x\in\mathbb{R}^n$,
$\lim_{m\to\infty}f_m(x)=f(x)$.
\end{enumerate}
Then
$\lim_{m\to\infty}\|f_m-f\|_{X(\mathbb{R}^n)}=0$.
\end{lemma}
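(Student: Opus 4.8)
The plan is to reduce the claim to the elementary form of absolute continuity in Definition~\ref{2029} via a three-part truncation of the error. First, letting $m\to\infty$ in hypothesis (i) gives $|f|\le|g|$ almost everywhere in $\mathbb{R}^n$, so Definition~\ref{1659}(ii) yields $f\in X(\mathbb{R}^n)$; set $u:=2|g|$, which belongs to $X(\mathbb{R}^n)$ and satisfies $|f_m-f|\le u$ almost everywhere for every $m\in\mathbb{N}$. For each $m\in\mathbb{N}$, put $h_m:=\sup_{k\ge m}|f_k-f|$; then $\{h_m\}_{m\in\mathbb{N}}$ consists of nonnegative measurable functions with $h_{m+1}\le h_m\le u$ almost everywhere, and hypothesis (ii) forces $h_m\to0$ almost everywhere as $m\to\infty$. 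Since $|f_m-f|\le h_m$ almost everywhere, Definition~\ref{1659}(ii) gives $\|f_m-f\|_{X(\mathbb{R}^n)}\le\|h_m\|_{X(\mathbb{R}^n)}$, so it suffices to prove that $\lim_{m\to\infty}\|h_m\|_{X(\mathbb{R}^n)}=0$.

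To this end, by the quasi-triangle inequality of $\|\cdot\|_{X(\mathbb{R}^n)}$ there exists a positive constant $C$, independent of $m$, $\delta$, and $\varepsilon$, such that, for any $\delta,\varepsilon\in(0,\infty)$ and $m\in\mathbb{N}$, splitting $\mathbb{R}^n$ (modulo a null set) into $\{u\le\delta\}$, $\{u>\delta\}\cap\{h_m\le\varepsilon\}$, and $\{u>\delta\}\cap\{h_m>\varepsilon\}$ and using $h_m\le u$, one has, almost everywhere in $\mathbb{R}^n$,
\[
h_m\le u\mathbf{1}_{\{u\le\delta\}}+\varepsilon\mathbf{1}_{\{u>\delta\}}+u\mathbf{1}_{\{h_m>\varepsilon\}},
\]
and hence, by Definition~\ref{1659}(ii),
\[
\|h_m\|_{X(\mathbb{R}^n)}\le C\left(\left\|u\mathbf{1}_{\{u\le\delta\}}\right\|_{X(\mathbb{R}^n)}
+\varepsilon\left\|\mathbf{1}_{\{u>\delta\}}\right\|_{X(\mathbb{R}^n)}
+\left\|u\mathbf{1}_{\{h_m>\varepsilon\}}\right\|_{X(\mathbb{R}^n)}\right).
\]
The middle term is harmless since $\mathbf{1}_{\{u>\delta\}}\le\delta^{-1}u$, so Definition~\ref{1659}(ii) gives $\|\mathbf{1}_{\{u>\delta\}}\|_{X(\mathbb{R}^n)}\le\delta^{-1}\|u\|_{X(\mathbb{R}^n)}$. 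For the first term, $u\mathbf{1}_{\{u\le\delta\}}=u\mathbf{1}_{\{0<u\le\delta\}}$ almost everywhere and $\mathbf{1}_{\{0<u\le\delta\}}\to0$ almost everywhere as $\delta\to0^+$ (because $\bigcap_{\delta>0}\{0<u\le\delta\}$ is empty), so Definition~\ref{2029}, applied to the fixed function $u$, gives $\|u\mathbf{1}_{\{u\le\delta\}}\|_{X(\mathbb{R}^n)}\to0$ as $\delta\to0^+$. For the last term, fixing $\varepsilon$ and noting that $h_m\to0$ almost everywhere implies $\mathbf{1}_{\{h_m>\varepsilon\}}\to0$ almost everywhere as $m\to\infty$, Definition~\ref{2029} again gives $\|u\mathbf{1}_{\{h_m>\varepsilon\}}\|_{X(\mathbb{R}^n)}\to0$ as $m\to\infty$.

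It then remains only to choose the parameters in the right order: given $\tau\in(0,\infty)$, first pick $\delta$ so that $C\|u\mathbf{1}_{\{u\le\delta\}}\|_{X(\mathbb{R}^n)}<\tau/3$, next pick $\varepsilon$ so that $C\varepsilon\delta^{-1}\|u\|_{X(\mathbb{R}^n)}<\tau/3$, and finally pick $M\in\mathbb{N}$ so that $C\|u\mathbf{1}_{\{h_m>\varepsilon\}}\|_{X(\mathbb{R}^n)}<\tau/3$ for all $m\ge M$; the displayed estimate then yields $\|h_m\|_{X(\mathbb{R}^n)}<\tau$ for all $m\ge M$, so $\|h_m\|_{X(\mathbb{R}^n)}\to0$ as $m\to\infty$, which is what we want. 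I expect the only genuinely delicate point to be the design of this decomposition: one cannot simply bound $h_m$ by $\varepsilon+u\mathbf{1}_{\{h_m>\varepsilon\}}$, since the indicator of $\{h_m\le\varepsilon\}$, whose measure may be infinite, need not belong to $X(\mathbb{R}^n)$ (for instance when $X=L^1$); confining the uniformly small part to the level set $\{u>\delta\}$, on which $\mathbf{1}_{\{u>\delta\}}\le\delta^{-1}u\in X(\mathbb{R}^n)$, and then letting $\varepsilon\to0^+$ only after $\delta$ has been fixed is precisely the device that circumvents this. Everything else is routine use of the defining properties of a ball quasi-Banach function space and of monotone sequences of indicator functions.
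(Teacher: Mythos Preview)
Your proof is correct. Note that the paper does not supply its own proof of this lemma but merely cites it from \cite{yhyy2022-2} and \cite{bs1988}, so there is no in-paper argument to compare against; your self-contained reduction to Definition~\ref{2029} via the three-term splitting $h_m \le u\mathbf{1}_{\{u\le\delta\}} + \varepsilon\mathbf{1}_{\{u>\delta\}} + u\mathbf{1}_{\{h_m>\varepsilon\}}$ is a standard route, and your diagnosis of why the naive two-term split fails (the constant function $\varepsilon$ need not lie in $X(\mathbb{R}^n)$, so one must confine it to the set $\{u>\delta\}$ where $\mathbf{1}_{\{u>\delta\}}\le\delta^{-1}u\in X(\mathbb{R}^n)$) is exactly the point.
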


\begin{remark}\label{LdctOmega}
Let $X(\mathbb{R}^n)$ be a ball quasi-Banach function space
having an absolutely continuous quasi-norm.
Let $\Omega\subset\mathbb{R}^n$ be an open set
and $X(\Omega)$ the restrictive space of $X(\mathbb{R}^n)$ on $\Omega$.
Assume that both (i) and (ii) of Lemma~\ref{Ldct}
with $\mathbb{R}^n$ replaced by $\Omega$ hold true
for given $f\in\mathscr{M}(\Omega)$ and
$\{f_m\}_{m\in\mathbb{N}}\subset\mathscr{M}(\Omega)$;
then, by Proposition~\ref{norm} and Lemma~\ref{Ldct},
we conclude that
$\lim_{m\to\infty}\|f_m-f\|_{X(\Omega)}=0$.
\end{remark}

Recall that, for any given $r\in(0,\infty)$,
the \emph{centered ball average operator}
$\mathcal{B}_r$ is defined by setting,
for any $f\in L_{{\mathrm{loc}}}^1(\mathbb{R}^n)$ and $x\in\mathbb{R}^n$,
\begin{align*}
\mathcal{B}_r(f)(x):=\frac{1}{|B(x,r)|}\int_{B(x,r)}\left|f(y)\right|\,dy.
\end{align*}
The following lemma gives a criterion
for the uniform boundedness of the
centered ball average operators
$\{\mathcal{B}_r\}_{r\in(0,\infty)}$ on $X(\mathbb{R}^n)$,
which is just \cite[Lemma~3.11]{dgpyyz2022}.

\begin{lemma}\label{1009}
Let $X(\mathbb{R}^n)$ be a ball Banach
function space.
Assume that there exists $p\in[1,\infty)$ such that
$X^\frac{1}{p}(\mathbb{R}^n)$
is a ball Banach function space and that the Hardy--Littlewood
maximal operator is bounded on $[X^\frac{1}{p}(\mathbb{R}^n)]'$.
Then the centered ball average operators
$\{\mathcal{B}_r\}_{r\in(0,\infty)}$
are uniformly bounded on $X(\mathbb{R}^n)$.
\end{lemma}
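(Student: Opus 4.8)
The plan is to fix $r\in(0,\infty)$ and prove $\|\mathcal{B}_r(f)\|_{X(\mathbb{R}^n)}\le C\|f\|_{X(\mathbb{R}^n)}$ with $C$ independent of both $r$ and $f$; since $\mathcal{B}_r(f)$ depends only on $|f|$, we may and do assume $f\ge0$. The weak shape of hypothesis (b)---that the Hardy--Littlewood maximal operator $\mathcal{M}$ is bounded only on $[X^{1/p}(\mathbb{R}^n)]'$ and not necessarily on $X'(\mathbb{R}^n)$---forces us first to pass to the $1/p$-convexification. Write $Y:=X^{1/p}(\mathbb{R}^n)$, which by assumption is a ball Banach function space and, by Definition~\ref{tuhua}, satisfies $Y^p=X(\mathbb{R}^n)$ isometrically. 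Applying Jensen's inequality to the probability measure $|B(\cdot,r)|^{-1}\mathbf{1}_{B(\cdot,r)}\,dy$ and to the convex function $t\mapsto t^p$ on $[0,\infty)$ (using $p\ge1$) yields the pointwise estimate $[\mathcal{B}_r(f)]^p\le\mathcal{B}_r(f^p)$. By this, the lattice property [Definition~\ref{1659}(ii)], and Definition~\ref{tuhua}, one obtains
\[
\|\mathcal{B}_r(f)\|_{X(\mathbb{R}^n)}^p=\bigl\|[\mathcal{B}_r(f)]^p\bigr\|_Y\le\|\mathcal{B}_r(f^p)\|_Y,
\]
so it suffices to show that $\{\mathcal{B}_r\}_{r\in(0,\infty)}$ is uniformly bounded on $Y$ and then apply this with $f^p$ in place of a generic element of $Y$.

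The second ingredient is the (essential) self-adjointness of $\mathcal{B}_r$. For nonnegative $h,g\in\mathscr{M}(\mathbb{R}^n)$, since $|B(x,r)|=|B(y,r)|=|B(\mathbf{0},1)|\,r^n$ and $y\in B(x,r)\Leftrightarrow x\in B(y,r)$, the Tonelli theorem gives
\[
\int_{\mathbb{R}^n}\mathcal{B}_r(h)(x)\,g(x)\,dx=\int_{\mathbb{R}^n}h(y)\,\mathcal{B}_r(g)(y)\,dy,
\]
and moreover $\mathcal{B}_r(g)(y)\le\mathcal{M}(g)(y)$ for every $y\in\mathbb{R}^n$, because $B(y,r)$ is one of the balls appearing in the supremum defining $\mathcal{M}(g)(y)$.

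Next I would run a duality argument on $Y$. Since $Y$ is a ball Banach function space, $Y'$ is as well [Remark~\ref{dual}(i) applied to $Y$] and $Y''=Y$ with equal norms [Remark~\ref{dual}(ii) applied to $Y$], whence, for any $h\in Y$ with $h\ge0$,
\[
\|\mathcal{B}_r(h)\|_Y=\sup\left\{\int_{\mathbb{R}^n}\mathcal{B}_r(h)(x)\,g(x)\,dx:\ g\ge0,\ \|g\|_{Y'}\le1\right\}.
\]
For each admissible $g$, combining the self-adjointness identity, the pointwise bound $\mathcal{B}_r(g)\le\mathcal{M}(g)$, the Hölder inequality for the pair $(Y,Y')$ (immediate from the definition of $\|\cdot\|_{Y'}$), and hypothesis (b), we get
\[
\int_{\mathbb{R}^n}\mathcal{B}_r(h)\,g\le\int_{\mathbb{R}^n}h\,\mathcal{M}(g)\le\|h\|_Y\,\|\mathcal{M}(g)\|_{Y'}\le C_0\|h\|_Y,
\]
where $C_0$ is the operator norm of $\mathcal{M}$ on $[X^{1/p}(\mathbb{R}^n)]'=Y'$. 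Taking the supremum over $g$ shows $\|\mathcal{B}_r(h)\|_Y\le C_0\|h\|_Y$ uniformly in $r$; inserting $h=f^p$ into the reduction of the first paragraph and using $\|f^p\|_Y=\|f\|_{X(\mathbb{R}^n)}^p$ then yields $\|\mathcal{B}_r(f)\|_{X(\mathbb{R}^n)}\le C_0^{1/p}\|f\|_{X(\mathbb{R}^n)}$, as desired.

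I do not expect a genuinely deep obstacle: the essential point is that hypothesis (b) is exactly what makes the Jensen-plus-convexification reduction work, after which $\mathcal{B}_r$ is seen to be self-adjoint and dominated pointwise by $\mathcal{M}$. The steps that require care are the justifications of the function-space facts in the ball Banach setting (that $Y=X^{1/p}(\mathbb{R}^n)$ and $Y'$ are ball Banach function spaces, that $Y''=Y$ isometrically so that $\|\cdot\|_Y$ admits the above supremum representation, and the Hölder inequality), together with the routine bookkeeping that every integral above is finite once $g\in Y'$, since then $\mathcal{M}(g)<\infty$ almost everywhere and $\mathcal{M}(g)\in Y'$. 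An alternative, slightly heavier route bypasses the first paragraph: from $\mathcal{B}_r(f)\le\mathcal{M}(f)$ one sees that $\{\mathcal{B}_r\}_{r\in(0,\infty)}$ is uniformly bounded on every $L^q_\omega(\mathbb{R}^n)$ with $q\in(1,\infty)$ and $\omega\in A_q(\mathbb{R}^n)$, and then a method of extrapolation on ball Banach function spaces, available under hypothesis (b), transfers this uniform bound to $X(\mathbb{R}^n)$.
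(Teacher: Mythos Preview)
Your proof is correct. The paper does not prove this lemma at all; it merely quotes it from \cite[Lemma~3.11]{dgpyyz2022}. Your argument---Jensen's inequality to reduce from $X$ to $Y=X^{1/p}$, then the self-adjointness of $\mathcal{B}_r$ combined with the pointwise bound $\mathcal{B}_r(g)\le\mathcal{M}(g)$ and the duality formula $\|\cdot\|_Y=\|\cdot\|_{Y''}$---is clean and fully rigorous within the framework set up in Section~\ref{section2}; the verifications you flag (that $Y$ and $Y'$ are ball Banach, that $Y''=Y$, and the H\"older inequality) are exactly Remark~\ref{dual} and Definition~\ref{associte}. The alternative route you mention at the end (extrapolation via the Rubio de Francia algorithm, as in Lemma~\ref{4.6}) is closer in spirit to the machinery the authors deploy elsewhere in the paper, and is plausibly the approach in the cited reference, but your direct duality argument is shorter and avoids introducing the $A_1$ weights entirely.
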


Now, we use Remark~\ref{LdctOmega} and
Lemma~\ref{1009} to show Lemma~\ref{943}.

\begin{proof}[Proof of Lemma~\ref{943}]
We first show (i). Let $f\in X(\Omega)$.
By $\xi\in C_{\mathrm{c}}^\infty(\mathbb{R}^n)$
and the definition of $\mathcal{B}_t(f)$,
we find that,
for any $x\in\Omega_s$ and $t\in(0,s)$,
\begin{align}\label{1016}
\left|f*\xi_t(x)\right|
&=\left|t^{-n}\int_{B(x,t)}f(y)
\xi\left(\frac{x-y}{t}\right)\,dy\right|\\
&\lesssim\left\|\xi\right\|_{L^\infty(\mathbb{R}^n)}
\fint_{B(x,t)}|f(y)|\,dy
\sim\mathcal{B}_t(f)(x).\nonumber
\end{align}
From the assumption of the present lemma
and Lemma~\ref{1009},
we infer that the
centered ball average operators
$\{\mathcal{B}_r\}_{r\in(0,\infty)}$
are uniformly bounded on $X(\mathbb{R}^n)$.
By this and $f\in X(\Omega)$,
we find that $\mathcal{B}_t(f)\in X(\Omega_s)$.
On the other hand,
from $\int_{\mathbb{R}^n}\xi(x)\,dx=1$,
the Lebesgue differentiation theorem
(see, for instantce, \cite[(2.10)]{duo}),
and $f\in X(\Omega)$
[which implies that $f\in L_{\mathrm{loc}}^1(\Omega)$;
see Proposition~\ref{1533}(vi)],
we deduce that, for almost every $x\in\Omega_s$,
\begin{align*}
\left|f*\xi_t(x)-f(x)\right|
&=\left|t^{-n}\int_{B(x,t)}\left[f(y)-f(x)\right]
\xi\left(\frac{x-y}{t}\right)\,dy\right|\\
&\lesssim\|\xi\|_{L^\infty(\mathbb{R}^n)}
\fint_{B(x,t)}\left|f(y)-f(x)\right|\,dy\to0
\end{align*}
as $t\to0^+$.
By this, Remark~\ref{LdctOmega},
\eqref{1016}, and the proven conclusion that
$\mathcal{B}_t(f)\in X(\Omega_s)$,
we conclude that
\begin{align}\label{955}
\lim_{t\to0^+}\left\|f*\xi_t-f\right\|_{X(\Omega_s)}=0.
\end{align}
This finishes the proof of (i).

Next, we show (ii). Let $f\in W^{m,X}(\Omega)$.
From \cite[Proposition~8.10]{folland},
we infer that, for any $\alpha\in\mathbb{Z}_+^n$
with $|\alpha|\leq m$, $t\in(0,s)$, and $x\in\Omega_s$,
$$
D^\alpha(f*\xi_t)(x)=D^\alpha f*\xi_t(x).
$$
By this and an argument similar to that used
in the estimation of \eqref{955} with $f$
replaced by $D^\alpha f$, we conclude that
\begin{align*}
&\lim_{t\to0^+}\left\|f*\xi_t-f\right\|_{W^{m,X}(\Omega_s)}\\
&\quad=\lim_{t\to0^+}\sum_{\alpha\in\mathbb{Z}_+^n,\,
|\alpha|\leq m}\left\|D^\alpha(f*\xi_t)-D^\alpha f\right\|_{X(\Omega_s)}\\
&\quad=\lim_{t\to0^+}\sum_{\alpha\in\mathbb{Z}_+^n,\,
|\alpha|\leq m}\left\|D^\alpha f*\xi_t-D^\alpha f\right\|_{X(\Omega_s)}
=0.
\end{align*}
This finishes the proof of (ii) and hence Lemma~\ref{943}.
\end{proof}

The following extrapolation lemma is a simple
corollary of \cite[Lemma~4.7 and Remark~4.8]{dlyyz.arxiv}.

\begin{lemma}\label{4.6}
Let $X(\mathbb{R}^n)$ be a ball Banach
function space and $p\in[1,\infty)$.
Assume that $X^\frac{1}{p}(\mathbb{R}^n)$ is a
ball Banach function space and that
the Hardy--Littlewood maximal operator
$\mathcal{M}$ is bounded on $[X^\frac{1}{p}(\mathbb{R}^n)]'$
with its operator norm denoted by
$\|\mathcal{M}\|_{[X^\frac{1}{p}(\mathbb{R}^n)]'
\to[X^\frac{1}{p}(\mathbb{R}^n)]'}$.
Then, for any $f\in X(\mathbb{R}^n)$,
\begin{align}\label{2120}
\|f\|_{X(\mathbb{R}^n)}\leq\sup_{\|g\|_{[X^\frac{1}{p}(\mathbb{R}^n)]'}=1}
\left[\int_{\mathbb{R}^n}
\left|f(x)\right|^pR_{[X^\frac{1}{p}(\mathbb{R}^n)]'}
g(x)\,dx\right]^\frac{1}{p}
\leq2^\frac{1}{p}\|f\|_{X(\mathbb{R}^n)},
\end{align}
where, for any $g\in[X^\frac{1}{p}(\mathbb{R}^n)]'$,
\begin{align*}
R_{[X^\frac{1}{p}(\mathbb{R}^n)]'}g:=\sum_{k=0}^\infty
\frac{\mathcal{M}^kg}{2^k\|\mathcal{M}\|^k_{
[X^\frac{1}{p}(\mathbb{R}^n)]'\to
[X^\frac{1}{p}(\mathbb{R}^n)]'}}\in A_1(\mathbb{R}^n)
\end{align*}
and
$$
\left[R_{[X^\frac{1}{p}(\mathbb{R}^n)]'}g\right]_{A_1(\mathbb{R}^n)}\leq2
\|\mathcal{M}\|_{[X^\frac{1}{p}(\mathbb{R}^n)]'\to
[X^\frac{1}{p}(\mathbb{R}^n)]'}
$$
and where, for any $k\in\mathbb{N}$,
$\mathcal{M}^k$ is the $k$ iterations of $\mathcal{M}$
and $\mathcal{M}^0g:=|g|$.
\end{lemma}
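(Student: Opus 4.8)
The plan is to read \eqref{2120} as the ball-Banach-function-space form of the Rubio de Francia extrapolation and to assemble it from three ingredients: the mapping properties of the iteration operator $R:=R_{[X^{1/p}(\mathbb{R}^n)]'}$, the identity relating the norm of $X(\mathbb{R}^n)$ to that of the $1/p$-convexification $X^{1/p}(\mathbb{R}^n)$, and the H\"older inequality for an associate couple. Since, by hypothesis, $X^{1/p}(\mathbb{R}^n)$ is a ball Banach function space on whose associate space $\mathcal{M}$ is bounded, \cite[Lemma~4.7 and Remark~4.8]{dlyyz.arxiv}, applied with underlying space $X^{1/p}(\mathbb{R}^n)$, already supply, for every nonnegative $g\in[X^{1/p}(\mathbb{R}^n)]'$: the pointwise bound $g\le Rg$ (the $k=0$ summand of $Rg$ is $|g|$); the norm comparison $\|Rg\|_{[X^{1/p}(\mathbb{R}^n)]'}\le2\|g\|_{[X^{1/p}(\mathbb{R}^n)]'}$, obtained by summing the geometric series in which each application of $\mathcal{M}$ is compensated by a division by its own operator norm; and $Rg\in A_1(\mathbb{R}^n)$ with $[Rg]_{A_1(\mathbb{R}^n)}\le2\|\mathcal{M}\|_{[X^{1/p}(\mathbb{R}^n)]'\to[X^{1/p}(\mathbb{R}^n)]'}$, which follows from the subadditivity of $\mathcal{M}$ and a one-line reindexing of $\mathcal{M}(Rg)\le\sum_{k\ge0}(2^k\|\mathcal{M}\|^k)^{-1}\mathcal{M}^{k+1}g$ (here $\|\mathcal{M}\|$ abbreviates that operator norm). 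So the displayed properties of $R$ are in hand, and the only remaining task is the inequality \eqref{2120} itself.

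Next I would record the convexification identity. By Definition~\ref{tuhua}, $\|h\|_{X^{1/p}(\mathbb{R}^n)}=\||h|^{1/p}\|_{X(\mathbb{R}^n)}^{p}$ for any $h\in\mathscr{M}(\mathbb{R}^n)$, so with $h=|f|^p$ one gets $\||f|^p\|_{X^{1/p}(\mathbb{R}^n)}=\|f\|_{X(\mathbb{R}^n)}^p$. Applying Remark~\ref{dual}(ii) to the ball Banach function space $X^{1/p}(\mathbb{R}^n)$, so that it coincides with its second associate space, and then unwinding the definition of the associate norm (Definition~\ref{associte}), I obtain
\begin{align*}
\|f\|_{X(\mathbb{R}^n)}^p&=\bigl\||f|^p\bigr\|_{X^{1/p}(\mathbb{R}^n)}\\
&=\sup_{\|g\|_{[X^{1/p}(\mathbb{R}^n)]'}=1}\int_{\mathbb{R}^n}|f(x)|^p\,g(x)\,dx,
\end{align*}
where the supremum may be restricted to nonnegative $g$ (replace $g$ by $|g|$).

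With these in place, the two inequalities of \eqref{2120} drop out. For the left one, the pointwise domination $g\le Rg$ yields $\int_{\mathbb{R}^n}|f|^pg\le\int_{\mathbb{R}^n}|f|^pRg$ for each admissible $g$; taking the supremum in the displayed identity and then $p$-th roots gives $\|f\|_{X(\mathbb{R}^n)}\le\sup_{\|g\|_{[X^{1/p}(\mathbb{R}^n)]'}=1}[\int_{\mathbb{R}^n}|f|^pRg]^{1/p}$. For the right one, the H\"older inequality for the couple $(X^{1/p}(\mathbb{R}^n),[X^{1/p}(\mathbb{R}^n)]')$, the norm comparison for $R$, and the convexification identity give, whenever $\|g\|_{[X^{1/p}(\mathbb{R}^n)]'}=1$,
\begin{align*}
\int_{\mathbb{R}^n}|f(x)|^p\,Rg(x)\,dx
&\le\bigl\||f|^p\bigr\|_{X^{1/p}(\mathbb{R}^n)}\,\|Rg\|_{[X^{1/p}(\mathbb{R}^n)]'}\\
&\le2\|f\|_{X(\mathbb{R}^n)}^p;
\end{align*}
passing to the supremum over such $g$ and taking $p$-th roots produces the constant $2^{1/p}$. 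I do not expect a genuine obstacle: the only delicate points are that the second-associate duality and the associate H\"older inequality require $X^{1/p}(\mathbb{R}^n)$ to be a ball Banach function space, which is an explicit hypothesis, and that the interchange of $\mathcal{M}$ with the infinite sum defining $R$ is legitimate by monotone convergence since all summands are nonnegative; the rest is bookkeeping with convexifications and associate norms.
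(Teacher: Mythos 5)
Your proposal is correct and follows essentially the route the paper relies on: the paper gives no in-text argument, deriving Lemma~\ref{4.6} directly from \cite[Lemma~4.7 and Remark~4.8]{dlyyz.arxiv}, and your reconstruction (the properties of $R_{[X^{1/p}(\mathbb{R}^n)]'}$ from that reference, the convexification identity $\||f|^p\|_{X^{1/p}(\mathbb{R}^n)}=\|f\|_{X(\mathbb{R}^n)}^p$, the second-associate duality of Remark~\ref{dual}(ii), the pointwise bound $|g|\le R_{[X^{1/p}(\mathbb{R}^n)]'}g$ for the lower estimate, and the associate-space H\"older inequality with $\|R_{[X^{1/p}(\mathbb{R}^n)]'}g\|_{[X^{1/p}(\mathbb{R}^n)]'}\le 2$ for the upper estimate) is exactly the standard Rubio de Francia argument underlying that citation. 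No gaps.
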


\begin{remark}\label{waicha}
Let all the symbols be the same as in Lemma~\ref{4.6}.
Let $\Omega$ be an open subset of $\mathbb{R}^n$
and $X(\Omega)$ the restrictive space of $X(\mathbb{R}^n)$ on $\Omega$.
Then, by \eqref{2120} and Proposition~\ref{norm},
we conclude that, for any $f\in X(\Omega)$,
\begin{align*}
\|f\|_{X(\Omega)}\leq\sup_{\|g\|_{[X^\frac{1}{p}(\mathbb{R}^n)]'}=1}
\left[\int_{\Omega}
\left|f(x)\right|^pR_{[X^\frac{1}{p}(\mathbb{R}^n)]'}
g(x)\,dx\right]^\frac{1}{p}
\leq2^\frac{1}{p}\|f\|_{X(\Omega)}.
\end{align*}
\end{remark}

Now, we use Lemmas~\ref{943} and~\ref{4.6} to show Theorem~\ref{2247}.

\begin{proof}[Proof of Theorem~\ref{2247}]
We only show (i) by borrowing some ideas
from the proof of \cite[Theorem~6.1]{c1992}
with $\{E^{p_i}_{\omega_i,k_i}(\mathcal{D})\}_{i=0}^N$
replaced by $\{\dot{W}^{i,p}_\omega(\Omega)\}_{i=0}^m$
and hence $\bigcap_{i=0}^NE^{p_i}_{\omega_i,k_i}(\mathcal{D})$ becomes
$$
\bigcap_{i=0}^m\dot{W}^{i,p}_\omega(\Omega)
=W^{m,p}_\omega(\Omega),
$$
where $\omega\in A_1(\mathbb{R}^n)$.

Let $f\in W^{m,X}(\Omega)$ and fix $\eta\in(0,\infty)$.
From Remark~\ref{waicha},
we deduce that
\begin{align}\label{1944}
\|f\|_{W^{m,X}(\Omega)}&=\sum_{|\alpha|\leq m}
\left\|D^\alpha f\right\|_{X(\Omega)}
\sim\left\|\sum_{|\alpha|\leq m}\left|D^\alpha f\right|\right\|_{X(\Omega)}\\
&\sim\sup_{\|g\|_{[X^\frac{1}{p}(\mathbb{R}^n)]'}=1}
\sum_{|\alpha|\leq m}\left[\int_{\Omega}
\left|D^\alpha f(x)\right|^pR_{[X^\frac{1}{p}(\mathbb{R}^n)]'}
g(x)\,dx\right]^\frac{1}{p},\nonumber
\end{align}
which, combined with Lemma~\ref{4.6},
further implies that $f\in W^{m,p}_\omega(\Omega)$
with $\omega:=R_{[X^\frac{1}{p}(\mathbb{R}^n)]'}g$
for any $g\in[X^\frac{1}{p}(\mathbb{R}^n)]'$
satisfying $\|g\|_{[X^\frac{1}{p}(\mathbb{R}^n)]'}=1$.
Let $\mathfrak{R}:=\{R_j\}_{j\in I}$ and
$\{\varphi_j\}_{j\in I}\subset C_{\mathrm{c}}^\infty(\mathbb{R}^n)$
be the same as in \cite[p.\,1049]{c1992}
with $\mathcal{D}$ replaced by $\Omega$,
where $I$ is a set of indices.
For any $s\in(0,\infty)$,
let $\Omega_s$ be the same as in \eqref{1506} and,
for any set $K\subset\mathbb{R}^n$,
$K^s:=\{x+y:\ x\in K,\,y\in B(\mathbf{0},s)\}$.
By the inner regularity of the Lebesgue measure
(see, for instance, \cite[Theorem~2.14(d)]{rudin}),
the assumption that $X(\mathbb{R}^n)$ has an
absolutely norm,
and \cite[Lemma~5.6.14]{lyh2320}
(see also \cite[p.\,16, Proposition~3.6]{bs1988}),
we find that there exists a compact set $K_{(X)}\subset\Omega$
such that
\begin{align}\label{6.21}
\left\|f\right\|_{W^{m,X}(\Omega\setminus K_{(X)})}\leq\eta.
\end{align}
Fix $s\in(0,1)$ such that
\begin{align}\label{1530}
K^{3s/2}_{(X)}\subset\Omega_{s/2}
\end{align}
because $\Omega$ is open and
$K_{(X)}$ is a compact subset of $\Omega$.
Next, choose $\psi_{(X)}\in C_{\mathrm{c}}^\infty(\mathbb{R}^n)$
such that, for any $\alpha\in\mathbb{Z}_+^n$
with $|\alpha|\leq m$,
$$
\mathbf{1}_{K^s_{(X)}}\leq\psi_{(X)}
\leq\mathbf{1}_{K^{3s/2}_{(X)}}
\ \text{and}\ \left|D^\alpha\psi_{(X)}\right|
\leq C_{(\alpha)}s^{-|\alpha|},
$$
where the positive constant $C_{(\alpha)}$ depends on $\alpha$.
Let $\xi\in C_{\mathrm{c}}^\infty(\mathbb{R}^n)$
and $\xi_t$ be the same as in Lemma~\ref{943}.
From \eqref{1530} and
Lemma~\ref{943}(ii), we deduce that there exists $t\in(0,s/2)$
such that
\begin{align}\label{6.22}
\left\|f-f*\xi_t\right\|_{W^{m,X}(K^{3s/2})}
\leq\left\|f-f*\xi_t\right\|_{W^{m,X}(\Omega_{s/2})}
\leq\eta s^m.
\end{align}
For any $j\in I$, let
$P_j:=\pi_v^m(R_j)f$,
where the projection
$\pi_v^m(R_j):\ W^{m,X}(\Omega)\to\mathcal{P}_{m-1}(\Omega)$
[here and thereafter,
we denote by $\mathcal{P}_{m-1}(\Omega)$ the set of all the
polynomials of degree not higher than $m-1$ on $\Omega$]
is the same as in \cite[(4.2)]{c1992} with $k$ and $Q$ replaced,
respectively, by $m$ and $R_j$.
Let
\begin{align*}
f_\eta:=\left(\sum_{j\in I}P_j\varphi_j\right)(1-\psi_{(X)})
+\left(f*\xi_t\right)\psi_{(X)}.
\end{align*}
Then it is easy to show that $f_\eta\in C^\infty(\overline{\Omega})$
because $P_j$ for any $j\in I$ is a polynomial of degree
not higher than $m-1$ and $\varphi_j,\psi_{(X)},
(f*\xi_t)\psi_{(X)}\in C^\infty(\overline{\Omega})$.
By Remark~\ref{waicha}, \eqref{6.21}, \eqref{6.22},
and \cite[Proposition~7.1.5(6)]{g2014},
we conclude that,
for any $g\in[X^\frac{1}{p}(\mathbb{R}^n)]'$ with
$\|g\|_{[X^\frac{1}{p}(\mathbb{R}^n)]'}=1$
and for any $\alpha\in\mathbb{Z}_+^n$
with $l:=|\alpha|\in\mathbb{N}\cap[0,m]$,
\begin{align}\label{621}
\left\|D^\alpha f\right\|_{L^p_{\omega_g}(\Omega\setminus K_{(X)})}
&\leq\left\|f\right\|_{W^{m,p}_{\omega_g}(\Omega\setminus K_{(X)})}\\
&\leq\sup_{\|g\|_{[X^\frac{1}{p}(\mathbb{R}^n)]'}=1}
\sum_{\beta\in\mathbb{Z}_+^n,\,|\beta|\leq m}
\left[\int_{\Omega\setminus K_{(X)}}
\left|D^\beta f(x)\right|^p\omega_g(x)\,dx\right]^\frac{1}{p}
\nonumber\\
&\sim\left\|f\right\|_{W^{m,X}(\Omega\setminus K_{(X)})}\leq\eta
\nonumber
\end{align}
and
\begin{align}\label{622}
&\left\|f-f*\xi_t\right\|_{W^{l,p}_{\omega_g}(K^{3s/2}_{(X)})}\\
&\quad\leq\left\|f-f*\xi_t\right\|_{W^{m,p}_{\omega_g}(K^{3s/2}_{(X)})}
\nonumber\\
&\quad\leq\sup_{\|g\|_{[X^\frac{1}{p}(\mathbb{R}^n)]'}=1}
\sum_{\beta\in\mathbb{Z}_+^n,\,|\beta|\leq m}
\left[\int_{K^{3s/2}_{(X)}}
\left|D^\beta(f-f*\xi_t)(x)\right|^p\omega_g(x)\,dx\right]^\frac{1}{p}
\nonumber\\
&\quad\sim\left\|f-f*\xi_t\right\|_{W^{m,X}(K^{3s/2}_{(X)})}
\leq\eta s^m\leq\eta s^l,\nonumber
\end{align}
where
\begin{align}\label{omegag1}
\omega_g:=R_{[X^\frac{1}{p}(\mathbb{R}^n)]'}g\in A_1(\mathbb{R}^n)
\subset A_p(\mathbb{R}^n)
\end{align}
with
\begin{align}\label{omegag2}
[\omega_g]_{A_p(\mathbb{R}^n)}
\leq[\omega_g]_{A_1(\mathbb{R}^n)}\leq2
\|\mathcal{M}\|_{[X^\frac{1}{p}(\mathbb{R}^n)]'\to
[X^\frac{1}{p}(\mathbb{R}^n)]'}.
\end{align}
From this, the proven conclusion that $f\in W^{m,p}_\omega(\Omega)$
with $\omega:=\omega_g$,
and an argument similar to that used
in the proof of \cite[Theorem~6.1]{c1992}
(to be more precise, the estimation of the inequality
between \cite[(6.1)]{c1992} and \cite[(6.2)]{c1992})
with \cite[(6.2)]{c1992}
replaced by both \eqref{621} and \eqref{622},
we deduce that,
for any $g\in[X^\frac{1}{p}(\mathbb{R}^n)]'$ with
$\|g\|_{[X^\frac{1}{p}(\mathbb{R}^n)]'}=1$
and for any $\alpha\in\mathbb{Z}_+^n$
with $l:=|\alpha|\in\mathbb{N}\cap[0,m]$,
\begin{align*}
\left\|D^\alpha\left(f-f_\eta\right)\right\|_{L^p_{\omega_g}(\Omega)}
\lesssim\eta,
\end{align*}
where the implicit positive constant is independent of $\omega_g$
but depends on $\|\mathcal{M}\|_{[X^\frac{1}{p}(\mathbb{R}^n)]'\to
[X^\frac{1}{p}(\mathbb{R}^n)]'}$.
By this and Remark~\ref{waicha},
we conclude that
\begin{align*}
\left\|f-f_\eta\right\|_{W^{m,X}(\Omega)}
&=\sum_{\alpha\in\mathbb{Z}_+^n,\,|\alpha|\leq m}
\left\|D^\alpha\left(f-f_\eta\right)\right\|_{X(\Omega)}\\
&\leq\sum_{\alpha\in\mathbb{Z}_+^n,\,|\alpha|\leq m}
\sup_{\|g\|_{[X^\frac{1}{p}(\mathbb{R}^n)]'}=1}
\left[\int_{\Omega}
\left|D^\alpha\left(f-f_\eta\right)(x)\right|^p
\omega_g(x)\,dx\right]^\frac{1}{p}\\
&\lesssim\sum_{\alpha\in\mathbb{Z}_+^n,\,|\alpha|\leq m}
\eta\sim\eta,
\end{align*}
which completes the proof of (i).

Since the proof of (ii) is
similar to that of (i), which uses an argument
similar to that used in the proof of \cite[Theorem~6.1]{c1992}
with $\{E^{p_i}_{\omega_i,k_i}(\mathcal{D})\}_{i=0}^N$
replaced by $\dot{W}^{m,p}_{\omega}(\Omega)$,
we omit the details.
This finishes the proof of Theorem~\ref{2247}.
\end{proof}

\subsection{Extension Theorem on Inhomogeneous
Ball Banach Sobolev Spaces}
\label{sub3.2}

In this subsection,
we establish the following extension theorem
on inhomogeneous ball Banach Sobolev spaces.

\begin{theorem}\label{extension}
Let $\Omega\subset\mathbb{R}^n$ be an $(\varepsilon,\delta)$-domain
with $\varepsilon\in(0,1]$, $\delta\in(0,\infty]$,
and $\mathrm{rad\,}(\Omega)\in(0,\infty]$.
Let $m\in\mathbb{N}$
and $X(\mathbb{R}^n)$ be a ball Banach function space
satisfying the assumptions (a) and (b) of Theorem~\ref{2247}.
Then there exists a linear extension operator
$$
\Lambda:\ W^{m,X}(\Omega)\to W^{m,X}(\mathbb{R}^n)
$$
such that,
for any $f\in W^{m,X}(\Omega)$,
\begin{enumerate}
\item[\textup{(i)}]
$\Lambda(f)=f$ almost everywhere in $\Omega$;
\item[\textup{(ii)}]
$\|\Lambda(f)\|_{W^{m,X}(\mathbb{R}^n)}
\leq C\|f\|_{W^{m,X}(\Omega)}$,
where the positive constant $C$ is independent of $f$.
\end{enumerate}
\end{theorem}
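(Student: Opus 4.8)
The plan is to combine the extrapolation machinery of Lemma~\ref{4.6} with a quantitative, weight-universal extension theorem of Chua~\cite{c1992} for weighted Sobolev spaces on $(\varepsilon,\delta)$-domains.

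First I would recall the form of Chua's result that is needed: for an $(\varepsilon,\delta)$-domain $\Omega$ with $\mathrm{rad\,}(\Omega)\in(0,\infty]$ there is a \emph{single} linear operator $\Lambda$ (depending on $\Omega$, $n$ and $m$, but not on $p$ or on the weight), built from a Whitney decomposition of $\Omega^\complement$, polynomial reproductions on reflected cubes, and an associated smooth partition of unity, such that for every $p\in[1,\infty)$ and every $\omega\in A_p(\mathbb{R}^n)$ one has $\Lambda(h)=h$ almost everywhere on $\Omega$ and
$$
\|\Lambda(h)\|_{W^{m,p}_\omega(\mathbb{R}^n)}\le C\|h\|_{W^{m,p}_\omega(\Omega)},
$$
where $C$ depends only on $n$, $m$, $p$, $\varepsilon$, $\delta$, $\mathrm{rad\,}(\Omega)$ and $[\omega]_{A_p(\mathbb{R}^n)}$, and is nondecreasing in the last argument. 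Since, by Proposition~\ref{1533}(vi), $W^{m,X}(\Omega)\subset L^1_{\mathrm{loc}}(\Omega)$, the function $\Lambda(f)$ is a well-defined single function on $\mathbb{R}^n$ for any $f\in W^{m,X}(\Omega)$, independently of which weighted space $f$ is viewed in; this is the $\Lambda$ I would take.

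Next I would run the extrapolation sandwich. Fix $f\in W^{m,X}(\Omega)$ and let $p$ be as in assumption (b). For each $g\in[X^{1/p}(\mathbb{R}^n)]'$ with $\|g\|_{[X^{1/p}(\mathbb{R}^n)]'}=1$, put $\omega_g:=R_{[X^{1/p}(\mathbb{R}^n)]'}g$; by \eqref{omegag1} and \eqref{omegag2}, $\omega_g\in A_1(\mathbb{R}^n)\subset A_p(\mathbb{R}^n)$ with $[\omega_g]_{A_p(\mathbb{R}^n)}\le 2\|\mathcal{M}\|_{[X^{1/p}(\mathbb{R}^n)]'\to[X^{1/p}(\mathbb{R}^n)]'}$, a bound uniform in $g$. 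Applying Remark~\ref{waicha} to each $D^\alpha f$ with $|\alpha|\le m$ shows $f\in W^{m,p}_{\omega_g}(\Omega)$ with $\|f\|_{W^{m,p}_{\omega_g}(\Omega)}\le 2^{1/p}\|f\|_{W^{m,X}(\Omega)}$ uniformly in $g$, whence Chua's estimate gives $\Lambda(f)\in W^{m,p}_{\omega_g}(\mathbb{R}^n)$ with $\|\Lambda(f)\|_{W^{m,p}_{\omega_g}(\mathbb{R}^n)}\le C\|f\|_{W^{m,X}(\Omega)}$ where, crucially, $C$ is independent of $g$ because $[\omega_g]_{A_p(\mathbb{R}^n)}$ is uniformly bounded. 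Now fix a multi-index $\beta$ with $|\beta|\le m$ and, for $N\in\mathbb{N}$, set $h_N:=\min\{|D^\beta\Lambda(f)|,N\}\mathbf{1}_{B(\mathbf{0},N)}\in X(\mathbb{R}^n)$; applying the left-hand inequality in \eqref{2120} to $h_N$ and bounding the extrapolation supremum by $\sup_g\|D^\beta\Lambda(f)\|_{L^p_{\omega_g}(\mathbb{R}^n)}\le\sup_g\|\Lambda(f)\|_{W^{m,p}_{\omega_g}(\mathbb{R}^n)}\le C\|f\|_{W^{m,X}(\Omega)}$ yields $\|h_N\|_{X(\mathbb{R}^n)}\le C\|f\|_{W^{m,X}(\Omega)}$ for all $N$. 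Since $h_N\uparrow|D^\beta\Lambda(f)|$ almost everywhere as $N\to\infty$, the Fatou property in Definition~\ref{1659}(iii) gives $D^\beta\Lambda(f)\in X(\mathbb{R}^n)$ with the same bound. Summing over $\beta$ produces $\Lambda(f)\in W^{m,X}(\mathbb{R}^n)$ and conclusion (ii); linearity of $\Lambda$ and conclusion (i) are immediate from Chua's construction.

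The main obstacle is precisely the two uniformities that make the sandwich close: that Chua's extension operator can be chosen once and for all, independent of the weight, with operator norm a controlled (monotone) function of $[\omega]_{A_p(\mathbb{R}^n)}$ only — so that the uniform bound on $[\omega_g]_{A_p(\mathbb{R}^n)}$ coming from the boundedness of $\mathcal{M}$ on $[X^{1/p}(\mathbb{R}^n)]'$ translates into a bound uniform in $g$ — and that finiteness of the extrapolation supremum on $\mathbb{R}^n$ genuinely upgrades to membership in $X(\mathbb{R}^n)$, which is handled by the truncation-plus-Fatou device above rather than a direct appeal to Lemma~\ref{4.6}. Alternatively, one may first establish (i) and (ii) for $f\in C^\infty(\overline{\Omega})\cap W^{m,X}(\Omega)$ and then invoke the density Theorem~\ref{2247}(i) together with completeness of $W^{m,X}(\mathbb{R}^n)$ (a consequence of Remark~\ref{1052}(v)) to extend $\Lambda$ to all of $W^{m,X}(\Omega)$, checking that (i) persists in the limit via Proposition~\ref{norm} and the continuity of restriction from $X(\mathbb{R}^n)$ to $X(\Omega)$. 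Everything else — the verification $\Lambda(f)|_\Omega=f$ and the elementary manipulations with $A_1\subset A_p$ — is routine.
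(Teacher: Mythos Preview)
Your proposal is correct, and you have actually identified both routes. The paper follows precisely your ``alternatively'' paragraph: it first establishes (i) and (ii) for $f\in C^\infty(\overline{\Omega})\cap W^{m,X}(\Omega)$ --- where, as the paper notes citing \cite[p.\,1056]{c1992}, Chua's $\Lambda(f)$ is manifestly independent of $\omega$ --- and then invokes Theorem~\ref{2247}(i) together with completeness of $W^{m,X}(\mathbb{R}^n)$ to extend $\Lambda$ to all of $W^{m,X}(\Omega)$, verifying (i) in the limit via Proposition~\ref{1533}.

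Your primary route --- applying Chua's operator directly to an arbitrary $f\in W^{m,X}(\Omega)$ --- is a legitimate shortcut, but it leans on a slightly stronger reading of Lemma~\ref{chua1992} than the paper makes explicit: you need that $\Lambda$ is given by a single pointwise formula on all of $L^1_{\mathrm{loc}}(\Omega)$, so that when $f$ lies simultaneously in $W^{m,p}_{\omega_g}(\Omega)$ for every $g$, the extensions coincide as functions. This is true of Chua's Whitney-based construction, but it requires inspecting the proof rather than just the statement; the paper sidesteps the issue by claiming weight-independence only for smooth functions and then passing to the limit. On the other hand, your truncation-plus-Fatou device for upgrading the uniform bound $\sup_g\|D^\beta\Lambda(f)\|_{L^p_{\omega_g}(\mathbb{R}^n)}<\infty$ to membership $D^\beta\Lambda(f)\in X(\mathbb{R}^n)$ is more careful than the paper's chain \eqref{1550}, which tacitly applies the left inequality of \eqref{2120} to a function not yet known to be in $X(\mathbb{R}^n)$. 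Your direct approach is shorter; the paper's is more self-contained relative to how Lemma~\ref{chua1992} is quoted.
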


To show Theorem~\ref{extension},
we need the following extension lemma on
inhomogeneous weighted Sobolev spaces,
which can be found in \cite[Theorem~1.1]{c1992}.

\begin{lemma}\label{chua1992}
Let $\Omega\subset\mathbb{R}^n$ be an $(\varepsilon,\delta)$-domain
with $\varepsilon\in(0,1]$, $\delta\in(0,\infty]$,
and $\mathrm{rad\,}(\Omega)\in(0,\infty]$.
Let $m\in\mathbb{N}$,
$p\in[1,\infty)$, and $\omega\in A_p(\mathbb{R}^n)$.
Then there exists a linear extension operator
$$
\Lambda:\ W^{m,p}_\omega(\Omega)\to W^{m,p}_\omega(\mathbb{R}^n)
$$
such that, for any $f\in W^{m,p}_\omega(\Omega)$,
$\Lambda(f)=f$ almost everywhere in $\Omega$ and
$$
\left\|\Lambda(f)\right\|_{W^{m,p}_\omega(\mathbb{R}^n)}
\leq C\left\|f\right\|_{W^{m,p}_\omega(\Omega)},
$$
where the positive constant $C$ depends only on $\varepsilon$, $\delta$,
$[\omega]_{A_p(\mathbb{R}^n)}$,
$m$, $p$, $n$, and $\mathrm{rad\,}(\Omega)$.
\end{lemma}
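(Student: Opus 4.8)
The plan is to adapt the Whitney-reflection extension of Jones~\cite{j1981} to the weighted setting; the only genuinely new analytic inputs are the $A_p(\mathbb{R}^n)$-weighted Poincar\'e--Sobolev inequality on cubes and the doubling property of $\omega$, while the geometry (the reflection of Whitney cubes and the chain condition) is unchanged. First I would fix the geometry: take Whitney decompositions $\mathcal{W}_1$ of $\Omega^\complement$ and $\mathcal{W}_2$ of $\Omega$ into dyadic cubes $Q$ with side length $\ell(Q)\sim\mathrm{dist\,}(Q,\partial\Omega)$ and bounded overlap of mild dilates. For each $Q\in\mathcal{W}_1$ whose side length lies below a fixed small multiple of $\min\{\delta,\mathrm{rad\,}(\Omega)\}$ (the scale at which the construction is truncated; no truncation from above is needed when $\delta=\infty$ or $\mathrm{rad\,}(\Omega)=\infty$), I would use Definition~\ref{2121} applied to suitable points of $Q$ and of $\Omega$ to select a \emph{reflected cube} $Q^{*}\in\mathcal{W}_2$ with $\ell(Q^{*})\sim\ell(Q)$ and $\mathrm{dist\,}(Q,Q^{*})\lesssim\ell(Q)$; for the remaining $Q$ set $Q^{*}:=\emptyset$. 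Let $\{\varphi_Q\}_{Q\in\mathcal{W}_1}$ be a smooth partition of unity on $\Omega^\complement$ subordinate to the dilated cubes with $|D^\gamma\varphi_Q|\lesssim\ell(Q)^{-|\gamma|}$, and for any cube $S\subset\Omega$ let $\pi^m_S f\in\mathcal{P}_{m-1}(\mathbb{R}^n)$ denote the averaged degree-$(m-1)$ Taylor polynomial of $f$ over $S$ (an integral mean of the pointwise Taylor polynomials of $f$; it is well defined for $f$ locally in $W^{m,1}(\Omega)$ and projects onto $\mathcal{P}_{m-1}$). Define
\begin{align*}
\Lambda f:=\begin{cases}
f & \text{in }\Omega,\\
\displaystyle\sum_{Q\in\mathcal{W}_1}\varphi_Q\,\pi^m_{Q^{*}}f & \text{in }\Omega^\complement,
\end{cases}
\end{align*}
with the convention $\pi^m_{\emptyset}f:=0$; then $\Lambda$ is linear and $\Lambda f=f$ in $\Omega$.

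Second, I would collect the weighted local estimates. Since $\omega\in A_p(\mathbb{R}^n)$, it is doubling with constant controlled by $[\omega]_{A_p(\mathbb{R}^n)}$ (see, e.g., \cite[Chapter~7]{g2014}), and iterating the $A_p$-weighted Poincar\'e inequality on cubes gives, for every cube $Q$ and every $\beta$ with $|\beta|\le m$,
\begin{align*}
\left\|D^\beta(f-\pi^m_Q f)\right\|_{L^p_\omega(Q)}
\lesssim\sum_{|\beta|\le|\gamma|\le m}\ell(Q)^{|\gamma|-|\beta|}
\left\|D^\gamma f\right\|_{L^p_\omega(Q)}.
\end{align*}
Moreover, comparing the polynomials $\pi^m_{S}f$ and $\pi^m_{S'}f$ on the overlap of two comparable cubes $S,S'\subset\Omega$ (using that a polynomial of bounded degree satisfies $\|P\|_{L^\infty(S)}\sim\omega(S)^{-1/p}\|P\|_{L^p_\omega(S)}$, itself a consequence of $\omega\in A_\infty$, together with the above Poincar\'e estimate) yields a difference bound of the form
\begin{align*}
\left\|D^\beta(\pi^m_{S}f-\pi^m_{S'}f)\right\|_{L^\infty(S\cup S')}
\lesssim\sum_{|\beta|\le|\gamma|\le m}\ell(S)^{|\gamma|-|\beta|}\,\omega(S)^{-1/p}
\left\|D^\gamma f\right\|_{L^p_\omega(S)},
\end{align*}
all with constants depending only on $n$, $m$, $p$, and $[\omega]_{A_p(\mathbb{R}^n)}$.

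Third comes the core estimate on $\Omega^\complement$, which is where the real work lies. Fix $Q_0\in\mathcal{W}_1$ with $Q_0^{*}\ne\emptyset$; on $Q_0$ only the boundedly many $Q\in\mathcal{W}_1$ meeting $\overline{Q_0}$ contribute to $\Lambda f$, each with $\ell(Q)\sim\ell(Q_0)$ and with $Q^{*},Q_0^{*}$ comparable in size and position. Using $\sum_Q\varphi_Q\equiv1$ to subtract the reference polynomial $\pi^m_{Q_0^{*}}f$, one has, for $|\alpha|\le m$ and $x\in Q_0$,
\begin{align*}
D^\alpha(\Lambda f)(x)=D^\alpha(\pi^m_{Q_0^{*}}f)(x)
+\sum_{\substack{Q\in\mathcal{W}_1\\ \overline{Q}\cap\overline{Q_0}\ne\emptyset}}\ \sum_{\substack{\beta\le\alpha\\ \beta\ne\alpha}}
\binom{\alpha}{\beta}D^{\alpha-\beta}\varphi_Q(x)\,D^\beta(\pi^m_{Q^{*}}f-\pi^m_{Q_0^{*}}f)(x).
\end{align*}
The leading term is controlled on $Q_0$ by the Poincar\'e estimate on $Q_0^{*}$ together with doubling ($\omega(Q_0)\sim\omega(Q_0^{*})$). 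For the difference terms I invoke the \emph{chain condition} coming from Definition~\ref{2121}: since $Q^{*}$ and $Q_0^{*}$ are close points of $\Omega$, the rectifiable curve joining them has length $\lesssim\varepsilon^{-1}\ell(Q_0)$ and stays at distance $\gtrsim\varepsilon\ell(Q_0)$ from $\partial\Omega$, hence can be covered by a chain $Q^{*}=S_0,S_1,\dots,S_L=Q_0^{*}$ of at most $N=N(\varepsilon,n)$ cubes of $\mathcal{W}_2$ with $S_j\cap S_{j+1}\ne\emptyset$ and $\ell(S_j)\sim_\varepsilon\ell(Q_0)$; telescoping $\pi^m_{Q^{*}}f-\pi^m_{Q_0^{*}}f=\sum_{j}(\pi^m_{S_j}f-\pi^m_{S_{j+1}}f)$ and applying the comparable-cube difference bound to each link expresses $\|D^\alpha(\Lambda f)\|_{L^p_\omega(Q_0)}^p$ as a finite sum of weighted $L^p$ energies $\|D^\gamma f\|_{L^p_\omega(S_j)}^p$ over cubes of $\mathcal{W}_2$ near $Q_0^{*}$. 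The step I expect to be the main obstacle is then the global summation
\begin{align*}
\sum_{\substack{Q_0\in\mathcal{W}_1\\ Q_0^{*}\ne\emptyset}}\int_{Q_0}\left|D^\alpha(\Lambda f)(x)\right|^p\omega(x)\,dx
\lesssim\|f\|_{W^{m,p}_\omega(\Omega)}^p,
\end{align*}
which requires the packing property that each $S\in\mathcal{W}_2$ occurs in the chains of only $\lesssim_\varepsilon1$ cubes $Q_0\in\mathcal{W}_1$ (such $Q_0$ must satisfy $\ell(Q_0)\sim_\varepsilon\ell(S)$ and lie within distance $\lesssim_\varepsilon\ell(S)$ of $S$, and the reflection is finitely non-injective), combined with the uniform use of the doubling of $\omega$ to pass between $\omega(Q_0)$, $\omega(Q^{*})$, $\omega(S_j)$ at the cost of $[\omega]_{A_p(\mathbb{R}^n)}$-constants. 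This is the combinatorial heart of Jones's argument, now carried out with weighted bookkeeping.

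Finally, I would verify that $\Lambda f\in W^{m,p}_\omega(\mathbb{R}^n)$, i.e., that its distributional derivatives of order $\le m$ are the almost-everywhere-defined functions estimated above. Since $(\varepsilon,\delta)$-domains have $|\partial\Omega|=0$ and $\omega$ is absolutely continuous with respect to Lebesgue measure, $\omega(\partial\Omega)=0$; it therefore suffices to rule out a boundary-supported singular part. For $|\alpha|\le m-1$ the Poincar\'e estimates applied along reflected cubes shrinking to a boundary point show that $D^\alpha\big(\sum_Q\varphi_Q\pi^m_{Q^{*}}f\big)$ attains, from $\Omega^\complement$, the same boundary traces as $D^\alpha f$ from $\Omega$, so $\Lambda f$ and its derivatives up to order $m-1$ are continuous across $\partial\Omega$, whence the order-$m$ weak derivatives exist and equal the obvious $L^p_\omega$ functions with no singular contribution. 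Adding the trivial identity $\|D^\alpha\Lambda f\|_{L^p_\omega(\Omega)}=\|D^\alpha f\|_{L^p_\omega(\Omega)}$ for $|\alpha|\le m$ to the $\Omega^\complement$ estimate yields $\|\Lambda f\|_{W^{m,p}_\omega(\mathbb{R}^n)}\le C\|f\|_{W^{m,p}_\omega(\Omega)}$ with $C$ depending only on $\varepsilon$, $\delta$, $[\omega]_{A_p(\mathbb{R}^n)}$, $m$, $p$, $n$, and $\mathrm{rad\,}(\Omega)$, which is the asserted conclusion.
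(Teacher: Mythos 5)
You should first note that the paper does not prove this lemma at all: it is quoted verbatim from Chua \cite[Theorem~1.1]{c1992}, whose proof is precisely the weighted adaptation of Jones's reflection construction that you outline (Whitney decompositions of $\Omega$ and $\Omega^\complement$, reflected cubes, averaged Taylor polynomials $\pi^m_{Q^*}f$, weighted Poincar\'e estimates, chains, and a packing argument). So your route coincides with that of the cited source; the local ingredients you list (the $A_p$-weighted Poincar\'e inequality on cubes, the doubling of $\omega$, and the equivalence $\|P\|_{L^\infty(S)}\sim\omega(S)^{-1/p}\|P\|_{L^p_\omega(S)}$ for polynomials of bounded degree) are indeed the correct ones and appear as lemmas in Chua's paper.

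As a proof, however, the sketch has genuine gaps. First, the final step is wrong as stated: functions in $W^{m,p}_\omega$ need not have boundary traces or be continuous, so the claim that $\Lambda f$ and its derivatives up to order $m-1$ "are continuous across $\partial\Omega$" cannot be used; the correct argument (as in Jones and Chua) verifies the distributional derivative identity directly by testing against $\phi\in C_{\mathrm{c}}^\infty(\mathbb{R}^n)$, splitting the integral over $\Omega$ and $\Omega^\complement$, using that $|\partial\Omega|=0$ for $(\varepsilon,\delta)$-domains together with the near-boundary local estimates to kill the boundary contribution. Second, the "combinatorial heart" — the packing lemma asserting that each $S\in\mathcal{W}_2$ occurs in boundedly many chains, and the resulting global summation — is only announced, not proved, and this is where most of the work of the cited theorem lies. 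Third, the truncation is not handled: for cubes $Q\in\mathcal{W}_1$ with $Q^*=\emptyset$ the subtraction trick based on $\sum_Q\varphi_Q\equiv1$ no longer produces only polynomial differences; at the interface one meets terms $D^{\alpha-\beta}\varphi_Q\cdot D^\beta(\pi^m_{Q_0^*}f)$ whose control requires the full polynomial (hence the lower-order terms of the inhomogeneous norm $\|f\|_{W^{m,p}_\omega(\Omega)}$) and scales comparable to $\min\{\delta,\mathrm{rad\,}(\Omega)\}$ — this is exactly where the stated dependence of $C$ on $\delta$ and $\mathrm{rad\,}(\Omega)$ enters, and your sketch passes over it. Filling these three points essentially amounts to reproducing Chua's proof, which is why the paper simply cites it.
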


\begin{proof}[Proof of Theorem~\ref{extension}]
First assume that $f\in C^\infty(\overline{\Omega})\cap W^{m,X}(\Omega)$.
By Lemma~\ref{4.6},
Remark~\ref{waicha},
and an argument similar
to that used in the estimation of \eqref{1944},
we find that,
for any given $g\in[X^\frac{1}{p}(\mathbb{R}^n)]'$
with $\|g\|_{[X^\frac{1}{p}(\mathbb{R}^n)]'}=1$,
$f\in W^{m,p}_{\omega_g}(\Omega)$,
where $\omega_g$ is the same as in \eqref{omegag1}
satisfying \eqref{omegag2}.
Let $\Lambda$ be the same extension operator
as in \cite[Theorem~1.1]{c1992}.
The proof of \cite[Theorem~1.1]{c1992}
shows that $\Lambda(f)$
for any $f\in C^\infty(\overline{\Omega})\cap W^{m,X}(\Omega)$
is independent of $\omega$
(see \cite[p.\,1056]{c1992}).
Then, from Lemma~\ref{chua1992} and \eqref{omegag2},
we deduce that, for any given $g\in[X^\frac{1}{p}(\mathbb{R}^n)]'$
with $\|g\|_{[X^\frac{1}{p}(\mathbb{R}^n)]'}=1$,
\begin{align}\label{re}
\Lambda(f)=f
\end{align}
almost everywhere in $\Omega$
and
$$
\left\|\Lambda(f)\right\|_{W^{m,p}_{\omega_g}(\mathbb{R}^n)}
\lesssim\left\|f\right\|_{W^{m,p}_{\omega_g}(\Omega)},
$$
where the implicit positive constant
is independent of $g$ but depends on
$\|\mathcal{M}\|_{[X^\frac{1}{p}(\mathbb{R}^n)]'\to
[X^\frac{1}{p}(\mathbb{R}^n)]'}$. By this, Remark~\ref{waicha},
and \eqref{1944},
we conclude that
\begin{align}\label{1550}
\left\|\Lambda(f)\right\|_{W^{m,X}(\mathbb{R}^n)}
&=\sum_{|\alpha|\leq m}\left\|D^\alpha(\Lambda(f))\right\|_{X(\Omega)}
\sim\left\|\sum_{|\alpha|\leq m}
\left|D^\alpha(\Lambda(f))\right|\,\right\|_{X(\Omega)}\\
&\sim\sup_{\|g\|_{[X^\frac{1}{p}(\mathbb{R}^n)]'}=1}
\sum_{|\alpha|\leq m}\left[\int_{\Omega}
\left|D^\alpha(\Lambda(f))(x)\right|^pR_{[X^\frac{1}{p}(\mathbb{R}^n)]'}
g(x)\,dx\right]^\frac{1}{p}\nonumber\\
&=\sup_{\|g\|_{[X^\frac{1}{p}(\mathbb{R}^n)]'}=1}
\sum_{|\alpha|\leq m}
\left\|D^\alpha(\Lambda(f))\right\|_{L^p_{\omega_g}(\mathbb{R}^n)}\nonumber\\
&=\sup_{\|g\|_{[X^\frac{1}{p}(\mathbb{R}^n)]'}=1}
\left\|\Lambda(f)\right\|_{W^{m,p}_{\omega_g}(\mathbb{R}^n)}
\lesssim\sup_{\|g\|_{[X^\frac{1}{p}(\mathbb{R}^n)]'}=1}
\left\|f\right\|_{W^{m,p}_{\omega_g}(\Omega)}\nonumber\\
&\sim\left\|f\right\|_{W^{m,X}(\Omega)},\nonumber
\end{align}
where the implicit positive constants depend only
on $\varepsilon$, $\delta$,
$\|\mathcal{M}\|_{[X^\frac{1}{p}(\mathbb{R}^n)]'\to
[X^\frac{1}{p}(\mathbb{R}^n)]'}$, $m$,
$p$, $n$, and $\mathrm{rad\,}(\Omega)$,
which, combined with \eqref{re},
further implies that the desired conclusion of
the present theorem holds true for any
$f\in C^\infty(\overline{\Omega})\cap W^{m,X}(\Omega)$.

Now, let $f\in W^{m,X}(\Omega)$.
From Theorem~\ref{2247},
we deduce that there exists a sequence
$\{f_k\}_{k\in\mathbb{N}}\subset C^\infty(\overline{\Omega})\cap W^{m,X}(\Omega)$
such that
\begin{align}\label{1604}
\left\|f-f_k\right\|_{W^{m,X}(\Omega)}\to0
\end{align}
as $k\to\infty$.
Using \eqref{1550} with $f:=f_s-f_k$,
we find that, for any $k,s\in\mathbb{N}$,
\begin{align*}
\left\|\Lambda(f_k)-\Lambda(f_s)\right\|_{W^{m,X}(\mathbb{R}^n)}
\lesssim\left\|f_k-f_s\right\|_{W^{m,X}(\Omega)},
\end{align*}
which, combined with \eqref{1604}, further
implies that $\{\Lambda(f_k)\}_{k\in\mathbb{N}}$
is a Cauchy sequence in $W^{m,X}(\mathbb{R}^n)$.
This and the completeness of $W^{m,X}(\mathbb{R}^n)$
[which can be deduced from an argument similar to
that used in the proof of \cite[p.\,262, Theorem 2]{evans}
with $L^p(U)$ replaced by $X(\mathbb{R}^n)$]
imply that there exists a unique function in $W^{m,X}(\mathbb{R}^n)$,
denoted by $\Lambda(f)$,
such that
\begin{align}\label{1610}
\left\|\Lambda(f)-\Lambda(f_k)\right\|_{W^{m,X}(\mathbb{R}^n)}\to0
\end{align}
as $k\to\infty$.
From this, Definition~\ref{1659}(v),
\eqref{1550}, and \eqref{1604},
we infer that
\begin{align*}
\left\|\Lambda(f)\right\|_{W^{m,X}(\mathbb{R}^n)}
&\leq\left\|\Lambda(f)-\Lambda(f_k)\right\|_{W^{m,X}(\mathbb{R}^n)}
+\left\|\Lambda(f_k)\right\|_{W^{m,X}(\mathbb{R}^n)}\\
&\lesssim\left\|\Lambda(f)-\Lambda(f_k)\right\|_{W^{m,X}(\mathbb{R}^n)}
+\left\|f_k\right\|_{W^{m,X}(\Omega)}\\
&\leq\left\|\Lambda(f)-\Lambda(f_k)\right\|_{W^{m,X}(\mathbb{R}^n)}
+\left\|f_k-f\right\|_{W^{m,X}(\Omega)}+
\left\|f\right\|_{W^{m,X}(\Omega)}\\
&\to\left\|f\right\|_{W^{m,X}(\Omega)}
\end{align*}
as $k\to\infty$.
Thus, $\Lambda(f)$ satisfies (ii) of the present theorem.
Moreover, by Proposition~\ref{1533}(v),
\eqref{1035},
\eqref{re}, \eqref{1610}, and \eqref{1604},
we conclude that
\begin{align*}
\left\|\Lambda(f)|_\Omega-f\right\|_{X(\Omega)}
&\leq\left\|\Lambda(f)|_\Omega-\Lambda(f_k)|_\Omega\right\|_{X(\Omega)}
+\left\|\Lambda(f_k)|_\Omega-f\right\|_{X(\Omega)}\\
&\leq\left\|\Lambda(f)-\Lambda(f_k)\right\|_{X(\mathbb{R}^n)}
+\left\|f_k-f\right\|_{X(\Omega)}\\
&\leq\left\|\Lambda(f)-\Lambda(f_k)\right\|_{W^{m,X}(\mathbb{R}^n)}
+\left\|f_k-f\right\|_{W^{m,X}(\Omega)}\to0
\end{align*}
as $k\to\infty$,
which, combined with Proposition~\ref{1533}(i),
further implies that $\Lambda(f)|_\Omega=f$ almost everywhere in $\Omega$.
Thus, $\Lambda(f)$ satisfies (i) of the present theorem.
This finishes the proof of Theorem~\ref{extension}.
\end{proof}

As a corollary of Theorem~\ref{extension},
we have the following conclusion.

\begin{corollary}\label{restr}
Let $\Omega\subset\mathbb{R}^n$ be an $(\varepsilon,\delta)$-domain
with $\varepsilon\in(0,1]$, $\delta\in(0,\infty]$,
and $\mathrm{rad\,}(\Omega)\in(0,\infty]$.
Let $m\in\mathbb{N}$
and $X(\mathbb{R}^n)$ be a ball Banach function space
satisfying the assumptions (a) and (b) of Theorem~\ref{2247}.
Then $f\in W^{m,X}(\Omega)$ if and only if there exists
$g\in W^{m,X}(\mathbb{R}^n)$ such that
$g=f$ almost everywhere in $\Omega$;
moreover, there exists a positive constant $C$,
independent of both $f$ and $g$, such that
$$
\left\|f\right\|_{W^{m,X}(\Omega)}
\leq\left\|g\right\|_{W^{m,X}(\mathbb{R}^n)}
\leq C\left\|f\right\|_{W^{m,X}(\Omega)}.
$$
\end{corollary}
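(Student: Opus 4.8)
The plan is to read off Corollary~\ref{restr} from Theorem~\ref{extension} together with the elementary properties of the restrictive space $X(\Omega)$ collected in Section~\ref{section2}; no new idea is needed. I would split the proof into the two implications and track the two displayed inequalities separately. For the forward implication and the upper bound $\|g\|_{W^{m,X}(\mathbb{R}^n)}\le C\|f\|_{W^{m,X}(\Omega)}$, assume $f\in W^{m,X}(\Omega)$ and simply set $g:=\Lambda(f)$, where $\Lambda$ is the linear extension operator furnished by Theorem~\ref{extension}. Then Theorem~\ref{extension}(i) gives $g=f$ almost everywhere in $\Omega$, so such a $g$ exists, and Theorem~\ref{extension}(ii) gives $\|g\|_{W^{m,X}(\mathbb{R}^n)}\le C\|f\|_{W^{m,X}(\Omega)}$ with $C$ independent of $f$.

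For the converse implication and the lower bound, suppose $g\in W^{m,X}(\mathbb{R}^n)$ satisfies $g=f$ almost everywhere in $\Omega$. Since $g\in X(\mathbb{R}^n)\subset L^1_{\mathrm{loc}}(\mathbb{R}^n)$, we have $f=g|_\Omega\in X(\Omega)$ by Definition~\ref{1635} and $f\in L^1_{\mathrm{loc}}(\Omega)$. The key point is that the weak derivative is a local object: for any multi-index $\alpha$ with $|\alpha|\le m$ and any $\phi\in C_{\mathrm c}^\infty(\Omega)$, extending $\phi$ by zero to $\mathbb{R}^n$ gives an element of $C_{\mathrm c}^\infty(\mathbb{R}^n)$, still denoted by $\phi$, and
\begin{align*}
\int_\Omega f\,D^\alpha\phi\,dx
&=\int_{\mathbb{R}^n}g\,D^\alpha\phi\,dx
=(-1)^{|\alpha|}\int_{\mathbb{R}^n}D^\alpha g\cdot\phi\,dx\\
&=(-1)^{|\alpha|}\int_\Omega (D^\alpha g)|_\Omega\cdot\phi\,dx,
\end{align*}
so that $D^\alpha f$ exists and equals $(D^\alpha g)|_\Omega$. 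Since $D^\alpha g\in X(\mathbb{R}^n)$, Definition~\ref{1635} again gives $D^\alpha f\in X(\Omega)$, whence $f\in W^{m,X}(\Omega)$; moreover, by \eqref{1035} (equivalently, by Proposition~\ref{norm} together with Definition~\ref{1659}(ii)), $\|D^\alpha f\|_{X(\Omega)}=\|(D^\alpha g)|_\Omega\|_{X(\Omega)}\le\|D^\alpha g\|_{X(\mathbb{R}^n)}$. Summing over all $\alpha$ with $|\alpha|\le m$ yields $\|f\|_{W^{m,X}(\Omega)}\le\|g\|_{W^{m,X}(\mathbb{R}^n)}$, which is the left inequality (and holds for every admissible $g$).

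I do not expect a genuine obstacle here: the whole difficulty has been absorbed into Theorem~\ref{extension}, and what remains is only to check that the zero-extension of a test function stays admissible and that restriction does not increase the $X$-norm, both of which are immediate from Section~\ref{section2}. The one point worth stating carefully is the logical structure of the displayed chain: the lower bound $\|f\|_{W^{m,X}(\Omega)}\le\|g\|_{W^{m,X}(\mathbb{R}^n)}$ is automatic for \emph{every} $g$ agreeing with $f$ on $\Omega$, whereas the upper bound $\|g\|_{W^{m,X}(\mathbb{R}^n)}\le C\|f\|_{W^{m,X}(\Omega)}$ is asserted for the \emph{particular} near-optimal extension $g=\Lambda(f)$, with $C$ depending only on $\varepsilon$, $\delta$, $m$, $p$, $n$, $\mathrm{rad\,}(\Omega)$, and $\|\mathcal M\|_{[X^{1/p}(\mathbb{R}^n)]'\to[X^{1/p}(\mathbb{R}^n)]'}$, as in Theorem~\ref{extension}. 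I would close by noting that the two inequalities together say precisely that $f\in W^{m,X}(\Omega)$ if and only if $f$ admits an $X$-Sobolev extension to $\mathbb{R}^n$, with equivalent norms.
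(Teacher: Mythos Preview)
Your proof is correct and follows essentially the same route as the paper: the necessity is obtained by applying the extension operator $\Lambda$ from Theorem~\ref{extension}, and the sufficiency is checked by testing against $\phi\in C_{\mathrm c}^\infty(\Omega)$ extended by zero to $\mathbb{R}^n$ to identify $D^\alpha(f|_\Omega)=(D^\alpha g)|_\Omega$, then invoking \eqref{1035} (or Proposition~\ref{norm}) for the norm inequality. Your added remark clarifying that the lower bound holds for every admissible $g$ while the upper bound is realized by the particular choice $g=\Lambda(f)$ is a helpful touch not made explicit in the paper.
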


\begin{proof}
We first show the necessity.
By Theorem~\ref{extension},
we find that there exists a linear extension operator
$\Lambda$ such that, for any $f\in W^{m,X}(\Omega)$,
$\Lambda(f)\in W^{m,X}(\mathbb{R}^n)$,
$\Lambda(f)=f$ almost everywhere in $\Omega$,
and
$$
\left\|\Lambda(f)\right\|_{W^{m,X}(\mathbb{R}^n)}
\lesssim\|f\|_{W^{m,X}(\Omega)}.
$$
This finishes the proof of the necessity.

Next, we show the sufficiency.
Let $f\in W^{m,X}(\mathbb{R}^n)$.
Then, for any
$\alpha\in\mathbb{Z}_+^n$ with $|\alpha|\leq m$,
we have $D^\alpha f\in X(\mathbb{R}^n)$.
Let $f^{(\alpha)}:=D^\alpha f|_\Omega$
be the restriction of $D^\alpha f$ on $\Omega$;
particular, $f^{(\mathbf{0})}:=f|_\Omega$.
From this and \eqref{1035}, we infer that,
for any
$\alpha\in\mathbb{Z}_+^n$ with $|\alpha|\leq m$,
\begin{align}\label{2110}
\left\|f^{(\alpha)}\right\|_{X(\Omega)}\leq
\left\|D^\alpha f\right\|_{X(\mathbb{R}^n)}.
\end{align}
On the other hand, by \eqref{wpd},
the fact that $\widetilde{\phi}\in C_{\mathrm{c}}^\infty(\mathbb{R}^n)$
for any $\phi\in C_{\mathrm{c}}^\infty(\Omega)$,
and the definition of $f^{(\alpha)}$, we conclude that,
for any $\phi\in C_{\mathrm{c}}^\infty(\Omega)$
and $\alpha\in\mathbb{Z}_+^n$ with $|\alpha|\leq m$,
\begin{align*}
\int_{\Omega}f^{(\mathbf{0})}(x)D^\alpha\phi(x)\,dx
&=\int_{\mathbb{R}^n}f(x)\widetilde{D^\alpha\phi}(x)\,dx
=\int_{\mathbb{R}^n}f(x)D^\alpha\widetilde{\phi}(x)\,dx\\
&=(-1)^{|\alpha|}\int_{\mathbb{R}^n}D^\alpha f(x)\widetilde{\phi}(x)\,dx\\
&=(-1)^{|\alpha|}\int_{\Omega}f^{(\alpha)}(x)\phi(x)\,dx,
\end{align*}
where $\widetilde{D^\alpha f}$ and $\widetilde{\phi}$
are defined the same as in \eqref{1448} with $f$ replaced by
$D^\alpha f$ and $\phi$,
which further implies that $f^{(\alpha)}$
is just the $\alpha^\mathrm{th}$-weak partial derivative
of $f^{(\mathbf{0})}$.
From this and \eqref{2110}, we deduce that
\begin{align*}
\left\|f^{(\mathbf{0})}\right\|_{W^{m,X}(\Omega)}
&=\sum_{|\alpha|\leq m}
\left\|D^\alpha(f^{(\mathbf{0})})\right\|_{X(\Omega)}
=\sum_{|\alpha|\leq m}\left\|f^{(\alpha)}\right\|_{X(\Omega)}\\
&\leq\sum_{|\alpha|\leq m}\left\|D^\alpha f\right\|_{X(\mathbb{R}^n)}
=\left\|f\right\|_{W^{m,X}(\mathbb{R}^n)}<\infty.\nonumber
\end{align*}
Thus, $f^{(\mathbf{0})}\in W^{m,X}(\Omega)$.
This finishes the proof of the
sufficiency and hence Corollary~\ref{restr}.
\end{proof}

\subsection{Extension Theorem on Homogeneous
Ball Banach Sobolev Spaces}
\label{sub3.3}

In this subsection,
we establish the following extension theorem
on homogeneous ball Banach Sobolev spaces.

\begin{theorem}\label{extension2}
Let $\Omega\subset\mathbb{R}^n$ be an $(\varepsilon,\infty)$-domain
with $\varepsilon\in(0,1]$.
Let $m\in\mathbb{N}$
and $X(\mathbb{R}^n)$ be a ball Banach function space
satisfying the assumptions (a) and (b) of Theorem~\ref{2247}.
Then there exists a linear extension operator
$$
\Lambda:\ \dot{W}^{m,X}(\Omega)\to\dot{W}^{m,X}(\mathbb{R}^n)
$$
such that,
for any $f\in\dot{W}^{m,X}(\Omega)$,
\begin{enumerate}
\item[\textup{(i)}]
$\Lambda(f)=f$ almost everywhere in $\Omega$;
\item[\textup{(ii)}]
$\|\Lambda(f)\|_{\dot{W}^{m,X}(\mathbb{R}^n)}
\leq C\|f\|_{\dot{W}^{m,X}(\Omega)}$,
where the positive constant $C$ is independent of $f$.
\end{enumerate}
\end{theorem}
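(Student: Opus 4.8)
The plan is to follow the strategy of the proof of Theorem~\ref{extension}, systematically replacing its inhomogeneous ingredients by their homogeneous counterparts. The two inputs needed are: (1) the density of $C^\infty(\overline{\Omega})\cap\dot{W}^{m,X}(\Omega)$ in $\dot{W}^{m,X}(\Omega)$, which is Theorem~\ref{2247}(ii)---applicable because, by Remark~\ref{1711}, an $(\varepsilon,\infty)$-domain $\Omega$ is connected and satisfies $\mathrm{rad\,}(\Omega)\in(0,\infty]$; and (2) the homogeneous analogue of Lemma~\ref{chua1992}, also due to Chua~\cite{c1992}: for any $p\in[1,\infty)$ and $\omega\in A_p(\mathbb{R}^n)$ there is a linear extension operator $\Lambda\colon\dot{W}^{m,p}_\omega(\Omega)\to\dot{W}^{m,p}_\omega(\mathbb{R}^n)$ with $\Lambda(f)=f$ almost everywhere in $\Omega$ and $\|\Lambda(f)\|_{\dot{W}^{m,p}_\omega(\mathbb{R}^n)}\leq C\|f\|_{\dot{W}^{m,p}_\omega(\Omega)}$, where $C$ depends only on $\varepsilon$, $[\omega]_{A_p(\mathbb{R}^n)}$, $m$, $p$, and $n$; here the point of assuming $\delta=\infty$ is precisely that this constant does \emph{not} involve $\mathrm{rad\,}(\Omega)$, and that on $C^\infty(\overline{\Omega})$ functions this $\Lambda$ is independent of $\omega$.

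First I would prove the estimate for $f\in C^\infty(\overline{\Omega})\cap\dot{W}^{m,X}(\Omega)$. Fix $g\in[X^\frac{1}{p}(\mathbb{R}^n)]'$ with $\|g\|_{[X^\frac{1}{p}(\mathbb{R}^n)]'}=1$ and set $\omega_g:=R_{[X^\frac{1}{p}(\mathbb{R}^n)]'}g$, which, exactly as in \eqref{omegag1} and \eqref{omegag2}, lies in $A_1(\mathbb{R}^n)\subset A_p(\mathbb{R}^n)$ with $[\omega_g]_{A_p(\mathbb{R}^n)}\leq[\omega_g]_{A_1(\mathbb{R}^n)}\leq2\|\mathcal{M}\|_{[X^\frac{1}{p}(\mathbb{R}^n)]'\to[X^\frac{1}{p}(\mathbb{R}^n)]'}$. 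By the extrapolation machinery of Lemma~\ref{4.6} and Remark~\ref{waicha}---argued as in \eqref{1944} but with the sum taken over $|\alpha|=m$ only---one gets $f\in\dot{W}^{m,p}_{\omega_g}(\Omega)$. Applying the homogeneous weighted extension above with $\omega:=\omega_g$ yields $\Lambda(f)=f$ a.e.\ in $\Omega$ and $\|\Lambda(f)\|_{\dot{W}^{m,p}_{\omega_g}(\mathbb{R}^n)}\lesssim\|f\|_{\dot{W}^{m,p}_{\omega_g}(\Omega)}$ with an implicit constant depending only on $\varepsilon$, $\|\mathcal{M}\|_{[X^\frac{1}{p}(\mathbb{R}^n)]'\to[X^\frac{1}{p}(\mathbb{R}^n)]'}$, $m$, $p$, and $n$, hence uniform in $g$; since $\Lambda(f)$ does not depend on $\omega_g$, taking the supremum over all such $g$ and converting back via Remark~\ref{waicha} on $\mathbb{R}^n$ gives $\|\Lambda(f)\|_{\dot{W}^{m,X}(\mathbb{R}^n)}\lesssim\|f\|_{\dot{W}^{m,X}(\Omega)}$, the homogeneous analogue of \eqref{1550}.

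Finally I would pass to a general $f\in\dot{W}^{m,X}(\Omega)$: using Theorem~\ref{2247}(ii), pick $\{f_k\}_{k\in\mathbb{N}}\subset C^\infty(\overline{\Omega})\cap\dot{W}^{m,X}(\Omega)$ with $\|f-f_k\|_{\dot{W}^{m,X}(\Omega)}\to0$. By linearity of $\Lambda$ and the estimate just proved, for each $\alpha$ with $|\alpha|=m$ the sequence $\{D^\alpha\Lambda(f_k)\}_{k\in\mathbb{N}}$ is Cauchy in $X(\mathbb{R}^n)$, so, invoking the completeness of $\dot{W}^{m,X}(\mathbb{R}^n)$ modulo polynomials of degree at most $m-1$ (argued as in \cite{dlyyz.arxiv} along the lines of \cite[p.\,262, Theorem~2]{evans} with $L^p$ replaced by $X(\mathbb{R}^n)$), there is $F\in\dot{W}^{m,X}(\mathbb{R}^n)$ with $\|F-\Lambda(f_k)\|_{\dot{W}^{m,X}(\mathbb{R}^n)}\to0$; the triangle inequality and the smooth case then give $\|F\|_{\dot{W}^{m,X}(\mathbb{R}^n)}\lesssim\|f\|_{\dot{W}^{m,X}(\Omega)}$. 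For each $|\alpha|=m$ one has $D^\alpha(F|_\Omega)=(D^\alpha F)|_\Omega=\lim_k D^\alpha f_k=D^\alpha f$ in $X(\Omega)$, so, $\Omega$ being connected, $F|_\Omega-f$ is a polynomial $P\in\mathcal{P}_{m-1}(\Omega)$ depending linearly on $f$; defining $\Lambda(f):=F-P$ (with $P$ regarded as a polynomial on all of $\mathbb{R}^n$) produces a linear operator with $\Lambda(f)=f$ a.e.\ in $\Omega$ and, since the top-order derivatives of $P$ vanish, $\|\Lambda(f)\|_{\dot{W}^{m,X}(\mathbb{R}^n)}=\|F\|_{\dot{W}^{m,X}(\mathbb{R}^n)}\lesssim\|f\|_{\dot{W}^{m,X}(\Omega)}$. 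I expect the main obstacle to be precisely this last step: because the seminorm $\|\cdot\|_{\dot{W}^{m,X}}$ controls only top-order derivatives, one cannot, as in the inhomogeneous case, read off $\Lambda(f)|_\Omega=f$ from almost-everywhere convergence, and the correction by the polynomial $P$---together with the connectedness of $\Omega$ from Remark~\ref{1711} and a check that Chua's homogeneous extension constant is genuinely free of $\mathrm{rad\,}(\Omega)$ on $(\varepsilon,\infty)$-domains---is what makes $\Lambda$ a well-defined bounded linear extension.
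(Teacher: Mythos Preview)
Your proposal is correct and follows essentially the same route as the paper: first establish the estimate on $C^\infty(\overline{\Omega})\cap\dot{W}^{m,X}(\Omega)$ via extrapolation (Lemma~\ref{4.6}, Remark~\ref{waicha}) combined with Chua's homogeneous weighted extension (Lemma~\ref{chua1992'}), then pass to general $f$ by Theorem~\ref{2247}(ii), completeness of $\dot{W}^{m,X}(\mathbb{R}^n)/\mathcal{P}_{m-1}(\mathbb{R}^n)$, and a polynomial correction. The paper cites \cite[p.\,22, Theorem~1]{ma2011} rather than \cite{dlyyz.arxiv} or \cite{evans} for the completeness of the quotient space, but otherwise your outline matches the paper's argument, including the identification of the polynomial-correction step as the point where the homogeneous proof genuinely differs from the inhomogeneous one.
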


To show Theorem~\ref{extension2}, we need
the following extension lemma on homogeneous
weighted Sobolev spaces,
which is just \cite[Theorem~1.2]{c1992}.

\begin{lemma}\label{chua1992'}
Let $\Omega\subset\mathbb{R}^n$ be an $(\varepsilon,\infty)$-domain
with $\varepsilon\in(0,1]$.
Let $m\in\mathbb{N}$,
$p\in[1,\infty)$, and $\omega\in A_p(\mathbb{R}^n)$.
Then there exists a linear extension operator
$$
\Lambda:\ \dot{W}^{m,p}_\omega(\Omega)\to\dot{W}^{m,p}_\omega(\mathbb{R}^n)
$$
such that, for any $f\in\dot{W}^{m,p}_\omega(\Omega)$,
$\Lambda(f)=f$ almost everywhere in $\Omega$ and
$$
\left\|\Lambda(f)\right\|_{\dot{W}^{m,p}_\omega(\mathbb{R}^n)}
\leq C\left\|f\right\|_{\dot{W}^{m,p}_\omega(\Omega)},
$$
where the positive constant $C$ depends only on $\varepsilon$,
$[\omega]_{A_p(\mathbb{R}^n)}$,
$m$, $p$, and $n$.
\end{lemma}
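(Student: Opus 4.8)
This lemma is precisely \cite[Theorem~1.2]{c1992}, so the plan is to recall the Jones--Chua construction and the estimates behind it. First I would fix a Whitney decomposition $\mathcal{W}_2$ of $\Omega^\complement$ and, following Jones \cite{j1981}, split its cubes into the \emph{near} ones (those $Q$ with, say, $\ell(Q)\le\varepsilon\,\mathrm{dist\,}(Q,\partial\Omega)/(16n)$, which lie within distance $\sim\mathrm{dist\,}(Q,\partial\Omega)$ of $\partial\Omega$) and the \emph{far} ones. Because $\Omega$ is an $(\varepsilon,\infty)$-domain, to each near cube $Q$ one assigns a \emph{reflected} Whitney cube $Q^*$ of the interior decomposition $\mathcal{W}_1$ of $\Omega$ with $\ell(Q^*)\sim\ell(Q)$ and $\mathrm{dist\,}(Q,Q^*)\lesssim\ell(Q)$, all implicit constants depending only on $\varepsilon$ and $n$; this is Jones' chaining lemma for uniform domains. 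Fixing a smooth partition of unity $\{\varphi_Q\}$ subordinate to a mild dilation of $\mathcal{W}_2$ with $\sum_Q\varphi_Q\equiv1$ in a neighbourhood of $\partial\Omega$ inside $\Omega^\complement$ and $|D^\beta\varphi_Q|\le C_{(\beta,n)}\ell(Q)^{-|\beta|}$, and letting $\pi^m(Q^*)f\in\mathcal{P}_{m-1}$ be the projection polynomial of \cite[(4.2)]{c1992} associated with $f$ and $Q^*$, I set
$$
\Lambda(f):=
\begin{cases}
f & \text{in }\Omega,\\
\displaystyle\sum_{Q\ \mathrm{near}}\varphi_Q\,\pi^m(Q^*)f & \text{in }\Omega^\complement,
\end{cases}
$$
which is linear in $f$ and equals $f$ almost everywhere on $\Omega$; the far cubes contribute nothing because $\varphi_Q$ vanishes near $\partial\Omega$ there.

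The heart of the matter is the estimate of $D^\alpha(\Lambda f)$ for $|\alpha|=m$ on $\Omega^\complement$. Since every $\pi^m(Q^*)f$ has degree at most $m-1$, the Leibniz rule gives $D^\alpha(\Lambda f)=\sum_Q\sum_{0\neq\beta\le\alpha}\binom{\alpha}{\beta}D^\beta\varphi_Q\,D^{\alpha-\beta}(\pi^m(Q^*)f)$, the $\beta=0$ term dropping out. For $x$ lying in a near cube $Q_0$, I would exploit $\sum_Q D^\beta\varphi_Q\equiv0$ for $\beta\neq0$ to replace $\pi^m(Q^*)f$ by the difference $\pi^m(Q^*)f-\pi^m(Q_0^*)f$ in each summand at no cost, so that only the $O(1)$ neighbours $Q$ of $Q_0$ survive. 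Combining $|D^\beta\varphi_Q(x)|\lesssim\ell(Q_0)^{-|\beta|}$ with the telescoping bound, valid along a chain $\{S\}$ of interior Whitney cubes joining $Q^*$ to $Q_0^*$,
$$
\left\|D^\gamma\bigl(\pi^m(Q^*)f-\pi^m(Q_0^*)f\bigr)\right\|_{L^\infty(Q_0)}
\lesssim\sum_{S}\ell(S)^{m-|\gamma|}\left[\frac{1}{\omega(S)}\int_S\sum_{|\beta|=m}|D^\beta f|^p\,\omega\right]^{1/p}
$$
(up to the geometric weights coming from the chain), reduces everything to a sum of $\omega$-averaged $L^p$ norms of the top-order derivatives of $f$ over interior cubes.

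To make that display rigorous I would invoke the weighted polynomial (Poincar\'e) inequality for Muckenhoupt weights on cubes, $\|D^\gamma(g-\pi^m(Q)g)\|_{L^p_\omega(Q)}\lesssim\ell(Q)^{m-|\gamma|}\sum_{|\beta|=m}\|D^\beta g\|_{L^p_\omega(Q)}$ for $|\gamma|<m$, with constant depending only on $[\omega]_{A_p(\mathbb{R}^n)}$, $m$, $p$, $n$, which underlies \cite[Section~4]{c1992}, together with the fact that consecutive cubes of a chain overlap in a set of comparable measure, so that doubling of $\omega\in A_p(\mathbb{R}^n)\subset A_\infty(\mathbb{R}^n)$ allows one to pass among $\omega(S)$, $\omega(Q^*)$, and $\omega(Q)$ (here one uses $\ell(Q^*)\sim\ell(Q)$ and $\mathrm{dist\,}(Q,Q^*)\lesssim\ell(Q)$, so $Q\cup Q^*$ sits in a cube of side $\sim\ell(Q)$ on which $\omega$ is doubling-comparable). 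Raising to the $p$-th power, summing over $Q$, and using the bounded overlap of both the Whitney cubes and the chains — another consequence of the $(\varepsilon,\infty)$ condition, with overlap controlled by $\varepsilon$ and $n$ — yields $\int_{\Omega^\complement}\sum_{|\alpha|=m}|D^\alpha(\Lambda f)|^p\,\omega\lesssim\int_{\Omega}\sum_{|\alpha|=m}|D^\alpha f|^p\,\omega$, and adding the trivial interior contribution gives~(ii) with the stated dependence of $C$. Finally, one verifies that the formally computed $D^\alpha(\Lambda f)$, $|\alpha|\le m$, are the genuine weak derivatives on $\mathbb{R}^n$: since $\partial\Omega$ is Lebesgue-null for an $(\varepsilon,\infty)$-domain (cf.\ \cite{j1981}) and $\omega$ is locally integrable, it suffices to check the distributional identity across $\partial\Omega$, which follows by testing against $\phi\in C^\infty_{\mathrm{c}}(\mathbb{R}^n)$, integrating by parts on the Whitney cubes of $\Omega$ and of $\Omega^\complement$ separately, and observing that the boundary contributions cancel because, for $f$ smooth on $\overline\Omega$, $\pi^m(Q^*)f$ matches $f$ to order $m$ as $Q\to\partial\Omega$; the general case follows by density (Theorem~\ref{2247}(ii)).

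The main obstacle is the chaining step: for each near cube $Q$ and its partner $Q^*$ — and for each pair of neighbouring reflected cubes — one must exhibit a chain of interior Whitney cubes whose length is controlled, whose union has controlled diameter, and through any fixed one of which only boundedly many chains pass, and one must keep \emph{every} geometric constant depending on $\varepsilon$ and $n$ alone. This is exactly Jones' combinatorial lemma for uniform domains; the remainder is bookkeeping with the $A_p(\mathbb{R}^n)$ constant. A secondary, more delicate, point is the boundary verification just described, where one must rule out a singular part of the weak derivative concentrated on $\partial\Omega$.
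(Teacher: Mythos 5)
The paper does not prove this lemma at all: it is quoted verbatim as \cite[Theorem~1.2]{c1992}, so your attempt to reconstruct the Jones--Chua construction is doing strictly more than the paper does. Your outline is the right machinery for the \emph{unbounded} case (Whitney cubes of $\Omega^\complement$, reflected interior cubes, projections $\pi^m(Q^*)f$, chaining plus the weighted Poincar\'e inequality with constants depending only on $[\omega]_{A_p(\mathbb{R}^n)}$, $\varepsilon$, $m$, $p$, $n$), but as written it has a genuine gap when $\Omega$ is bounded --- which is exactly the case this paper needs, and exactly why the proof of Theorem~\ref{extension2} points separately to \cite[p.\,1061]{c1992} (unbounded $\Omega$) and \cite[p.\,1062]{c1992} (bounded $\Omega$).

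Concretely: if $\Omega$ is bounded, exterior Whitney cubes with $\ell(Q)\gg\mathrm{diam\,}(\Omega)$ admit no reflected interior cube of comparable size, so your partition of unity over the ``near'' cubes sums to $1$ only on a bounded neighbourhood of $\overline{\Omega}$ and to $0$ far away. Your operator then sends the constant function $f\equiv1$ (whose homogeneous seminorm is zero) to $\sum_{Q\ \mathrm{near}}\varphi_Q$ on $\Omega^\complement$, a nonconstant cutoff whose $m$-th derivatives do not vanish in the transition region; hence $\|\Lambda(f)\|_{\dot{W}^{m,p}_\omega(\mathbb{R}^n)}>0$ and the claimed bound fails already on polynomials of degree $<m$. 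The cancellation trick you invoke ($\sum_Q D^\beta\varphi_Q\equiv0$ for $\beta\neq0$) is only available where the near-cube partition sums to a constant, so it cannot rescue the transition annulus. Chua's actual proof for bounded $\Omega$ normalizes first, e.g.\ extending $f-\pi^m(Q_0)f$ for a fixed reference cube $Q_0\subset\Omega$ by the Whitney/reflection scheme and then adding back the globally defined polynomial $\pi^m(Q_0)f$ (harmless for the homogeneous seminorm); the far-field value of the extension is then that polynomial rather than $0$, and the transition-region terms are controlled by chaining $Q^*$ to $Q_0$. This normalization step is missing from your proposal. Two smaller points: your ``near cube'' criterion $\ell(Q)\le\varepsilon\,\mathrm{dist\,}(Q,\partial\Omega)/(16n)$ is garbled (for Whitney cubes $\ell(Q)\sim\mathrm{dist\,}(Q,\partial\Omega)$, so as stated it selects nothing; Jones' condition compares $\ell(Q)$ with $\varepsilon\delta/(16n)$, and for bounded $\Omega$ one additionally needs $\ell(Q)\lesssim\mathrm{rad\,}(\Omega)$ to find a partner $Q^*$), and the final integration-by-parts verification that no singular part sits on $\partial\Omega$ should not lean on pointwise ``matching to order $m$'' of the projections, which only approximate $f$ in an averaged sense; Jones' argument uses $|\partial\Omega|=0$ together with local estimates on chains rather than boundary-term cancellation.
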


\begin{proof}[Proof of Theorem~\ref{extension2}]
First assume that $f\in C^\infty(\overline{\Omega})\cap
\dot{W}^{m,X}(\Omega)$. By
Lemma~\ref{4.6}, Remark~\ref{waicha},
and an argument similar
to that used in the estimation of \eqref{1944},
we find that, for any given $g\in[X^\frac{1}{p}(\mathbb{R}^n)]'$
with $\|g\|_{[X^\frac{1}{p}(\mathbb{R}^n)]'}=1$,
$f\in\dot{W}^{m,p}_{\omega_g}(\Omega)$,
where $\omega_g$ is the same as in \eqref{omegag1}
satisfying \eqref{omegag2}.
Let $\Lambda$ be the extension operator
same as in \cite[Theorem~1.2]{c1992}.
The proof of \cite[Theorem~1.2]{c1992}
shows that $\Lambda(f)$
for any $f\in C^\infty(\overline{\Omega})\cap
\dot{W}^{m,X}(\Omega)$
is independent of $\omega$;
see \cite[p.\,1061]{c1992} for the
definition of $\Lambda(f)$ in the case that $\Omega$
is unbounded and \cite[p.\,1062]{c1992} for the
definition of $\Lambda(f)$ in the case that $\Omega$
is bounded.
Then, from Lemma~\ref{chua1992'} and an argument similar
to that used in the estimation of \eqref{1550},
we deduce that both (i) and (ii) of the present theorem
hold true for any $f\in C^\infty(\overline{\Omega})\cap
\dot{W}^{m,X}(\Omega)$.

Next, let $f\in\dot{W}^{m,X}(\Omega)$.
By Theorem~\ref{2247},
we find that there exists a sequence
$\{f_k\}_{k\in\mathbb{N}}\subset C^\infty(\overline{\Omega})
\cap\dot{W}^{m,X}(\Omega)$
such that
\begin{align}\label{1611}
\left\|f-f_k\right\|_{\dot{W}^{m,X}(\Omega)}\to0
\end{align}
as $k\to\infty$.
From the proven conclusion
[(ii) of the present theorem for $f_s-f_k$],
we infer that, for any $k,s\in\mathbb{N}$,
\begin{align*}
\left\|\Lambda(f_k)-\Lambda(f_s)\right\|_{\dot{W}^{m,X}(\mathbb{R}^n)}
\lesssim\left\|f_k-f_s\right\|_{\dot{W}^{m,X}(\Omega)}.
\end{align*}
By this and \eqref{1611}, we find that
$\{\Lambda(f_k)\}_{k\in\mathbb{N}}$
is a Cauchy sequence in $\dot{W}^{m,X}(\mathbb{R}^n)/
\mathcal{P}_{m-1}(\mathbb{R}^n)$,
where
$\dot{W}^{m,X}(\mathbb{R}^n)/\mathcal{P}_{m-1}(\mathbb{R}^n)$
is the \emph{quotient space}
equipped with the norm $\|\cdot\|_{\dot{W}^{m,X}(\mathbb{R}^n)}$.
This and the completeness of
$$
\left(\dot{W}^{m,X}(\mathbb{R}^n)/\mathcal{P}_{m-1}(\mathbb{R}^n),
\|\cdot\|_{\dot{W}^{m,X}(\mathbb{R}^n)}\right)
$$
(which can be deduced from an argument similar to that
used in the proof of \cite[p.\,22, Theorem~1]{ma2011}
with $\Omega$ and $L^p$ replaced, respectively,
by $\mathbb{R}^n$ and $X$)
imply that there exists a function
$g\in\dot{W}^{m,X}(\mathbb{R}^n)/\mathcal{P}_{m-1}(\mathbb{R}^n)$
such that
\begin{align*}
\left\|g-\Lambda(f_k)\right\|_{\dot{W}^{m,X}(\mathbb{R}^n)}\to0
\end{align*}
as $k\to\infty$.
From this, Proposition~\ref{1533}(v),
the proven conclusion that $\Lambda(f_k)=f_k$
almost everywhere in $\Omega$, and \eqref{1611}, we infer that
\begin{align*}
\left\|g|_\Omega-f\right\|_{\dot{W}^{m,X}(\Omega)}
&\leq\|g|_\Omega-\Lambda(f_k)|_\Omega\|_{\dot{W}^{m,X}(\Omega)}
+\|\Lambda(f_k)|_\Omega-f\|_{\dot{W}^{m,X}(\Omega)}\\
&\leq\|g-\Lambda(f_k)\|_{\dot{W}^{m,X}(\mathbb{R}^n)}
+\|f_k-f\|_{\dot{W}^{m,X}(\Omega)}\to0
\end{align*}
as $k\to\infty$ and hence
$\|g|_\Omega-f\|_{\dot{W}^{m,X}(\Omega)}=0$,
which further implies that there
exists a polynomial $P$ of degree less than $m$
such that $g=f+P$ almost everywhere in $\Omega$.
Define $\Lambda(f):=g-P$.
Then $\Lambda(f)$ satisfies (i) of the present theorem.
Notice that, for any $\alpha\in\mathbb{Z}_+^n$ with $|\alpha|=m$,
$D^\alpha(\Lambda(f))=D^\alpha g$.
By this, Definition~\ref{1659}(v),
Proposition~\ref{1533}(v),
\eqref{1550}, and \eqref{1604},
we conclude that
\begin{align*}
\left\|\Lambda(f)\right\|_{\dot{W}^{m,X}(\mathbb{R}^n)}
&=\sum_{\alpha\in\mathbb{Z}_+^n,\,|\alpha|=m}
\left\|D^\alpha(\Lambda(f))\right\|_{X(\mathbb{R}^n)}\\
&=\sum_{\alpha\in\mathbb{Z}_+^n,\,|\alpha|=m}
\left\|D^\alpha g\right\|_{X(\mathbb{R}^n)}
=\|g\|_{\dot{W}^{m,X}(\mathbb{R}^n)}\\
&\leq\left\|g-\Lambda(f_k)\right\|_{\dot{W}^{m,X}(\mathbb{R}^n)}
+\left\|\Lambda(f_k)\right\|_{\dot{W}^{m,X}(\mathbb{R}^n)}\\
&\lesssim\left\|g-\Lambda(f_k)\right\|_{\dot{W}^{m,X}(\mathbb{R}^n)}
+\left\|f_k\right\|_{\dot{W}^{m,X}(\Omega)}\\
&\leq\left\|g-\Lambda(f_k)\right\|_{\dot{W}^{m,X}(\mathbb{R}^n)}
+\left\|f_k-f\right\|_{\dot{W}^{m,X}(\Omega)}+
\left\|f\right\|_{\dot{W}^{m,X}(\Omega)}\\
&\to\left\|f\right\|_{\dot{W}^{m,X}(\Omega)}
\end{align*}
as $k\to\infty$.
Thus, $\Lambda(f)$ satisfies (ii) of the present theorem,
which completes the proof of Theorem~\ref{extension2}.
\end{proof}

As a corollary of Theorem~\ref{extension2},
we have the following conclusion
whose proof is similar to that of Corollary~\ref{restr};
we omit the details.

\begin{corollary}\label{restr2}
Let $\Omega\subset\mathbb{R}^n$ be an $(\varepsilon,\infty)$-domain
with $\varepsilon\in(0,1]$.
Let $m\in\mathbb{N}$
and $X(\mathbb{R}^n)$ be a ball Banach function space
satisfying the assumptions (a) and (b) of Theorem~\ref{2247}.
Then $f\in\dot{W}^{m,X}(\Omega)$ if and only if there exists
$g\in\dot{W}^{m,X}(\mathbb{R}^n)$ such that
$g=f$ almost everywhere in $\Omega$;
moreover, there exists a positive constant $C$,
independent of both $f$ and $g$, such that
$$
\left\|f\right\|_{\dot{W}^{m,X}(\Omega)}
\leq\left\|g\right\|_{\dot{W}^{m,X}(\mathbb{R}^n)}
\leq C\left\|f\right\|_{\dot{W}^{m,X}(\Omega)}.
$$
\end{corollary}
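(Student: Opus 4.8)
The plan is to mirror the proof of Corollary~\ref{restr}, replacing the inhomogeneous extension theorem by its homogeneous counterpart Theorem~\ref{extension2} and keeping only the top-order multi-indices. The equivalence splits into a necessity part (existence of an extension $g$ with the upper bound) and a sufficiency part (the lower bound, valid for every admissible $g$); only the former uses an extension theorem. For the necessity I would simply invoke Theorem~\ref{extension2}, which under the present hypotheses on $\Omega$ and $X(\mathbb{R}^n)$ furnishes a linear extension operator $\Lambda\colon\dot{W}^{m,X}(\Omega)\to\dot{W}^{m,X}(\mathbb{R}^n)$ with $\Lambda(f)=f$ almost everywhere in $\Omega$ and $\|\Lambda(f)\|_{\dot{W}^{m,X}(\mathbb{R}^n)}\le C\|f\|_{\dot{W}^{m,X}(\Omega)}$; the choice $g:=\Lambda(f)$ settles this direction together with the right-hand estimate.

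For the sufficiency, let $g\in\dot{W}^{m,X}(\mathbb{R}^n)$ satisfy $g=f$ almost everywhere in $\Omega$. Since $g\in L^1_{\mathrm{loc}}(\mathbb{R}^n)$ by Definition~\ref{2.7}(ii), its restriction $f=g|_\Omega$ lies in $L^1_{\mathrm{loc}}(\Omega)$. For each $\alpha\in\mathbb{Z}_+^n$ with $|\alpha|=m$, set $f^{(\alpha)}:=D^\alpha g|_\Omega$, so that \eqref{1035} gives $\|f^{(\alpha)}\|_{X(\Omega)}\le\|D^\alpha g\|_{X(\mathbb{R}^n)}$. To identify $f^{(\alpha)}$ with the $\alpha^{\mathrm{th}}$-weak partial derivative of $f$ on $\Omega$, I would test against an arbitrary $\phi\in C_{\mathrm{c}}^\infty(\Omega)$, whose zero-extension $\widetilde{\phi}$ lies in $C_{\mathrm{c}}^\infty(\mathbb{R}^n)$, has support in $\Omega$, and satisfies $D^\alpha\widetilde{\phi}=\widetilde{D^\alpha\phi}$; applying \eqref{wpd} on $\mathbb{R}^n$ to $g$ and $\widetilde{\phi}$ then yields
$$
\int_{\Omega}f(x)D^\alpha\phi(x)\,dx
=\int_{\mathbb{R}^n}g(x)D^\alpha\widetilde{\phi}(x)\,dx
=(-1)^{|\alpha|}\int_{\mathbb{R}^n}D^\alpha g(x)\widetilde{\phi}(x)\,dx
=(-1)^{|\alpha|}\int_{\Omega}f^{(\alpha)}(x)\phi(x)\,dx.
$$
Hence $D^\alpha f=f^{(\alpha)}\in X(\Omega)$ for every such $\alpha$, so that $f\in\dot{W}^{m,X}(\Omega)$ and, summing over $|\alpha|=m$,
$$
\|f\|_{\dot{W}^{m,X}(\Omega)}
\le\sum_{\alpha\in\mathbb{Z}_+^n,\,|\alpha|=m}\|D^\alpha g\|_{X(\mathbb{R}^n)}
=\|g\|_{\dot{W}^{m,X}(\mathbb{R}^n)},
$$
which is the left-hand inequality. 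Combining the two parts completes the proof.

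I expect no genuine obstacle here. The only point calling for a little care is the weak-derivative identity displayed above, which rests on the elementary facts that the zero-extension of a test function on $\Omega$ remains in $C_{\mathrm{c}}^\infty(\mathbb{R}^n)$ and commutes with differentiation --- precisely the mechanism used in the sufficiency part of Corollary~\ref{restr}. In contrast to the proof of Theorem~\ref{extension2}, no density or completeness argument enters, because the extending function $g$ is handed to us directly.
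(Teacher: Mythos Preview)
Your proposal is correct and follows exactly the approach the paper indicates: it mirrors the proof of Corollary~\ref{restr}, invoking Theorem~\ref{extension2} in place of Theorem~\ref{extension} for the necessity and restricting the weak-derivative argument to multi-indices with $|\alpha|=m$ for the sufficiency. The paper itself omits the details, simply stating that the proof is similar to that of Corollary~\ref{restr}, which is precisely what you have written out.
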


\section{Asymptotics
of $\dot{W}^{1,X}(\Omega)$ Functions in Terms of
Ball Banach Sobolev\\ Norms}
\label{S3}

This section is devoted to establishing the asymptotics
of $\dot{W}^{1,X}(\Omega)$ functions in terms of
ball Banach Sobolev norms.
To this end, we begin with
the following definition of the
radial decreasing approximation of the identity
on $\mathbb{R}^n$, which
can be found in \cite[Definition~2.3]{dgpyyz2022}.

\begin{definition}\label{ATI}
Let $\nu_0\in(0,\infty)$.
A family $\{\rho_\nu\}_{\nu\in(0,\nu_0)}$
of nonnegative and locally integrable functions on $(0,\infty)$
is called a $\nu_0$-\emph{radial
decreasing approximation of the identity
on $\mathbb{R}^n$} (for short, a $\nu_0$-RDATI)
if $\{\rho_\nu\}_{\nu\in(0,\nu_0)}$ satisfies that
\begin{enumerate}
\item[\textup{(i)}]
for any $\nu\in(0,\nu_0)$,
$\rho_\nu$ is decreasing on $(0,\infty)$;
\item[\textup{(ii)}]
for any $\nu\in(0,\nu_0)$,
\begin{align*}
\int_0^\infty\rho_\nu(r)r^{n-1}\,dr=1;
\end{align*}
\item[\textup{(iii)}]
for any $\delta\in(0,\infty)$,
\begin{align*}
\lim_{\nu\to0^+}\int_\delta^\infty
\rho_\nu(r)r^{n-1}\,dr=0.
\end{align*}
\end{enumerate}
\end{definition}

Recall that
$\nu\to0^+$ means that there exists $\nu_0\in(0,\infty)$
such that $\nu\in(0,\nu_0)$ and $\nu\to0$.
Now, we present the main theorem of this section as follows.

\begin{theorem}\label{2045}
Let $\Omega\subset\mathbb{R}^n$ be a bounded
$(\varepsilon,\infty)$-domain
with $\varepsilon\in(0,1]$
and $\{\rho_\nu\}_{\nu\in(0,\nu_0)}$
a $\nu_0$-{\rm RDATI} on $\mathbb{R}^n$
with $\nu_0\in(0,\infty)$.
Let $X(\mathbb{R}^n)$ be a ball Banach function space
and $p\in[1,\infty)$.
Assume that
\begin{enumerate}
\item[\rm(i)]
$X(\mathbb{R}^n)$ has an absolutely continuous norm;
\item[\rm(ii)]
$X^\frac{1}{p}(\mathbb{R}^n)$
is a ball Banach function space;
\item[\rm(iii)]
the Hardy--Littlewood maximal operator $\mathcal{M}$
is bounded on $[X^\frac{1}{p}(\mathbb{R}^n)]'$.
\end{enumerate}
Then, for any $f\in\dot{W}^{1,X}(\Omega)$,
\begin{align}\label{2031}
\lim_{\nu\to0^+}
\left\|\left[\int_\Omega\frac{|f(\cdot)-f(y)|^p}{
|\cdot-y|^p}\rho_\nu(|\cdot-y|)\,dy
\right]^\frac{1}{p}\right\|_{X(\Omega)}
=\left[\kappa(p,n)\right]^\frac{1}{p}
\left\|\,\left|\nabla f\right|\,\right\|_{X(\Omega)},
\end{align}
where the constant
$\kappa(p,n)$ is the same as in \eqref{kappaqn}.
\end{theorem}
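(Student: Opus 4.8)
The plan is to prove Theorem~\ref{2045} by a density argument, bootstrapping from the already-settled smooth case (Proposition~\ref{2043}) by means of the $\nu$-uniform upper estimate (Proposition~\ref{1537}) and the approximation theorem (Theorem~\ref{2247}). Note first that hypotheses (i)--(iii) on $X(\mathbb{R}^n)$ are precisely the assumptions (a) and (b) of Theorem~\ref{2247} with the same exponent $p$, and that a bounded $(\varepsilon,\infty)$-domain is an $(\varepsilon,\delta)$-domain for every $\delta\in(0,\infty]$ with $\mathrm{rad\,}(\Omega)\in(0,\infty]$ by Remark~\ref{1711}, so Theorem~\ref{2247}(ii) is available. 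For $\nu\in(0,\nu_0)$ and $g\in\mathscr{M}(\Omega)$, abbreviate
$$
\Phi_\nu(g)(x):=\left[\int_\Omega\frac{|g(x)-g(y)|^p}{|x-y|^p}\rho_\nu(|x-y|)\,dy\right]^{\frac{1}{p}},\qquad x\in\Omega.
$$

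First I would record that, for each fixed $x\in\Omega$, the map $g\mapsto\Phi_\nu(g)(x)$ is the $L^p$-norm of $y\mapsto g(x)-g(y)$ with respect to the measure $|x-y|^{-p}\rho_\nu(|x-y|)\,dy$ on $\Omega$; Minkowski's inequality therefore gives, whenever the right-hand side is finite,
$$
\left|\Phi_\nu(f)(x)-\Phi_\nu(g)(x)\right|\leq\Phi_\nu(f-g)(x).
$$
Combining this pointwise bound with the lattice property [Proposition~\ref{1533}(ii)] and the triangle inequality [Proposition~\ref{1533}(v)] of $X(\Omega)$ yields, for every $f,g\in\mathscr{M}(\Omega)$ and every $\nu\in(0,\nu_0)$, the reverse triangle inequality
$$
\left|\,\left\|\Phi_\nu(f)\right\|_{X(\Omega)}-\left\|\Phi_\nu(g)\right\|_{X(\Omega)}\,\right|\leq\left\|\Phi_\nu(f-g)\right\|_{X(\Omega)}.
$$
Next I would invoke Proposition~\ref{1537} to produce a constant $C$, independent of $\nu$, such that $\sup_{\nu\in(0,\nu_0)}\|\Phi_\nu(h)\|_{X(\Omega)}\leq C\|\,|\nabla h|\,\|_{X(\Omega)}$ for all $h\in\dot{W}^{1,X}(\Omega)$; recall here that $\|\,|\nabla h|\,\|_{X(\Omega)}\sim\|h\|_{\dot{W}^{1,X}(\Omega)}$, because $|\partial_jh|\leq|\nabla h|\leq\sum_{j=1}^n|\partial_jh|$ and $X(\Omega)$ is a ball Banach function space.

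Now fix $f\in\dot{W}^{1,X}(\Omega)$ and $\eta\in(0,\infty)$, and use Theorem~\ref{2247}(ii) to pick $f_\eta\in C^\infty(\overline{\Omega})\cap\dot{W}^{1,X}(\Omega)$ with $\|f-f_\eta\|_{\dot{W}^{1,X}(\Omega)}<\eta$. For every $\nu\in(0,\nu_0)$, the reverse triangle inequality bounds
$$
\left|\,\left\|\Phi_\nu(f)\right\|_{X(\Omega)}-\left[\kappa(p,n)\right]^{\frac{1}{p}}\left\|\,|\nabla f|\,\right\|_{X(\Omega)}\,\right|
$$
by the sum of the three quantities $\|\Phi_\nu(f-f_\eta)\|_{X(\Omega)}$, $\big|\,\|\Phi_\nu(f_\eta)\|_{X(\Omega)}-[\kappa(p,n)]^{\frac{1}{p}}\|\,|\nabla f_\eta|\,\|_{X(\Omega)}\,\big|$, and $[\kappa(p,n)]^{\frac{1}{p}}\big|\,\|\,|\nabla f_\eta|\,\|_{X(\Omega)}-\|\,|\nabla f|\,\|_{X(\Omega)}\,\big|$. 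The first is $\lesssim C\eta$ uniformly in $\nu$ by the previous step, the third is $\lesssim\eta$ likewise, and the middle one tends to $0$ as $\nu\to0^+$ by Proposition~\ref{2043} applied to the smooth function $f_\eta$ (legitimate since $\overline{\Omega}$ is compact). Hence $\limsup_{\nu\to0^+}$ of the displayed quantity is $\lesssim\eta$; letting $\eta\to0^+$ shows that the limit in \eqref{2031} exists and equals $[\kappa(p,n)]^{\frac{1}{p}}\|\,|\nabla f|\,\|_{X(\Omega)}$.

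The genuinely substantial inputs are the exact constant $\kappa(p,n)$ in the smooth case (Proposition~\ref{2043}, via a Taylor expansion, the defining properties of the RDATI, and the dominated convergence theorem on $X(\Omega)$) and the $\nu$-uniform upper bound (Proposition~\ref{1537}, via extrapolation from a weighted estimate on $\dot{W}^{1,p}_\omega(\Omega)$, itself built from an $\mathbb{R}^n$-estimate of \cite[(3.12)]{dgpyyz2022} and Chua's weighted extension theorem \cite{c1992}); both are proved elsewhere in the paper. Within the present argument the only points demanding care are that the constant $C$ in Proposition~\ref{1537} is independent of $\nu$, that $\Phi_\nu(f-f_\eta)\in X(\Omega)$ so the pointwise reverse triangle inequality holds almost everywhere, and that all three summands in the $3\varepsilon$-splitting are controlled uniformly in $\nu$ before one passes to $\limsup_{\nu\to0^+}$; the rest is routine. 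So the real difficulty of this circle of results lies in those two propositions rather than in the theorem itself.
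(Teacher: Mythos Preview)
Your proposal is correct and follows essentially the same approach as the paper: a density argument based on Theorem~\ref{2247}(ii), a three-term splitting using the pointwise Minkowski inequality and the reverse triangle inequality in $X(\Omega)$, then controlling the outer terms via Proposition~\ref{1537} (uniformly in $\nu$) and the gradient triangle inequality, and the middle term via Proposition~\ref{2043}. The paper's proof is organized identically (its terms I, II, III are your three summands), with the same order of limits: first $\nu\to0^+$, then the density parameter.
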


\subsection{Asymptotics
of $C^\infty(\overline{\Omega})$ Functions in Terms of
Ball Banach Sobolev Norms}
\label{sub4.1}

To show Theorem~\ref{2045}, in this subsection,
we first establish the following
asymptotics of $C^\infty(\overline{\Omega})$ functions.

\begin{proposition}\label{2043}
Let $\Omega\subset\mathbb{R}^n$ be
a bounded $(\varepsilon,\infty)$-domain
with $\varepsilon\in(0,1]$.
Let $X(\mathbb{R}^n)$ be a ball Banach function space and
$\{\rho_\nu\}_{\nu\in(0,\nu_0)}$
a $\nu_0$-{\rm RDATI} on $\mathbb{R}^n$
with $\nu_0\in(0,\infty)$.
Assume that $X(\mathbb{R}^n)$ has an absolutely continuous norm.
Then, for any $p\in(0,\infty)$
and $f\in C^\infty(\overline{\Omega})$,
\eqref{2031} holds true.
\end{proposition}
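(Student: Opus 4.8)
The plan is to reduce the problem, via the Taylor expansion of a smooth function, to the elementary asymptotics of the radially weighted integrals $\int_\Omega |\cdot - y|^{-p}\rho_\nu(|\cdot-y|)\,dy$ and then to pass to the limit inside $\|\cdot\|_{X(\Omega)}$ using the dominated convergence theorem on ball Banach function spaces (Remark~\ref{LdctOmega}). First I would fix $f\in C^\infty(\overline{\Omega})$ and, since $\Omega$ is bounded and $f\in C^\infty(\overline\Omega)$ means $f$ is the restriction of a $C^\infty(\mathbb{R}^n)$ function, record that $|\nabla f|$ is bounded and Lipschitz on $\overline\Omega$. For $x,y\in\Omega$ write $f(x)-f(y)=\nabla f(x)\cdot(x-y)+O(|x-y|^2)$; dividing by $|x-y|$ gives $|f(x)-f(y)|/|x-y| = |\nabla f(x)\cdot \omega| + O(|x-y|)$ where $\omega:=(x-y)/|x-y|$. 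Raising to the $p$-th power, multiplying by $\rho_\nu(|x-y|)$, and integrating over $y\in\Omega$, the main term becomes
\begin{align*}
\int_\Omega |\nabla f(x)\cdot\omega|^p \rho_\nu(|x-y|)\,dy
=\int_{\{y\in\Omega\}} \left|\nabla f(x)\cdot \tfrac{x-y}{|x-y|}\right|^p \rho_\nu(|x-y|)\,dy,
\end{align*}
which, after passing to polar coordinates centered at $x$, factors (up to the boundary contribution) as $\left[\int_{\mathbb{S}^{n-1}}|\nabla f(x)\cdot\omega|^p\,d\sigma(\omega)\right]\int_0^{\infty}\rho_\nu(r)r^{n-1}\,dr$; by the rotation invariance of $d\sigma$ the spherical integral equals $\kappa(p,n)|\nabla f(x)|^p$, and by Definition~\ref{ATI}(ii) the radial integral equals $1$.

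The next step is to make the two error contributions precise and show that, pointwise in $x\in\Omega$, the integrand inside $\|\cdot\|_{X(\Omega)}$ converges to $\kappa(p,n)^{1/p}|\nabla f(x)|$ as $\nu\to0^+$. There are two sources of error: the Taylor remainder $O(|x-y|)$, whose $p$-th power contributes a factor $|x-y|^{p}$ extra decay and hence, using Definition~\ref{ATI}(iii) to localize to $|x-y|<\delta$ and the boundedness of $\rho_\nu$-mass near infinity, is $o(1)$ as $\nu\to0^+$ uniformly in $x\in\overline\Omega$; and the discrepancy between integrating $\omega$ over $\{y:\ |x-y|=r,\ y\in\Omega\}$ versus the full sphere $r\mathbb{S}^{n-1}$. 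For the latter, for each fixed $x\in\Omega$ one has $B(x,\delta)\subset\Omega$ for $\delta<\mathrm{dist}(x,\Omega^\complement)$, so the portion of the integral over $|x-y|<\delta$ already sees the full sphere, while the portion over $|x-y|\ge\delta$ is controlled by $\|\nabla f\|_{L^\infty}^p\int_\delta^\infty\rho_\nu(r)r^{n-1}\,dr\to0$. Combining these, for every $x\in\Omega$,
\begin{align*}
\lim_{\nu\to0^+}\left[\int_\Omega\frac{|f(x)-f(y)|^p}{|x-y|^p}\rho_\nu(|x-y|)\,dy\right]^{1/p}
=\left[\kappa(p,n)\right]^{1/p}|\nabla f(x)|.
\end{align*}

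Finally I would upgrade this pointwise convergence to convergence in $X(\Omega)$ by producing a single $X(\Omega)$-dominating function. From the elementary estimate $|f(x)-f(y)|\le \|\nabla f\|_{L^\infty}|x-y|$, valid for $x,y\in\overline\Omega$ since $\Omega$ is an $(\varepsilon,\infty)$-domain and hence every segment-type bound can be replaced by the quasiconvexity bound $|f(x)-f(y)|\lesssim \|\nabla f\|_{L^\infty}|x-y|$ — actually for $f\in C^\infty(\mathbb{R}^n)$ the plain mean value inequality on the segment suffices — we get $\frac{|f(x)-f(y)|^p}{|x-y|^p}\rho_\nu(|x-y|)\le \|\nabla f\|_{L^\infty}^p\rho_\nu(|x-y|)$, so the inner integral is bounded by $\|\nabla f\|_{L^\infty}^p\int_{\Omega}\rho_\nu(|x-y|)\,dy\le \|\nabla f\|_{L^\infty}^p\int_0^\infty\rho_\nu(r)r^{n-1}\,dr|\mathbb{S}^{n-1}|=\|\nabla f\|_{L^\infty}^p\omega_{n-1}$; taking $p$-th roots, the whole family $\{\,[\cdots]^{1/p}\,\}_{\nu}$ is bounded in modulus by the constant $\|\nabla f\|_{L^\infty}\omega_{n-1}^{1/p}$, which lies in $X(\Omega)$ because $\Omega$ is bounded and $\mathbf 1_\Omega\in X(\Omega)$ (Proposition~\ref{1533}(iv)). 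Since $X(\mathbb{R}^n)$ has an absolutely continuous norm, so does $X(\Omega)$ (Proposition~\ref{abs}), and Remark~\ref{LdctOmega} applies to give convergence of the $X(\Omega)$-norms, i.e.\ \eqref{2031}. The main obstacle I anticipate is the careful, uniform-in-$x$ bookkeeping of the boundary error term: near $\partial\Omega$ the ball $B(x,\delta)$ is no longer contained in $\Omega$ for any fixed $\delta$, so the spherical-cap defect does not vanish uniformly; this is exactly where one must split $x$ into a "good" region $\Omega_\delta$ and a thin collar, use the boundedness of the integrand there together with $|\{x\in\Omega:\ \mathrm{dist}(x,\Omega^\complement)\le\delta\}|\to0$, and then invoke the absolute continuity of the norm of $X(\Omega)$ one more time (letting $\delta\to0$) to conclude — so absolute continuity of the norm is used twice, once for the dominated convergence and once to kill the collar.
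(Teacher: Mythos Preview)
Your proof is correct and follows essentially the same approach as the paper: Taylor expansion to isolate the main term $\kappa(p,n)|\nabla f(x)|^p$, pointwise convergence for each fixed $x\in\Omega$ (splitting the inner integral at the ball $B(x,\mathrm{dist}(x,\Omega^\complement))$), uniform domination by the constant $\|\nabla f\|_{L^\infty}^p\,|\mathbb{S}^{n-1}|$, and then the dominated convergence theorem on $X(\Omega)$ (Remark~\ref{LdctOmega}).

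Your ``anticipated obstacle'' is a phantom, however. The dominated convergence theorem only requires pointwise convergence together with a single dominating function in $X(\Omega)$; it does \emph{not} require the convergence to be uniform in $x$. You have already established both ingredients: for each fixed $x\in\Omega$ the inner integral converges to $\kappa(p,n)|\nabla f(x)|^p$ (the rate depending on $\mathrm{dist}(x,\Omega^\complement)$, but that is irrelevant), and the family is dominated by a constant, which lies in $X(\Omega)$ because $\Omega$ is bounded. So Remark~\ref{LdctOmega} applies directly and there is no need to split $\Omega$ into $\Omega_\delta$ and a collar, nor to invoke absolute continuity a second time. This is exactly how the paper closes the argument. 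As a minor remark, the paper obtains the Lipschitz bound $|f(x)-f(y)|\le\varepsilon^{-1}\|\nabla f\|_{L^\infty(\Omega)}|x-y|$ intrinsically via the curve from Definition~\ref{2121}, whereas your segment argument implicitly uses $\|\nabla f\|_{L^\infty}$ on the convex hull of $\overline\Omega$; both yield a finite constant, so either works here.
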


To show Proposition~\ref{2043}, we need
the following well-known inequality
(see, for instance, \cite[p.\,699]{b2002}).

\begin{lemma}\label{1111}
Let $q\in(0,\infty)$.
Then, for any $\theta\in(0,1)$,
there exists a positive constant $C_{(\theta)}$ such that,
for any $a,b\in(0,\infty)$,
$$
(a+b)^q\leq(1+\theta)a^q+C_{(\theta)}b^q.
$$
\end{lemma}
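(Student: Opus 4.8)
The plan is to exploit the positive homogeneity of both sides in $(a,b)$ and then to treat the two regimes $q\in(0,1]$ and $q\in(1,\infty)$ separately: the first case is immediate from concavity, while the second requires a splitting of the ratio $b/a$ into small and large values.

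First I would dispose of the case $q\in(0,1]$. Here the function $t\mapsto t^q$ is concave on $[0,\infty)$ and vanishes at $0$, hence is subadditive, so that $(a+b)^q\le a^q+b^q\le(1+\theta)a^q+b^q$ for all $a,b\in(0,\infty)$; thus the assertion holds with $C_{(\theta)}:=1$, independently of $\theta$.

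For the remaining case $q\in(1,\infty)$, I would note that both sides of the claimed inequality are positively homogeneous of degree $q$ in the pair $(a,b)$, so, after dividing by $a^q$ and setting $t:=b/a\in(0,\infty)$, it suffices to produce a constant $C_{(\theta)}\in(0,\infty)$ with
$$
(1+t)^q\le(1+\theta)+C_{(\theta)}\,t^q\qquad\text{for all }t\in(0,\infty).
$$
Since $\lim_{t\to0^+}(1+t)^q=1<1+\theta$, there exists $\delta\in(0,1)$, depending only on $\theta$ and $q$, such that $(1+t)^q\le1+\theta$ whenever $t\in(0,\delta]$; for such $t$ the displayed inequality is trivially true for every positive $C_{(\theta)}$. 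For $t\in(\delta,\infty)$ I would write $(1+t)^q=t^q(1+t^{-1})^q\le(1+\delta^{-1})^q\,t^q$, so the displayed inequality also holds on $(\delta,\infty)$ with $C_{(\theta)}:=(1+\delta^{-1})^q$. Hence this choice of $C_{(\theta)}$ works on all of $(0,\infty)$, and undoing the normalization by $a$ yields the claim for all $a,b\in(0,\infty)$.

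The argument is entirely elementary, so there is no genuine obstacle; the only point deserving attention is that, for $q>1$, one cannot hope to take $C_{(\theta)}$ close to $1$, and the finiteness of the admissible constant is secured precisely by isolating the small values of $b/a$ (where $(1+b/a)^q$ is close to $1$, so that the $(1+\theta)a^q$ term alone suffices) from the large ones (where $(1+b/a)^q$ is comparable to $(b/a)^q$, so that the $C_{(\theta)}b^q$ term absorbs it).
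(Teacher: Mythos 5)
Your proof is correct. Note that the paper itself does not prove this lemma at all: it is stated as a well-known inequality with a pointer to \cite[p.\,699]{b2002}, so your argument supplies details the paper leaves to the reference. Both halves of your argument are sound: for $q\in(0,1]$ the subadditivity of $t\mapsto t^q$ (concave, vanishing at $0$) indeed gives the stronger bound with $C_{(\theta)}=1$, and for $q\in(1,\infty)$ the reduction by homogeneity to $(1+t)^q\leq(1+\theta)+C_{(\theta)}t^q$, followed by the split at a threshold $\delta$ (which you could make explicit as $\delta=(1+\theta)^{1/q}-1$, by monotonicity rather than just continuity), is exactly the right mechanism; the choice $C_{(\theta)}=(1+\delta^{-1})^q$ then covers $t>\delta$. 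For comparison, the more common one-line route for $q>1$ is via convexity: writing $a+b=\lambda\frac{a}{\lambda}+(1-\lambda)\frac{b}{1-\lambda}$ gives $(a+b)^q\leq\lambda^{1-q}a^q+(1-\lambda)^{1-q}b^q$, and choosing $\lambda\in(0,1)$ with $\lambda^{1-q}\leq1+\theta$ yields the explicit constant $C_{(\theta)}=\bigl(1-(1+\theta)^{-1/(q-1)}\bigr)^{1-q}$; this avoids the case split on $b/a$ but is otherwise equivalent in content, so nothing is gained or lost beyond the explicit form of the constant, which is immaterial for the application in Proposition~\ref{2043}.
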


\begin{proof}[Proof of Proposition~\ref{2043}]
Let $f\in C^\infty(\overline{\Omega})$,
$x\in\Omega$, and
$R_x:=\mathrm{dist\,}(x,\Omega^\complement)$.
Then $R_x\in(0,\infty)$
because $\Omega$ is a bounded open set.
We write
\begin{align}\label{I1+I2}
&\int_\Omega\frac{|f(x)-f(y)|^p}{|x-y|^p}
\rho_\nu(|x-y|)\,dy\\
&\quad=\int_{B(x,R_x)}\frac{|f(x)-f(y)|^p}{|x-y|^p}
\rho_\nu(|x-y|)\,dy
+\int_{\Omega\setminus B(x,R_x)}\ldots\nonumber\\
&\quad=:I_1(x)+I_2(x).\nonumber
\end{align}
To estimate $I_1(x)$,
applying the Taylor expansion and the mean value theorem,
we find that there exists $L\in(0,\infty)$ such that,
for any $y\in B(x,R_x)$,
$$
\left|f(x)-f(y)-\nabla f(x)\cdot(x-y)\right|\leq L|x-y|^2,
$$
which further implies that, for any $y\neq x$,
\begin{align}\label{2129}
&\frac{|\nabla f(x)\cdot(x-y)|}{|x-y|}
-L|x-y|\\
&\quad\leq
\frac{\left|f(x)-f(y)\right|}{|x-y|}
\leq\frac{|\nabla f(x)\cdot(x-y)|}{|x-y|}
+L|x-y|.\nonumber
\end{align}
On the one hand,
by the second inequality of \eqref{2129}
and Lemma~\ref{1111},
we find that,
for any given $\theta_1\in(0,\infty)$,
there exists a positive constant $C_{(\theta_1)}$ such that
\begin{align}\label{I1}
I_1(x)
&\leq(1+\theta_1)\int_{B(x,R_x)}
\frac{|\nabla f(x)\cdot(x-y)|^p}{|x-y|^p}
\rho_\nu(|x-y|)\,dy\\
&\quad+C_{(\theta_1)}L^p\int_{B(x,R_x)}
|x-y|^p
\rho_\nu(|x-y|)\,dy\nonumber\\
&=:(1+\theta_1)I_{1,1}(x)+C_{(\theta_1)}
L^pI_{1,2}(x).\nonumber
\end{align}
To estimate $I_{1,1}(x)$, from the change of variable, the polar
coordinate, and both (ii) and (iii) of Definition~\ref{ATI},
we deduce that
\begin{align}\label{I11}
I_{1,1}(x)&=\left|\nabla f(x)\right|^p
\int_{\mathbb{S}^{n-1}}\left|\frac{\nabla f(x)}{|\nabla f(x)|}
\cdot\omega\right|^p\,d\sigma(\omega)\int_0^{R_x}
\rho_\nu(r)r^{n-1}\,dr\\
&=\kappa(p,n)\left|\nabla f(x)\right|^p
\left[\int_0^{\infty}
\rho_\nu(r)r^{n-1}\,dr-
\int_{R_x}^\infty\ldots\right]\nonumber\\
&=\kappa(p,n)\left|\nabla f(x)\right|^p
\left[1-\int_{R_x}^\infty\rho_\nu(r)r^{n-1}\,dr\right]
\to\kappa(p,n)\left|\nabla f(x)\right|^p\nonumber
\end{align}
as $\nu\to0^+.$
To estimate $I_{1,2}(x)$, by the polar
coordinate and both (ii) and (iii) of Definition~\ref{ATI},
we find that, for any $N\in\mathbb{N}$,
\begin{align}\label{I12}
I_{1,2}(x)&=\int_{\mathbb{S}^{n-1}}\int_0^{R_x}
\rho_\nu(r)r^{n-1}r^p\,dr\,d\sigma\\
&\sim\left[\int_0^{\frac{R_x}{N}}
\rho_\nu(r)r^{n-1}r^p\,dr
+\int_{\frac{R_x}{N}}^{R_x}\ldots\right]\nonumber\\
&\leq\left[\left(
\frac{R_x}{N}\right)^p\int_0^{\frac{R_x}{N}}
\rho_\nu(r)r^{n-1}\,dr
+R_x^p\int_{\frac{R_x}{N}}^{R_x}\ldots\right]\nonumber\\
&\leq\left[\left(
\frac{R_x}{N}\right)^p\int_0^{\infty}
\rho_\nu(r)r^{n-1}\,dr
+R_x^p\int_{\frac{R_x}{N}}^{\infty}\ldots\right]\nonumber\\
&=\left[\left(
\frac{R_x}{N}\right)^p
+R_x^p\int_{\frac{R_x}{N}}^{\infty}
\rho_\nu(r)r^{n-1}\,dr\right]\to0\nonumber
\end{align}
via first letting $\nu\to0^+$
and then letting $N\to\infty$.
From this, \eqref{I1}, and \eqref{I11},
we infer that
\begin{align}\label{2204}
I_1(x)\leq(1+\theta_1)I_{1,1}(x)+C_{(\theta_1)}L^pI_{1,2}(x)
\to\kappa(p,n)\left|\nabla f(x)\right|^p
\end{align}
via first letting $\nu\to0^+$
and then letting $\theta_1\to0^+$.
On the other hand, by the first inequality of \eqref{2129}
and Lemma~\ref{1111}, we find that,
for any given $\theta_2\in(0,\infty)$,
there exists a positive constant $C_{(\theta_2)}$ such that
\begin{align*}
I_1(x)
&\ge\frac{1}{1+\theta_2}\int_{B(x,R_x)}
\left|\frac{\nabla f(x)\cdot(x-y)}{|x-y|}\right|^p
\rho_\nu(|x-y|)\,dy\\
&\quad-\frac{C_{(\theta_2)}}{1+\theta_2}L^p\int_{B(x,R_x)}
|x-y|^p
\rho_\nu(|x-y|)\,dy\\
&=\frac{1}{1+\theta_2}I_{1,1}(x)
-\frac{C_{(\theta_2)}L^p}{1+\theta_2}I_{1,2}(x),
\end{align*}
which, combined with both \eqref{I11} and \eqref{I12},
further implies that
\begin{align*}
I_1(x)\ge\frac{1}{1+\theta_2}I_{1,1}(x)
-\frac{C_{(\theta_2)}L^p}{1+\theta_2}I_{1,2}(x)
\to\kappa(p,n)\left|\nabla f(x)\right|^p
\end{align*}
via first letting $\nu\to0^+$
and then letting $\theta_2\to0^+$.
From this and \eqref{2204}, we infer that
\begin{align}\label{2206}
I_1(x)\to\kappa(p,n)\left|\nabla f(x)\right|^p
\end{align}
as $\nu\to0^+$.

Next, we turn to estimate $I_2(x)$.
By $f\in C^\infty(\overline{\Omega})$
and the fact that $\Omega$ is a bounded
$(\varepsilon,\infty)$-domain
[here, we use both (ii)
and (iii) of Definition~\ref{2121}],
we find that, for any $x,y\in\Omega$,
\begin{align}\label{949}
\left|f(x)-f(y)\right|
&\leq\int_{\Gamma_{x,y}}\left|\nabla f(s)\right|\,ds
\leq\left\|\,|\nabla f|\,\right\|_{L^\infty(\Omega)}
\ell(\Gamma_{x,y})\\
&\leq\frac{\|\,|\nabla f|\,\|_{L^\infty(\Omega)}}
{\varepsilon}|x-y|,\nonumber
\end{align}
where $\Gamma_{x,y}\subset\mathbb{R}^n$ is
the same rectifiable curve as in Definition~\ref{2121}
connecting $x$ and $y$, which, combined with
the polar coordinate, $R_x\in(0,\infty)$,
and Definition~\ref{ATI}(iii),
further implies that
\begin{align*}
I_2(x)\leq
\left\|\,|\nabla f|\,\right\|_{L^\infty(\Omega)}^p
\varepsilon^{-p}
\int_{\mathbb{S}^{n-1}}\int_{R_x}^\infty
\rho_\nu(r)r^{n-1}\,dr\,d\sigma\to0
\end{align*}
as $\nu\to0^+$.
From this, \eqref{2206}, and \eqref{I1+I2},
we deduce that, for any $x\in\Omega$,
\begin{align}\label{2215}
\int_\Omega\frac{|f(x)-f(y)|^p}{|x-y|^p}
\rho_\nu(|x-y|)\,dy
=I_1(x)+I_2(x)\to\kappa(p,n)\left|\nabla f(x)\right|^p
\end{align}
as $\nu\to0^+$.
Moreover, by \eqref{949},
the polar coordinate,
and Definition~\ref{ATI}(ii), we find that,
for any $x\in\Omega$,
\begin{align*}
&\int_\Omega\frac{|f(x)-f(y)|^p}{|x-y|^p}
\rho_\nu(|x-y|)\,dy\\
&\quad\lesssim
\left\|\,|\nabla f|\,\right\|_{L^\infty(\Omega)}^p
\int_\Omega\rho_\nu(|x-y|)\,dy\\
&\quad\leq\left\|\,|\nabla f|\,\right\|_{L^\infty(\Omega)}^p
\int_{\mathbb{S}^{n-1}}\int_0^\infty
\rho_\nu(r)r^{n-1}\,dr\,d\sigma\\
&\quad\sim
\left\|\,|\nabla f|\,\right\|_{L^\infty(\Omega)}^p,
\end{align*}
the right hand side of which obviously belongs to $X(\Omega)$
because $\Omega$ is bounded.
From this, Proposition~\ref{1533}(v), \eqref{2215},
and Remark~\ref{LdctOmega},
we deduce that
\begin{align*}
&\lim_{\nu\to0^+}
\left\|\left[\int_\Omega\frac{|f(\cdot)-f(y)|^p}{
|\cdot-y|^p}\rho_\nu(|\cdot-y|)\,dy
\right]^\frac{1}{p}\right\|_{X(\Omega)}\\
&\quad\leq\left[\kappa(p,n)\right]^\frac{1}{p}
\left\|\,\left|\nabla f\right|\,\right\|_{X(\Omega)}\\
&\qquad+\lim_{\nu\to0^+}
\left\|\left[\int_\Omega\frac{|f(\cdot)-f(y)|^p}{
|\cdot-y|^p}\rho_\nu(|\cdot-y|)\,dy
\right]^\frac{1}{p}-\left[\kappa(p,n)\right]^\frac{1}{p}
\left|\nabla f(\cdot)\right|\right\|_{X(\Omega)}\\
&\quad=\left[\kappa(p,n)\right]^\frac{1}{p}
\left\|\,\left|\nabla f\right|\,\right\|_{X(\Omega)}.
\end{align*}
This finishes the proof of Proposition~\ref{2043}.
\end{proof}

\subsection{A Key Upper Estimate}
\label{sub4.2}

The main target of this subsection is to establish
the following proposition which gives a key upper estimate
in the proof of Theorem~\ref{2045}.

\begin{proposition}\label{1537}
Let $\Omega\subset\mathbb{R}^n$ be
a bounded $(\varepsilon,\infty)$-domain
with $\varepsilon\in(0,1]$.
Let $X(\mathbb{R}^n)$ be a ball Banach function space,
$p\in[1,\infty)$,
and $\rho\in L_{\mathrm{loc}}^1(0,\infty)$
a nonnegative and decreasing function
satisfying Definition~\ref{ATI}(ii).
Assume that $X^\frac{1}{p}(\mathbb{R}^n)$
is a ball Banach function space
and that the Hardy--Littlewood maximal operator $\mathcal{M}$
is bounded on $[X^\frac{1}{p}(\mathbb{R}^n)]'$.
Then there exists a positive constant $C$ such that,
for any $f\in\dot{W}^{1,X}(\Omega)$,
\begin{align*}
\left\|\left[\int_\Omega\frac{|f(\cdot)-f(y)|^p}{
|\cdot-y|^p}\rho(|\cdot-y|)\,dy
\right]^\frac{1}{p}\right\|_{X(\Omega)}
\leq C\left\|\,\left|\nabla f\right|\,\right\|_{X(\Omega)}.
\end{align*}
\end{proposition}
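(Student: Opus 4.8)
The plan is to reduce the asserted bound first to a weighted estimate on $\Omega$, then, via an extension theorem, to a weighted estimate on all of $\mathbb{R}^n$, and finally to pass back to $X(\Omega)$ by the extrapolation machinery of Lemma~\ref{4.6} and Remark~\ref{waicha}. Note that, unlike the density and extension theorems of Section~\ref{yantuo}, no absolutely continuous norm assumption on $X(\mathbb{R}^n)$ is needed here: we only use that $X^\frac{1}{p}(\mathbb{R}^n)$ is a ball Banach function space and that $\mathcal{M}$ is bounded on $[X^\frac{1}{p}(\mathbb{R}^n)]'$, together with the structural properties of $X(\Omega)$ from Proposition~\ref{1533}.

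First I would prove that, for any $\omega\in A_1(\mathbb{R}^n)$ and any $f\in\dot{W}^{1,p}_\omega(\Omega)$,
\begin{align}\label{1537-wtd}
\int_\Omega\int_\Omega\frac{|f(x)-f(y)|^p}{|x-y|^p}\rho(|x-y|)\omega(x)\,dy\,dx
\leq C_\omega\int_\Omega|\nabla f(x)|^p\omega(x)\,dx,
\end{align}
where $C_\omega$ depends on $\omega$ only through $[\omega]_{A_1(\mathbb{R}^n)}$ (besides $\varepsilon$, $p$, $n$, and the normalization $\int_0^\infty\rho(r)r^{n-1}\,dr=1$). Indeed, let $\Lambda$ be the extension operator of Lemma~\ref{chua1992'} with $m=1$ and set $F:=\Lambda(f)\in\dot{W}^{1,p}_\omega(\mathbb{R}^n)$, so that $F=f$ and $\nabla F=\nabla f$ almost everywhere in $\Omega$ and $\|\,|\nabla F|\,\|_{L^p_\omega(\mathbb{R}^n)}\lesssim\|\,|\nabla f|\,\|_{L^p_\omega(\Omega)}$ with implicit constant depending on $\omega$ only through $[\omega]_{A_p(\mathbb{R}^n)}\leq[\omega]_{A_1(\mathbb{R}^n)}$. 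Since the integrand in \eqref{1537-wtd} is nonnegative, enlarging both domains of integration from $\Omega$ to $\mathbb{R}^n$ and replacing $f$ by $F$ only increases the left-hand side; the weighted upper estimate \cite[(3.12)]{dgpyyz2022} on $\mathbb{R}^n$ then bounds the resulting quantity by $C\int_{\mathbb{R}^n}|\nabla F|^p\omega$, which in turn is $\lesssim\int_\Omega|\nabla f|^p\omega$ by the extension bound. This gives \eqref{1537-wtd}.

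Next I would run the extrapolation. For $x\in\Omega$ set $\Phi(x):=[\int_\Omega|f(x)-f(y)|^p|x-y|^{-p}\rho(|x-y|)\,dy]^{1/p}$. Given $g\in[X^\frac{1}{p}(\mathbb{R}^n)]'$ with $\|g\|_{[X^\frac{1}{p}(\mathbb{R}^n)]'}=1$, put $\omega_g:=R_{[X^\frac{1}{p}(\mathbb{R}^n)]'}g$; by Lemma~\ref{4.6}, $\omega_g\in A_1(\mathbb{R}^n)$ with $[\omega_g]_{A_1(\mathbb{R}^n)}\leq2\|\mathcal{M}\|_{[X^\frac{1}{p}(\mathbb{R}^n)]'\to[X^\frac{1}{p}(\mathbb{R}^n)]'}$, a bound independent of $g$, and, by the right-hand inequality of Remark~\ref{waicha} applied to $|\nabla f|\in X(\Omega)$, $\int_\Omega|\nabla f|^p\omega_g\leq2\|\,|\nabla f|\,\|_{X(\Omega)}^p<\infty$, so that $f\in\dot{W}^{1,p}_{\omega_g}(\Omega)$. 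Thus \eqref{1537-wtd} applies with $\omega:=\omega_g$, and since $C_{\omega_g}$ depends on $\omega_g$ only through $[\omega_g]_{A_1(\mathbb{R}^n)}$ it may be replaced by a constant $C$ uniform in $g$, giving
\begin{align}\label{1537-ex}
\int_\Omega\Phi(x)^p\omega_g(x)\,dx
\leq C^p\int_\Omega|\nabla f(x)|^p\omega_g(x)\,dx
\leq2C^p\left\|\,|\nabla f|\,\right\|_{X(\Omega)}^p.
\end{align}
To invoke the left-hand inequality of Remark~\ref{waicha} legitimately, I would truncate: for $N\in\mathbb{N}$ with $\Omega\subset B(\mathbf{0},N)$, the function $\Phi_N:=\min\{\Phi,N\}$ is bounded and supported in the bounded set $\Omega$, hence $\Phi_N\in X(\Omega)$ by Proposition~\ref{1533}(ii) and~(iv); Remark~\ref{waicha} and \eqref{1537-ex} then yield $\|\Phi_N\|_{X(\Omega)}\leq\sup_g[\int_\Omega\Phi^p\omega_g]^{1/p}\leq2^{1/p}C\|\,|\nabla f|\,\|_{X(\Omega)}$, uniformly in $N$. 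Since $0\leq\Phi_N\uparrow\Phi$ almost everywhere in $\Omega$ as $N\to\infty$, Proposition~\ref{1533}(iii) gives $\|\Phi_N\|_{X(\Omega)}\uparrow\|\Phi\|_{X(\Omega)}$, whence $\|\Phi\|_{X(\Omega)}\leq2^{1/p}C\|\,|\nabla f|\,\|_{X(\Omega)}$, which is the claim.

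The step I expect to be the main obstacle is the bookkeeping of constants across the extrapolation: one must verify that the constants in \cite[(3.12)]{dgpyyz2022} and in Chua's weighted extension theorem (Lemma~\ref{chua1992'}) depend on the weight only through its $A_1$ (equivalently $A_p$) characteristic, so that, when $\omega=\omega_g$, all these dependencies collapse into the single uniform bound $2\|\mathcal{M}\|_{[X^\frac{1}{p}(\mathbb{R}^n)]'\to[X^\frac{1}{p}(\mathbb{R}^n)]'}$; the secondary subtlety is the a priori membership $\Phi\in X(\Omega)$, handled by truncation together with the monotone convergence property of ball Banach function spaces rather than by applying Remark~\ref{waicha} directly to $\Phi$.
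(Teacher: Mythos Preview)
Your proof is correct and follows essentially the same route as the paper: establish the weighted inequality on $\Omega$ by extending via Lemma~\ref{chua1992'} and invoking the $\mathbb{R}^n$ estimate (the paper's Lemma~\ref{dgpyyz}, which encapsulates \cite[(3.12)]{dgpyyz2022}), then pass to $X(\Omega)$ by the extrapolation of Remark~\ref{waicha} with the uniform $A_1$ bound on $\omega_g$. Your truncation step $\Phi_N:=\min\{\Phi,N\}$ together with Proposition~\ref{1533}(iii) is a welcome extra care that the paper omits (it applies Remark~\ref{waicha} directly to $\Phi$, tacitly relying on the fact that the left-hand inequality there holds for arbitrary measurable functions by duality); likewise, your use of the right-hand inequality of Remark~\ref{waicha} to bound $\int_\Omega|\nabla f|^p\omega_g$ is equivalent to the paper's route through Lemma~\ref{1639} and the bound $\|\omega_g\|_{[X^{1/p}(\mathbb{R}^n)]'}\leq2$.
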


To show Proposition~\ref{1537},
we need the following lemma
which can be deduced from
an argument similar to the proof of \cite[Lemma~3.23]{dgpyyz2022};
we present the details here for the convenience of the reader.

\begin{lemma}\label{dgpyyz}
Let $p\in[1,\infty)$ and $\rho\in L_{\mathrm{loc}}^1(0,\infty)$
be a nonnegative and decreasing function
satisfying Definition~\ref{ATI}(ii).
Assume that $\omega\in A_1(\mathbb{R}^n)$.
Then there exists a positive constant $C_{(n,p)}$,
depending only on both $n$ and $p$,
such that, for any $f\in\dot{W}^{1,p}_\omega(\mathbb{R}^n)$,
\begin{align}\label{937}
&\int_{\mathbb{R}^n}\int_{\mathbb{R}^n}
\frac{|f(x)-f(y)|^p}{|x-y|^p}\rho(|x-y|)\omega(x)\,dy\,dx\\
&\quad\leq C_{(n,p)}[\omega]_{A_1(\mathbb{R}^n)}^2
\left\|\,\left|\nabla f\right|\,\right\|_{L^p_\omega(\mathbb{R}^n)}^p.\nonumber
\end{align}
\end{lemma}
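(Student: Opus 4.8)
The plan is to bound the double integral in \eqref{937} by a single weighted $L^p$-norm of $\nabla f$, transferring the weight through a change of variables and then absorbing the resulting average of $\omega$ via the $A_1(\mathbb{R}^n)$-condition. Since $\omega\in A_1(\mathbb{R}^n)\subset A_p(\mathbb{R}^n)$, one has $\dot{W}^{1,p}_\omega(\mathbb{R}^n)\subset W^{1,1}_{\mathrm{loc}}(\mathbb{R}^n)$, so each $f\in\dot{W}^{1,p}_\omega(\mathbb{R}^n)$ has a representative that is absolutely continuous on almost every line and satisfies, for almost every $(x,y)\in\mathbb{R}^n\times\mathbb{R}^n$,
$$
f(x)-f(y)=\int_0^1\nabla f(y+t(x-y))\cdot(x-y)\,dt
$$
(alternatively, one may first prove \eqref{937} for $f\in C_{\mathrm c}^\infty(\mathbb{R}^n)$ and then pass to the general case by mollification). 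Since $p\geq1$, Jensen's inequality then yields the pointwise bound
$$
\frac{|f(x)-f(y)|^p}{|x-y|^p}\leq\int_0^1|\nabla f(y+t(x-y))|^p\,dt .
$$

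Next I would substitute $h:=x-y$ in \eqref{937}, insert this pointwise bound, and use Tonelli's theorem (all integrands being nonnegative) together with the change of variables $z:=x-(1-t)h$ for each fixed $t\in[0,1)$ and $h\in\mathbb{R}^n$, to obtain that the left-hand side of \eqref{937} is at most
$$
\int_0^1\int_{\mathbb{R}^n}|\nabla f(z)|^p\left[\int_{\mathbb{R}^n}\rho(|h|)\,\omega(z+(1-t)h)\,dh\right]dz\,dt .
$$
For fixed $t$, a further change of variables $u:=(1-t)h$ turns the inner integral into the convolution $(\Psi_t*\omega)(z)$, where $\Psi_t(\cdot):=\rho_{(t)}(|\cdot|)$ and $\rho_{(t)}(r):=(1-t)^{-n}\rho(r/(1-t))$. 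The kernel $\Psi_t$ is again nonnegative, radial, and decreasing and, crucially, Definition~\ref{ATI}(ii) is scale invariant, so $\int_{\mathbb{R}^n}\Psi_t(x)\,dx=\int_{\mathbb{R}^n}\rho(|x|)\,dx$ for \emph{every} $t\in[0,1)$, a finite quantity depending only on $n$.

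Now I would invoke the classical pointwise estimate that convolution against a nonnegative radial decreasing integrable kernel $\Psi$ is dominated by $\|\Psi\|_{L^1(\mathbb{R}^n)}\mathcal{M}(\cdot)$, whose proof is the layer-cake decomposition $\Psi(x)=\int_0^\infty\mathbf{1}_{B(\mathbf{0},s)}(x)\,d\mu(s)$ for a suitable nonnegative measure $\mu$. This gives $(\Psi_t*\omega)(z)\leq C_{(n)}\mathcal{M}\omega(z)$ for almost every $z$, uniformly in $t$, and the $A_1(\mathbb{R}^n)$-condition gives $\mathcal{M}\omega\leq C_{(n)}[\omega]_{A_1(\mathbb{R}^n)}\omega$ almost everywhere; plugging these back and integrating in $t\in(0,1)$ produces \eqref{937}, in fact with the stronger factor $[\omega]_{A_1(\mathbb{R}^n)}$ in place of $[\omega]_{A_1(\mathbb{R}^n)}^2$ (the stated form then follows since $[\omega]_{A_1(\mathbb{R}^n)}\geq1$). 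The changes of variables and the use of Tonelli's theorem are routine, since nothing negative appears. The two points deserving care are: (i) the segmental fundamental-theorem-of-calculus representation for a \emph{general} $f\in\dot{W}^{1,p}_\omega(\mathbb{R}^n)$, i.e.\ absolute continuity on almost every line for weighted Sobolev functions with an $A_1(\mathbb{R}^n)$-weight, which rests on $\dot{W}^{1,p}_\omega(\mathbb{R}^n)\subset W^{1,1}_{\mathrm{loc}}(\mathbb{R}^n)$; and (ii) the convolution--maximal-function inequality for radial decreasing kernels, whose decisive feature here is that the rescaled kernels $\Psi_t$ all carry the same $L^1$-mass, which is exactly what makes the estimate uniform in the auxiliary parameter $t$ and hence yields a single constant in \eqref{937}. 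Step (ii) is the only genuinely nontrivial ingredient.
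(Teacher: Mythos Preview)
Your argument is correct and takes a genuinely different route from the paper. The paper proceeds by density: it first invokes Lemma~\ref{952} (the smooth case, quoted from \cite[Lemma~3.30]{dgpyyz2022}) and then, for a general $f\in\dot{W}^{1,p}_\omega(\mathbb{R}^n)$, picks an approximating sequence $\{f_k\}\subset C^\infty(\mathbb{R}^n)$ with $|\nabla f_k|\in C_{\mathrm c}(\mathbb{R}^n)$ via Lemma~\ref{319}, splits $|f(x)-f(y)|$ by the triangle inequality into three pieces, and controls the cross terms by restricting to truncation sets $E_N$ on which $\omega$ is bounded above and below; the Levi lemma then removes the truncation. By contrast, you bypass the density step entirely: after observing that $\omega\in A_1(\mathbb{R}^n)$ forces $\dot{W}^{1,p}_\omega(\mathbb{R}^n)\subset W^{1,1}_{\mathrm{loc}}(\mathbb{R}^n)$, you apply the segmental fundamental theorem of calculus directly, use Jensen, and reduce the inner integral to $(\Psi_t\ast\omega)(z)$ with a radial decreasing kernel of fixed $L^1$-mass, whence the pointwise bound by $\mathcal{M}\omega$ and the $A_1$ condition finish the job.

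What each approach buys: the paper's route is modular---it cleanly separates the analytic core (the smooth estimate) from the functional-analytic passage to the limit, and the truncation to $E_N$ handles the ratio $\omega(x)/\omega(y)$ that appears when symmetrizing---but at the price of a somewhat involved limiting argument. Your route is shorter and more transparent, and it yields the sharper dependence $[\omega]_{A_1(\mathbb{R}^n)}$ rather than $[\omega]_{A_1(\mathbb{R}^n)}^2$ (since the $A_1$ condition is invoked exactly once, through $\mathcal{M}\omega\lesssim[\omega]_{A_1(\mathbb{R}^n)}\omega$). The only point that needs a sentence of justification in your write-up is (i): the line-integral identity for a.e.\ $(x,y)$ follows from $f\in W^{1,1}_{\mathrm{loc}}(\mathbb{R}^n)$ by mollification and Fubini, exactly as you indicate.
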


To show Lemma~\ref{dgpyyz}, we need two technical lemmas.
The following lemma can be deduced from the proof
of \cite[Lemma~3.30]{dgpyyz2022}
which also holds true for any $f\in C^\infty(\mathbb{R}^n)$
with $|\nabla f|\in C_{\mathrm{c}}(\mathbb{R}^n)$;
we omit the details.

\begin{lemma}\label{952}
Let all the symbols be the same as in Lemma~\ref{dgpyyz}.
Then there exists a positive constant $C_{(n,p)}$,
depending only on both $n$ and $p$,
such that, for any $f\in C^\infty(\mathbb{R}^n)$
with $|\nabla f|\in C_{\mathrm{c}}(\mathbb{R}^n)$,
\eqref{937} holds true.
\end{lemma}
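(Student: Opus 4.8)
The plan is to reduce \eqref{937} to a pointwise weighted estimate for a single averaging kernel, and then to prove that estimate by a dyadic decomposition that trades one-dimensional line integrals of $\omega$ for solid averages. First, since $f\in C^\infty(\mathbb{R}^n)$, the fundamental theorem of calculus along the segment joining $x$ and $y$ gives, for all $x,y\in\mathbb{R}^n$,
\[
f(y)-f(x)=\int_0^1\nabla f\big(x+t(y-x)\big)\cdot(y-x)\,dt,
\]
so that, by the triangle inequality together with Jensen's inequality (applied to the convex function $u\mapsto u^p$ with $p\in[1,\infty)$ and the probability measure $dt$ on $[0,1]$),
\[
\frac{|f(x)-f(y)|^p}{|x-y|^p}\le\int_0^1\Big|\nabla f\big(x+t(y-x)\big)\Big|^p\,dt
\qquad\text{whenever }x\neq y.
\]
Inserting this into the left-hand side of \eqref{937}, using the Tonelli theorem (all integrands are nonnegative), and then performing the two successive linear changes of variables $h:=y-x$ (for fixed $x$) and $z:=x+th$ (for fixed $t$ and $h$), both with Jacobian $1$, one arrives at
\[
\int_{\mathbb{R}^n}\int_{\mathbb{R}^n}\frac{|f(x)-f(y)|^p}{|x-y|^p}\rho(|x-y|)\omega(x)\,dy\,dx
\le\int_{\mathbb{R}^n}\big|\nabla f(z)\big|^p\,W(z)\,dz,
\]
where, for any $z\in\mathbb{R}^n$,
\[
W(z):=\int_{\mathbb{R}^n}\int_0^1\rho(|h|)\,\omega(z-th)\,dt\,dh .
\]
Hence \eqref{937} would follow once we prove the pointwise bound $W(z)\le C_{(n)}[\omega]_{A_1(\mathbb{R}^n)}\omega(z)$ for almost every $z\in\mathbb{R}^n$: indeed $\||\nabla f|\|_{L^p_\omega(\mathbb{R}^n)}^p<\infty$ because $|\nabla f|\in C_{\mathrm{c}}(\mathbb{R}^n)$ and $\omega$ is locally integrable, and $[\omega]_{A_1(\mathbb{R}^n)}\ge1$, so this yields \eqref{937} with $C_{(n,p)}:=C_{(n)}$.

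To estimate $W(z)$, I would pass to polar coordinates $h=r\theta$ with $r\in(0,\infty)$ and $\theta\in\mathbb{S}^{n-1}$ and substitute $s=tr$ in the inner integral, obtaining
\[
W(z)=\int_{\mathbb{S}^{n-1}}\int_0^\infty\rho(r)\,r^{n-2}\Bigg[\int_0^r\omega(z-s\theta)\,ds\Bigg]\,dr\,d\sigma(\theta) .
\]
The heart of the matter is then the estimate
\[
\int_{\mathbb{S}^{n-1}}\int_0^r\omega(z-s\theta)\,ds\,d\sigma(\theta)\le C_{(n)}\,r\,[\omega]_{A_1(\mathbb{R}^n)}\,\omega(z)
\qquad\text{for a.e. }z\in\mathbb{R}^n\text{ and all }r\in(0,\infty).
\]
To prove this, decompose $(0,r)=\bigcup_{k\in\mathbb{Z}_+}[2^{-k-1}r,2^{-k}r)$; on the $k$-th piece one has $1\le(2^{-k-1}r)^{-(n-1)}s^{n-1}$, and inserting this harmless factor turns $\int_{\mathbb{S}^{n-1}}\int_{2^{-k-1}r}^{2^{-k}r}\omega(z-s\theta)\,ds\,d\sigma(\theta)$ into the solid integral $(2^{-k-1}r)^{-(n-1)}\int_{\{2^{-k-1}r\le|x-z|<2^{-k}r\}}\omega(x)\,dx$, which is at most $(2^{-k-1}r)^{-(n-1)}|B(z,2^{-k}r)|\,\mathcal{M}\omega(z)\le C_{(n)}\,2^{-k}r\,\mathcal{M}\omega(z)$. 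Since $\omega\in A_1(\mathbb{R}^n)$ implies $\mathcal{M}\omega(z)\le C_{(n)}[\omega]_{A_1(\mathbb{R}^n)}\omega(z)$ for a.e. $z$, summing the resulting geometric series over $k\in\mathbb{Z}_+$ gives the displayed estimate. Feeding it back into the formula for $W(z)$ and invoking the normalization $\int_0^\infty\rho(r)r^{n-1}\,dr=1$ from Definition~\ref{ATI}(ii), we obtain $W(z)\le C_{(n)}[\omega]_{A_1(\mathbb{R}^n)}\omega(z)$, as required.

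The main obstacle is that $L^p_\omega(\mathbb{R}^n)$ is \emph{not} translation invariant: the elementary bound $\|\nabla f(\cdot+th)\|_{L^p_\omega(\mathbb{R}^n)}\le\|\nabla f\|_{L^p_\omega(\mathbb{R}^n)}$, which trivializes the unweighted case, fails here, and this is precisely why the quantitative analysis of $W(z)$ cannot be avoided. Within that analysis, the delicate point is that a one-dimensional line integral $\int_0^r\omega(z-s\theta)\,ds$ is in general \emph{not} comparable to $r\,\omega(z)$ for an arbitrary $A_1$ weight; the remedy is the insertion of the Jacobian-type factor $s^{n-1}$ on each dyadic annulus, which converts such line integrals into solid annular integrals, to which the $A_1$/maximal-function bound does apply, with the summability over scales then supplied for free by the radial normalization of $\rho$. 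Finally, since the statement permits the weaker constant $[\omega]_{A_1(\mathbb{R}^n)}^2$, the slightly sharper linear dependence obtained above is more than enough; the squared dependence would only arise if one first dominated $\int_0^r\omega(z-s\theta)\,ds$ by a maximal-type average and then used the $A_1$ condition a second time, a detour we can dispense with.
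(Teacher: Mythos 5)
Your argument is correct, and it is genuinely self-contained, whereas the paper gives no written proof of this lemma at all: it merely observes that the proof of \cite[Lemma~3.30]{dgpyyz2022} goes through verbatim for $f\in C^\infty(\mathbb{R}^n)$ with $|\nabla f|\in C_{\mathrm{c}}(\mathbb{R}^n)$ and omits the details. Your route --- fundamental theorem of calculus along segments, Jensen's inequality, the two unimodular changes of variables reducing everything to the pointwise bound $W(z)\lesssim[\omega]_{A_1(\mathbb{R}^n)}\omega(z)$, and then the dyadic-annulus trick that converts the ray integrals $\int_0^r\omega(z-s\theta)\,ds$ into solid averages controlled by $\mathcal{M}\omega(z)\lesssim[\omega]_{A_1(\mathbb{R}^n)}\omega(z)$ --- invokes the $A_1$ condition only once, so it yields linear dependence on $[\omega]_{A_1(\mathbb{R}^n)}$, strictly better than the quadratic dependence $[\omega]_{A_1(\mathbb{R}^n)}^2$ recorded in \eqref{937} (which presumably reflects a two-step use of the weight in the cited proof); moreover, your proof never uses the monotonicity of $\rho$, only its nonnegativity and the normalization in Definition~\ref{ATI}(ii), so it is slightly more general in that respect as well. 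Two small points you should state explicitly but which are standard: the inequality $\mathcal{M}\omega\le C_{(n)}[\omega]_{A_1(\mathbb{R}^n)}\omega$ almost everywhere (with the paper's cube-based $[\omega]_{A_1(\mathbb{R}^n)}$ and the uncentered ball maximal operator, e.g. via \cite[Section~7.1]{g2014}) holds off a single null set independent of $r$, which is what makes your key estimate valid ``for a.e. $z$ and all $r$'' simultaneously; and all interchanges of integrals are justified by Tonelli since every integrand is nonnegative. With these remarks in place, the proof is complete and, since $[\omega]_{A_1(\mathbb{R}^n)}\ge1$, it implies \eqref{937} as stated.
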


The following lemma is a corollary of \cite[Corollary~2.18]{dlyyz.arxiv}
with $X(\mathbb{R}^n):=L^p_\omega(\mathbb{R}^n)$.

\begin{lemma}\label{319}
Let $p\in[1,\infty)$ and $\omega\in A_p(\mathbb{R}^n)$.
Then, for any $f\in\dot{W}^{1,p}_\omega(\mathbb{R}^n)$,
there exists a sequence $\{f_k\}_{k\in\mathbb{N}}
\subset C^\infty(\mathbb{R}^n)$ with
$|\nabla f_k|\in C_{\mathrm{c}}(\mathbb{R}^n)$ for any $k\in\mathbb{N}$
such that, for any $R\in(0,\infty)$,
\begin{align}\label{958}
\lim_{k\to\infty}\left\|\,\left|\nabla\left(f
-f_k\right)\right|\,\right\|_{L^p_\omega(\mathbb{R}^n)}=0
\ \text{and}\
\lim_{k\to\infty}\left\|\left(f-f_k\right)
\mathbf{1}_{B(\mathbf{0},R)}\right\|_{L^p_\omega(\mathbb{R}^n)}=0.
\end{align}
\end{lemma}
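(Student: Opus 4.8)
Since $\omega\in A_p(\mathbb{R}^n)$, the weighted Lebesgue space $L^p_\omega(\mathbb{R}^n)$ is a ball Banach function space for which the hypotheses of \cite[Corollary~2.18]{dlyyz.arxiv} are met, so the lemma is just the specialization of that corollary to $X(\mathbb{R}^n):=L^p_\omega(\mathbb{R}^n)$; the direct argument I would carry out proceeds by mollifying, then truncating the gradient, and finally diagonalizing. \emph{Step 1 (reduction to $f\in C^\infty(\mathbb{R}^n)$).} Let $\xi\in C_{\mathrm{c}}^\infty(\mathbb{R}^n)$ be nonnegative with $\int_{\mathbb{R}^n}\xi=1$, put $\xi_t:=t^{-n}\xi(t^{-1}\cdot)$ and $f^{(t)}:=f*\xi_t\in C^\infty(\mathbb{R}^n)$, so that $\nabla f^{(t)}=(\nabla f)*\xi_t$. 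Because $\omega\in A_p(\mathbb{R}^n)$, the Hardy--Littlewood maximal operator $\mathcal{M}$ is bounded on $L^p_\omega(\mathbb{R}^n)$ (classical Muckenhoupt theory; see, e.g., \cite{g2014}) and $C_{\mathrm{c}}(\mathbb{R}^n)$ is dense in $L^p_\omega(\mathbb{R}^n)$; combining these two facts with the pointwise bound $|h*\xi_t|\lesssim\mathcal{M}h$, a routine density argument gives $\|\,|\nabla(f-f^{(t)})|\,\|_{L^p_\omega(\mathbb{R}^n)}\to0$ as $t\to0^+$, and, since the weighted Poincar\'e inequality of Step 2 below shows $f\mathbf{1}_B\in L^p_\omega(\mathbb{R}^n)$ for every ball $B$, the same argument localized gives $\|(f-f^{(t)})\mathbf{1}_B\|_{L^p_\omega(\mathbb{R}^n)}\to0$ as $t\to0^+$. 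Hence it suffices to prove the lemma for $f\in C^\infty(\mathbb{R}^n)$.

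\emph{Step 2 (truncating the gradient of a smooth function).} Let $g\in C^\infty(\mathbb{R}^n)$ with $|\nabla g|\in L^p_\omega(\mathbb{R}^n)$. For $R\in(0,\infty)$ choose $\phi_R\in C_{\mathrm{c}}^\infty(\mathbb{R}^n)$ with $\mathbf{1}_{B(\mathbf{0},R)}\le\phi_R\le\mathbf{1}_{B(\mathbf{0},2R)}$ and $|\nabla\phi_R|\lesssim R^{-1}$, let $S_R:=B(\mathbf{0},2R)\setminus\overline{B(\mathbf{0},R)}$, let $c_R:=\fint_{S_R}g(y)\,dy$, and set $g_R:=\phi_R g+(1-\phi_R)c_R\in C^\infty(\mathbb{R}^n)$. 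Then $g_R=g$ on $B(\mathbf{0},R)$, and $\nabla g_R=\phi_R\nabla g+(g-c_R)\nabla\phi_R$ is supported in $\overline{B(\mathbf{0},2R)}$, so $|\nabla g_R|\in C_{\mathrm{c}}(\mathbb{R}^n)$. Here $\|(\phi_R-1)\nabla g\|_{L^p_\omega(\mathbb{R}^n)}^p\le\int_{\mathbb{R}^n\setminus B(\mathbf{0},R)}|\nabla g|^p\omega\to0$ as $R\to\infty$, and, using $|\nabla\phi_R|\lesssim R^{-1}$ together with the weighted Poincar\'e inequality on the shell $S_R$,
\[
\left\|(g-c_R)\nabla\phi_R\right\|_{L^p_\omega(\mathbb{R}^n)}^p\lesssim R^{-p}\int_{S_R}|g-c_R|^p\omega\lesssim\int_{\mathbb{R}^n\setminus B(\mathbf{0},R/2)}|\nabla g|^p\omega\to0\quad(R\to\infty),
\]
the last limit because $\int_{\mathbb{R}^n}|\nabla g|^p\omega<\infty$. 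The shell Poincar\'e inequality used here --- with constant $C([\omega]_{A_p(\mathbb{R}^n)},p,n)R^p$ independent of $R$ --- follows by covering $S_R$ with a fixed finite number of overlapping balls of radius $\sim R$, applying on each ball $B$ the inequality $\int_B|g-\fint_Bg|^p\omega\lesssim r_B^p\int_B|\nabla g|^p\omega$ (itself a consequence of the pointwise bound $|g(x)-\fint_Bg|\lesssim r_B\mathcal{M}(|\nabla g|\mathbf{1}_B)(x)$ for $x\in B$ and the $L^p_\omega(\mathbb{R}^n)$-boundedness of $\mathcal{M}$), and chaining the mean values across the overlaps by means of the doubling property of $\omega$; the scaling $x\mapsto Rx$ shows this constant does not depend on $R$. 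Thus $\|\,|\nabla(g-g_R)|\,\|_{L^p_\omega(\mathbb{R}^n)}\to0$ as $R\to\infty$.

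\emph{Step 3 (diagonalization) and the main difficulty.} For each $k\in\mathbb{N}$ first pick (Step 1) $t_k\in(0,\infty)$ with $\|\,|\nabla(f-f^{(t_k)})|\,\|_{L^p_\omega(\mathbb{R}^n)}<\frac{1}{2k}$ and $\|(f-f^{(t_k)})\mathbf{1}_{B(\mathbf{0},k)}\|_{L^p_\omega(\mathbb{R}^n)}<\frac{1}{2k}$, then pick (Step 2 with $g:=f^{(t_k)}$) $R_k\ge k$ with $\|\,|\nabla(f^{(t_k)}-(f^{(t_k)})_{R_k})|\,\|_{L^p_\omega(\mathbb{R}^n)}<\frac{1}{2k}$, and set $f_k:=(f^{(t_k)})_{R_k}$. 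Then $f_k\in C^\infty(\mathbb{R}^n)$, $|\nabla f_k|\in C_{\mathrm{c}}(\mathbb{R}^n)$, $\|\,|\nabla(f-f_k)|\,\|_{L^p_\omega(\mathbb{R}^n)}<\frac{1}{k}\to0$, and, for any fixed $R\in(0,\infty)$ and all $k>R$, $f_k=f^{(t_k)}$ on $B(\mathbf{0},R)\subset B(\mathbf{0},R_k)$, whence $\|(f-f_k)\mathbf{1}_{B(\mathbf{0},R)}\|_{L^p_\omega(\mathbb{R}^n)}\le\|(f-f^{(t_k)})\mathbf{1}_{B(\mathbf{0},k)}\|_{L^p_\omega(\mathbb{R}^n)}<\frac{1}{2k}\to0$, which is \eqref{958}. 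The one genuinely delicate step is Step 2: one cannot truncate $g$ to $0$ (replacing it by $\phi_Rg$), since functions in the homogeneous space $\dot{W}^{1,p}_\omega(\mathbb{R}^n)$ need not decay at infinity; truncating instead to the shell mean $c_R$ and estimating the error by the weighted Poincar\'e inequality \emph{on the shell} $S_R$ --- rather than on the ball $B(\mathbf{0},2R)$, which would leave the non-vanishing quantity $\int_{B(\mathbf{0},2R)}|\nabla g|^p\omega$ on the right-hand side --- is exactly what makes the error a convergent tail, uniformly for all $p\in[1,\infty)$.
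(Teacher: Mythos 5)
The paper's own ``proof'' of Lemma~\ref{319} is exactly the one-line observation in your opening sentence: it is quoted as \cite[Corollary~2.18]{dlyyz.arxiv} specialized to $X(\mathbb{R}^n):=L^p_\omega(\mathbb{R}^n)$, with no further argument. Your mollify--truncate--diagonalize construction is therefore a genuinely different, self-contained route, and its architecture is the right one: reduce to smooth $f$ by mollification, truncate the gradient by interpolating to the shell mean $c_R$ (rather than to $0$, which would indeed fail for homogeneous Sobolev functions), control the error by a Poincar\'e inequality on the shell so that only a tail of $\int_{\mathbb{R}^n}|\nabla g|^p\omega$ survives, and then diagonalize. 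For $p\in(1,\infty)$ and $n\ge2$ this is correct and buys an explicit construction with quantitative dependence on $[\omega]_{A_p(\mathbb{R}^n)}$, where the paper only buys brevity.

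There is, however, a genuine gap at $p=1$, and this case is not vacuous here: Lemma~\ref{319} is applied in the proof of Lemma~\ref{dgpyyz} with $\omega\in A_1(\mathbb{R}^n)$ and $p\in[1,\infty)$, including $p=1$. Both in Step~1 (convergence of $f*\xi_t$) and in Step~2 (the ball Poincar\'e inequality behind the shell estimate) you justify the key bounds by the $L^p_\omega(\mathbb{R}^n)$-boundedness of $\mathcal{M}$; Muckenhoupt's theorem gives this only for $p\in(1,\infty)$, while for $p=1$ and $\omega\in A_1(\mathbb{R}^n)$ the operator $\mathcal{M}$ is merely of weak type $(1,1)$ with respect to $\omega\,dx$, so neither $\|h*\xi_t\|_{L^1_\omega(\mathbb{R}^n)}\lesssim\|\mathcal{M}h\|_{L^1_\omega(\mathbb{R}^n)}$ nor $\int_B|g-\fint_Bg|\,\omega\lesssim r_B\int_B\mathcal{M}(|\nabla g|\mathbf{1}_B)\,\omega$ closes. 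Both facts you need are still true for $A_1$ weights, but the estimate must be run on the weight side: by the Tonelli theorem, $\|h*\xi_t\|_{L^1_\omega(\mathbb{R}^n)}\le\int_{\mathbb{R}^n}|h(z)|\,(\widetilde{\xi}_t*\omega)(z)\,dz\lesssim\int_{\mathbb{R}^n}|h|\,\mathcal{M}\omega\le[\omega]_{A_1(\mathbb{R}^n)}\|h\|_{L^1_\omega(\mathbb{R}^n)}$ (uniformly in $t$, which together with convergence on the dense class $C_{\mathrm{c}}(\mathbb{R}^n)$ gives Step~1), and similarly $\int_B|g-\fint_Bg|\,\omega\lesssim\int_B|\nabla g(y)|\int_B|x-y|^{1-n}\omega(x)\,dx\,dy\lesssim r_B\,[\omega]_{A_1(\mathbb{R}^n)}\int_B|\nabla g|\,\omega$, which restores the ball Poincar\'e inequality used in Step~2. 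A second, smaller point: your chaining argument on $S_R$ needs $S_R$ to be connected, which fails when $n=1$, and there the shell inequality itself is false (take $g$ equal to two different constants on the two components of $S_R$ with $\nabla g$ supported in $B(\mathbf{0},R/2)$); for $n=1$ replace $(1-\phi_R)c_R$ by $(1-\phi_R)u_R$ with $u_R$ equal to the one-sided means $c_R^{\pm}$ on the two components of $\{|x|\ge R\}$, after which the same estimates apply componentwise. With these two repairs your proof is complete for all $p\in[1,\infty)$ and $n\in\mathbb{N}$.
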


\begin{proof}[Proof of Lemma~\ref{dgpyyz}]
Let $f\in\dot{W}^{1,p}_\omega(\mathbb{R}^n)$.
By Lemma~\ref{319}, we find that
there exists a sequence $\{f_k\}_{k\in\mathbb{N}}
\subset C^\infty(\mathbb{R}^n)$ with
$|\nabla f_k|\in C_{\mathrm{c}}(\mathbb{R}^n)$ for any $k\in\mathbb{N}$
such that, for any $R\in(0,\infty)$,
\eqref{958} holds true.
For any $N\in\mathbb{R}^n$, let
\begin{align*}
E_N:&=\left\{(x,y)\in B(\mathbf{0},N)\times B(\mathbf{0},N):\
|x-y|\in(N^{-1},\infty),\right.\\
&\left.\quad\omega(x),\omega(y)\in(N^{-1},N)\right\}.
\end{align*}
From this, a change of variables, the Tonelli theorem,
Definition~\ref{ATI}(ii),
Lemma~\ref{952} with $f:=f_k$, and \eqref{958},
we infer that, for any fixed $N\in\mathbb{N}$,
\begin{align*}
&\iint_{E_N}\frac{|f(x)-f(y)|^p}{|x-y|^p}\rho(|x-y|)\omega(x)\,dy\,dx\\
&\quad\lesssim\iint_{E_N}\frac{|f(x)-f_k(x)|^p}{|x-y|^p}\rho(|x-y|)\omega(x)\,dy\,dx\\
&\qquad+\iint_{E_N}\frac{|f_k(x)-f_k(y)|^p}{|x-y|^p}\rho(|x-y|)\omega(x)\,dy\,dx\\
&\qquad+\iint_{E_N}\frac{|f_k(y)-f(y)|^p}{|x-y|^p}\rho(|x-y|)
\frac{\omega(x)}{\omega(y)}\omega(y)\,dy\,dx\\
&\quad\lesssim N^p\int_{B(\mathbf{0},N)}\left|f(x)-f_k(x)\right|^p
\omega(x)\,dx\int_{\mathbb{R}^n}\rho(|h|)\,dh\\
&\qquad+
\left\|\,\left|\nabla f_k\right|\,\right\|_{L^p_\omega(\mathbb{R}^n)}^p\\
&\qquad+N^{p+2}\int_{B(\mathbf{0},N)}\left|f_k(y)-f(y)\right|^p
\omega(y)\,dy\int_{\mathbb{R}^n}\rho(|h|)\,dh\\
&\quad\lesssim N^{p+2}\left\|\left(f-f_k\right)
\mathbf{1}_{B(\mathbf{0},N)}\right\|_{L^p_\omega(\mathbb{R}^n)}^p\\
&\qquad+\left\|\,\left|\nabla\left(f_k-f\right)\right|\,\right\|_{L^p_\omega(\mathbb{R}^n)}^p
+\left\|\,\left|\nabla f\right|\,\right\|_{L^p_\omega(\mathbb{R}^n)}^p\\
&\quad\to\left\|\,\left|\nabla f\right|\,\right\|_{L^p_\omega(\mathbb{R}^n)}^p
\end{align*}
as $k\to\infty$.
By this and the Levi lemma, we conclude that
\begin{align*}
&\int_{\mathbb{R}^n}\int_{\mathbb{R}^n}
\frac{|f(x)-f(y)|^p}{|x-y|^p}\rho(|x-y|)\omega(x)\,dy\,dx\\
&\quad=\lim_{N\to\infty}\iint_{E_N}\frac{|f(x)-f(y)|^p}{|x-y|^p}\rho(|x-y|)\omega(x)\,dy\,dx\\
&\quad\lesssim
\left\|\,\left|\nabla f\right|\,\right\|_{L^p_\omega(\mathbb{R}^n)}^p,
\end{align*}
which completes the proof of Lemma~\ref{dgpyyz}.
\end{proof}

The following H\"older inequality
when $\Omega=\mathbb{R}^n$ is just
\cite[Lemma~2.13]{dlyyz.arxiv};
see also \cite[p.\,9, Theorem~2.4]{bs1988}.

\begin{lemma}\label{1639}
Let $X(\mathbb{R}^n)$ be a ball Banach function
space and $X'(\mathbb{R}^n)$ its associate
space. Let $X(\Omega)$ and $X'(\Omega)$ be, respectively,
the restrictive spaces
of $X(\mathbb{R}^n)$ and $X'(\mathbb{R}^n)$ on $\Omega$.
If $f\in X(\Omega)$ and $g\in X'(\Omega)$,
then $fg\in L^1(\Omega)$ and
$$
\int_{\Omega}\left|f(x)g(x)\right|\,dx
\leq\|f\|_{X(\Omega)}\|g\|_{X'(\Omega)}.
$$
\end{lemma}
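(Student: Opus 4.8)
The plan is to reduce the inequality on $\Omega$ to the already-known H\"older inequality on $\mathbb{R}^n$ (the case $\Omega=\mathbb{R}^n$ of the statement, i.e.\ \cite[Lemma~2.13]{dlyyz.arxiv}) via the zero extension. First I would recall from Remark~\ref{dual}(i) that $X'(\mathbb{R}^n)$ is itself a ball Banach function space, so that $X'(\Omega)$, the restrictive space of $X'(\mathbb{R}^n)$, is well defined and Propositions~\ref{norm} and~\ref{1533} apply to it verbatim. Given $f\in X(\Omega)$ and $g\in X'(\Omega)$, let $\widetilde{f}$ and $\widetilde{g}$ denote their zero extensions to $\mathbb{R}^n$, defined as in \eqref{1448}. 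By Proposition~\ref{norm}, applied to $X(\mathbb{R}^n)$ and to $X'(\mathbb{R}^n)$ respectively, one has $\widetilde{f}\in X(\mathbb{R}^n)$ with $\|\widetilde{f}\|_{X(\mathbb{R}^n)}=\|f\|_{X(\Omega)}$ and $\widetilde{g}\in X'(\mathbb{R}^n)$ with $\|\widetilde{g}\|_{X'(\mathbb{R}^n)}=\|g\|_{X'(\Omega)}$.

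Next I would observe that the pointwise product $\widetilde{f}\,\widetilde{g}$ coincides with the zero extension of $fg$, since both functions vanish on $\Omega^\complement$ and restrict to $fg$ on $\Omega$; in particular $\int_{\mathbb{R}^n}|\widetilde{f}(x)\,\widetilde{g}(x)|\,dx=\int_\Omega|f(x)g(x)|\,dx$. Applying the H\"older inequality on $\mathbb{R}^n$ to the pair $\widetilde{f}\in X(\mathbb{R}^n)$, $\widetilde{g}\in X'(\mathbb{R}^n)$ then gives $\widetilde{f}\,\widetilde{g}\in L^1(\mathbb{R}^n)$ together with
$$
\int_{\mathbb{R}^n}\bigl|\widetilde{f}(x)\,\widetilde{g}(x)\bigr|\,dx
\leq\|\widetilde{f}\|_{X(\mathbb{R}^n)}\,\|\widetilde{g}\|_{X'(\mathbb{R}^n)}.
$$
The membership $\widetilde{f}\,\widetilde{g}\in L^1(\mathbb{R}^n)$ yields $fg\in L^1(\Omega)$, and combining the displayed inequality with the identity of integrals above and the two norm identities from the previous paragraph produces exactly
$$
\int_{\Omega}\bigl|f(x)g(x)\bigr|\,dx
\leq\|f\|_{X(\Omega)}\,\|g\|_{X'(\Omega)},
$$
which is the desired conclusion.

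There is no serious obstacle here: the only point requiring a little care is that the restrictive-space apparatus (Definition~\ref{1635} and Proposition~\ref{norm}) must be invoked for $X'(\mathbb{R}^n)$ as well as for $X(\mathbb{R}^n)$, which is legitimate precisely because $X'(\mathbb{R}^n)$ is a ball Banach function space by Remark~\ref{dual}(i). The remaining ingredients are the elementary identification of $\widetilde{f}\,\widetilde{g}$ with the zero extension of $fg$ and a direct appeal to the $\mathbb{R}^n$ version of H\"older's inequality already available in the literature.
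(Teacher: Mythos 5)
Your proposal is correct and follows essentially the same route as the paper: extend $f$ and $g$ by zero, use Proposition~\ref{norm} to identify the norms, and apply the $\mathbb{R}^n$ H\"older inequality from \cite[Lemma~2.13]{dlyyz.arxiv} to $\widetilde{f}$ and $\widetilde{g}$. Your extra remark that $X'(\mathbb{R}^n)$ is itself a ball Banach function space (so Proposition~\ref{norm} applies to it) is a point the paper leaves implicit, but otherwise the arguments coincide.
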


\begin{proof}
Let $f\in X(\Omega)$ and $g\in X'(\Omega)$
and let $\widetilde{f}$ and $\widetilde{g}$
be defined the same as in \eqref{1448}
related, respectively, to $f$ and $g$.
Then, by \cite[Lemma~2.13]{dlyyz.arxiv}
and Proposition~\ref{norm},
we conclude that
\begin{align*}
\int_\Omega f(x)g(x)\,dx
&=\int_{\mathbb{R}^n}
\widetilde{f}(x)\widetilde{g}(x)\,dx
\leq\left\|\widetilde{f}\right\|_{X(\mathbb{R}^n)}
\left\|\widetilde{g}\right\|_{X'(\mathbb{R}^n)}\\
&=\|f\|_{X(\Omega)}\|g\|_{X'(\Omega)},
\end{align*}
which completes the proof of Lemma~\ref{1639}.
\end{proof}

Next, we use Lemmas~\ref{chua1992'},~\ref{dgpyyz},
and~\ref{1639}
to show Proposition~\ref{1537}.

\begin{proof}[Proof of Proposition~\ref{1537}]
Let $\omega\in A_1(\mathbb{R}^n)$.
We first claim that, for any $g\in\dot{W}^{1,p}_\omega(\Omega)$,
\begin{align*}
&\int_{\Omega}\left[\int_{\Omega}
\frac{|g(x)-g(y)|^p}{|x-y|^p}\rho(|x-y|)\,dy\right]
\omega(x)\,dx\lesssim
\left\|\,\left|\nabla g\right|\,\right\|_{L^{p}_\omega(\Omega)}^p,
\end{align*}
where the implicit positive constant depends only
on $p$, $n$, and $[\omega]_{A_1(\mathbb{R}^n)}$.
To this end, let $g\in\dot{W}^{1,p}_\omega(\Omega)$.
By Lemma~\ref{chua1992},
we find that there exists
$\widetilde{g}\in\dot{W}^{1,p}_\omega(\mathbb{R}^n)$
such that $\widetilde{g}=g$ almost everywhere in $\Omega$ and
\begin{align*}
\left\|\,\left|\nabla \widetilde{g}\right|
\,\right\|_{L^p_\omega(\mathbb{R}^n)}
\lesssim\left\|\,\left|\nabla g\right|\,\right\|_{L^{p}_\omega(\Omega)}.
\end{align*}
From this and Lemma~\ref{dgpyyz},
we deduce that
\begin{align*}
&\int_{\Omega}\left[\int_{\Omega}
\frac{|g(x)-g(y)|^p}{|x-y|^p}\rho(|x-y|)\,dy\right]
\omega(x)\,dx\\
&\quad=\int_{\Omega}\left[\int_{\Omega}
\frac{|\widetilde{g}(x)-\widetilde{g}(y)|^p}{|x-y|^p}\rho(|x-y|)\,dy\right]
\omega(x)\,dx\\
&\quad\leq\int_{\mathbb{R}^n}\left[\int_{\mathbb{R}^n}
\frac{|\widetilde{g}(x)-\widetilde{g}(y)|^p}{|x-y|^p}\rho(|x-y|)\,dy\right]
\omega(x)\,dx\\
&\quad\lesssim
\left\|\,\left|\nabla \widetilde{g}\right|
\,\right\|_{L^p_\omega(\mathbb{R}^n)}^p
\lesssim\left\|\,\left|\nabla g\right|\,\right\|_{L^{p}_\omega(\Omega)}^p.
\end{align*}
This finishes the proof of the above claim.

Now, let $f\in\dot{W}^{1,X}(\Omega)$.
For any $g\in[X^\frac{1}{p}(\mathbb{R}^n)]'$
with $\|g\|_{[X^\frac{1}{p}(\mathbb{R}^n)]'}=1$,
let $\omega_g:=R_{[X^\frac{1}{p}(\mathbb{R}^n)]'}g$.
By Remark~\ref{waicha},
the above claim, Lemma~\ref{1639}, and Proposition~\ref{2034},
we conclude that
\begin{align*}
&\left\|\left[\int_\Omega\frac{|f(\cdot)-f(y)|^p}{
|\cdot-y|^p}\rho(|\cdot-y|)\,dy
\right]^\frac{1}{p}\right\|_{X(\Omega)}^p\\
&\quad\leq\sup_{\|g\|_{[X^\frac{1}{p}(\mathbb{R}^n)]'}=1}
\int_{\Omega}\left[\int_{\Omega}
\frac{|f(x)-f(y)|^p}{|x-y|^p}\rho(|x-y|)\,dy\right]
\omega_g(x)\,dx\\
&\quad\lesssim\sup_{\|g\|_{[X^\frac{1}{p}(\mathbb{R}^n)]'}=1}
\int_\Omega\left|\nabla f(x)\right|^p\omega_g(x)\,dx\\
&\quad\leq\sup_{\|g\|_{[X^\frac{1}{p}(\mathbb{R}^n)]'}=1}
\|\omega_g|_\Omega\|_{[X^\frac{1}{p}(\Omega)]'}
\left\||\nabla f|^p\right\|_{X^\frac{1}{p}(\Omega)}\\
&\quad\leq\sup_{\|g\|_{[X^\frac{1}{p}(\mathbb{R}^n)]'}=1}
\|\omega_g\|_{[X^\frac{1}{p}(\mathbb{R}^n)]'}
\left\||\nabla f|^p\right\|_{X^\frac{1}{p}(\Omega)}
\lesssim\left\|\,\left|\nabla f\right|\,\right\|_{X(\Omega)}^p.
\end{align*}
This finishes the proof of Proposition~\ref{1537}.
\end{proof}

\subsection{Proof of Theorem~\ref{2045}}
\label{sub4.3}

In this subsection, we use
Theorem~\ref{2247} and Propositions~\ref{2043}
and~\ref{1537}
to prove Theorem~\ref{2045}.

\begin{proof}[Proof of Theorem~\ref{2045}]
Let $f\in\dot{W}^{1,X}(\Omega)$.
By the assumptions of the present theorem,
Theorem~\ref{2247}, and Remark~\ref{1711},
we find that there exists
a sequence $\{f_k\}_{k\in\mathbb{N}}\subset
C^\infty(\overline{\Omega})\cap\dot{W}^{1,X}(\Omega)$
such that
\begin{align}\label{1033}
\left\|f-f_k\right\|_{\dot{W}^{1,X}(\Omega)}
\sim\left\|\,\left|\nabla(f-f_k
)\right|\,\right\|_{X(\Omega)}\to0
\end{align}
as $k\to\infty$.
From both (ii) and (v) of Proposition~\ref{1533}
and the Minkowski inequality,
we infer that,
for any $\nu\in(0,\nu_0)$
and $k\in\mathbb{N}$,
\begin{align}\label{IIIIII}
&\left|\left\|\left[\int_\Omega\frac{|f(\cdot)-f(y)|^p}{
|\cdot-y|^p}\rho_\nu(|\cdot-y|)\,dy
\right]^\frac{1}{p}\right\|_{X(\Omega)}-\kappa(p,n)
\left\|\,\left|\nabla f\right|\,\right\|_{X(\Omega)}\right|\\
&\quad\leq\left|\left\|\left[\int_\Omega\frac{|f(\cdot)-f(y)|^p}{
|\cdot-y|^p}\rho_\nu(|\cdot-y|)\,dy
\right]^\frac{1}{p}\right\|_{X(\Omega)}\right.\nonumber\\
&\qquad\left.-
\left\|\left[\int_\Omega\frac{|f_k(\cdot)-f_k(y)|^p}{
|\cdot-y|^p}\rho_\nu(|\cdot-y|)\,dy
\right]^\frac{1}{p}\right\|_{X(\Omega)}\right|\nonumber\\
&\qquad+\left|\left\|\left[\int_\Omega\frac{|f_k(\cdot)-f_k(y)|^p}{
|\cdot-y|^p}\rho_\nu(|\cdot-y|)\,dy
\right]^\frac{1}{p}\right\|_{X(\Omega)}-\kappa(p,n)
\left\|\,\left|\nabla f_k\right|\,\right\|_{X(\Omega)}\right|\nonumber\\
&\qquad+\kappa(p,n)\left|\left\|\,\left|
\nabla f_k\right|\,\right\|_{X(\Omega)}
-\left\|\,\left|\nabla f\right|\,\right\|_{X(\Omega)}\right|\nonumber\\
&\quad\leq\left|\left\|\left[\int_\Omega
\frac{|(f-f_k)(\cdot)-(f-f_k)(y)|^p}{
|\cdot-y|^p}\rho_\nu(|\cdot-y|)\,dy
\right]^\frac{1}{p}\right\|_{X(\Omega)}\right.\nonumber\\
&\qquad+\left|\left\|\left[\int_\Omega\frac{|f_k(\cdot)-f_k(y)|^p}{
|\cdot-y|^p}\rho_\nu(|\cdot-y|)\,dy
\right]^\frac{1}{p}\right\|_{X(\Omega)}-\kappa(p,n)
\left\|\,\left|\nabla f_k\right|\,\right\|_{X(\Omega)}\right|\nonumber\\
&\qquad+\kappa(p,n)\left|\left\|\,\left|
\nabla f_k\right|\,\right\|_{X(\Omega)}
-\left\|\,\left|\nabla f\right|\,\right\|_{X(\Omega)}\right|\nonumber\\
&\quad=:\mathrm{I}+\mathrm{II}+\mathrm{III}.\nonumber
\end{align}
To estimate $\mathrm{I}$,
by the assumption that $\{\rho_\nu\}_{\nu\in(0,\nu_0)}$
is a $\nu_0$-RDATI on $\mathbb{R}^n$,
it is easy to show that, for any $\nu\in(0,\nu_0)$,
$\rho_\nu$ satisfies all the assumptions on $\rho$
in Proposition~\ref{1537}.
Using this, Proposition~\ref{1537}, and \eqref{1033},
we conclude that, for any $\nu\in(0,\nu_0)$,
\begin{align}\label{I}
\mathrm{I}\lesssim
\left\|\,\left|\nabla(f-f_k
)\right|\,\right\|_{X(\Omega)}\to0
\end{align}
as $k\to\infty$,
where the implicit positive constant is independent of $\nu$.
To estimate $\mathrm{II}$,
from Proposition~\ref{2043}
with $f:=f_k$, we deduce that,
for any $k\in\mathbb{N}$,
\begin{align}\label{II}
\lim_{\nu\to0^+}\mathrm{II}=0.
\end{align}
To estimate $\mathrm{III}$,
by Proposition~\ref{1533}(v) and \eqref{1033}, we find that
\begin{align*}
\lim_{k\to\infty}\mathrm{III}
\leq\lim_{k\to\infty}\left\|\,\left|\nabla(f-
f_k)\right|\,\right\|_{X(\Omega)}=0,
\end{align*}
which, together with \eqref{IIIIII},
\eqref{I}, and \eqref{II},
further implies that
\begin{align*}
\left|\left\|\left[\int_\Omega\frac{|f(\cdot)-f(y)|^p}{
|\cdot-y|^p}\rho_\nu(|\cdot-y|)\,dy
\right]^\frac{1}{p}\right\|_{X(\Omega)}-\kappa(p,n)
\left\|\,\left|\nabla f\right|\,\right\|_{X(\Omega)}\right|\to0
\end{align*}
via first letting $\nu\to0^+$
and then letting $k\to\infty$.
This finishes the proof of Theorem~\ref{2045}.
\end{proof}

\begin{remark}
In Theorem~\ref{2045}, if $X(\mathbb{R}^n):=L^q(\mathbb{R}^n)$
with $q\in[1,\infty)$ and if $p\in[1,q]$,
then it is easy to show that $L^q(\mathbb{R}^n)$
is a ball Banach function space having an absolutely
continuous norm,
$X^\frac{1}{p}(\mathbb{R}^n)=L^\frac{q}{p}(\mathbb{R}^n)$
is a ball Banach function space,
and the Hardy--Littlewood maximal operator is bounded on
$[X^\frac{1}{p}(\mathbb{R}^n)]'
=L^{(\frac{q}{p})'}(\mathbb{R}^n)$.
Thus, Theorem~\ref{2045} in this case holds true.
In addition, we point out that \cite[(38)]{b2002}
(see also \cite{bbm2001}) is just
Theorem~\ref{2045} with
both $X(\mathbb{R}^n)=L^q(\mathbb{R}^n)$
and $q=p\in[1,\infty)$
and that Theorem~\ref{2045}
with both $X(\mathbb{R}^n)=L^q(\mathbb{R}^n)$
and $1\leq p<q<\infty$ is new.
Moreover, from Remark~\ref{1638}(iii),
we infer that
the assumption on $\Omega$ of Theorem~\ref{2045}
is weaker than the corresponding one in both \cite[Remark~7]{b2002}
and \cite{bbm2001}.
\end{remark}

\section{Bourgain--Brezis--Mironescu-Type Characterization
of $W^{1,X}(\Omega)$}
\label{S4}

This section is devoted to establishing the
Bourgain--Brezis--Mironescu-type characterization
of the inhomogeneous ball Banach Sobolev space $W^{1,X}(\Omega)$.
To this end,
we begin with the following concept of the locally doubling
property of ball quasi-Banach function spaces, which was introduced in
\cite[Definition~2.10]{dgpyyz2022}.

\begin{definition}\label{1658}
Let $\beta\in(0,\infty)$.
A ball quasi-Banach function space
$X(\mathbb{R}^n)$ is said to be \emph{locally $\beta$-doubling}
if there exists a positive constant $C$ such that,
for any $r\in(0,\infty)$ and $\lambda\in[1,\infty)$,
\begin{align*}
\left\|\mathbf{1}_{B(\mathbf{0},\lambda r)}\right\|_{X(\mathbb{R}^n)}
\leq C\lambda^\beta\left\|\mathbf{1}_{
B(\mathbf{0},r)}\right\|_{X(\mathbb{R}^n)}.
\end{align*}
\end{definition}

The following is one of the
main theorems of this section.

\begin{theorem}\label{BBMdomain}
Let $X(\mathbb{R}^n)$ be a ball Banach function space,
$\Omega\subset\mathbb{R}^n$ an open set,
$\{\rho_\nu\}_{\nu\in(0,\nu_0)}$
a $\nu_0$-{\rm RDATI} on $\mathbb{R}^n$
with $\nu_0\in(0,\infty)$,
and $p\in[1,\infty)$.
Assume that
\begin{enumerate}
\item[\textup{(i)}]
$C_{\mathrm{c}}^\infty(\Omega)$ is
dense in $X'(\Omega)$;
\item[\textup{(ii)}]
$X'(\Omega)$ has an absolutely continuous
norm on $\Omega$;
\item[\textup{(iii)}]
$X'(\mathbb{R}^n)$ is locally
$\beta$-doubling with $\beta\in(0,n+1)$.
\end{enumerate}
If $f\in X(\Omega)$ satisfies
\begin{align}\label{953}
\liminf_{\nu\to0^+}
\left\|\left[\int_\Omega\frac{|f(\cdot)-f(y)|^p}{
|\cdot-y|^p}\rho_\nu(|\cdot-y|)\,dy
\right]^\frac{1}{p}\right\|_{X(\Omega)}<\infty,
\end{align}
then $|\nabla f|\in X(\Omega)$ and
\begin{align*}
\left\|\,\left|\nabla f\right|\,\right\|_{X(\Omega)}
\lesssim\liminf_{\nu\to0^+}
\left\|\left[\int_\Omega\frac{|f(\cdot)-f(y)|^p}{
|\cdot-y|^p}\rho_\nu(|\cdot-y|)\,dy
\right]^\frac{1}{p}\right\|_{X(\Omega)},
\end{align*}
where the implicit positive constant is independent of $f$.
\end{theorem}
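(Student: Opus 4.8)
The plan is to reduce the whole statement to a single duality inequality: there is a positive constant $C_{(n,p)}$ such that, for every vector field $\phi\in C_{\mathrm c}^\infty(\Omega;\mathbb R^n)$,
\begin{align*}
\left|\int_\Omega f(x)\,\mathrm{div}\,\phi(x)\,dx\right|
&\le C_{(n,p)}\,\|\,|\phi|\,\|_{X'(\Omega)}\,
\liminf_{\nu\to0^+}\left\|\left[\int_\Omega
\frac{|f(\cdot)-f(y)|^p}{|\cdot-y|^p}\rho_\nu(|\cdot-y|)\,dy
\right]^{\frac1p}\right\|_{X(\Omega)}.
\end{align*}
Granting this, the conclusion follows by an abstract duality argument. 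By assumption~(i), $C_{\mathrm c}^\infty(\Omega)$ is dense in $X'(\Omega)$, hence, passing to the natural $\mathbb R^n$-valued extensions equipped with the norms $\|\,|\cdot|\,\|$, the space $C_{\mathrm c}^\infty(\Omega;\mathbb R^n)$ is dense in $X'(\Omega;\mathbb R^n)$; by assumption~(ii) together with Proposition~\ref{2034}, Remark~\ref{dual}(ii), and Remark~\ref{1104}, the space $X'(\Omega)$ has an absolutely continuous norm, so $[X'(\Omega)]^{*}=[X'(\Omega)]'=X''(\Omega)=X(\Omega)$ under the integral pairing, and therefore $[X'(\Omega;\mathbb R^n)]^{*}=X(\Omega;\mathbb R^n)$ (checked coordinatewise). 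Thus the bounded linear functional $\phi\mapsto-\int_\Omega f\,\mathrm{div}\,\phi$ on $C_{\mathrm c}^\infty(\Omega;\mathbb R^n)$ extends to $X'(\Omega;\mathbb R^n)$ and is represented by some $g=(g_1,\dots,g_n)$ with $g_j\in X(\Omega)$ and $\|\,|g|\,\|_{X(\Omega)}$ bounded by $C_{(n,p)}$ times the liminf above; testing against $\psi e_j$ with $\psi\in C_{\mathrm c}^\infty(\Omega)$ identifies $g_j$ with the $j$th weak partial derivative of $f$, so $|\nabla f|=|g|\in X(\Omega)$ with the asserted bound.

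The core of the proof is the displayed inequality, which I would obtain by the spherical-averaging identity of Bourgain, Brezis, and Mironescu~\cite{bbm2001}. Fix $\phi$ and put $t_0:=\mathrm{dist}\,(\mathrm{supp}\,\phi,\Omega^\complement)\in(0,\infty)$. A second-order Taylor expansion of $\phi$ on the sphere, together with $\int_{\mathbb S^{n-1}}\omega\,d\sigma(\omega)=\mathbf 0$ and $\int_{\mathbb S^{n-1}}\omega_i\omega_j\,d\sigma(\omega)=n^{-1}|\mathbb S^{n-1}|\delta_{ij}$, gives, uniformly in $x$, that $\frac{n}{t|\mathbb S^{n-1}|}\int_{\mathbb S^{n-1}}\phi(x+t\omega)\cdot\omega\,d\sigma(\omega)=\mathrm{div}\,\phi(x)+O(t)$ as $t\to0^+$. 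For $t\in(0,t_0)$ the change of variables $x=y-t\omega$ is legitimate, and, since $\int_{\mathbb S^{n-1}}f(y)\,\phi(y)\cdot\omega\,d\sigma(\omega)=0$, one deduces from $f\in X(\Omega)\subset L^1_{\mathrm{loc}}(\Omega)$ that
\begin{align*}
F(t):=\frac{n}{t|\mathbb S^{n-1}|}\int_{\mathbb S^{n-1}}\int_{\mathrm{supp}\,\phi}
\bigl[f(y-t\omega)-f(y)\bigr]\,\phi(y)\cdot\omega\,dy\,d\sigma(\omega)
\longrightarrow\int_\Omega f\,\mathrm{div}\,\phi
\end{align*}
as $t\to0^+$; moreover $F$ is continuous and bounded on $(0,t_0)$. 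Since $\{\rho_\nu\}_{\nu\in(0,\nu_0)}$ is a $\nu_0$-{\rm RDATI}, the densities $\psi_\nu(t):=t^{n-1}\rho_\nu(t)$ satisfy $\int_0^\infty\psi_\nu\,dt=1$ and $\int_\delta^\infty\psi_\nu\,dt\to0$ for every $\delta>0$, whence, by a standard approximate-identity argument, $\int_0^{t_0}\psi_\nu(t)F(t)\,dt\to\int_\Omega f\,\mathrm{div}\,\phi$ as $\nu\to0^+$.

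On the other hand, bounding $|F|$ by the corresponding integral of $|f(y-t\omega)-f(y)|\,|\phi(y)|$, passing to polar coordinates $z=t\omega$ in the $(t,\omega)$ variables (so that $t^{n-2}\,dt\,d\sigma(\omega)=|z|^{-1}\,dz$), and then substituting $w=y-z$ (permissible because $B(y,t_0)\subset\Omega$ for $y\in\mathrm{supp}\,\phi$) yield, for every $\nu\in(0,\nu_0)$,
\begin{align*}
\left|\int_0^{t_0}\psi_\nu(t)F(t)\,dt\right|
&\le C_{(n)}\int_\Omega|\phi(y)|\int_\Omega
\frac{|f(y)-f(w)|}{|y-w|}\rho_\nu(|y-w|)\,dw\,dy.
\end{align*}
By Definition~\ref{ATI}(ii) one has $\int_\Omega\rho_\nu(|y-w|)\,dw\le|\mathbb S^{n-1}|$, so Jensen's inequality applied to the subprobability measure $\rho_\nu(|y-w|)\,dw$ bounds the inner integral by $|\mathbb S^{n-1}|^{1-1/p}[\int_\Omega|f(y)-f(w)|^p|y-w|^{-p}\rho_\nu(|y-w|)\,dw]^{1/p}$, and then the Hölder inequality on $\Omega$ (Lemma~\ref{1639}) applied to the couple $(X(\Omega),X'(\Omega))$ gives
\begin{align*}
\left|\int_0^{t_0}\psi_\nu(t)F(t)\,dt\right|
&\le C_{(n,p)}\,\|\,|\phi|\,\|_{X'(\Omega)}
\left\|\left[\int_\Omega\frac{|f(\cdot)-f(w)|^p}{|\cdot-w|^p}
\rho_\nu(|\cdot-w|)\,dw\right]^{\frac1p}\right\|_{X(\Omega)}.
\end{align*}
Taking $\liminf_{\nu\to0^+}$ and using that the left-hand side converges to $|\int_\Omega f\,\mathrm{div}\,\phi|$ yields the displayed duality inequality, and then a supremum over $\phi$ closes the argument.

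The step I expect to be the main obstacle is the rigorous bookkeeping near $\partial\Omega$ for a function that is only assumed to satisfy $f\in X(\Omega)$ and \eqref{953}: namely, justifying the change of variables $x=y-t\omega$ and the polar reduction on the truncated range $(0,t_0)$, verifying that $F$ is continuous and bounded near $0$, and exchanging $\liminf_{\nu\to0^+}$ with the $t$-integration against $\psi_\nu$. If instead one follows the mollification route of Dai et al.~\cite{dgpyyz2022}, estimating $|\nabla(f*\xi_\delta)|$ on $\Omega_\delta$ and letting $\delta\to0^+$, then the local $\beta$-doubling hypothesis~(iii) becomes essential: it is precisely what is needed to control the scale-$\delta$ kernel $\nabla\xi_\delta$ against the $X'(\mathbb R^n)$-sizes of balls, the restriction $\beta<n+1$ being exactly what makes the relevant dyadic sum converge.
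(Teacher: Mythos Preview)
Your approach is correct and takes a genuinely different route from the paper's. The paper reduces to a componentwise duality criterion (Lemma~\ref{2212}) and then, for each $j$, invokes Lemma~\ref{1459} as a black box from \cite{dgpyyz2022}: it extends $f$ and $\phi$ by zero to $\mathbb{R}^n$, applies Lemma~\ref{1459} on $\mathbb{R}^n$ (this is where hypothesis~(iii), the local $\beta$-doubling of $X'(\mathbb{R}^n)$, enters), and then shows the off-$\Omega$ piece $\int_{\mathrm{supp}\,\phi}\int_{\mathbb{R}^n\setminus\Omega}|x-y|^{-1}\rho_\nu(|x-y|)\,dy\,|f||\phi|\,dx$ vanishes as $\nu\to0^+$. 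The H\"older and Jensen steps afterwards are identical to yours.

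By contrast, you derive the key pointwise-in-$\nu$ bound $\big|\int_0^{t_0}\psi_\nu F\,dt\big|\le C_{(n)}\int_\Omega|\phi|\int_\Omega|f(y)-f(w)|\,|y-w|^{-1}\rho_\nu(|y-w|)\,dw\,dy$ directly via the spherical Taylor identity and polar coordinates, and you pass to the limit in $\nu$ on the left-hand side by an approximate-identity argument rather than by quoting \cite{dgpyyz2022}. The net effect is that your argument is self-contained and, as you correctly observe at the end, does not use assumption~(iii) at all; the $\beta$-doubling appears only in the alternative mollification route of \cite{dgpyyz2022}. This is a real gain: in the paper's organization, (iii) is inherited from Lemma~\ref{1459}, but your direct computation shows it is not intrinsic to the theorem. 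The obstacles you flag (change of variables $x=y-t\omega$, boundedness and continuity of $F$ on $(0,t_0)$, Fubini, and the $\liminf$ exchange) are all benign here precisely because $\phi\in C_{\mathrm c}^\infty(\Omega)$: for $t<t_0$ every integral lives on a fixed compact subset of $\Omega$, on which $f\in L^1$ by Proposition~\ref{1533}(vi), so dominated convergence and Fubini apply routinely.
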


Applying an argument similar to that used in
the proof of \cite[Lemma~4.4]{dgpyyz2022} with $\mathbb{R}^n$
and \cite[Lemmas~3.18 and~4.3]{dgpyyz2022}
replaced, respectively, by $\Omega$,
Lemma~\ref{1639}, and Remark~\ref{1104},
we obtain the following lemma
which gives a characterization
of $W^{1,X}(\Omega)$; we omit the details.

\begin{lemma}\label{2212}
Let $\Omega\subset\mathbb{R}^n$ be an open set and
$X(\mathbb{R}^n)$ a ball Banach function space.
Assume that
\begin{enumerate}
\item[\textup{(i)}]
$C^\infty_{\mathrm{c}}(\Omega)$ is dense in $X'(\Omega)$;
\item[\textup{(ii)}]
$X'(\Omega)$ has an absolutely continuous norm on $\Omega$.
\end{enumerate}
Let $f\in X(\Omega)$.
Then $|\nabla f|\in X(\Omega)$ if and only if,
for any $j\in\{1,\ldots,n\}$,
\begin{align}\label{2218}
\mathcal{T}_j(f):=\sup\left\{\left|\int_\Omega
f(x)\partial_j\phi(x)\,dx\right|:\ \phi\in
C^\infty_{\mathrm{c}}(\Omega),
\,\|\phi\|_{X'(\Omega)}\leq1\right\}<\infty;
\end{align}
moreover, for such $f$,
$$
\left\|\,|\nabla f|\,\right\|_{X(\Omega)}
\sim\sum_{j=1}^n\mathcal{T}_j(f)
$$
with the positive equivalence constants independent of $f$.
\end{lemma}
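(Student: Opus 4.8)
The plan is to prove the two implications separately and then read off the norm equivalence, following the scheme of \cite[Lemma~4.4]{dgpyyz2022} with $\mathbb{R}^n$ replaced by $\Omega$, \cite[Lemma~3.18]{dgpyyz2022} replaced by the H\"older inequality of Lemma~\ref{1639}, and the duality input \cite[Lemma~4.3]{dgpyyz2022} replaced by the identification $[X'(\Omega)]^{*}=X(\Omega)$ discussed below. First I would handle necessity. Assume $|\nabla f|\in X(\Omega)$. For any $j\in\{1,\ldots,n\}$ and any $\phi\in C_{\mathrm{c}}^\infty(\Omega)$, the definition \eqref{wpd} of the weak derivative gives $\int_\Omega f(x)\partial_j\phi(x)\,dx=-\int_\Omega\partial_j f(x)\phi(x)\,dx$, so Lemma~\ref{1639} yields $|\int_\Omega f\partial_j\phi|\le\|\partial_j f\|_{X(\Omega)}\|\phi\|_{X'(\Omega)}\le\|\,|\nabla f|\,\|_{X(\Omega)}\|\phi\|_{X'(\Omega)}$. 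Taking the supremum over all admissible $\phi$ gives $\mathcal{T}_j(f)\le\|\,|\nabla f|\,\|_{X(\Omega)}<\infty$, whence $\sum_{j=1}^n\mathcal{T}_j(f)\le n\,\|\,|\nabla f|\,\|_{X(\Omega)}$.

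Next, the sufficiency. Assume $\mathcal{T}_j(f)<\infty$ for every $j$ and fix $j$. Note first that $C_{\mathrm{c}}^\infty(\Omega)\subset X'(\Omega)$, since for $\phi\in C_{\mathrm{c}}^\infty(\Omega)$ one has $|\phi|\le\|\phi\|_{L^\infty(\Omega)}\mathbf{1}_{\mathrm{supp\,}(\phi)}$ and $\mathrm{supp\,}(\phi)$ is a bounded measurable subset of $\Omega$, so Proposition~\ref{1533}(ii) and~(iv), applied with $X(\mathbb{R}^n)$ replaced by $X'(\mathbb{R}^n)$ (a ball Banach function space by Remark~\ref{dual}(i), whose restrictive space is $X'(\Omega)$ by Proposition~\ref{2034}), give $\phi\in X'(\Omega)$. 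Thus, the assumption $\mathcal{T}_j(f)<\infty$ says that $\phi\mapsto\int_\Omega f\partial_j\phi$ is a bounded linear functional on the subspace $C_{\mathrm{c}}^\infty(\Omega)$ of $X'(\Omega)$, of norm $\mathcal{T}_j(f)$. By assumption~(i) and the completeness of $X'(\Omega)$ (Proposition~\ref{1533}), it extends uniquely to $L_j\in[X'(\Omega)]^{*}$ with $\|L_j\|_{[X'(\Omega)]^{*}}=\mathcal{T}_j(f)$. Now $X'(\Omega)$ is a ball Banach function space on $\Omega$ having an absolutely continuous norm by assumption~(ii); hence, by the Lorentz--Luxemburg-type duality for such spaces (the principle underlying Remark~\ref{1104}; see \cite[Lemma~1.7.7]{lyh2320}), together with $[X'(\Omega)]'=(X')'(\Omega)=X''(\Omega)=X(\Omega)$ (Proposition~\ref{2034} applied to $X'$ and Remark~\ref{dual}(ii)), one may identify $[X'(\Omega)]^{*}$ isometrically with $X(\Omega)$ under the pairing $(g,\psi)\mapsto\int_\Omega g\psi$.

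Consequently there is $g_j\in X(\Omega)$ with $\|g_j\|_{X(\Omega)}=\mathcal{T}_j(f)$ and $\int_\Omega f\partial_j\phi=\int_\Omega g_j\phi$ for all $\phi\in C_{\mathrm{c}}^\infty(\Omega)$. Since $g_j\in X(\Omega)\subset L^1_{\mathrm{loc}}(\Omega)$ by Proposition~\ref{1533}(vi), comparing the last identity with \eqref{wpd} for $\alpha$ the $j$-th unit multi-index shows that the weak derivative $\partial_j f$ exists and equals $-g_j$; in particular $\partial_j f\in X(\Omega)$ with $\|\partial_j f\|_{X(\Omega)}=\mathcal{T}_j(f)$. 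Since $|\nabla f|\le\sum_{j=1}^n|\partial_j f|$, Proposition~\ref{1533}(ii) and~(v) give $|\nabla f|\in X(\Omega)$ and $\|\,|\nabla f|\,\|_{X(\Omega)}\le\sum_{j=1}^n\mathcal{T}_j(f)$, which combined with the necessity estimate yields $\|\,|\nabla f|\,\|_{X(\Omega)}\sim\sum_{j=1}^n\mathcal{T}_j(f)$ with positive equivalence constants depending only on $n$.

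The one step requiring genuine care is the isometric identification $[X'(\Omega)]^{*}=X(\Omega)$ used in the sufficiency argument: one must justify applying the Lorentz--Luxemburg representation theorem to $X'(\Omega)$ \emph{viewed as a ball Banach function space on the set $\Omega$}, whose norm is absolutely continuous by assumption~(ii), rather than to $X(\mathbb{R}^n)$ itself; here Propositions~\ref{norm}, \ref{1533} and~\ref{2034} supply exactly the structural facts (completeness, the lattice properties, local integrability, and $[X'(\Omega)]'=X(\Omega)$) needed to invoke that theorem. Everything else — the density extension of the functional from $C_{\mathrm{c}}^\infty(\Omega)$ to $X'(\Omega)$, the sign matching with \eqref{wpd}, and the elementary norm estimates via Proposition~\ref{1533} — is routine, and the overall scheme mirrors \cite[Lemma~4.4]{dgpyyz2022} verbatim after the indicated replacements.
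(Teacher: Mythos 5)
Your proposal is correct and follows essentially the same route the paper indicates (the paper omits the details, citing the argument of \cite[Lemma~4.4]{dgpyyz2022} with $\mathbb{R}^n$, \cite[Lemma~3.18]{dgpyyz2022}, and \cite[Lemma~4.3]{dgpyyz2022} replaced by $\Omega$, Lemma~\ref{1639}, and Remark~\ref{1104}): the H\"older inequality gives $\mathcal{T}_j(f)\leq\|\partial_j f\|_{X(\Omega)}$, and density of $C_{\mathrm{c}}^\infty(\Omega)$ in $X'(\Omega)$ plus the duality $[X'(\Omega)]^{*}=[X'(\Omega)]'=X''(\Omega)=X(\Omega)$ (valid since $X'(\Omega)$ has an absolutely continuous norm, via Propositions~\ref{norm}, \ref{1533}, \ref{2034} and Remark~\ref{dual}(ii)) yields the representing functions $g_j=-\partial_j f\in X(\Omega)$. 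You also correctly flag, and resolve, the only delicate point, namely that the Lorentz--Luxemburg-type representation must be applied to $X'(\Omega)$ viewed as a ball Banach function space on $\Omega$ rather than to $X'(\mathbb{R}^n)$.
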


The following estimate is just
the antepenultimate formula in the
proof of \cite[Theorem~4.1]{dgpyyz2022}.

\begin{lemma}\label{1459}
Let $X(\mathbb{R}^n)$ be a ball Banach function space
and $\{\rho_\nu\}_{\nu\in(0,\nu_0)}$
a $\nu_0$-{\rm RDATI} on $\mathbb{R}^n$
with $\nu_0\in(0,\infty)$.
Assume that
$X'(\mathbb{R}^n)$ is locally $\beta$-doubling with $\beta\in(0,n+1)$.
Then there exists a positive constant $C$ such that,
for any $j\in\{1,\ldots,n\}$,
$f\in X(\mathbb{R}^n)$, and
$\phi\in C_{\mathrm{c}}^\infty(\mathbb{R}^n)$
with $\|\phi\|_{X'(\mathbb{R}^n)}\leq1$,
\begin{align*}
&\left|\int_{\mathbb{R}^n}f(x)
\partial_j\phi(x)\,dx\right|\\
&\quad\leq C\liminf_{\nu\to0^+}
\int_{\mathbb{R}^n}\left[
\int_{\mathbb{R}^n}\frac{|f(x)-f(y)|}{|x-y|}
\rho_\nu(|x-y|)\,dy
\right]|\phi(x)|\,dx.
\end{align*}
\end{lemma}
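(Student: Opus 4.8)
The plan is to reproduce the portion of the proof of \cite[Theorem~4.1]{dgpyyz2022} that yields exactly this inequality; the argument is a duality-free comparison combined with a first-order Taylor expansion of $\phi$. Fix $j\in\{1,\ldots,n\}$, $f\in X(\mathbb{R}^n)$, and $\phi\in C_{\mathrm{c}}^\infty(\mathbb{R}^n)$ with $\|\phi\|_{X'(\mathbb{R}^n)}\le1$, and, for $\nu\in(0,\nu_0)$, set $A_\nu:=\int_{\mathbb{R}^n}[\int_{\mathbb{R}^n}\frac{|f(x)-f(y)|}{|x-y|}\rho_\nu(|x-y|)\,dy]\,|\phi(x)|\,dx$. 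If $\liminf_{\nu\to0^+}A_\nu=\infty$, there is nothing to prove, so assume it is finite. Changing variables $y=x-z$ in the inner integral, passing to polar coordinates $z=r\omega$, and using Tonelli's theorem, I would first rewrite
\begin{align*}
A_\nu&=\int_0^\infty\rho_\nu(r)\,r^{n-1}\,G(r)\,dr,\ \text{ where }\\
G(r)&:=\int_{\mathbb{R}^n}\left[\int_{\mathbb{S}^{n-1}}\frac{|f(x)-f(x-r\omega)|}{r}\,d\sigma(\omega)\right]|\phi(x)|\,dx,
\end{align*}
and observe that $G(r)\in[0,\infty)$ for every $r\in(0,\infty)$, since $\phi$ has compact support and $f\in L_{\mathrm{loc}}^1(\mathbb{R}^n)$ (by Definition~\ref{1659}(vi)). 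Because $\int_0^\infty\rho_\nu(r)r^{n-1}\,dr=1$ by Definition~\ref{ATI}(ii) and $\int_\delta^\infty\rho_\nu(r)r^{n-1}\,dr\to0$ as $\nu\to0^+$ by Definition~\ref{ATI}(iii), for each $\delta\in(0,\infty)$ one has $A_\nu\ge(\inf_{r\in(0,\delta)}G(r))\int_0^\delta\rho_\nu(r)r^{n-1}\,dr$, whence $\liminf_{\nu\to0^+}A_\nu\ge\inf_{r\in(0,\delta)}G(r)$; letting $\delta\to0^+$ then gives $\liminf_{\nu\to0^+}A_\nu\ge\liminf_{r\to0^+}G(r)$.

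The second step is to bound $G(r)$ from below by a linear functional of $f$ and to identify its limit as $r\to0^+$. Since $|\omega_j|\le1$ on $\mathbb{S}^{n-1}$, the triangle inequality yields, for every $r\in(0,\infty)$,
\begin{align*}
G(r)\ge\left|\int_{\mathbb{R}^n}\left[\int_{\mathbb{S}^{n-1}}\frac{f(x)-f(x-r\omega)}{r}\,\omega_j\,d\sigma(\omega)\right]\phi(x)\,dx\right|=:\left|\Phi_j(r)\right|.
\end{align*}
Using Fubini (legitimate for each fixed $r$, since the corresponding integral with $|f(x)-f(x-r\omega)|$ in place of $f(x)-f(x-r\omega)$ is dominated by $G(r)<\infty$), the change of variables $x\mapsto x+r\omega$, and $\phi(y+r\omega)-\phi(y)=r\int_0^1\omega\cdot\nabla\phi(y+tr\omega)\,dt$, I would arrive at
\begin{align*}
\Phi_j(r)=-\int_{\mathbb{S}^{n-1}}\int_0^1\int_{\mathbb{R}^n}f(y)\,\omega_j\left(\omega\cdot\nabla\phi(y+tr\omega)\right)\,dy\,dt\,d\sigma(\omega).
\end{align*}
For $r\in(0,1]$ the integrand is dominated by $|f(y)|\,\mathbf{1}_K(y)\,n\,\|\,|\nabla\phi|\,\|_{L^\infty(\mathbb{R}^n)}$ with $K:=\{z\in\mathbb{R}^n:\ \mathrm{dist\,}(z,\mathrm{supp\,}(\phi))\le1\}$ compact, so the dominated convergence theorem, together with the elementary identities $\int_{\mathbb{S}^{n-1}}\omega_j\,d\sigma(\omega)=0$ and $\int_{\mathbb{S}^{n-1}}\omega_j\omega_k\,d\sigma(\omega)=n^{-1}\delta_{jk}\,\sigma(\mathbb{S}^{n-1})$, gives
\begin{align*}
\lim_{r\to0^+}\Phi_j(r)=-\int_{\mathbb{R}^n}f(y)\left[\int_{\mathbb{S}^{n-1}}\omega_j\left(\omega\cdot\nabla\phi(y)\right)\,d\sigma(\omega)\right]dy=-\frac{\sigma(\mathbb{S}^{n-1})}{n}\int_{\mathbb{R}^n}f(y)\,\partial_j\phi(y)\,dy.
\end{align*}

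Combining the two steps, $\liminf_{\nu\to0^+}A_\nu\ge\liminf_{r\to0^+}G(r)\ge\lim_{r\to0^+}|\Phi_j(r)|=\frac{\sigma(\mathbb{S}^{n-1})}{n}|\int_{\mathbb{R}^n}f\,\partial_j\phi|$, which is precisely the asserted estimate with $C:=n/\sigma(\mathbb{S}^{n-1})$. I expect the one genuinely delicate point to be the rigorous passage to the limit in $\Phi_j(r)$ under the sole regularity $f\in L_{\mathrm{loc}}^1(\mathbb{R}^n)$: one must fix the compact set $K$ so that it absorbs $\mathrm{supp\,}(\phi(\cdot+tr\omega))$ for all $t\in[0,1]$, $\omega\in\mathbb{S}^{n-1}$, and all small $r$, and verify the interchanges of integration (Fubini) at each fixed $r>0$ before interchanging the $\mathbb{S}^{n-1}$-integration with the limit; all of this is routine. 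It is worth noting that, although the statement carries over the local $\beta$-doubling of $X'(\mathbb{R}^n)$ and the normalization $\|\phi\|_{X'(\mathbb{R}^n)}\le1$ from the ambient \cite[Theorem~4.1]{dgpyyz2022}, neither plays any role in this particular estimate; the only property of $X(\mathbb{R}^n)$ used is $X(\mathbb{R}^n)\subset L_{\mathrm{loc}}^1(\mathbb{R}^n)$.
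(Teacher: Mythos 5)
Your proposal is correct. Note that the paper itself does not prove Lemma~\ref{1459}: it simply quotes the estimate as "the antepenultimate formula" in the proof of \cite[Theorem~4.1]{dgpyyz2022}, where it appears inside an argument carrying the ambient hypotheses (local doubling of $X'(\mathbb{R}^n)$, normalization of $\phi$). What you give instead is a self-contained, elementary derivation: Tonelli plus polar coordinates to write $A_\nu=\int_0^\infty\rho_\nu(r)r^{n-1}G(r)\,dr$, the RDATI normalization from Definition~\ref{ATI}(ii)--(iii) to get $\liminf_{\nu\to0^+}A_\nu\ge\liminf_{r\to0^+}G(r)$, and then the classical Bourgain--Brezis--Mironescu duality computation (transfer of the difference quotient onto $\phi$ via the change of variables $x\mapsto y+r\omega$ and the fundamental theorem of calculus, dominated convergence justified by $f\in L^1_{\mathrm{loc}}(\mathbb{R}^n)$ from Definition~\ref{1659}(vi) and the compact support of $\phi$, and the identity $\int_{\mathbb{S}^{n-1}}\omega_j\omega_k\,d\sigma(\omega)=\delta_{jk}\sigma(\mathbb{S}^{n-1})/n$). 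All the measure-theoretic interchanges you flag (finiteness of $G(r)$ for each fixed $r$, Fubini at fixed $r$, the uniform domination for $r\in(0,1]$) do check out. What your route buys is an explicit universal constant $C=n/\sigma(\mathbb{S}^{n-1})$ and the observation--correct, and worth recording--that neither the locally $\beta$-doubling assumption on $X'(\mathbb{R}^n)$ nor $\|\phi\|_{X'(\mathbb{R}^n)}\le1$ is actually used for this particular inequality; in the paper they are inherited from the statement of the quoted theorem rather than required by the estimate itself.
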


Now, we are ready to show Theorem~\ref{BBMdomain}.

\begin{proof}[Proof of Theorem~\ref{BBMdomain}]
Let $f\in X(\Omega)$ satisfy \eqref{953}.
By Lemma~\ref{2212},
we find that, to prove the present theorem,
it suffices to show that, for any $j\in\{1,\ldots,n\}$,
$$
\mathcal{T}_j(f)\lesssim\liminf_{\nu\to0^+}
\left\|\left[\int_\Omega\frac{|f(\cdot)-f(y)|^p}{
|\cdot-y|^p}\rho_\nu(|\cdot-y|)\,dy
\right]^\frac{1}{p}\right\|_{X(\Omega)},
$$
where $\mathcal{T}_j(f)$ is defined in \eqref{2218}.
To this end,
let $\phi\in C_{\mathrm{c}}^\infty(\Omega)$ satisfy
$\|\phi\|_{X'(\Omega)}\leq1$.
Let
$\widetilde{f}$ and $\widetilde{\phi}$
be defined the same as in \eqref{1448}
related, respectively, to $f$ and $\phi$.
Then $\widetilde{f}\in X(\mathbb{R}^n)$
and $\widetilde{\phi}\in C_{\mathrm{c}}^\infty(\mathbb{R}^n)$
with $\|\widetilde{\phi}\|_{X'(\mathbb{R}^n)}
=\|\phi\|_{X'(\Omega)}\leq1$.
From this, the assumption that
$X'(\mathbb{R}^n)$ is locally
$\beta$-doubling with $\beta\in(0,n+1)$,
and Lemma~\ref{1459} with both $f:=\widetilde{f}$
and $\phi:=\widetilde{\phi}$,
we deduce that, for any $j\in\{1,\ldots,n\}$,
\begin{align}\label{2141}
\left|\int_{\Omega}f(x)\partial_j\phi(x)\,dx\right|
&=\left|\int_{\mathbb{R}^n}\widetilde{f}(x)
\partial_j\widetilde{\phi}(x)\,dx\right|\\
&\lesssim\liminf_{\nu\to0^+}
\int_{\mathbb{R}^n}\left[
\int_{\mathbb{R}^n}\frac{|\widetilde{f}(x)-\widetilde{f}(y)|}{|x-y|}
\rho_\nu(|x-y|)\,dy
\right]\left|\widetilde{\phi}(x)\right|\,dx\nonumber\\
&=\liminf_{\nu\to0^+}
\int_{\Omega}\left[
\int_{\mathbb{R}^n}\frac{|\widetilde{f}(x)-\widetilde{f}(y)|}{|x-y|}
\rho_\nu(|x-y|)\,dy
\right]|\phi(x)|\,dx\nonumber\\
&=\liminf_{\nu\to0^+}
\int_{\Omega}\left[
\int_{\Omega}\frac{|f(x)-f(y)|}{|x-y|}
\rho_\nu(|x-y|)\,dy
\right]|\phi(x)|\,dx\nonumber\\
&\quad+\lim_{\nu\to0^+}
\int_{\mathrm{supp\,}(\phi)}\left[
\int_{\mathbb{R}^n\setminus\Omega}
\frac{\rho_\nu(|x-y|)}{|x-y|}\,dy
\right]|f(x)||\phi(x)|\,dx,\nonumber
\end{align}
where we claim that
$$
\lim_{\nu\to0^+}
\int_{\mathrm{supp\,}(\phi)}\left[
\int_{\mathbb{R}^n\setminus\Omega}
\frac{\rho_\nu(|x-y|)}{|x-y|}\,dy
\right]|f(x)||\phi(x)|\,dx=0.
$$
If $\Omega=\mathbb{R}^n$,
then the above claim holds true automatically.
Now, assume that $\Omega\subsetneqq\mathbb{R}^n$.
Let
$$
d:=\mathrm{dist\,}\left(\mathrm{supp\,}(\phi),
\Omega^\complement\right).
$$
Then $d\in(0,\infty)$ because $\Omega$ is open
and $\phi\in C_{\mathrm{c}}^\infty(\Omega)$.
By this, Lemma~\ref{1639},
the polar coordinate, and Definition~\ref{ATI}(iii),
we find that
\begin{align*}
&\int_{\mathrm{supp\,}(\phi)}\left[
\int_{\mathbb{R}^n\setminus\Omega}
\frac{\rho_\nu(|x-y|)}{|x-y|}\,dy
\right]|f(x)||\phi(x)|\,dx\\
&\quad\leq\frac{1}{d}\int_{\mathrm{supp\,}(\phi)}\left[
\int_{|h|\ge d}
\rho_\nu(|h|)\,dh
\right]|f(x)||\phi(x)|\,dx\\
&\quad=\frac{1}{d}\int_{\Omega}|{f}(x)||{\phi}(x)|\,dx
\int_{|h|\ge d}
\rho_\nu(|h|)\,dh\\
&\quad\leq\frac{1}{d}\|f\|_{X(\Omega)}\|\phi\|_{X'(\Omega)}
\int_{\mathbb{S}^{n-1}}\int_{d}^\infty
\rho_\nu(r)r^{n-1}\,dr\,d\sigma
\to0
\end{align*}
as $\nu\to0^+$,
which completes the proof of the above claim.

From the above claim, \eqref{2141}, Lemma~\ref{1639},
the H\"older inequality,
and Definition~\ref{ATI}(ii),
we infer that
\begin{align*}
&\left|\int_{\Omega}f(x)\partial_j\phi(x)\,dx\right|\\
&\quad\lesssim\liminf_{\nu\to0^+}
\int_{\Omega}\left[
\int_{\Omega}\frac{|f(x)-f(y)|}{|x-y|}
\rho_\nu(|x-y|)\,dy
\right]|\phi(x)|\,dx\\
&\quad\leq\liminf_{\nu\to0^+}
\left\|\int_{\Omega}\frac{|f(\cdot)-f(y)|}{|\cdot-y|}
\rho_\nu(|\cdot-y|)\,dy\right\|_{X(\Omega)}
\|\phi\|_{X'(\Omega)}\\
&\quad\leq\liminf_{\nu\to0^+}
\left\|\left[\int_{\Omega}\frac{|f(\cdot)-f(y)|^p}{|\cdot-y|^p}
\rho_\nu(|\cdot-y|)\,dy\right]^\frac{1}{p}
\left[\int_{\mathbb{R}^n}\rho_\nu
(|\cdot-y|)\,dy\right]^\frac{1}{p'}\right\|_{X(\Omega)}\\
&\quad\sim\liminf_{\nu\to0^+}
\left\|\left[\int_{\Omega}\frac{|f(\cdot)-f(y)|^p}{|\cdot-y|^p}
\rho_\nu(|\cdot-y|)\,dy\right]^\frac{1}{p}\right\|_{X(\Omega)}.
\end{align*}
Taking the supremum over all $\phi\in C_{\mathrm{c}}^\infty(\Omega)$
with $\|\phi\|_{X'(\Omega)}\leq1$,
we conclude that, for any $j\in\{1,\ldots,n\}$,
$$
\mathcal{T}_j(f)\lesssim\liminf_{\nu\to0^+}
\left\|\left[\int_{\Omega}\frac{|f(\cdot)-f(y)|^p}{|\cdot-y|^p}
\rho_\nu(|\cdot-y|)\,dy
\right]^\frac{1}{p}\right\|_{X(\Omega)}<\infty,
$$
which, combined with Lemma~\ref{2212},
further implies that $|\nabla f|\in X(\Omega)$
and
$$
\|\,|\nabla f|\,\|_{X(\Omega)}\sim\sum_{j=1}^n\mathcal{T}_j(f)
\lesssim\liminf_{\nu\to0^+}
\left\|\left[\int_{\Omega}\frac{|f(\cdot)-f(y)|^p}{|\cdot-y|^p}
\rho_\nu(|\cdot-y|)\,dy
\right]^\frac{1}{p}\right\|_{X(\Omega)}.
$$
This finishes the proof of Theorem~\ref{BBMdomain}.
\end{proof}

\begin{remark}
In Theorem~\ref{BBMdomain}, if $X(\mathbb{R}^n):=L^q(\mathbb{R}^n)$
with $q\in(1,\infty)$ and if $p\in[1,q]$,
then it is easy to show that $L^q(\mathbb{R}^n)$
is a ball Banach function space,
$L^{q'}(\mathbb{R}^n)$ has an absolutely
continuous norm, $C_{\mathrm{c}}^\infty(\Omega)$
is dense in $L^{q'}(\Omega)$
(using Lemma~\ref{dense} with $X:=L^{q'}$),
and $L^{q'}(\mathbb{R}^n)$ is locally $n$-doubling
(which can be deduced from \cite[Lemma~2.18]{dgpyyz2022})
and hence Theorem~\ref{BBMdomain} in this case holds true.
Thus, we conclude that \cite[(36)]{b2002} is just
Theorem~\ref{BBMdomain} with
both $X(\mathbb{R}^n)=L^q(\mathbb{R}^n)$
and $q=p\in(1,\infty)$
and that Theorem~\ref{BBMdomain}
with both $X(\mathbb{R}^n)=L^q(\mathbb{R}^n)$
and $1\leq p<q<\infty$ is new.
\end{remark}

The following is another main theorem of this
section, which presents a Bourgain--Brezis--Mironescu-type
characterization
of $W^{1,X}(\Omega)$.

\begin{theorem}\label{1622}
Let $\Omega\subset\mathbb{R}^n$ be a bounded
$(\varepsilon,\infty)$-domain
with $\varepsilon\in(0,1]$,
$\{\rho_\nu\}_{\nu\in(0,\nu_0)}$
a $\nu_0$-{\rm RDATI} on $\mathbb{R}^n$
with $\nu_0\in(0,\infty)$,
and $p\in[1,\infty)$.
Assume that
\begin{enumerate}
\item[\textup{(i)}]
both $X(\mathbb{R}^n)$ and $X^\frac{1}{p}(\mathbb{R}^n)$
are ball Banach function spaces;
\item[\textup{(ii)}]
both $X(\mathbb{R}^n)$ and $X'(\mathbb{R}^n)$
have absolutely continuous norms;
\item[\textup{(iii)}]
the Hardy--Littlewood maximal operator
is bounded on $[X^\frac{1}{p}(\mathbb{R}^n)]'$.
\end{enumerate}
Then $f\in W^{1,X}(\Omega)$ if and only if $f\in X(\Omega)$
and
\begin{align}\label{1643}
\liminf_{\nu\to0^+}
\left\|\left[\int_\Omega\frac{|f(\cdot)-f(y)|^p}{
|\cdot-y|^p}\rho_\nu(|\cdot-y|)\,dy
\right]^\frac{1}{p}\right\|_{X(\Omega)}<\infty;
\end{align}
moreover, for such $f$,
\begin{align*}
\lim_{\nu\to0^+}
\left\|\left[\int_\Omega\frac{|f(\cdot)-f(y)|^p}{
|\cdot-y|^p}\rho_\nu(|\cdot-y|)\,dy
\right]^\frac{1}{p}\right\|_{X(\Omega)}
=\left[\kappa(p,n)\right]^\frac{1}{p}
\left\|\,\left|\nabla f\right|\,\right\|_{X(\Omega)},
\end{align*}
where the constant
$\kappa(p,n)$ is the same as in \eqref{kappaqn}.
\end{theorem}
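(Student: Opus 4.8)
The plan is to combine the two main results already assembled in the paper: the asymptotics Theorem~\ref{2045} (which provides the limit formula together with the ``only if'' direction when $f\in W^{1,X}(\Omega)$) and the sufficient condition Theorem~\ref{BBMdomain} (which gives the ``if'' direction). The only real work is to check that, under hypotheses (i)--(iii) of the present theorem and the assumption that $\Omega$ is a bounded $(\varepsilon,\infty)$-domain, all the hypotheses of those two theorems are met; in particular hypotheses (i)--(iii) of Theorem~\ref{BBMdomain} on $X'$ need to be verified, since the present theorem only assumes things about $X$ and $X^{1/p}$. I also need the elementary observation that, for $f\in W^{1,X}(\Omega)$, one has $\dot W^{1,X}(\Omega)\supset W^{1,X}(\Omega)$ and $\||\nabla f|\|_{X(\Omega)}\le\|f\|_{W^{1,X}(\Omega)}$, so Theorem~\ref{2045} applies verbatim.

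First I would prove the ``only if'' direction. Let $f\in W^{1,X}(\Omega)$. Then $f\in\dot W^{1,X}(\Omega)$, and since $\Omega$ is a bounded $(\varepsilon,\infty)$-domain (hence $\mathrm{rad\,}(\Omega)\in(0,\infty]$ by Remark~\ref{1711}) and $X$ satisfies (a)=(ii) and (b)=(i)\&(iii) of Theorem~\ref{2247}, the hypotheses of Theorem~\ref{2045} hold. Applying Theorem~\ref{2045} gives
\begin{align*}
\lim_{\nu\to0^+}
\left\|\left[\int_\Omega\frac{|f(\cdot)-f(y)|^p}{
|\cdot-y|^p}\rho_\nu(|\cdot-y|)\,dy
\right]^\frac{1}{p}\right\|_{X(\Omega)}
=\left[\kappa(p,n)\right]^\frac{1}{p}
\left\|\,\left|\nabla f\right|\,\right\|_{X(\Omega)}<\infty,
\end{align*}
so in particular the $\liminf$ in \eqref{1643} is finite, proving necessity and the quantitative limit formula simultaneously.

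Next I would prove the ``if'' direction via Theorem~\ref{BBMdomain}. Assume $f\in X(\Omega)$ and \eqref{1643} holds. I need to check Theorem~\ref{BBMdomain}(i)--(iii) for $X'$. Hypothesis (ii): by assumption (ii) of the present theorem, $X'(\mathbb{R}^n)$ has an absolutely continuous norm, so by Proposition~\ref{abs} its restriction $X'(\Omega)$ has an absolutely continuous norm on $\Omega$. Hypothesis (i): since $X(\mathbb{R}^n)$ has an absolutely continuous norm, $X'$ coincides with the dual $X^*$ (Remark~\ref{1104}); combined with the absolute continuity of $X'$ and the boundedness of $\mathcal M$ on $[X^{1/p}]'$ (which, via Lemma~\ref{1009}, forces $X(\mathbb{R}^n)$ and $X'(\mathbb{R}^n)$ to contain $C_{\mathrm{c}}^\infty$ densely in the appropriate sense), one gets density of $C_{\mathrm{c}}^\infty(\Omega)$ in $X'(\Omega)$ — here I would invoke the density lemma (``Lemma~\ref{dense}'') referenced in the remarks after Theorem~\ref{BBMdomain}. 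Hypothesis (iii): the boundedness of $\mathcal M$ on $[X^{1/p}(\mathbb{R}^n)]'$ implies $\mathcal M$ is bounded on $X'(\mathbb{R}^n)$ (cf.\ the Remark after Theorem~\ref{2247}), and a standard argument shows that any ball Banach function space on which $\mathcal M$ is bounded is locally $n$-doubling, hence locally $\beta$-doubling with $\beta=n<n+1$. With (i)--(iii) verified, Theorem~\ref{BBMdomain} yields $|\nabla f|\in X(\Omega)$, i.e.\ $f\in W^{1,X}(\Omega)$; then the limit formula follows from the ``only if'' part just proved. The main obstacle is the careful bookkeeping for hypothesis (i) of Theorem~\ref{BBMdomain} (density of $C_{\mathrm{c}}^\infty(\Omega)$ in $X'(\Omega)$), since it requires combining the associate-space identification, absolute continuity, and the maximal-operator bound rather than a single cited lemma; everything else is a routine matching of hypotheses.
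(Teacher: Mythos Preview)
Your proposal is correct and takes the same route as the paper: necessity via Theorem~\ref{2045}, sufficiency via Theorem~\ref{BBMdomain} after checking its three hypotheses on $X'$. One simplification: density of $C_{\mathrm{c}}^\infty(\Omega)$ in $X'(\Omega)$ is not the ``main obstacle'' you flag --- the paper applies Lemma~\ref{dense} directly with $X'$ in place of $X$, using only that $X'(\mathbb{R}^n)$ is a ball Banach function space (Remark~\ref{dual}(i)) with an absolutely continuous norm (assumption~(ii)), so no dual identification or maximal-operator input is needed there; for the locally doubling hypothesis the paper instead goes through Lemma~\ref{1009} (uniform boundedness of $\{\mathcal B_r\}$ on $X$) and then \cite[Lemmas~4.9 and~4.10]{dgpyyz2022} to conclude $X'$ is locally $n$-doubling, which is a slightly different but equivalent path to yours.
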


To show Theorem~\ref{1622},
we need the following density lemma
which when $\Omega=\mathbb{R}^n$
is just \cite[Corollary~3.10]{dgpyyz2022}.

\begin{lemma}\label{dense}
Let $\Omega\subset\mathbb{R}^n$ be an open set and
$X(\mathbb{R}^n)$ a ball Banach function space
having an absolutely continuous norm.
Then $C_{\mathrm{c}}^\infty(\Omega)$ is
dense in $X(\Omega)$.
\end{lemma}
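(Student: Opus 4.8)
The plan is to approximate any $f\in X(\Omega)$ first by a bounded function supported in a compact subset of $\Omega$ and then to mollify this truncation; the absolutely continuous norm enters through the dominated convergence theorem on $X(\Omega)$ in both steps.

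\emph{Step 1 (truncation).} For any $k\in\mathbb{N}$, let $K_k:=\{x\in\Omega:\ \mathrm{dist\,}(x,\Omega^\complement)\ge1/k\ \text{and}\ |x|\le k\}$, a compact subset of $\Omega$, and set $g_k:=f\mathbf{1}_{K_k\cap\{|f|\le k\}}$. Since $f\in X(\Omega)\subset L^1_{\mathrm{loc}}(\Omega)$ by Proposition~\ref{1533}(vi), $f$ is finite almost everywhere on $\Omega$, so $K_k\cap\{|f|\le k\}\uparrow\Omega$ modulo a null set as $k\to\infty$; hence $E_k:=\Omega\setminus(K_k\cap\{|f|\le k\})$ satisfies $\mathbf{1}_{E_k}\to0$ almost everywhere on $\Omega$. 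Because $X(\mathbb{R}^n)$ has an absolutely continuous norm, so does $X(\Omega)$ by Proposition~\ref{abs}; combined with $f-g_k=f\mathbf{1}_{E_k}$ and Definition~\ref{2029}, this yields $\|f-g_k\|_{X(\Omega)}=\|f\mathbf{1}_{E_k}\|_{X(\Omega)}\to0$ as $k\to\infty$. It therefore suffices to approximate each $g_k$, which is bounded and supported in the compact set $K_k\subset\Omega$, by functions in $C_{\mathrm{c}}^\infty(\Omega)$.

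\emph{Step 2 (mollification).} Fix such a $g:=g_k$, write $K:=K_k$, and set $\delta:=\min\{1,\mathrm{dist\,}(K,\Omega^\complement)\}$, which is positive (with the convention $\mathrm{dist\,}(K,\emptyset)=\infty$ covering the case $\Omega=\mathbb{R}^n$). Choose $\xi\in C_{\mathrm{c}}^\infty(\mathbb{R}^n)$ with $\xi\ge0$, $\mathrm{supp\,}(\xi)\subset B(\mathbf{0},1)$, and $\int_{\mathbb{R}^n}\xi(x)\,dx=1$, and put $\xi_t(\cdot):=t^{-n}\xi(t^{-1}\cdot)$. For any $t\in(0,\delta/2)$, the function $g*\xi_t$ is infinitely differentiable with $\mathrm{supp\,}(g*\xi_t)\subset\{x:\ \mathrm{dist\,}(x,K)\le t\}$, a compact subset of $\Omega$, so $g*\xi_t\in C_{\mathrm{c}}^\infty(\Omega)$. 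Moreover, with $S:=\{x:\ \mathrm{dist\,}(x,K)\le\delta/2\}$ --- a bounded measurable subset of $\Omega$, whence $\mathbf{1}_S\in X(\Omega)$ by Proposition~\ref{1533}(iv) --- we have $|g*\xi_t|\le\|g\|_{L^\infty(\Omega)}\mathbf{1}_S$ and $|g|\le\|g\|_{L^\infty(\Omega)}\mathbf{1}_S$ for every $t\in(0,\delta/2)$. By the Lebesgue differentiation theorem, for almost every $x\in\Omega$,
\[
\left|g*\xi_t(x)-g(x)\right|
\le\|\xi\|_{L^\infty(\mathbb{R}^n)}\,\frac{|B(\mathbf{0},1)|}{|B(x,t)|}
\int_{B(x,t)}\left|g(y)-g(x)\right|\,dy\longrightarrow0
\]
as $t\to0^+$. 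Applying Remark~\ref{LdctOmega} (the dominated convergence theorem on $X(\Omega)$, available since $X(\Omega)$ has an absolutely continuous norm) along an arbitrary sequence $t_j\downarrow0$, with dominating function $2\|g\|_{L^\infty(\Omega)}\mathbf{1}_S\in X(\Omega)$, we obtain $\|g*\xi_t-g\|_{X(\Omega)}\to0$ as $t\to0^+$.

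\emph{Step 3 (conclusion) and the main obstacle.} Given $\eta\in(0,\infty)$, first pick $k$ with $\|f-g_k\|_{X(\Omega)}<\eta/2$ by Step~1, then pick $t$ small enough (as in Step~2) so that $g_k*\xi_t\in C_{\mathrm{c}}^\infty(\Omega)$ and $\|g_k*\xi_t-g_k\|_{X(\Omega)}<\eta/2$; the triangle inequality for $\|\cdot\|_{X(\Omega)}$ (Proposition~\ref{1533}(v)) then gives $\|f-g_k*\xi_t\|_{X(\Omega)}<\eta$, proving the density. The only delicate point is that, unlike in Lemma~\ref{943}(i), no boundedness of the Hardy--Littlewood maximal operator is assumed, so the uniform boundedness of the centered ball average operators on $X(\mathbb{R}^n)$ is unavailable and cannot be used to control $g*\xi_t$ for a general $g\in X(\Omega)$; carrying out the truncation of Step~1 beforehand circumvents this, since once $g$ is bounded with support in a fixed compact set the whole family $\{g*\xi_t\}_{t\in(0,\delta/2)}$ is dominated by a single multiple of $\mathbf{1}_S\in X(\Omega)$, after which the pointwise almost-everywhere convergence supplied by the Lebesgue differentiation theorem makes the dominated convergence theorem on $X(\Omega)$ directly applicable.
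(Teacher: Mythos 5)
Your proof is correct, but it takes a genuinely different route from the paper's. You reduce to a bounded truncation $g_k=f\mathbf{1}_{K_k\cap\{|f|\le k\}}$ supported in a compact subset of $\Omega$ (using Proposition~\ref{abs} and the absolutely continuous norm), and then mollify, combining the Lebesgue differentiation theorem with the dominated convergence theorem on $X(\Omega)$ (Remark~\ref{LdctOmega}) and the single dominant $2\|g\|_{L^\infty(\Omega)}\mathbf{1}_S\in X(\Omega)$; your observation that the pre-truncation is precisely what makes the whole family $\{g*\xi_t\}_{t\in(0,\delta/2)}$ dominated --- so that, unlike in Lemma~\ref{943}, no uniform boundedness of the centered ball averages and hence no maximal-operator hypothesis is needed --- is the key point, and it is handled correctly. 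The paper never mollifies: it first approximates $\widetilde f$ by a simple function, using the density of simple functions in a space with absolutely continuous norm (see \cite[p.\,18, Theorem~3.11]{bs1988}), then uses the inner regularity of the Lebesgue measure together with \cite[Lemma~5.6.14]{lyh2320} twice to pass to simple functions built on nested compact subsets of $\Omega$, and finally applies the Urysohn lemma to squeeze a function $f_0\in C_{\mathrm{c}}^\infty(\mathbb{R}^n)$, supported in a compact subset of $\Omega$, between two such simple functions. Your argument is the more classical truncation-plus-mollification scheme and produces the approximants explicitly as mollified truncations of $f$; the paper's argument stays within the abstract machinery (simple functions, regularity of the measure, Urysohn) and avoids convolution estimates altogether. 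Both use only the absolutely continuous norm, so the hypotheses are used in the same way.
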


\begin{proof}
Without loss of generality,
we may assume that $f\in X(\Omega)$
is a nonnegative measurable function.
Let $\widetilde{f}$ be defined in \eqref{1448}.
By \cite[p.\,18, Theorem~3.11]{bs1988}
(whose proof remains true for ball Banach function spaces) and
the assumption that $X(\mathbb{R}^n)$ has
an absolutely continuous norm,
we find that the set of all
simple functions is dense in $X(\mathbb{R}^n)$,
which further implies that,
for any given $\varepsilon\in(0,\infty)$,
there exist $N\in\mathbb{N}$,
a sequence $\{\lambda_k\}_{k=1}^N\subset(0,\infty)$,
a sequence $\{E_k\}_{k\in\mathbb{N}}$ of
measurable subsets of $\Omega$,
and a nonnegative simple function
$g:=\sum_{k=1}^N\lambda_k\mathbf{1}_{E_k}$ such that
\begin{align}\label{1727}
\left\|\widetilde{f}-g\right\|_{X(\mathbb{R}^n)}<\frac{\varepsilon}{3}.
\end{align}
Let
$\mathring{g}:=\sum_{k=1}^N\lambda_k\mathbf{1}_{\mathring{E_k}}$,
where, for any $k\in\{1,\ldots,N\}$, $\mathring{E_k}$
denotes the interior of $E_k$.
From the inner regularity of the Lebesgue measure
(see, for instance, \cite[Theorem~2.14(d)]{rudin})
and \cite[Lemma~5.6.14]{lyh2320}
(see also \cite[p.\,16, Proposition~3.6]{bs1988}),
we infer that there exists a
simple function $h:=\sum_{k=1}^N\lambda_k\mathbf{1}_{F_k}$
such that
\begin{align}\label{1728}
\left\|\mathring{g}-h\right\|_{X(\mathbb{R}^n)}<\frac{\varepsilon}{3},
\end{align}
where, for any $k\in\{1,\ldots,N\}$,
$F_k\subset\mathring{E_k}$ is a compact set.
Let
$\mathring{h}:=\sum_{k=1}^N\lambda_k\mathbf{1}_{\mathring{F_k}}$.
By the inner regularity of the Lebesgue measure
(see, for instance, \cite[Theorem~2.14(d)]{rudin})
and \cite[Lemma~5.6.14]{lyh2320}
(see also \cite[p.\,16, Proposition~3.6]{bs1988}) again,
we find that there exists a
simple function $u:=\sum_{k=1}^N\lambda_k\mathbf{1}_{U_k}$
such that
\begin{align}\label{1729}
\left\|\mathring{h}-u\right\|_{X(\mathbb{R}^n)}<\frac{\varepsilon}{3},
\end{align}
where, for any $k\in\{1,\ldots,N\}$,
$U_k\subset\mathring{F_k}$ is a compact set.
Applying the Urysohn lemma
(see, for instance, \cite[Lemma~8.18]{folland})
to the open set $\bigcup_{k=1}^N\mathring{F}_k$
and the compact set $\bigcup_{k=1}^NU_k$,
we conclude that there exists
$f_0\in C_{\mathrm{c}}^\infty(\mathbb{R}^n)$
satisfying that, for any $x\in\mathbb{R}^n$,
$$
u(x)\leq f_0(x)\leq\mathring{h}(x).
$$
By this, $F_k\subset\mathring{E}_k$, and $E_k\subset\Omega$,
we conclude that
$$
\mathrm{supp\,}(f_0)\subset\mathrm{supp\,}(h)=\bigcup_{k=1}^N
F_k\subsetneqq\bigcup_{k=1}^N\mathring{E}_k\subset\Omega,
$$
which, together with the fact that
$\bigcup_{k=1}^N{F}_k$ is compact,
further implies that
$f_0|_\Omega\in C_{\mathrm{c}}^\infty(\Omega)$.
Moreover, from
Proposition~\ref{norm},
Definition~\ref{1659}(v), \eqref{1727}, \eqref{1728},
and \eqref{1729}, we deduce that
\begin{align*}
\left\|f_0|_\Omega-f\right\|_{X(\Omega)}
&=\left\|f_0-\widetilde{f}\right\|_{X(\mathbb{R}^n)}\\
&\leq\left\|f_0-\mathring{h}\right\|_{X(\mathbb{R}^n)}
+\left\|\mathring{h}-h\right\|_{X(\mathbb{R}^n)}
+\left\|h-\mathring{g}\right\|_{X(\mathbb{R}^n)}\\
&\quad+\left\|\mathring{g}-g\right\|_{X(\mathbb{R}^n)}
+\left\|g-\widetilde{f}\right\|_{X(\mathbb{R}^n)}\\
&\leq\left\|u-\mathring{h}\right\|_{X(\mathbb{R}^n)}
+\left\|h-\mathring{g}\right\|_{X(\mathbb{R}^n)}
+\left\|g-\widetilde{f}\right\|_{X(\mathbb{R}^n)}<\varepsilon,
\end{align*}
which completes the proof of Lemma~\ref{dense}.
\end{proof}

Now, we use Lemma~\ref{dense}
and Theorems~\ref{2045}
and~\ref{BBMdomain} to show Theorem~\ref{1622}.

\begin{proof}[Proof of Theorem~\ref{1622}]
We first show the necessity.
Let $f\in W^{1,X}(\Omega)$.
Then $f\in X(\Omega)$ and Theorem~\ref{2045} implies that
\begin{align*}
&\lim_{\nu\to0^+}
\left\|\left[\int_\Omega\frac{|f(\cdot)-f(y)|^p}{
|\cdot-y|^p}\rho_\nu(|\cdot-y|)\,dy
\right]^\frac{1}{p}\right\|_{X(\Omega)}\\
&\quad=\left[\kappa(p,n)\right]^\frac{1}{p}
\left\|\,\left|\nabla f\right|\,\right\|_{X(\Omega)}<\infty.
\end{align*}
This finishes the proof of the necessity.

Next, we show the sufficiency.
Let $f\in X(\Omega)$ satisfy \eqref{1643}.
By Lemma~\ref{1009} and the assumptions (i)
and (iii) of the present theorem,
we find that the centered ball average
operators $\{\mathcal{B}_r\}_{r\in(0,\infty)}$
are uniformly bounded on $X(\mathbb{R}^n)$,
which, combined with
\cite[Lemmas~4.9 and~4.10]{dgpyyz2022},
further implies that
$X'(\mathbb{R}^n)$ is locally $n$-doubling.
From Lemma~\ref{dense} with $X(\Omega):=X'(\Omega)$ and
the assumption (ii) of the present theorem,
we infer that $C_{\mathrm{c}}^\infty(\Omega)$
is dense in $X'(\Omega)$.
By Proposition~\ref{abs} and the assumption (ii)
of the present theorem,
we find that $X'(\Omega)$
has an absolutely continuous norm on $\Omega$.
Thus, all the assumptions of Theorem~\ref{BBMdomain}
are satisfied. From Theorem~\ref{BBMdomain},
we deduce that $f\in W^{1,X}(\Omega)$.
This finishes the proof of the sufficiency
and hence Theorem~\ref{1622}.
\end{proof}

\begin{remark}\label{1607}
In Theorem~\ref{1622}, if $X(\mathbb{R}^n):=L^q(\mathbb{R}^n)$
with $q\in(1,\infty)$ and if $p\in[1,q]$,
then it is easy to show that both $L^q(\mathbb{R}^n)$
and $L^{q'}(\mathbb{R}^n)$
are a ball Banach function space having an absolutely
continuous norm
and that the Hardy--Littlewood maximal operator
is bounded on $[X^\frac{1}{p}(\mathbb{R}^n)]'
=L^{(\frac{q}{p})'}(\mathbb{R}^n)$
and hence Theorem~\ref{1622} in this case holds true.
Moreover, we point out that \cite[Theorem~2']{b2002} is just
Theorem~\ref{1622} with both $X(\mathbb{R}^n)=L^q(\mathbb{R}^n)$
and $q=p\in(1,\infty)$
and that Theorem~\ref{1622} with both $X(\mathbb{R}^n)=L^q(\mathbb{R}^n)$
and $1\leq p<q<\infty$ is new.
In addition, from Remark~\ref{1638}(iii),
we infer that
the assumption on $\Omega$ in Theorem~\ref{1622}
is weaker than that in \cite[Remark~7]{b2002}.
\end{remark}

Using Theorem~\ref{1622},
we obtain the following corollary.

\begin{corollary}\label{2001}
Let $p\in[1,\infty)$,
$\Omega\subset\mathbb{R}^n$ be a bounded
$(\varepsilon,\infty)$-domain
with $\varepsilon\in(0,1]$,
and $X(\mathbb{R}^n)$ a ball Banach function space
satisfying the same assumptions in Theorem~\ref{1622}.
Then $f\in W^{1,X}(\Omega)$ if and only if $f\in X(\Omega)$
and
\begin{align*}
\liminf_{s\to1^-}(1-s)^\frac{1}{p}\left\|\left[\int_{\Omega}
\frac{|f(\cdot)-f(y)|^p}{|\cdot-y|^{n+sp}}
\,dy\right]^{\frac{1}{p}}\right\|_{X(\Omega)}<\infty;
\end{align*}
moreover, for such $f$,
\begin{align*}
\lim_{s\to1^-}(1-s)^\frac{1}{p}\left\|\left[\int_{\Omega}
\frac{|f(\cdot)-f(y)|^p}{|\cdot-y|^{n+sp}}
\,dy\right]^{\frac{1}{p}}\right\|_{X(\Omega)}
=\left[\frac{\kappa(p,n)}{p}\right]^\frac{1}{p}
\left\|\,\left|\nabla f\right|\,\right\|_{X(\Omega)},
\end{align*}
where the constant
$\kappa(p,n)$ is the same as in \eqref{kappaqn}.
\end{corollary}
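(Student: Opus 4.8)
The plan is to deduce Corollary~\ref{2001} from Theorem~\ref{1622} by specializing the $\nu_0$-RDATI to the particular family already indicated in the introduction. Concretely, set $\nu_0:=\min\{n/p,\,1\}$, fix $R\in(0,\infty)$ with $\Omega\subset B(\mathbf{0},R/2)$ (possible since $\Omega$ is bounded), and, for any $\nu\in(0,\nu_0)$ and $r\in(0,\infty)$, define
$$
\rho_\nu(r):=\nu p(2R)^{-\nu p}r^{-n+\nu p}\mathbf{1}_{(0,2R]}(r).
$$
The first step is to verify that $\{\rho_\nu\}_{\nu\in(0,\nu_0)}$ is a $\nu_0$-RDATI in the sense of Definition~\ref{ATI}. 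This is elementary: since $\nu<\nu_0\leq n/p$, the exponent $-n+\nu p$ is negative, so $\rho_\nu$ is decreasing on $(0,\infty)$; a direct computation of $\int_0^{2R}r^{\nu p-1}\,dr$ gives $\int_0^\infty\rho_\nu(r)r^{n-1}\,dr=1$; and for any $\delta\in(0,2R)$ one has $\int_\delta^\infty\rho_\nu(r)r^{n-1}\,dr=1-(\delta/(2R))^{\nu p}\to0$ as $\nu\to0^+$, while the integral vanishes identically for $\delta\ge2R$.

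Next I would carry out the change of variable $s:=1-\nu$, so that $s\to1^-$ is equivalent to $\nu\to0^+$. The key algebraic step is the pointwise identity: for $x,y\in\Omega$ with $x\neq y$ we have $0<|x-y|<R\le 2R$, hence $\mathbf{1}_{(0,2R]}(|x-y|)=1$ and, using $p-\nu p=sp$,
$$
\frac{|f(x)-f(y)|^p}{|x-y|^p}\rho_\nu(|x-y|)
=\nu p(2R)^{-\nu p}\,\frac{|f(x)-f(y)|^p}{|x-y|^{n+sp}}.
$$
Pulling the constant $\nu p(2R)^{-\nu p}$ out of the inner integral, taking the $p$-th root, and using the positive homogeneity of $\|\cdot\|_{X(\Omega)}$ yields
$$
\left\|\left[\int_\Omega\frac{|f(\cdot)-f(y)|^p}{|\cdot-y|^p}\rho_\nu(|\cdot-y|)\,dy\right]^\frac{1}{p}\right\|_{X(\Omega)}
=p^\frac{1}{p}(2R)^{-(1-s)}(1-s)^\frac{1}{p}\left\|\left[\int_\Omega\frac{|f(\cdot)-f(y)|^p}{|\cdot-y|^{n+sp}}\,dy\right]^\frac{1}{p}\right\|_{X(\Omega)}.
$$

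Finally, since $(2R)^{-(1-s)}\to1$ as $s\to1^-$, the factor $p^\frac{1}{p}(2R)^{-(1-s)}$ converges to the positive finite constant $p^{1/p}$; hence the $\liminf$ in \eqref{1643} is finite if and only if the $\liminf$ in the statement of Corollary~\ref{2001} is finite, and applying Theorem~\ref{1622} gives the equivalence $f\in W^{1,X}(\Omega)\Longleftrightarrow f\in X(\Omega)$ together with that finiteness condition. Moreover, for such $f$, dividing the limit identity furnished by Theorem~\ref{1622},
$$
\lim_{\nu\to0^+}\left\|\left[\int_\Omega\frac{|f(\cdot)-f(y)|^p}{|\cdot-y|^p}\rho_\nu(|\cdot-y|)\,dy\right]^\frac{1}{p}\right\|_{X(\Omega)}=[\kappa(p,n)]^\frac{1}{p}\left\|\,|\nabla f|\,\right\|_{X(\Omega)},
$$
by $p^{1/p}$ and using $(2R)^{-(1-s)}\to1$ turns it into the asserted asymptotics with constant $[\kappa(p,n)/p]^{1/p}$. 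There is no substantive obstacle here: the only points requiring care are checking that the chosen $\rho_\nu$ genuinely satisfies all three conditions of Definition~\ref{ATI} on the whole interval $(0,\nu_0)$, and noting that the cutoff $\mathbf{1}_{(0,2R]}$ is harmless precisely because $\Omega$ has diameter strictly less than $R$ so that it is identically $1$ on the relevant range of $|x-y|$.
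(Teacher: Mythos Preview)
Your proof is correct and follows essentially the same approach as the paper: you choose the identical family $\rho_\nu(r)=\nu p(2R)^{-\nu p}r^{-n+\nu p}\mathbf{1}_{(0,2R]}(r)$, verify the three RDATI conditions in the same way, perform the substitution $s=1-\nu$, and reduce to Theorem~\ref{1622} via the pointwise identity (using $\Omega\subset B(\mathbf{0},R/2)$ so that the cutoff is inactive). The only cosmetic difference is that the paper writes $(2R)^{s-1}$ where you write $(2R)^{-(1-s)}$.
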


\begin{proof}
Since $\Omega$ is bounded,
it follows that there exists $R\in(0,\infty)$ such that
$\Omega\subset B(\mathbf{0},R/2)$.
Let $\nu_0:=\min\{n/p,\,1\}$.
For any $\nu\in(0,\nu_0)$ and $r\in(0,\infty)$, define
\begin{align}\label{rhov}
\rho_\nu(r):=\nu p(2R)^{-\nu p}
r^{-n+\nu p}\mathbf{1}_{(0,2R]}(r).
\end{align}
Next, we claim that
$\{\rho_\nu\}_{\nu\in(0,\nu_0)}$
is a $\nu_0$-RDATI in Definition~\ref{ATI}.
By \eqref{rhov}, it is easy to show that,
for any $\nu\in(0,\nu_0)$,
$\rho_\nu$ is a locally integrable,
nonnegative, and decreasing function on $(0,\infty)$.
From a simple computation, we deduce that
\begin{align*}
\int_0^\infty\rho_\nu(r)r^{n-1}\,dr=1
\end{align*}
for any $\nu\in(0,\nu_0)$ and
\begin{align*}
\lim_{\nu\to0^+}
\int_\delta^\infty\rho_\nu(r)r^{n-1}\,dr
=1-\lim_{\nu\to0^+}
\left(\frac{\delta}{2R}\right)^{\nu p}=0
\end{align*}
for any given $\delta\in(0,\infty)$.
Thus, $\{\rho_\nu\}_{\nu\in(0,\nu_0)}$
satisfies all the conditions in Definition~\ref{ATI}
and hence $\{\rho_\nu\}_{\nu\in(0,\nu_0)}$
is a $\nu_0$-RDATI. This finishes the proof of the above claim.
By this claim and the assumptions of the present corollary,
we conclude that all the assumptions of Theorem~\ref{1622}
are satisfied.

On the other hand, from the definition of
$\{\rho_\nu\}_{\nu\in(0,\nu_0)}$
and $\Omega\subset B(\mathbf{0},R/2)$
and letting $s:=1-\nu$,
we infer that
\begin{align*}
&\liminf_{\nu\to0^+}
\left\|\left[\int_{\Omega}\frac{|f(\cdot)-f(y)|^p}{
|\cdot-y|^p}\rho_\nu(|\cdot-y|)\,dy
\right]^\frac{1}{p}\right\|_{X(\Omega)}\\
&\quad=\liminf_{\nu\to0^+}(\nu p)^\frac{1}{p}
(2R)^{-\nu}
\left\|\left[\int_{\Omega}\frac{|f(\cdot)-f(y)|^p}{
|\cdot-y|^{n+(1-\nu)p}}\,dy
\right]^\frac{1}{p}\right\|_{X(\Omega)}\\
&\quad=p^\frac{1}{p}\liminf_{s\to1^-}(2R)^{s-1}
(1-s)^\frac{1}{p}
\left\|\left[\int_{\Omega}\frac{|f(\cdot)-f(y)|^p}{
|\cdot-y|^{n+sp}}\,dy
\right]^\frac{1}{p}\right\|_{X(\Omega)}\\
&\quad=p^\frac{1}{p}
\liminf_{s\to1^-}(1-s)^\frac{1}{p}
\left\|\left[\int_{\Omega}\frac{|f(\cdot)-f(y)|^p}{
|\cdot-y|^{n+sp}}\,dy
\right]^\frac{1}{p}\right\|_{X(\Omega)}.
\end{align*}
By this and Theorem~\ref{1622},
we complete the proof of Corollary~\ref{2001}.
\end{proof}

\begin{remark}
In Corollary~\ref{2001},
if $X(\mathbb{R}^n):=L^q(\mathbb{R}^n)$
with $q\in(1,\infty)$ and $p\in[1,q]$,
then, by an argument similar to that used
in Remark~\ref{1607}, we conclude that
Corollary~\ref{2001} in this case holds true.
Moreover, we point out that \cite[(44)]{b2002}
(see also \cite[Corollary~4]{b2002}) is just
Corollary~\ref{2001} with both $X(\mathbb{R}^n)=L^q(\mathbb{R}^n)$
and $q=p\in(1,\infty)$
and that Corollary~\ref{2001} with
both $X(\mathbb{R}^n)=L^q(\mathbb{R}^n)$
and $1\leq p<q<\infty$ is new.
Moreover, from Remark~\ref{1638}(iii),
we deduce that
the assumption on $\Omega$ in Corollary~\ref{2001}
is weaker than that in \cite[Corollary~4]{b2002}.
\end{remark}

\section{Applications to Specific Function Spaces}
\label{S5}

In this section, we apply Theorem~\ref{1622}
to ten examples of ball
Banach function spaces, namely
weighted Lebesgue and
Morrey (see Subsection~\ref{5.4} below),
Besov--Bourgain--Morrey
(see Subsection~\ref{BBMspace} below),
local (or global) generalized Herz
(see Subsection~\ref{Herz} below),
mixed-norm Lebesgue (see Subsection~\ref{5.2} below),
variable Lebesgue (see Subsection~\ref{5.3} below),
Lorentz (see Subsection~\ref{5.7} below),
Orlicz (see Subsection~\ref{5.5} below),
and Orlicz-slice (see Subsection~\ref{5.6} below) spaces.

\subsection{Weighted Lebesgue Spaces
and Morrey Spaces}\label{5.4}

First, we consider the case of weighted Lebesgue spaces;
see Definition~\ref{1556}(i) for its definition.
As was pointed out in \cite[p.\,86]{shyy2017},
the weighted Lebesgue space
is a ball quasi-Banach function space,
but it may not be a Banach function space
in the sense of Bennett and Sharpley \cite{bs1988}.

By an argument similar to that used in
the proof of \cite[Theorem~4.1]{zyy20231}
with \cite[Theorems~3.1 and 3.11]{zyy20231} replaced by
Theorem~\ref{1622},
we obtain the following conclusion;
we omit the details.

\begin{theorem}\label{2044}
Let $\Omega\subset\mathbb{R}^n$ be a bounded
$(\varepsilon,\infty)$-domain
with $\varepsilon\in(0,1]$ and
$\{\rho_\nu\}_{\nu\in(0,\nu_0)}$
a $\nu_0$-{\rm RDATI} on $\mathbb{R}^n$
with $\nu_0\in(0,\infty)$.
Let $r\in(1,\infty)$, $p\in[1,r]$, and
$\omega\in A_{\frac{r}{p}}(\mathbb{R}^n)$.
Then $f\in W^{1,r}_\omega(\Omega)$ if and only if
$f\in L^{r}_\omega(\Omega)$ and
$$
\liminf_{\nu\to0^+}
\left\|\left[\int_\Omega\frac{|f(\cdot)-f(y)|^p}{
|\cdot-y|^p}\rho_\nu(|\cdot-y|)\,dy
\right]^\frac{1}{p}\right\|_{L^{r}_\omega(\Omega)}<\infty;
$$
moreover, for such $f$,
$$
\lim_{\nu\to0^+}
\left\|\left[\int_\Omega\frac{|f(\cdot)-f(y)|^p}{
|\cdot-y|^p}\rho_\nu(|\cdot-y|)\,dy
\right]^\frac{1}{p}\right\|_{L^{r}_\omega(\Omega)}
=\left[\kappa(p,n)\right]^\frac{1}{p}
\left\|\,\left|\nabla f\right|\,\right\|_{L^{r}_\omega(\Omega)},
$$
where the constant
$\kappa(p,n)$ is the same as in \eqref{kappaqn}.
\end{theorem}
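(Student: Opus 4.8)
The plan is to deduce Theorem~\ref{2044} directly from Theorem~\ref{1622} by choosing $X(\mathbb{R}^n):=L^r_\omega(\mathbb{R}^n)$ and verifying that this choice fulfils hypotheses (i), (ii), and (iii) of Theorem~\ref{1622}, together with the identification that the restrictive space $X(\Omega)$ of Definition~\ref{1635} coincides with the weighted Lebesgue space $L^r_\omega(\Omega)$ of Definition~\ref{1556} and that $W^{1,X}(\Omega)$ coincides with $W^{1,r}_\omega(\Omega)$, both with equal norms; this parallels the argument used in the proof of \cite[Theorem~4.1]{zyy20231}. First I would recall from \cite[p.\,86]{shyy2017} that, since $r\in(1,\infty)$ and $\omega\in A_{\frac{r}{p}}(\mathbb{R}^n)\subset A_\infty(\mathbb{R}^n)$ is locally integrable and positive almost everywhere, $L^r_\omega(\mathbb{R}^n)$ is a ball Banach function space; because $p\le r$, its $\frac{1}{p}$-convexification equals $L^{r/p}_\omega(\mathbb{R}^n)$ with $r/p\in[1,\infty)$, which is again a ball Banach function space, so hypothesis~(i) holds.

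For hypothesis~(ii), I would use that any weighted Lebesgue space with finite exponent has an absolutely continuous norm (a consequence of the dominated convergence theorem), and compute the associate space: the substitution $g\mapsto g\omega^{-1/r}$ shows $[L^r_\omega(\mathbb{R}^n)]'=L^{r'}_{\omega^{1-r'}}(\mathbb{R}^n)$; since $\omega\in A_{\frac{r}{p}}(\mathbb{R}^n)\subset A_r(\mathbb{R}^n)$, the dual weight $\omega^{1-r'}$ lies in $A_{r'}(\mathbb{R}^n)$ and hence is locally integrable and positive almost everywhere, so $L^{r'}_{\omega^{1-r'}}(\mathbb{R}^n)$ also has an absolutely continuous norm. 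For hypothesis~(iii), the same associate-space computation gives $[X^{\frac{1}{p}}(\mathbb{R}^n)]'=[L^{r/p}_\omega(\mathbb{R}^n)]'=L^{(r/p)'}_{\omega^{1-(r/p)'}}(\mathbb{R}^n)$; by the Muckenhoupt theorem, $\mathcal{M}$ is bounded on this space if and only if $\omega^{1-(r/p)'}\in A_{(r/p)'}(\mathbb{R}^n)$, which is equivalent to $\omega\in A_{\frac{r}{p}}(\mathbb{R}^n)$ and thus holds by assumption.

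Finally I would identify the abstract objects of Theorem~\ref{1622} with those in the statement of Theorem~\ref{2044}. By Proposition~\ref{norm}, for any $f$ in the restrictive space $X(\Omega)$ one has $\|f\|_{X(\Omega)}=\|\widetilde{f}\|_{L^r_\omega(\mathbb{R}^n)}=[\int_\Omega|f(x)|^r\omega(x)\,dx]^{1/r}$, and $f$ belongs to $X(\Omega)$ precisely when this quantity is finite; hence $X(\Omega)=L^r_\omega(\Omega)$ with identical norms, and, since weak derivatives are computed intrinsically on $\Omega$, also $W^{1,X}(\Omega)=W^{1,r}_\omega(\Omega)$ with identical norms. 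Substituting these identifications into the conclusion of Theorem~\ref{1622} yields exactly the asserted characterization and asymptotic formula. I do not expect a serious obstacle here; the only point needing a little care is the bookkeeping with the Muckenhoupt classes in hypothesis~(iii)—namely that $\omega\in A_{\frac{r}{p}}(\mathbb{R}^n)$ is equivalent to $\omega^{1-(r/p)'}\in A_{(r/p)'}(\mathbb{R}^n)$—and the parallel inclusions $A_{\frac{r}{p}}(\mathbb{R}^n)\subset A_r(\mathbb{R}^n)$ used for hypothesis~(ii), all of which are standard facts about $A_p$ weights.
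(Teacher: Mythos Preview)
Your proposal is correct and follows essentially the same route as the paper: the paper proves Theorem~\ref{2044} by applying Theorem~\ref{1622} with $X(\mathbb{R}^n):=L^r_\omega(\mathbb{R}^n)$, citing the argument of \cite[Theorem~4.1]{zyy20231} for the verification of the hypotheses, and your outline supplies precisely those verifications. Your handling of the endpoint $p=r$ (where $X^{1/p}=L^1_\omega$ and $[X^{1/p}]'$ is a weighted $L^\infty$ space on which $\mathcal{M}$ is bounded exactly because $\omega\in A_1$) is the only place requiring a moment's care, and you have identified it correctly.
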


\begin{remark}
\begin{enumerate}
\item[(i)]
To the best of our knowledge,
Theorem~\ref{2044} is completely new.
\item[(ii)]
Let $\omega\in A_1(\mathbb{R}^n)$.
In this case,
since $[L^1_\omega(\mathbb{R}^n)]'
=L^\infty_{\omega^{-1}}(\mathbb{R}^n)$
(see, for instance, \cite[p.\,9]{ins2019})
does not have an absolutely continuous
norm, it is still unclear whether or not
Theorem~\ref{2044} with $r=1$
holds true.
\end{enumerate}
\end{remark}

Next, we turn to study the case of Morrey spaces.
The \emph{Morrey space} $M_r^\alpha(\mathbb{R}^n)$
with $0<r\leq\alpha<\infty$
is defined to be the set of
all the $f\in\mathscr{M}(\mathbb{R}^n)$
having the following finite quasi-norm
\begin{align*}
\|f\|_{M_r^\alpha(\mathbb{R}^n)}:=
\sup_{x\in\mathbb{R}^n}\sup_{r\in(0,\infty)}
\left|B(x,r)\right|^{\frac{1}{\alpha}-
\frac{1}{r}}\|f\|_{L^r(B(x,r))},
\end{align*}
These spaces were introduced by Morrey \cite{m1938}
for the purpose of studying the regularity of solutions
to partial differential equations.
Morrey spaces have important
applications in the theory of partial differential equations,
harmonic analysis,
and potential theory;
see, for instance, the articles \cite{hms2017,hms2020,hss2018,
hs2017,tyy2019}
and the monographs \cite{sdh2020i,sdh20202,ysy2010}.
As was pointed out in \cite[p.\,87]{shyy2017},
$M_r^\alpha(\mathbb{R}^n)$ with $1\leq r\leq\alpha<\infty$
is a ball Banach function space,
but is not a Banach function space
in the sense of Bennett and Sharpley \cite{bs1988}.
Since the Morrey space $M_r^\alpha(\mathbb{R}^n)$
does not have an absolutely continuous norm,
Theorem~\ref{1622} does not seem to be applicable in
the Morrey setting.
However, we can use Proposition~\ref{1537}
and Theorem~\ref{2044} to further
obtain a characterization similar to Theorem~\ref{1622}
for Morrey--Sobolev spaces.

\begin{theorem}\label{Morrey}
Let $\Omega\subset\mathbb{R}^n$ be a bounded
$(\varepsilon,\infty)$-domain
with $\varepsilon\in(0,1]$ and
$\{\rho_\nu\}_{\nu\in(0,\nu_0)}$
a $\nu_0$-{\rm RDATI} on $\mathbb{R}^n$
with $\nu_0\in(0,\infty)$.
Let $1<r\leq\alpha<\infty$ and $p\in[1,r]$.
Then $f\in W^{1,M_r^\alpha}(\Omega)$ if and only if
$f\in M_r^\alpha(\Omega)$ and
\begin{align}\label{1534}
\liminf_{\nu\to0^+}
\left\|\left[\int_\Omega\frac{|f(\cdot)-f(y)|^p}{
|\cdot-y|^p}\rho_\nu(|\cdot-y|)\,dy
\right]^\frac{1}{p}\right\|_{M_r^\alpha(\Omega)}<\infty;
\end{align}
moreover, there exist two constants $C_1,C_2\in(0,\infty)$
such that, for such $f$,
\begin{align*}
C_1\left\|\,\left|\nabla f\right|\,\right\|_{M_r^\alpha(\Omega)}
&\leq\liminf_{\nu\to0^+}
\left\|\left[\int_\Omega\frac{|f(\cdot)-f(y)|^p}{
|\cdot-y|^p}\rho_\nu(|\cdot-y|)\,dy
\right]^\frac{1}{p}\right\|_{M_r^\alpha(\Omega)}\\
&\leq\limsup_{\nu\to0^+}
\left\|\left[\int_\Omega\frac{|f(\cdot)-f(y)|^p}{
|\cdot-y|^p}\rho_\nu(|\cdot-y|)\,dy
\right]^\frac{1}{p}\right\|_{M_r^\alpha(\Omega)}\\
&\leq C_2\left\|\,\left|\nabla f\right|\,\right\|_{M_r^\alpha(\Omega)},
\end{align*}
where the constants $C_1$ and $C_2$ depend only on
$\alpha$, $r$, $n$, $p$, and $\Omega$.
\end{theorem}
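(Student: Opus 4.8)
I would prove the two-sided estimate by running the scheme behind Theorem~\ref{1622}, but with its two pillars replaced by ingredients that do not require an absolutely continuous norm: the upper half comes directly from Proposition~\ref{1537} applied to $X:=M_r^\alpha(\mathbb{R}^n)$, while the lower half comes from transferring the \emph{sharp} weighted asymptotics of Theorem~\ref{2044}, applied weight by weight, through the classical description of the Morrey norm as an equivalent supremum of weighted Lebesgue norms. Only the constants $C_1,C_2$ survive, rather than the exact constant $[\kappa(p,n)]^{1/p}$ of Theorem~\ref{1622}, precisely because $M_r^\alpha(\mathbb{R}^n)$ has no absolutely continuous norm: the density machinery of Theorems~\ref{2247} and~\ref{2045} is unavailable, and the weight description of $\|\cdot\|_{M_r^\alpha}$ is only a norm equivalence.

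For the upper bound and the necessity of \eqref{1534}, observe that for $1\le p\le r\le\alpha<\infty$ the space $X:=M_r^\alpha(\mathbb{R}^n)$ is a ball Banach function space, its $1/p$-convexification $X^{1/p}=M_{r/p}^{\alpha/p}(\mathbb{R}^n)$ is again a ball Banach function space, and the Hardy--Littlewood maximal operator $\mathcal M$ is bounded on $[M_{r/p}^{\alpha/p}(\mathbb{R}^n)]'$ (the associate, i.e.\ block-type, space of a Morrey space with first exponent at least $1$). Since each $\rho_\nu$ is a nonnegative, decreasing element of $L^1_{\mathrm{loc}}(0,\infty)$ satisfying Definition~\ref{ATI}(ii), the hypotheses of Proposition~\ref{1537} are met, and it yields a constant $C$, independent of $\nu\in(0,\nu_0)$, with $\|[\int_\Omega|f(\cdot)-f(y)|^p|\cdot-y|^{-p}\rho_\nu(|\cdot-y|)\,dy]^{1/p}\|_{M_r^\alpha(\Omega)}\le C\|\,|\nabla f|\,\|_{M_r^\alpha(\Omega)}$ for every $f\in\dot W^{1,M_r^\alpha}(\Omega)$. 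Taking the supremum over $\nu$ gives the $\limsup$ inequality with $C_2:=C$; when in addition $f\in W^{1,M_r^\alpha}(\Omega)$ one has $|\nabla f|\in M_r^\alpha(\Omega)$, so the right-hand side is finite, which is exactly \eqref{1534}.

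For the lower bound and the sufficiency, use the known equivalence $\|g\|_{M_r^\alpha(E)}\sim\sup_{\omega\in\mathcal W}\|g\|_{L^r_\omega(E)}$ for $E\in\{\mathbb{R}^n,\Omega\}$, where $\mathcal W$ is a class of Muckenhoupt $A_1(\mathbb{R}^n)$-weights with uniformly bounded $A_1$-constant; in particular $\mathcal W\subset A_{r/p}(\mathbb{R}^n)$ and $M_r^\alpha(\Omega)\hookrightarrow L^r_\omega(\Omega)$ with norm bounded independently of $\omega\in\mathcal W$. Let $f\in M_r^\alpha(\Omega)$ satisfy \eqref{1534} and put $A:=\liminf_{\nu\to0^+}\|[\int_\Omega\cdots]^{1/p}\|_{M_r^\alpha(\Omega)}<\infty$. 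For fixed $\omega\in\mathcal W$ we then have $f\in L^r_\omega(\Omega)$ and $\liminf_{\nu\to0^+}\|[\int_\Omega\cdots]^{1/p}\|_{L^r_\omega(\Omega)}\le A<\infty$, so Theorem~\ref{2044} applies and gives $f\in W^{1,r}_\omega(\Omega)$ together with $\lim_{\nu\to0^+}\|[\int_\Omega\cdots]^{1/p}\|_{L^r_\omega(\Omega)}=[\kappa(p,n)]^{1/p}\|\,|\nabla f|\,\|_{L^r_\omega(\Omega)}$, whence $\|\,|\nabla f|\,\|_{L^r_\omega(\Omega)}\le[\kappa(p,n)]^{-1/p}A$. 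Since $\omega\in\mathcal W$ was arbitrary, the weight description yields $|\nabla f|\in M_r^\alpha(\Omega)$ and $\|\,|\nabla f|\,\|_{M_r^\alpha(\Omega)}\lesssim[\kappa(p,n)]^{-1/p}A$, which is the $C_1$-inequality. Moreover $\omega\in A_1(\mathbb{R}^n)$ forces $\omega^{-1}\in L^\infty_{\mathrm{loc}}$, so $f\in W^{1,r}_\omega(\Omega)$ already supplies the weak first-order derivatives of $f$, and $|\partial_j f|\le|\nabla f|\in M_r^\alpha(\Omega)$ together with Proposition~\ref{1533}(ii) gives $\partial_j f\in M_r^\alpha(\Omega)$ for each $j$; hence $f\in W^{1,M_r^\alpha}(\Omega)$. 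Rerunning the same computation for $f\in W^{1,M_r^\alpha}(\Omega)$ — where now both $f$ and $|\nabla f|$ lie in $M_r^\alpha(\Omega)\hookrightarrow L^r_\omega(\Omega)$, so Theorem~\ref{2044} applies for every $\omega\in\mathcal W$ — produces the matching bound $\liminf_{\nu\to0^+}\|[\int_\Omega\cdots]^{1/p}\|_{M_r^\alpha(\Omega)}\ge C_1\|\,|\nabla f|\,\|_{M_r^\alpha(\Omega)}$, completing the estimate.

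The main obstacle is supplying the two external facts in exactly the form used above: the boundedness of $\mathcal M$ on $[M_{r/p}^{\alpha/p}(\mathbb{R}^n)]'$, including the endpoint $p=r$ where the first Morrey exponent of the convexification drops to $1$; and the weight characterization of $\|\cdot\|_{M_r^\alpha}$ by a class of $A_1$-weights whose $A_1$-constants are uniformly bounded, so that the \emph{universal} constant $[\kappa(p,n)]^{1/p}$ of Theorem~\ref{2044} passes through the supremum undiminished, and which contains weights positive almost everywhere on $\Omega$ so that weak differentiability of $f$ is genuinely recovered. Conceptually, the absence of an absolutely continuous norm is what precludes the sharp equality: $\limsup_{\nu\to0^+}$ does not commute with $\sup_{\omega\in\mathcal W}$, and this is the very mechanism that degrades equality into the two inequalities governed by $C_1$ and $C_2$.
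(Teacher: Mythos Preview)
Your proposal is correct and follows essentially the same route as the paper: the upper bound via Proposition~\ref{1537} applied to $X=M_r^\alpha(\mathbb{R}^n)$ after checking that $\mathcal{M}$ is bounded on $[M_{r/p}^{\alpha/p}(\mathbb{R}^n)]'$, and the lower bound by expressing the Morrey norm as an equivalent supremum of weighted $L^r$ norms over $A_1$-weights with uniformly bounded characteristic and then invoking Theorem~\ref{2044} weight by weight. The paper makes your abstract class $\mathcal W$ concrete by taking $\omega=[\mathcal{M}(\mathbf{1}_Q)]^\theta$ with $\theta\in(1-r/\alpha,1)$ and $Q$ ranging over cubes, citing \cite[Proposition~285]{sdh2020i} for the norm equivalence and \cite[Theorem~281]{sdh2020i} for the uniform $A_1$ bound; your additional care in explicitly recovering weak differentiability from a single $W^{1,r}_\omega(\Omega)$ membership is a point the paper leaves implicit.
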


\begin{proof}
We first show the necessity.
Assume that $f\in W^{1,M_r^\alpha}(\Omega)$.
By \cite[p.\,87]{shyy2017}, we find that
both $M_r^\alpha(\mathbb{R}^n)$ and
$M_{r/p}^{\alpha/p}(\mathbb{R}^n)$ are ball Banach function spaces.
From \cite[Theorem~4.1]{st2015} and \cite[Theorem~3.1]{ch2014},
we deduce that $\mathcal{M}$
is bounded on $[M_{r/p}^{\alpha/p}(\mathbb{R}^n)]'$.
By this and Proposition~\ref{1537} with both $X:=M_r^\alpha$
and $\rho:=\rho_\nu$, we conclude that,
for any $\nu\in(0,\nu_0)$,
\begin{align*}
\left\|\left[\int_\Omega\frac{|f(\cdot)-f(y)|^p}{
|\cdot-y|^p}\rho_\nu(|\cdot-y|)\,dy
\right]^\frac{1}{p}\right\|_{M_r^\alpha(\Omega)}
\lesssim\left\|\,\left|\nabla f\right|\,\right\|_{M_r^\alpha(\Omega)},
\end{align*}
where the implicit positive constant
is independent of both $\nu$
and $f$. From this, we deduce that
\begin{align*}
&\liminf_{\nu\to0^+}
\left\|\left[\int_\Omega\frac{|f(\cdot)-f(y)|^p}{
|\cdot-y|^p}\rho_\nu(|\cdot-y|)\,dy
\right]^\frac{1}{p}\right\|_{M_r^\alpha(\Omega)}\\
&\quad\leq\limsup_{\nu\to0^+}
\left\|\left[\int_\Omega\frac{|f(\cdot)-f(y)|^p}{
|\cdot-y|^p}\rho_\nu(|\cdot-y|)\,dy
\right]^\frac{1}{p}\right\|_{M_r^\alpha(\Omega)}\nonumber\\
&\quad\lesssim\left\|\,\left|\nabla f\right|\,
\right\|_{M_r^\alpha(\Omega)}<\infty.\nonumber
\end{align*}
This finishes the proof of the necessity.

Next, we show the sufficiency.
Assume that $f\in M_r^\alpha(\Omega)$ satisfies \eqref{1534}.
Fix $\theta\in(1-\frac{r}{\alpha},1)$.
By \cite[Proposition~285]{sdh2020i},
we find that, for any $g\in\mathscr{M}(\mathbb{R}^n)$,
\begin{align}\label{2106}
\left\|g\right\|_{M_r^\alpha(\mathbb{R}^n)}\sim
\sup_{Q\subset\mathbb{R}^n}
|Q|^{\frac{1}{\alpha}-\frac{1}{r}}
\|g\|_{L^r_{[\mathcal{M}(\mathbf{1}_Q)]^\theta}(\mathbb{R}^n)},
\end{align}
where the supremum is taken over all cubes $Q\subset\mathbb{R}^n$
and the positive equivalence constants depend only on
$r$, $\alpha$, $n$, and $\theta$.
Moreover, we have, for any $Q\subset\mathbb{R}^n$
and $\nu\in(0,1)$,
\begin{align*}
&|Q|^{\frac{1}{\alpha}-\frac{1}{r}}
\left\|\left[\int_\Omega\frac{|f(\cdot)-f(y)|^p}{
|\cdot-y|^p}\rho_\nu(|\cdot-y|)\,dy
\right]^\frac{1}{p}\right\|_{
L^r_{[\mathcal{M}(\mathbf{1}_Q)]^\theta}(\Omega)}\\
&\quad\lesssim\left\|\left[\int_\Omega\frac{|f(\cdot)-f(y)|^p}{
|\cdot-y|^p}\rho_\nu(|\cdot-y|)\,dy
\right]^\frac{1}{p}\right\|_{M_r^\alpha(\Omega)},
\end{align*}
which further implies that
\begin{align}\label{1642}
&\liminf_{\nu\to0^+}|Q|^{\frac{1}{\alpha}-\frac{1}{r}}
\left\|\left[\int_\Omega\frac{|f(\cdot)-f(y)|^p}{
|\cdot-y|^p}\rho_\nu(|\cdot-y|)\,dy
\right]^\frac{1}{p}\right\|_{
L^r_{[\mathcal{M}(\mathbf{1}_Q)]^\theta}(\Omega)}\\
&\quad\lesssim\liminf_{\nu\to0^+}
\left\|\left[\int_\Omega\frac{|f(\cdot)-f(y)|^p}{
|\cdot-y|^p}\rho_\nu(|\cdot-y|)\,dy
\right]^\frac{1}{p}\right\|_{M_r^\alpha(\Omega)}.\nonumber
\end{align}
On the other hand,
from \cite[Theorem~281]{sdh2020i} and $\theta\in(0,1)$,
we deduce that,
for any cube  $Q\subset\mathbb{R}^n$,
$[\mathcal{M}(\mathbf{1}_Q)]^\theta\in A_1(\mathbb{R}^n)$
and
$[\{\mathcal{M}(\mathbf{1}_Q)\}^\theta
]_{A_1(\mathbb{R}^n)}\lesssim1$,
where the implicit positive constant depends only on $\theta$.
By this, \eqref{2106}, Theorem~\ref{2044}, and \eqref{1642},
we conclude that,
for any $f\in M_r^\alpha(\Omega)$,
\begin{align*}
\left\|\,|\nabla f|\,\right\|_{M_r^\alpha(\Omega)}
&\sim\sup_{Q\subset\mathbb{R}^n}
|Q|^{\frac{1}{\alpha}-\frac{1}{r}}
\left\|\,|\nabla f|\,\right\|_
{L^r_{[\mathcal{M}(\mathbf{1}_Q)]^\theta}(\Omega)}\\
&\sim\sup_{Q\subset\mathbb{R}^n}
|Q|^{\frac{1}{\alpha}-\frac{1}{r}}\\
&\quad\times\lim_{\nu\to0^+}
\left\|\left[\int_\Omega\frac{|f(\cdot)-f(y)|^p}{
|\cdot-y|^p}\rho_\nu(|\cdot-y|)\,dy
\right]^\frac{1}{p}\right\|_{L^r_{
[\mathcal{M}(\mathbf{1}_Q)]^\theta}(\Omega)}
\\
&\lesssim\liminf_{\nu\to0^+}
\left\|\left[\int_\Omega\frac{|f(\cdot)-f(y)|^p}{
|\cdot-y|^p}\rho_\nu(|\cdot-y|)\,dy
\right]^\frac{1}{p}\right\|_{M_r^\alpha(\Omega)}.
\end{align*}
This finishes the proof of Theorem~\ref{Morrey}.
\end{proof}

\begin{remark}
To the best of our knowledge,
Theorem~\ref{Morrey} is completely new.
\end{remark}

\subsection{Besov--Bourgain--Morrey Spaces}
\label{BBMspace}

Recall that Bourgain--Morrey spaces
(see \cite[Definition~1.1]{hnsh2022} for its definition),
whose special case was introduced by Bourgain \cite{bourgain},
play an important role in the study of both the
nonlinear Schr\"odinger equation and the
Strichartz estimate.
As a generalization of the Bourgain--Morrey space,
the Besov--Bourgain--Morrey space was
introduced by Zhao et al. \cite{zstyy2022},
which is a bridge connecting Bourgain--Morrey spaces
with amalgam-type spaces;
see \cite[p.\,525]{f1989} for the definition
of the amalgam-type space.
For more studies on Bourgain--Morrey-type spaces,
we refer the reader to
\cite{hnsh2022,hly2023,m1,m2,zstyy2022}.

Let $j\in\mathbb{Z}$
and $m:=(m_1,\ldots,m_n)\in\mathbb{Z}^n$.
Recall that a dyadic cube $Q_{j,m}\subset\mathbb{R}^n$
is defined by setting
$$
Q_{j,m}:=\prod_{i=1}^n\left[\frac{m_i}{2^j},
\frac{m_i+1}{2^j}\right).
$$
The definition of
the Besov--Bourgain--Morrey space
can be found in
\cite[Definition~1.2]{zstyy2022}.

\begin{definition}\label{2209}
Let $0<q\leq p\leq r\leq\infty$, $\tau\in(0,\infty]$,
and $\{Q_{j,m}\}_{j\in\mathbb{Z},\,
m\in\mathbb{Z}^n}$ be the collection of
all dyadic cubes of $\mathbb{R}^n$.
The \emph{Besov--Bourgain--Morrey space}
$\mathcal{M}\dot{B}^{p,\tau}_{q,r}(\mathbb{R}^n)$
is defined to be the set of all the
$f\in L_{\mathrm{loc}}^q(\mathbb{R}^n)$
such that
$$
\left\|f\right\|_{\mathcal{M}\dot{B}^{p,\tau}_{q,r}(\mathbb{R}^n)}
:=\left\{\sum_{j\in\mathbb{Z}}
\left[\sum_{m\in\mathbb{Z}^n}\left\{
|Q_{j,m}|^{\frac{1}{p}-\frac{1}{q}}
\left[\int_{Q_{j,m}}|f(y)|^q\,dy\right]^\frac{1}{q}\right\}^r
\right]^\frac{\tau}{r}\right\}^\frac{1}{\tau},
$$
with the usual modifications made when
$q=\infty$, $r=\infty$, or $\tau=\infty$,
is finite.
\end{definition}

By an argument similar to that used in
the proof of \cite[Theorem~4.12]{zyy20231}
with \cite[Theorems~3.1 and~3.11]{zyy20231} replaced by
Theorem~\ref{1622},
we obtain the following conclusion;
we omit the details.

\begin{theorem}\label{BBM}
Let $\Omega\subset\mathbb{R}^n$ be a bounded
$(\varepsilon,\infty)$-domain
with $\varepsilon\in(0,1]$ and
$\{\rho_\nu\}_{\nu\in(0,\nu_0)}$
a $\nu_0$-{\rm RDATI} on $\mathbb{R}^n$
with $\nu_0\in(0,\infty)$.
Let $1<q<p<r<\infty$, $\tau\in(1,\infty)$,
and $s\in[1,\min\{q,\,p,\,r,\,\tau\})$.
Then $f\in W^{1,\mathcal{M}\dot{B}^{p,\tau}_{q,r}}(\Omega)$
if and only if
$f\in\mathcal{M}\dot{B}^{p,\tau}_{q,r}(\Omega)$ and
$$
\liminf_{\nu\to0^+}
\left\|\left[\int_\Omega\frac{|f(\cdot)-f(y)|^p}{
|\cdot-y|^p}\rho_\nu(|\cdot-y|)\,dy
\right]^\frac{1}{p}\right\|_{
\mathcal{M}\dot{B}^{p,\tau}_{q,r}(\Omega)}<\infty;
$$
moreover, for such $f$,
$$
\lim_{\nu\to0^+}
\left\|\left[\int_\Omega\frac{|f(\cdot)-f(y)|^p}{
|\cdot-y|^p}\rho_\nu(|\cdot-y|)\,dy
\right]^\frac{1}{p}\right\|_{\mathcal{M}\dot{B}^{p,\tau}_{q,r}(\Omega)}
=\left[\kappa(p,n)\right]^\frac{1}{p}
\left\|\,\left|\nabla f\right|\,\right\|_{
\mathcal{M}\dot{B}^{p,\tau}_{q,r}(\Omega)},
$$
where the constant
$\kappa(p,n)$ is the same as in \eqref{kappaqn}.
\end{theorem}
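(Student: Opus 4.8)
The plan is to obtain Theorem~\ref{BBM} as a direct consequence of Theorem~\ref{1622}, applied to the ball Banach function space $X(\mathbb{R}^n):=\mathcal{M}\dot{B}^{p,\tau}_{q,r}(\mathbb{R}^n)$, with the exponent occurring in the formulation of Theorem~\ref{1622} (there denoted $p$) now played by $s$. Since $\Omega\subset\mathbb{R}^n$ is assumed to be a bounded $(\varepsilon,\infty)$-domain and $\{\rho_\nu\}_{\nu\in(0,\nu_0)}$ is a $\nu_0$-{\rm RDATI}, it remains only to verify that this $X(\mathbb{R}^n)$, together with the exponent $s$, satisfies hypotheses (i), (ii), and (iii) of Theorem~\ref{1622}; the claimed characterization and the limit formula then follow verbatim from Theorem~\ref{1622}.

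For hypothesis (i): by the structural results on Besov--Bourgain--Morrey spaces of Zhao et al.\ \cite{zstyy2022} (see also \cite{zyy20231}), the assumptions $1<q<p<r<\infty$ and $\tau\in(1,\infty)$ ensure that $\mathcal{M}\dot{B}^{p,\tau}_{q,r}(\mathbb{R}^n)$ is a ball Banach function space. A direct computation from Definition~\ref{2209}, using the substitution $g=|f|^{1/s}$ and the homogeneity of each of the three layers defining the norm (the local $L^{q}$-average over a dyadic cube, the weighted $\ell^{r}$-sum over the dyadic cubes of a fixed generation, and the $\ell^{\tau}$-sum over generations), yields the convexification identity
\[
\left[\mathcal{M}\dot{B}^{p,\tau}_{q,r}(\mathbb{R}^n)\right]^{1/s}
=\mathcal{M}\dot{B}^{p/s,\tau/s}_{q/s,r/s}(\mathbb{R}^n)
\]
with equal norms. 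Since $s\in[1,\min\{q,p,r,\tau\})$, all four exponents $q/s$, $p/s$, $r/s$, $\tau/s$ lie in $(1,\infty)$ and still satisfy $q/s\le p/s\le r/s$, so $X^{1/s}(\mathbb{R}^n)$ is again a ball Banach function space, which is precisely hypothesis (i).

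For hypotheses (ii) and (iii): appealing again to \cite{zstyy2022} (and to the verifications already carried out in the proof of \cite[Theorem~4.12]{zyy20231}), in the range $1<q<p<r<\infty$, $\tau\in(1,\infty)$ the space $\mathcal{M}\dot{B}^{p,\tau}_{q,r}(\mathbb{R}^n)$ and its associate space both have absolutely continuous norms, which gives hypothesis (ii); and, $X^{1/s}(\mathbb{R}^n)$ being the Besov--Bourgain--Morrey space displayed above with all of $q/s,p/s,r/s,\tau/s$ in $(1,\infty)$, the Hardy--Littlewood maximal operator $\mathcal{M}$ is bounded on its associate space $[X^{1/s}(\mathbb{R}^n)]'$, which gives hypothesis (iii). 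With (i)--(iii) in hand, Theorem~\ref{1622} applies and finishes the proof.

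I expect the genuine work to be concentrated in hypothesis (iii) together with the $X'$-part of hypothesis (ii): these require an explicit identification of the associate space of $\mathcal{M}\dot{B}^{p/s,\tau/s}_{q/s,r/s}(\mathbb{R}^n)$ as a block-type space, the boundedness of $\mathcal{M}$ on that block-type space, and the absolute continuity of the norm (equivalently, the reflexivity of $\mathcal{M}\dot{B}^{p,\tau}_{q,r}(\mathbb{R}^n)$ in this parameter range)---none of which is merely formal. However, all of these facts are available in \cite{zstyy2022} and were already exploited in \cite{zyy20231}, so the present argument only needs to route them through Theorem~\ref{1622} in place of the $\mathbb{R}^n$-results \cite[Theorems~3.1 and~3.11]{zyy20231}; once the parameter range is recorded as above, the remaining steps are routine.
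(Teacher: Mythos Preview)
Your proposal is correct and follows precisely the route indicated by the paper: the paper itself omits the proof, merely noting that one repeats the argument of \cite[Theorem~4.12]{zyy20231} with \cite[Theorems~3.1 and~3.11]{zyy20231} replaced by Theorem~\ref{1622}, which is exactly your plan of verifying hypotheses (i)--(iii) of Theorem~\ref{1622} for $X=\mathcal{M}\dot{B}^{p,\tau}_{q,r}(\mathbb{R}^n)$ with convexification exponent $s$, using the structural results on Besov--Bourgain--Morrey spaces from \cite{zstyy2022}. Your identification of the convexification $X^{1/s}=\mathcal{M}\dot{B}^{p/s,\tau/s}_{q/s,r/s}(\mathbb{R}^n)$ and your remarks about where the real work sits (associate space identification and maximal boundedness thereon) are accurate.
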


\begin{remark}
\begin{enumerate}
\item[\textup{(i)}]
To the best of our knowledge,
Theorem~\ref{BBM} is completely new.
\item[\textup{(ii)}]
Let $\tau\in[1,\infty)$ and $1=q<p<r<\infty$ or let
$\tau=1\leq q<p<r<\infty$.
In both cases, since the associate space
of $\mathcal{M}\dot{B}^{p,\tau}_{q,r}(\mathbb{R}^n)$ is unknown,
it is still unclear whether or not Theorem~\ref{BBM} when $s=1$
holds true.
\end{enumerate}
\end{remark}

\subsection{Local and Global Generalized Herz Spaces}
\label{Herz}

We begin with recalling several concepts related to
local and global generalized Herz spaces.
A nonnegative function $\omega$ on $\mathbb{R}_+$
is said to be \emph{almost increasing}
(resp. \emph{almost decreasing})
if there exists a constant $C\in[1,\infty)$ such that,
for any $0<t\leq\tau<\infty$ (resp. $0<\tau\leq t<\infty$),
$\omega(t)\leq C\omega(\tau)$.
The \emph{function class} $M(\mathbb{R}_+)$ is defined to be
the set of all the positive functions $\omega$ on $\mathbb{R}_+$
such that, for any $0<\delta<N<\infty$,
$$
0<\inf_{t\in(\delta,N)}\omega(t)
\leq\sup_{t\in(\delta,N)}\omega(t)<\infty
$$
and that there exist four constants
$\alpha_0,\beta_0,\alpha_\infty,
\beta_\infty\in\mathbb{R}$ such that
\begin{enumerate}
\item[(i)]
for any $t\in(0,1]$,
$t^{-\alpha_0}\omega(t)$ is almost increasing
and $t^{-\beta_0}\omega(t)$ is almost decreasing;
\item[(ii)]
for any $t\in[1,\infty)$,
$t^{-\alpha_\infty}\omega(t)$ is almost increasing
and $t^{-\beta_\infty}\omega(t)$ is almost decreasing.
\end{enumerate}
The \emph{Matuszewska--Orlicz indices} $m_0(\omega)$, $M_0(\omega)$,
$m_\infty(\omega)$, and $M_\infty(\omega)$
of a positive function $\omega$ on $\mathbb{R}_+$
are defined, respectively, by setting
\begin{align*}
m_0(\omega):=\sup_{t\in(0,1)}\frac{\ln\,[\limsup\limits_{h\to0^+}
\frac{\omega(ht)}{\omega(h)}]}{\ln t},\
M_0(\omega):=\inf_{t\in(0,1)}\frac{
\ln\,[\liminf\limits_{h\to0^+}
\frac{\omega(ht)}{\omega(h)}]}{\ln t},
\end{align*}
\begin{align*}
m_\infty(\omega):=\sup_{t\in(1,\infty)}\frac{
\ln\,[\liminf\limits_{h\to\infty}
\frac{\omega(ht)}{\omega(h)}]}{\ln t},
\end{align*}
and
\begin{align*}
M_\infty(\omega):=\inf_{t\in(1,\infty)}
\frac{\ln\,[\limsup\limits_{h\to\infty}
\frac{\omega(ht)}{\omega(h)}]}{\ln t}.
\end{align*}
Next, we recall the concepts of both the local generalized Herz space
and the global generalized Herz space,
which can be found in \cite[Definitions~1.2.1
and  1.7.1]{lyh2320}
(see also \cite[Definition~2.2]{rs2020}).
For any $\xi\in\mathbb{R}^n$ and $k\in\mathbb{Z}$,
let $R_{\xi,k}:=B(\xi,2^k)\setminus B(\xi,2^{k-1})$.

\begin{definition}\label{4.8}
Let $p,q\in(0,\infty]$ and $\omega\in M(\mathbb{R}_+)$.
\begin{enumerate}
\item[(i)]
The \emph{local generalized Herz space}
$\dot{\mathcal{K}}^{p,q}_{\omega,\xi}(\mathbb{R}^n)$,
with a given $\xi\in\mathbb{R}^n$,
is defined to be the set of all the $f\in L_{\mathrm{loc}}^p
(\mathbb{R}^n\setminus\{\xi\})$ having the following finite quasi-norm
\begin{align*}
\|f\|_{\dot{\mathcal{K}}^{p,q}_{\omega,\xi}(\mathbb{R}^n)}
:=\left\{\sum_{k\in\mathbb{Z}}
\left[\omega(2^k)\right]^q
\left\|f\right\|_{L^p(R_{\xi,k})}^q
\right\}^\frac{1}{q}.
\end{align*}
\item[(ii)]
The \emph{global generalized Herz space}
$\dot{\mathcal{K}}^{p,q}_{\omega}(\mathbb{R}^n)$
is defined to be the set of all the $f\in L_{\mathrm{loc}}^p
(\mathbb{R}^n)$ having the following finite quasi-norm
\begin{align*}
\|f\|_{\dot{\mathcal{K}}^{p,q}_{\omega}(\mathbb{R}^n)}
:=\sup_{\xi\in\mathbb{R}^n}
\|f\|_{\dot{\mathcal{K}}^{p,q}_{\omega,\xi}(\mathbb{R}^n)}.
\end{align*}
\end{enumerate}
\end{definition}

Recall that the classical Herz space was originally
introduced by Herz \cite{herz} in order to
study the Bernstein theorem on absolutely
convergent Fourier transforms.
The local and the global generalized Herz spaces
in Definition~\ref{4.8}, introduced by
Rafeiro and Samko \cite{rs2020},
generalize the classical homogeneous
Herz spaces and connect with generalized
Morrey type spaces.
For more studies on Herz spaces,
we refer the reader to \cite{gly1998,hy1999,hwyy2023,ly1996,
lyh2320,rs2020,zyz2022}.
As was pointed out in \cite[Theorem~1.2.46]{lyh2320},
for any $\xi\in\mathbb{R}^n$,
when $p,q\in[1,\infty]$ and $\omega\in M(\mathbb{R}_+)$
satisfy $-\frac{n}{p}<m_0(\omega)\leq M_0(\omega)<n-\frac{n}{p}$,
the local generalized Herz space
$\dot{\mathcal{K}}^{p,q}_{\omega,\xi}(\mathbb{R}^n)$
is a ball Banach function space.
Moreover, as was pointed out in \cite[Theorem~1.2.48]{lyh2320},
when $p,q\in[1,\infty]$ and
$\omega\in M(\mathbb{R}_+)$ satisfy both
$m_0(\omega)\in(-\frac{n}{p},\infty)$
and $M_\infty(\omega)\in(-\infty,0)$,
the global generalized Herz space
$\dot{\mathcal{K}}^{p,q}_{\omega}(\mathbb{R}^n)$ is
a ball Banach function space.

By an argument similar to that used in
the proof of \cite[Theorem~4.15]{zyy20231}
with \cite[Theorems~3.1 and~3.11]{zyy20231} replaced by
Theorem~\ref{1622},
we obtain the following conclusion;
we omit the details.

\begin{theorem}\label{localHerz}
Let $\Omega\subset\mathbb{R}^n$ be a bounded
$(\varepsilon,\infty)$-domain
with $\varepsilon\in(0,1]$ and
$\{\rho_\nu\}_{\nu\in(0,\nu_0)}$
a $\nu_0$-{\rm RDATI} on $\mathbb{R}^n$
with $\nu_0\in(0,\infty)$.
Let $\xi\in\mathbb{R}^n$,
$p,q\in(1,\infty)$, $s\in[1,\min\{p,\,q\}]$, and
$\omega\in M(\mathbb{R}_+)$
satisfy
\begin{align*}
-\frac{n}{p}<m_0(\omega)\leq
M_0(\omega)<n\left(\frac{1}{s}-\frac{1}{p}\right)
\end{align*}
and
\begin{align*}
-\frac{n}{p}<m_\infty(\omega)\leq M_\infty(\omega)
<n\left(\frac{1}{s}-\frac{1}{p}\right).
\end{align*}
Then $f\in W^{1,\dot{\mathcal{K}}^{p,q}_{\omega,
\xi}}(\Omega)$ if and only if
$f\in\dot{\mathcal{K}}^{p,q}_{\omega,
\xi}(\Omega)$ and
$$
\liminf_{\nu\to0^+}
\left\|\left[\int_\Omega\frac{|f(\cdot)-f(y)|^p}{
|\cdot-y|^p}\rho_\nu(|\cdot-y|)\,dy
\right]^\frac{1}{p}\right\|_{\dot{\mathcal{K}}^{p,q}_{\omega,
\xi}(\Omega)}<\infty;
$$
moreover, for such $f$,
$$
\lim_{\nu\to0^+}
\left\|\left[\int_\Omega\frac{|f(\cdot)-f(y)|^p}{
|\cdot-y|^p}\rho_\nu(|\cdot-y|)\,dy
\right]^\frac{1}{p}\right\|_{\dot{\mathcal{K}}^{p,q}_{\omega,
\xi}(\Omega)}
=\left[\kappa(p,n)\right]^\frac{1}{p}
\left\|\,\left|\nabla f\right|\,\right\|_{
\dot{\mathcal{K}}^{p,q}_{\omega,
\xi}(\Omega)},
$$
where the constant
$\kappa(p,n)$ is the same as in \eqref{kappaqn}.
\end{theorem}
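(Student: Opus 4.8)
The plan is to derive Theorem~\ref{localHerz} as a direct consequence of Theorem~\ref{1622}, applied to the ball Banach function space $X(\mathbb{R}^n):=\dot{\mathcal{K}}^{p,q}_{\omega,\xi}(\mathbb{R}^n)$ with the exponent parameter of Theorem~\ref{1622} taken to be $s$. Thus the whole argument reduces to checking, under the stated index conditions on $\omega$, the three hypotheses (i), (ii), and (iii) of Theorem~\ref{1622}. First, since $s\ge1$ forces $M_0(\omega)<n(\frac1s-\frac1p)\le n-\frac np$ while the hypotheses give $m_0(\omega)>-\frac np$, the space $\dot{\mathcal{K}}^{p,q}_{\omega,\xi}(\mathbb{R}^n)$ is a ball Banach function space by \cite[Theorem~1.2.46]{lyh2320}.

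Next I would record the convexification identity
\[
\left[\dot{\mathcal{K}}^{p,q}_{\omega,\xi}(\mathbb{R}^n)\right]^{1/s}
=\dot{\mathcal{K}}^{p/s,\,q/s}_{\omega^s,\,\xi}(\mathbb{R}^n),
\]
which follows immediately from Definition~\ref{tuhua} and Definition~\ref{4.8} via the elementary identity $\||g|^{1/s}\|_{L^p(R_{\xi,k})}=\|g\|_{L^{p/s}(R_{\xi,k})}^{1/s}$ on each annulus $R_{\xi,k}$. Since $s\le\min\{p,q\}$, both $p/s$ and $q/s$ lie in $[1,\infty)$; and since the Matuszewska--Orlicz indices transform as $m_0(\omega^s)=s\,m_0(\omega)$, $M_0(\omega^s)=s\,M_0(\omega)$ (and likewise at infinity), the inequality $-\frac np<m_0(\omega)\le M_0(\omega)<n(\frac1s-\frac1p)$ is exactly equivalent to $-\frac{n}{p/s}<m_0(\omega^s)\le M_0(\omega^s)<n-\frac{n}{p/s}$. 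Hence \cite[Theorem~1.2.46]{lyh2320} again shows that $X^{1/s}(\mathbb{R}^n)$ is a ball Banach function space, which gives hypothesis (i); the same identity also reduces hypothesis (iii) to the boundedness of $\mathcal{M}$ on $[\dot{\mathcal{K}}^{p/s,q/s}_{\omega^s,\xi}(\mathbb{R}^n)]'$.

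For hypothesis (iii) I would invoke the explicit description of the associate space of a local generalized Herz space together with the corresponding sufficient condition for the boundedness of $\mathcal{M}$ on it recorded in \cite{lyh2320}; this is precisely the ingredient used in the proof of \cite[Theorem~4.15]{zyy20231}. Here the ``at-infinity'' indices $m_\infty(\omega)$ and $M_\infty(\omega)$ enter, since the associate-space computation for a Herz space is sensitive to the behaviour of $\omega$ near both $0$ and $\infty$, and the two-sided bounds $-\frac np<m_\infty(\omega)\le M_\infty(\omega)<n(\frac1s-\frac1p)$ are exactly those forced by this boundedness. For hypothesis (ii) one checks that both $\dot{\mathcal{K}}^{p,q}_{\omega,\xi}(\mathbb{R}^n)$ and its associate space have absolutely continuous norms; since $p,q\in(1,\infty)$ and the index conditions hold, this again follows from the relevant results in \cite{lyh2320} (the finiteness of $p,q$ and the absence of an $L^\infty$-type factor being the point, in the spirit of the remark after Theorem~\ref{2044}). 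With (i)--(iii) in hand, Theorem~\ref{1622} applies and yields both the stated characterization of $W^{1,\dot{\mathcal{K}}^{p,q}_{\omega,\xi}}(\Omega)$ and the limit formula.

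The step I expect to be the main obstacle is hypothesis (iii): identifying $[\dot{\mathcal{K}}^{p/s,q/s}_{\omega^s,\xi}(\mathbb{R}^n)]'$ precisely enough to establish the boundedness of $\mathcal{M}$ on it, and confirming that all four index inequalities in the statement --- on $(m_0(\omega),M_0(\omega))$ and on $(m_\infty(\omega),M_\infty(\omega))$ --- are exactly those dictated by that boundedness after the convexification and associate-space passages. The remaining pieces, namely the convexification identity, the transformation rule for the Matuszewska--Orlicz indices, and the absolute continuity of the two norms, are routine and can be quoted directly from \cite{lyh2320}.
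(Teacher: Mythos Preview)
Your proposal is correct and follows essentially the same route as the paper: the paper deduces Theorem~\ref{localHerz} by applying Theorem~\ref{1622} with $X=\dot{\mathcal{K}}^{p,q}_{\omega,\xi}$ and verifying hypotheses (i)--(iii) via the convexification identity $[\dot{\mathcal{K}}^{p,q}_{\omega,\xi}]^{1/s}=\dot{\mathcal{K}}^{p/s,q/s}_{\omega^s,\xi}$ and the structural results from \cite{lyh2320}, exactly as in the proof of \cite[Theorem~4.15]{zyy20231}. Your identification of hypothesis (iii) as the substantive step, and of the role of the index conditions on $(m_0,M_0,m_\infty,M_\infty)$ in securing boundedness of $\mathcal{M}$ on the associate of the convexification, is on the mark.
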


\begin{remark}
\begin{enumerate}
\item[(i)]
To the best of our knowledge,
Theorem~\ref{localHerz} is completely new.
\item[(ii)]
Let $p=1$ and $q\in[1,\infty)$
or let $p\in(1,\infty)$ and $q=1$.
Let $\gamma$, $s$, and $\omega$ be
the same as in Theorem~\ref{localHerz}.
In both cases,
since $[\dot{\mathcal{K}}^{p,q}_{\omega,\xi}
(\mathbb{R}^n)]'$
does not have an absolutely continuous norm,
it is still unclear whether or not
Theorem~\ref{localHerz} holds true.
\end{enumerate}
\end{remark}

Next, we turn to study the case of
global generalized Herz spaces.
Since the global generalized Herz space does not
have an absolutely continuous norm,
Theorem~\ref{1622} seems inapplicable
in this setting. However,
applying Proposition~\ref{1537} and Theorem~\ref{localHerz},
we obtain the following characterization
similar to Theorem~\ref{1622} for
global generalized Herz--Sobolev spaces.

\begin{theorem}\label{globalHerz}
Let $\Omega\subset\mathbb{R}^n$ be a bounded
$(\varepsilon,\infty)$-domain
with $\varepsilon\in(0,1]$ and
$\{\rho_\nu\}_{\nu\in(0,\nu_0)}$
a $\nu_0$-{\rm RDATI} on $\mathbb{R}^n$
with $\nu_0\in(0,\infty)$.
Let $p,q\in(1,\infty)$, $s\in[1,\min\{p,\,q\})$, and
$\omega\in M(\mathbb{R}_+)$
satisfy
\begin{align}\label{1859}
-\frac{n}{p}<m_0(\omega)\leq
M_0(\omega)<0
\end{align}
and
\begin{align*}
-\frac{n}{p}<m_\infty(\omega)\leq M_\infty(\omega)<
n\left(\frac{1}{s}-\frac{1}{p}\right).
\end{align*}
Then $f\in W^{1,\dot{\mathcal{K}}^{p,q}_{\omega}}(\Omega)$
if and only if
$f\in\dot{\mathcal{K}}^{p,q}_{\omega}(\Omega)$ and
\begin{align}\label{1911}
\liminf_{\nu\to0^+}
\left\|\left[\int_\Omega\frac{|f(\cdot)-f(y)|^p}{
|\cdot-y|^p}\rho_\nu(|\cdot-y|)\,dy
\right]^\frac{1}{p}\right\|_{
\dot{\mathcal{K}}^{p,q}_{\omega}(\Omega)}<\infty;
\end{align}
moreover, there exists a positive constant $C$,
depending only on $p$, $q$, $s$, $n$, $\omega$, and $\Omega$,
such that, for such $f$,
\begin{align*}
\left\|\,\left|\nabla f\right|\,\right\|_{
\dot{\mathcal{K}}^{p,q}_{\omega}(\Omega)}
&\leq\liminf_{\nu\to0^+}
\left\|\left[\int_\Omega\frac{|f(\cdot)-f(y)|^p}{
|\cdot-y|^p}\rho_\nu(|\cdot-y|)\,dy
\right]^\frac{1}{p}\right\|_{
\dot{\mathcal{K}}^{p,q}_{\omega}(\Omega)}\\
&\leq\limsup_{\nu\to0^+}
\left\|\left[\int_\Omega\frac{|f(\cdot)-f(y)|^p}{
|\cdot-y|^p}\rho_\nu(|\cdot-y|)\,dy
\right]^\frac{1}{p}\right\|_{
\dot{\mathcal{K}}^{p,q}_{\omega}(\Omega)}\\
&\leq C\left\|\,\left|\nabla f\right|\,\right\|_{
\dot{\mathcal{K}}^{p,q}_{\omega}(\Omega)}.
\end{align*}
\end{theorem}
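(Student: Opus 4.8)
The plan is to run the same scheme already used to prove Theorem~\ref{Morrey}: since $\dot{\mathcal{K}}^{p,q}_\omega(\mathbb{R}^n)$ has no absolutely continuous norm, Theorem~\ref{1622} is inapplicable, so I would derive the necessity together with the two rightmost inequalities of the displayed chain from the a priori upper estimate of Proposition~\ref{1537}, and the sufficiency together with the leftmost inequality from the already established local case, Theorem~\ref{localHerz}, exploiting that, by Definition~\ref{4.8}(ii) and Proposition~\ref{norm}, $\|g\|_{\dot{\mathcal{K}}^{p,q}_\omega(\Omega)}=\sup_{\xi\in\mathbb{R}^n}\|g\|_{\dot{\mathcal{K}}^{p,q}_{\omega,\xi}(\Omega)}$ for every $g\in\mathscr M(\Omega)$; in particular $\|g\|_{\dot{\mathcal{K}}^{p,q}_{\omega,\xi}(\Omega)}\le\|g\|_{\dot{\mathcal{K}}^{p,q}_\omega(\Omega)}$ for each fixed $\xi$.

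For the necessity, suppose $f\in W^{1,\dot{\mathcal{K}}^{p,q}_\omega}(\Omega)$, so $f\in\dot{\mathcal{K}}^{p,q}_\omega(\Omega)$ and $|\nabla f|\in\dot{\mathcal{K}}^{p,q}_\omega(\Omega)$. Using the index conditions \eqref{1859} and the hypotheses on $m_\infty(\omega)$ and $M_\infty(\omega)$, together with the index-theoretic criteria in \cite{lyh2320}, I would check that $\dot{\mathcal{K}}^{p,q}_\omega(\mathbb{R}^n)$ and $(\dot{\mathcal{K}}^{p,q}_\omega)^{1/p}(\mathbb{R}^n)$ are ball Banach function spaces and that $\mathcal{M}$ is bounded on $[(\dot{\mathcal{K}}^{p,q}_\omega)^{1/p}(\mathbb{R}^n)]'$; moreover, for each $\nu\in(0,\nu_0)$, the function $\rho_\nu$ is nonnegative, decreasing, and satisfies Definition~\ref{ATI}(ii). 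Then Proposition~\ref{1537} applies with $X:=\dot{\mathcal{K}}^{p,q}_\omega$ and $\rho:=\rho_\nu$ and yields, uniformly in $\nu\in(0,\nu_0)$,
$$
\left\|\left[\int_\Omega\frac{|f(\cdot)-f(y)|^p}{|\cdot-y|^p}\rho_\nu(|\cdot-y|)\,dy\right]^{\frac1p}\right\|_{\dot{\mathcal{K}}^{p,q}_\omega(\Omega)}\le C\left\|\,|\nabla f|\,\right\|_{\dot{\mathcal{K}}^{p,q}_\omega(\Omega)}.
$$
Passing to $\limsup_{\nu\to0^+}$ gives both the finiteness of the $\liminf$ in \eqref{1911} and the rightmost inequality of the displayed chain.

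For the sufficiency, let $f\in\dot{\mathcal{K}}^{p,q}_\omega(\Omega)$ satisfy \eqref{1911}, and abbreviate the integrand above henceforth. Since $s<p$, one has $n(\frac1s-\frac1p)>0$, hence $M_0(\omega)<0<n(\frac1s-\frac1p)$; together with the hypotheses on $m_0(\omega)$, $m_\infty(\omega)$, $M_\infty(\omega)$, this shows the index assumptions of Theorem~\ref{localHerz} hold for every $\xi\in\mathbb{R}^n$. Fix $\xi\in\mathbb{R}^n$. Because $\|\cdot\|_{\dot{\mathcal{K}}^{p,q}_{\omega,\xi}(\Omega)}\le\|\cdot\|_{\dot{\mathcal{K}}^{p,q}_\omega(\Omega)}$, we get $f\in\dot{\mathcal{K}}^{p,q}_{\omega,\xi}(\Omega)$ and
$$
\liminf_{\nu\to0^+}\left\|\left[\int_\Omega\cdots\,dy\right]^{\frac1p}\right\|_{\dot{\mathcal{K}}^{p,q}_{\omega,\xi}(\Omega)}\le\liminf_{\nu\to0^+}\left\|\left[\int_\Omega\cdots\,dy\right]^{\frac1p}\right\|_{\dot{\mathcal{K}}^{p,q}_\omega(\Omega)}<\infty,
$$
so Theorem~\ref{localHerz} yields $|\nabla f|\in\dot{\mathcal{K}}^{p,q}_{\omega,\xi}(\Omega)$ together with the identity $\lim_{\nu\to0^+}\|[\int_\Omega\cdots\,dy]^{1/p}\|_{\dot{\mathcal{K}}^{p,q}_{\omega,\xi}(\Omega)}=[\kappa(p,n)]^{1/p}\|\,|\nabla f|\,\|_{\dot{\mathcal{K}}^{p,q}_{\omega,\xi}(\Omega)}$. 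Since for every $\nu$ the global norm dominates the local one, taking first the local limit and then the supremum over $\xi\in\mathbb{R}^n$ (which restores the global Herz norm) produces
$$
[\kappa(p,n)]^{\frac1p}\left\|\,|\nabla f|\,\right\|_{\dot{\mathcal{K}}^{p,q}_\omega(\Omega)}\le\liminf_{\nu\to0^+}\left\|\left[\int_\Omega\cdots\,dy\right]^{\frac1p}\right\|_{\dot{\mathcal{K}}^{p,q}_\omega(\Omega)}.
$$
In particular $|\nabla f|\in\dot{\mathcal{K}}^{p,q}_\omega(\Omega)$, whence, since the weak partial derivatives of $f$ do not depend on the ambient space and $|\partial_j f|\le|\nabla f|$ for each $j$, Proposition~\ref{1533}(ii) and Definition~\ref{2.7}(i) give $f\in W^{1,\dot{\mathcal{K}}^{p,q}_\omega}(\Omega)$. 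Combining the last two displays with the trivial bound $\liminf\le\limsup$ yields the asserted two-sided estimate (with constants as in the statement).

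I expect the genuinely delicate point to be the verification, under the precise index constraints of the hypotheses, that $\dot{\mathcal{K}}^{p,q}_\omega(\mathbb{R}^n)$ and its $\frac1p$-convexification are ball Banach function spaces and that $\mathcal{M}$ is bounded on the associate space of that convexification, since exactly this licenses Proposition~\ref{1537}; it should be extractable from the Matuszewska--Orlicz index criteria in \cite{lyh2320} and the bookkeeping already carried out for the local case in \cite{zyy20231}, but it is the step requiring real care. Everything else is a formal manipulation with $\liminf$, $\limsup$, and suprema over centers $\xi$.
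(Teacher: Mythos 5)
Your overall scheme is the same as the paper's: the sufficiency (and the leftmost bound) by applying Theorem~\ref{localHerz} for each fixed center $\xi$ and then taking the supremum over $\xi\in\mathbb{R}^n$ via Definition~\ref{4.8}(ii) and Proposition~\ref{norm}, and the necessity (and the rightmost bound) by the uniform upper estimate of Proposition~\ref{1537}. The sufficiency half of your argument matches the paper essentially line by line (your version even keeps the factor $[\kappa(p,n)]^{1/p}$ on the left, which is what Theorem~\ref{localHerz} actually yields and which the paper's computation silently drops).

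The gap is in the necessity, precisely at the step you defer as ``delicate bookkeeping''. You propose to verify the hypotheses of Proposition~\ref{1537} for the $\frac1p$-convexification $[\dot{\mathcal{K}}^{p,q}_{\omega}(\mathbb{R}^n)]^{1/p}$, but under the stated assumptions this cannot work in general: by the convexification identity $[\dot{\mathcal{K}}^{p,q}_{\omega}(\mathbb{R}^n)]^{1/t}=\dot{\mathcal{K}}^{p/t,q/t}_{\omega^t}(\mathbb{R}^n)$ from \cite{lyh2320}, taking $t=p$ produces a Herz space with first exponent $1$ and second exponent $q/p$, which may be $\le 1$ (nothing in the hypotheses prevents $q\le p$), so $[\dot{\mathcal{K}}^{p,q}_{\omega}]^{1/p}$ need not even be a ball \emph{Banach} function space, and the boundedness of $\mathcal{M}$ on its associate space is unavailable; moreover the index condition $M_\infty(\omega)<n(\frac1s-\frac1p)$ is calibrated to the level $s$, not $p$. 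This is exactly why the auxiliary parameter $s\in[1,\min\{p,q\})$ appears in the hypotheses: the paper runs Proposition~\ref{1537} through the $\frac1s$-convexification, using that \eqref{1859} and the second index condition make both $\dot{\mathcal{K}}^{p,q}_{\omega}(\mathbb{R}^n)$ and $\dot{\mathcal{K}}^{p/s,q/s}_{\omega^s}(\mathbb{R}^n)$ ball Banach function spaces (via \cite[Theorem~1.2.48]{lyh2320}) and that the associate space of a global generalized Herz space is a block space on which $\mathcal{M}$ is bounded (\cite[Theorem~2.2.1, Corollary~2.3.5]{lyh2320}). Note also that there is no cheap repair within your formulation: since $\int_{\mathbb{R}^n}\rho_\nu(|h|)\,dh$ is a fixed constant, H\"older only dominates the $s$-functional by the $p$-functional, i.e.\ the wrong direction for deducing the exponent-$p$ upper bound from the level-$s$ application of Proposition~\ref{1537}; so the necessity must be argued at the level of the auxiliary exponent $s$ (as the paper does), not at the level of the Herz exponent $p$ as in your proposal.
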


\begin{proof}
We first show the sufficiency.
Assume that $f\in\dot{\mathcal{K}}^{p,q}_{\omega}(\Omega)$
satisfies \eqref{1911}.
By Definition~\ref{4.8}(ii) and Theorem~\ref{localHerz},
we conclude that,
for any $f\in\dot{\mathcal{K}}^{p,q}_{\omega}(\Omega)$,
\begin{align*}
\left\|\,|\nabla f|\,\right\|_{\dot{\mathcal{K}}^{p,q}_{\omega}
(\Omega)}
&=\sup_{\xi\in\mathbb{R}^n}
\left\|\,\left|\nabla f\right|\,\right\|_{
\dot{\mathcal{K}}^{p,q}_{\omega,
\xi}(\Omega)}\\
&=\sup_{\xi\in\mathbb{R}^n}
\lim_{\nu\to0^+}
\left\|\left[\int_\Omega\frac{|f(\cdot)-f(y)|^p}{
|\cdot-y|^p}\rho_\nu(|\cdot-y|)\,dy
\right]^\frac{1}{p}\right\|_{\dot{\mathcal{K}}^{p,q}_{\omega,
\xi}(\Omega)}\\
&\leq\liminf_{\nu\to0^+}
\left\|\left[\int_\Omega\frac{|f(\cdot)-f(y)|^p}{
|\cdot-y|^p}\rho_\nu(|\cdot-y|)\,dy
\right]^\frac{1}{p}\right\|_{
\dot{\mathcal{K}}^{p,q}_{\omega}(\Omega)}
<\infty.
\end{align*}
This finishes the proof of the sufficiency.

Next, we show the necessity.
Assume that $f\in W^{1,\dot{\mathcal{K}}^{p,q}_{\omega}}(\Omega)$.
By \cite[Lemma~1.3.2]{lyh2320},
we find that
$$
\left[\dot{\mathcal{K}}^{p,q}_{\omega}(\mathbb{R}^n)\right]^\frac{1}{s}
=\dot{\mathcal{K}}^{\frac{p}{s},\frac{q}{s}}_{\omega^s}(\mathbb{R}^n),
$$
which, combined with both \eqref{1859} and
\cite[Theorem~1.2.48]{lyh2320},
further implies that both
$\dot{\mathcal{K}}^{p,q}_{\omega}(\mathbb{R}^n)$
and $[\dot{\mathcal{K}}^{p,q}_{\omega}(\mathbb{R}^n)]^\frac{1}{s}$
are ball Banach function spaces.
From \cite[Theorem~2.2.1]{lyh2320},
we infer that the associate space of
the global generalized Herz space is a block space
(see \cite[Definition~2.1.3]{lyh2320}
for the definition of the block space),
on which $\mathcal{M}$ is bounded
(see, for instance, \cite[Corollary~2.3.5]{lyh2320}).
By this and Proposition~\ref{1537} with
both $X:=\dot{\mathcal{K}}^{p,q}_{\omega}$
and $\rho:=\rho_\nu$, we conclude that,
for any $\nu\in(0,\nu_0)$,
\begin{align*}
\left\|\left[\int_\Omega\frac{|f(\cdot)-f(y)|^p}{
|\cdot-y|^p}\rho_\nu(|\cdot-y|)\,dy
\right]^\frac{1}{p}\right\|_{
\dot{\mathcal{K}}^{p,q}_{\omega}(\Omega)}
\lesssim\left\|\,\left|\nabla f\right|\,\right\|_{
\dot{\mathcal{K}}^{p,q}_{\omega}(\Omega)},
\end{align*}
where the implicit positive constant
is independent of both $\nu$
and $f$. From this, we deduce that
\begin{align*}
&\liminf_{\nu\to0^+}
\left\|\left[\int_\Omega\frac{|f(\cdot)-f(y)|^p}{
|\cdot-y|^p}\rho_\nu(|\cdot-y|)\,dy
\right]^\frac{1}{p}\right\|_{\dot{\mathcal{K}}^{p,q}_{\omega}(\Omega)}\\
&\quad\leq\limsup_{\nu\to0^+}
\left\|\left[\int_\Omega\frac{|f(\cdot)-f(y)|^p}{
|\cdot-y|^p}\rho_\nu(|\cdot-y|)\,dy
\right]^\frac{1}{p}\right\|_{
\dot{\mathcal{K}}^{p,q}_{\omega}(\Omega)}\nonumber\\
&\quad\lesssim\left\|\,\left|\nabla f\right|\,\right\|_{
\dot{\mathcal{K}}^{p,q}_{\omega}(\Omega)}<\infty.\nonumber
\end{align*}
This finishes the proof of the necessity
and hence Theorem~\ref{globalHerz}.
\end{proof}

\begin{remark}
To the best of our knowledge,
Theorem~\ref{globalHerz} is completely new.
\end{remark}

\subsection{Mixed-Norm Lebesgue Spaces}\label{5.2}

Let $\vec{r}:=(r_1,\ldots,r_n)
\in(0,\infty]^n$ and
$r_-:=\min\{r_1, \ldots , r_n\}$.
The \emph{mixed-norm Lebesgue
space $L^{\vec{r}}(\mathbb{R}^n)$} is defined to be the
set of all the $f\in\mathscr{M}(\mathbb{R}^n)$
having the following finite quasi-norm
\begin{equation*}
\|f\|_{L^{\vec{r}}(\mathbb{R}^n)}:=\left\{\int_{\mathbb{R}}
\cdots\left[\int_{\mathbb{R}}\left|f(x_1,\ldots,
x_n)\right|^{r_1}\,dx_1\right]^{\frac{r_2}{r_1}}
\cdots\,dx_n\right\}^{\frac{1}{r_n}}
\end{equation*}
with the usual modifications made when $r_i=
\infty$ for some $i\in\{1,\ldots,n\}$.
The mixed-norm Lebesgue space
was systematically studies by
Benedek and Panzone \cite{bp1961}, which
can be traced back to H\"ormander \cite{h1960}.
We refer the reader
to \cite{cgn2017,cgn2019,gjn2017,gn2016,
hlyy2019,hlyy2019b,hy2021,n2019,noss2021,zyz2022}
for more studies on mixed-norm Lebesgue spaces.
As was pointed out in \cite[p.\,2047]{zwyy2021},
$L^{\vec{r}}(\mathbb{R}^n)$
is a ball quasi-Banach function space,
but it
may not be a quasi-Banach function space
in the sense of Bennett and Sharpley \cite{bs1988}.

By an argument similar to that used in
the proof of \cite[Theorem~4.19]{zyy20231}
with \cite[Theorems~3.1 and~3.11]{zyy20231} replaced by
Theorem~\ref{1622},
we obtain the following conclusion;
we omit the details.

\begin{theorem}\label{2116}
Let $\Omega\subset\mathbb{R}^n$ be a bounded
$(\varepsilon,\infty)$-domain
with $\varepsilon\in(0,1]$ and
$\{\rho_\nu\}_{\nu\in(0,\nu_0)}$
a $\nu_0$-{\rm RDATI} on $\mathbb{R}^n$
with $\nu_0\in(0,\infty)$.
Let $\vec{r}:=(r_1,\ldots,r_n)\in(1,\infty)^n$ and $p\in[1,r_-)$.
Then $f\in W^{1,L^{\vec{r}}}(\Omega)$ if and only if
$f\in L^{\vec{r}}(\Omega)$ and
$$
\liminf_{\nu\to0^+}
\left\|\left[\int_\Omega\frac{|f(\cdot)-f(y)|^p}{
|\cdot-y|^p}\rho_\nu(|\cdot-y|)\,dy
\right]^\frac{1}{p}\right\|_{L^{\vec{r}}(\Omega)}<\infty;
$$
moreover, for such $f$,
$$
\lim_{\nu\to0^+}
\left\|\left[\int_\Omega\frac{|f(\cdot)-f(y)|^p}{
|\cdot-y|^p}\rho_\nu(|\cdot-y|)\,dy
\right]^\frac{1}{p}\right\|_{L^{\vec{r}}(\Omega)}
=\left[\kappa(p,n)\right]^\frac{1}{p}
\left\|\,\left|\nabla f\right|\,\right\|_{L^{\vec{r}}(\Omega)},
$$
where the constant
$\kappa(p,n)$ is the same as in \eqref{kappaqn}.
\end{theorem}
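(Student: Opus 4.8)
The plan is to obtain Theorem~\ref{2116} as a direct application of Theorem~\ref{1622} to the ball Banach function space $X(\mathbb{R}^n):=L^{\vec r}(\mathbb{R}^n)$ with the exponent $p\in[1,r_-)$ playing the role of the parameter $p$ there; this follows the same scheme as the proof of \cite[Theorem~4.19]{zyy20231}, with \cite[Theorems~3.1 and~3.11]{zyy20231} replaced by Theorem~\ref{1622}. Thus the whole task reduces to checking that, under the stated hypotheses on $\vec r$ and $p$, the three assumptions (i), (ii), and (iii) of Theorem~\ref{1622} are satisfied; once this is done, the conclusion---both the equivalence and the limit identity with the constant $\kappa(p,n)$---is read off verbatim from Theorem~\ref{1622}.

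First I would recall the structure of mixed-norm Lebesgue spaces. Since $\vec r\in(1,\infty)^n$, the space $L^{\vec r}(\mathbb{R}^n)$ is a ball Banach function space (see \cite[p.\,2047]{zwyy2021}), although in general it need not be a Banach function space in the sense of Bennett and Sharpley, which is precisely why the ball framework is used. A short computation with iterated integrals identifies the $\frac{1}{p}$-convexification: $X^\frac{1}{p}(\mathbb{R}^n)=L^{\vec r/p}(\mathbb{R}^n)$ with $\vec r/p:=(r_1/p,\ldots,r_n/p)$, and the condition $p<r_-\le r_i$ for every $i$ forces $\vec r/p\in(1,\infty)^n$, so $L^{\vec r/p}(\mathbb{R}^n)$ is again a ball Banach function space; this yields assumption (i). For (ii) I would invoke the mixed-norm H\"older inequality of Benedek and Panzone \cite{bp1961} to identify the K\"othe dual $[L^{\vec r}(\mathbb{R}^n)]'=L^{\vec r\,'}(\mathbb{R}^n)$ with $\vec r\,':=(r_1',\ldots,r_n')\in(1,\infty)^n$, together with the fact that a mixed-norm Lebesgue space with all exponents finite has an absolutely continuous norm; hence both $L^{\vec r}(\mathbb{R}^n)$ and its associate space have absolutely continuous norms.

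For (iii) I would combine the two previous identifications to get $[X^\frac{1}{p}(\mathbb{R}^n)]'=[L^{\vec r/p}(\mathbb{R}^n)]'=L^{(\vec r/p)'}(\mathbb{R}^n)$ with $(\vec r/p)'\in(1,\infty)^n$, and then quote the known boundedness of the Hardy--Littlewood maximal operator on mixed-norm Lebesgue spaces all of whose exponents lie in $(1,\infty)$. With (i), (ii), and (iii) verified, Theorem~\ref{1622} applies directly and gives the desired characterization. There is no deep obstacle here---the substantive work was already carried out in full generality in the proof of Theorem~\ref{1622}; the one point that deserves care is the exponent bookkeeping, namely checking that every mixed-norm space produced by convexification and K\"othe duality still has all of its indices strictly between $1$ and $\infty$. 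The hypothesis $p<r_-$ (rather than merely $p\le r_-$) is exactly what guarantees this, and once it is tracked through, the argument is routine and the details may be omitted.
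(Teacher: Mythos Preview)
Your proposal is correct and follows essentially the same approach as the paper: the paper itself omits the details and simply indicates that the result follows from Theorem~\ref{1622} by the same argument as \cite[Theorem~4.19]{zyy20231}, which amounts precisely to verifying assumptions (i)--(iii) of Theorem~\ref{1622} for $X(\mathbb{R}^n)=L^{\vec r}(\mathbb{R}^n)$ with $p\in[1,r_-)$, exactly as you have done.
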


\begin{remark}
\begin{enumerate}
\item[(i)]
To the best of our knowledge,
Theorem~\ref{2116} is completely new.
\item[(ii)]
Let $\vec{r}:=(r_1,\ldots,r_n)\in[1,\infty)^n$
with $r_-=1$. In this case,
since the Hardy--Littlewood maximal operator
may not be bounded on $[L^{\vec{r}}(\mathbb{R}^n)]'$
and since $[L^{\vec{r}}(\mathbb{R}^n)]'$
does not have an absolutely continuous norm,
it is still unclear whether or not
Theorem~\ref{2116} with $r_-=1$ holds true.
\end{enumerate}
\end{remark}

\subsection{Variable Lebesgue Spaces}\label{5.3}

For any given nonnegative measurable function $r$
on $\mathbb{R}^n$,
let
\begin{equation*}
\widetilde{r}_-:=\underset{x\in\mathbb{R}^n}{
\mathop\mathrm{\,ess\,inf\,}}\,r(x)\ \ \text{and}\ \
\widetilde{r}_+:=\underset{x\in\mathbb{R}^n}{
\mathop\mathrm{\,ess\,sup\,}}\,r(x).
\end{equation*}
A nonnegative measurable function $r$
is said to be \emph{globally
log-H\"older continuous} if there exist
$r_{\infty}\in\mathbb{R}$ and a positive
constant $C$ such that, for any
$x,y\in\mathbb{R}^n$,
\begin{equation*}
|r(x)-r(y)|\le\frac{C}{\log(e+\frac{1}{|x-y|})}\ \ \text{and}\ \
|r(x)-r_\infty|\le \frac{C}{\log(e+|x|)}.
\end{equation*}
Recall that the \emph{variable Lebesgue space
$L^{r(\cdot)}(\mathbb{R}^n)$}
associated with a nonnegative measurable function
$r$ on $\mathbb{R}^n$ is defined to be the set
of all the $f\in\mathscr{M}(\mathbb{R}^n)$
having the following finite quasi-norm
\begin{equation*}
\|f\|_{L^{r(\cdot)}(\mathbb{R}^n)}:=\inf\left\{\lambda
\in(0,\infty):\ \int_{\mathbb{R}^n}\left[\frac{|f(x)|}
{\lambda}\right]^{r(x)}\,dx\le1\right\}.
\end{equation*}
The variable Lebesgue space
was introduced by Kov\'a$\check{\mathrm{c}}$ik and R\'akosn\'ik
\cite{kr1991}. In \cite{kr1991},
a detailed study of the properties of
variable Lebesgue spaces was given and these spaces were
further applied to study both the mapping properties
of Nemytskii operators and the related nonlinear elliptic
boundary value problems.
For more studies on variable Lebesgue spaces,
we refer the reader to
\cite{b2018,bbd2021,cf2013,cw2014,dhr2009,ns2012,n1950,n1951}.
By \cite[p.\,94]{shyy2017},
we find that $L^{r(\cdot)}(\mathbb{R}^n)$
is a ball quasi-Banach function space.
In particular,
when $1\leq\widetilde r_-\le \widetilde r_+<\infty$,
$L^{r(\cdot)}(\mathbb{R}^n)$ is
a Banach function space
in the sense of Bennett and Sharpley \cite{bs1988}
and hence also a ball Banach function space.

By an argument similar to that used in
the proof of \cite[Theorem~4.21]{zyy20231}
with \cite[Theorems~3.1 and~3.11]{zyy20231} replaced by
Theorem~\ref{1622},
we obtain the following conclusion;
we omit the details.

\begin{theorem}\label{2041}
Let $\Omega\subset\mathbb{R}^n$ be a bounded
$(\varepsilon,\infty)$-domain
with $\varepsilon\in(0,1]$ and
$\{\rho_\nu\}_{\nu\in(0,\nu_0)}$
a $\nu_0$-{\rm RDATI} on $\mathbb{R}^n$
with $\nu_0\in(0,\infty)$.
Let $r:\ \mathbb{R}^n\to(0,\infty)$ be a globally
log-H\"older continuous function with
$1<\widetilde{r}_-\leq\widetilde{r}_+<\infty$
and let $p\in[1,\widetilde{r}_-)$.
Then $f\in W^{1,L^{r(\cdot)}}(\Omega)$ if and only if
$f\in L^{{r}(\cdot)}(\Omega)$ and
$$
\liminf_{\nu\to0^+}
\left\|\left[\int_\Omega\frac{|f(\cdot)-f(y)|^p}{
|\cdot-y|^p}\rho_\nu(|\cdot-y|)\,dy
\right]^\frac{1}{p}\right\|_{L^{{r}(\cdot)}(\Omega)}<\infty;
$$
moreover, for such $f$,
$$
\lim_{\nu\to0^+}
\left\|\left[\int_\Omega\frac{|f(\cdot)-f(y)|^p}{
|\cdot-y|^p}\rho_\nu(|\cdot-y|)\,dy
\right]^\frac{1}{p}\right\|_{L^{{r}(\cdot)}(\Omega)}
=\left[\kappa(p,n)\right]^\frac{1}{p}
\left\|\,\left|\nabla f\right|\,\right\|_{L^{{r}(\cdot)}(\Omega)},
$$
where the constant
$\kappa(p,n)$ is the same as in \eqref{kappaqn}.
\end{theorem}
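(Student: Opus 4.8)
The plan is to derive Theorem~\ref{2041} as a direct application of Theorem~\ref{1622} with $X(\mathbb{R}^n):=L^{r(\cdot)}(\mathbb{R}^n)$, so the whole task reduces to verifying that, for the given exponent $r(\cdot)$ and the given $p\in[1,\widetilde{r}_-)$, the three hypotheses (i)--(iii) of Theorem~\ref{1622} hold. First I would recall from \cite[p.\,94]{shyy2017} that, since $1<\widetilde{r}_-\le\widetilde{r}_+<\infty$, the variable Lebesgue space $L^{r(\cdot)}(\mathbb{R}^n)$ is a ball Banach function space. Because $p<\widetilde{r}_-$, the exponent $r(\cdot)/p$ satisfies $1<\widetilde{(r/p)}_-\le\widetilde{(r/p)}_+<\infty$, and the identity $[L^{r(\cdot)}(\mathbb{R}^n)]^{1/p}=L^{r(\cdot)/p}(\mathbb{R}^n)$ (with equivalent quasi-norms) then shows that $X^{1/p}(\mathbb{R}^n)$ is also a ball Banach function space; this is hypothesis~(i).

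For hypothesis~(iii), I would combine the duality relation $[L^{q(\cdot)}(\mathbb{R}^n)]'=L^{q'(\cdot)}(\mathbb{R}^n)$ (valid up to equivalent norms whenever $1\le\widetilde{q}_-\le\widetilde{q}_+<\infty$), applied with $q(\cdot):=r(\cdot)/p$ so that $[X^{1/p}(\mathbb{R}^n)]'=L^{(r(\cdot)/p)'}(\mathbb{R}^n)$, with the classical theorem on the boundedness of the Hardy--Littlewood maximal operator on variable Lebesgue spaces (due to Cruz-Uribe, Fiorenza and Neugebauer, and to Diening). Indeed, since $r$ is globally log-H\"older continuous and $\widetilde{r}_-/p>1$, the exponent $(r(\cdot)/p)'$ is again globally log-H\"older continuous and has lower bound $\widetilde{[(r/p)']}_->1$, so $\mathcal{M}$ is bounded on $[X^{1/p}(\mathbb{R}^n)]'$. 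Hypothesis~(ii) is then the standard fact that $L^{q(\cdot)}(\mathbb{R}^n)$ has an absolutely continuous norm whenever $\widetilde{q}_+<\infty$: for $X(\mathbb{R}^n)=L^{r(\cdot)}(\mathbb{R}^n)$ this follows from $\widetilde{r}_+<\infty$, while for $X'(\mathbb{R}^n)=L^{r'(\cdot)}(\mathbb{R}^n)$ one uses $\widetilde{(r')}_+=(\widetilde{r}_-)'<\infty$, which holds because $\widetilde{r}_->1$.

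With (i)--(iii) verified, Theorem~\ref{1622} applies with $X:=L^{r(\cdot)}$ and gives both the characterization of membership in $W^{1,L^{r(\cdot)}}(\Omega)$ in terms of the finiteness condition~\eqref{1643} and the limit identity with constant $\kappa(p,n)$ as in~\eqref{kappaqn}; rephrasing the $X(\Omega)$- and $X'(\Omega)$-norms as $L^{r(\cdot)}(\Omega)$-norms then yields exactly the statement of Theorem~\ref{2041}. The only slightly delicate point in this argument is the exponent bookkeeping in step~(iii): one must confirm that passing from $r$ to $r/p$ and then to its conjugate $(r/p)'$ preserves global log-H\"older continuity \emph{and} keeps the infimum of the exponent strictly above $1$, since otherwise the maximal inequality underpinning Theorem~\ref{1622} would be unavailable; all remaining steps are a routine transcription of the reasoning already carried out for Theorem~\ref{1622}.
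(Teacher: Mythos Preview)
Your proposal is correct and follows essentially the same approach as the paper: the paper also derives the result as a direct application of Theorem~\ref{1622} with $X:=L^{r(\cdot)}$, deferring the verification of hypotheses (i)--(iii) to the argument in \cite[Theorem~4.21]{zyy20231}, which amounts precisely to the exponent bookkeeping (ball Banach structure of $L^{r(\cdot)}$ and $L^{r(\cdot)/p}$, absolute continuity of the norm on $L^{r(\cdot)}$ and $L^{r'(\cdot)}$, and boundedness of $\mathcal{M}$ on $L^{(r(\cdot)/p)'}$ via the log-H\"older condition) that you spell out.
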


\begin{remark}
\begin{enumerate}
\item[(i)]
To the best of our knowledge,
Theorem~\ref{2041} is completely new.
\item[(ii)]
Let $r:\ \mathbb{R}^n\to(0,\infty)$ be globally
log-H\"older continuous with
$1=\widetilde{r}_-\leq\widetilde{r}_+<\infty$.
In this case, since $[L^{r(\cdot)}(\mathbb{R}^n)]'$
does not have an absolutely continuous
norm, it is still unclear whether or not
Theorem~\ref{2041} with $\widetilde{r}_-=1$
holds true.
\end{enumerate}
\end{remark}

\subsection{Lorentz Spaces}\label{5.7}

Recall that, for any $r,\tau\in(0,\infty)$,
the \emph{Lorentz space $L^{r,\tau}(\mathbb{R}^n)$}
is defined to be the set of all the
$f\in\mathscr{M}(\mathbb{R}^n)$ having
the following finite quasi-norm
\begin{equation*}
\|f\|_{L^{r,\tau}(\mathbb{R}^n)}
:=\left\{\int_0^{\infty}
\left[t^{\frac{1}{r}}f^*(t)\right]^\tau
\frac{\,dt}{t}\right\}^{\frac{1}{\tau}},
\end{equation*}
where $f^*$ denotes the \emph{decreasing rearrangement of $f$},
defined by setting, for any $t\in[0,\infty)$,
\begin{equation*}
f^*(t):=\inf\left\{s\in(0,\infty):\ \left|
\left\{x\in\mathbb{R}^n:\ |f(x)|>s\right\}\right|\leq t\right\}
\end{equation*}
with the convention $\inf \emptyset = \infty$.
As a generalization of Lebesgue spaces,
Lorentz spaces were originally studied by Lorentz \cite{l1950,l1951}
in the early 1950's, which are recognized to be
the intermediate spaces of Lebesgue spaces in the real interpolation method
(see, for instance, \cite{c1964}).
We refer the reader to \cite{CF1,CF2,Hunt,osttw2012,st2001}
for more studies on Lorentz spaces
and to \cite{lwyy2019,lyy2016,lyy2017,
lyy2018,zhy2020}
for more studies on Hardy--Lorentz spaces.
When $r,\tau\in(0,\infty)$,
$L^{r,\tau}(\mathbb{R}^n)$ is a quasi-Banach function space
in the sense of Bennett and Sharpley \cite{bs1988}
and hence a ball quasi-Banach function space
(see, for instance, \cite[Theorem 1.4.11]{g2014});
when $r,\tau\in(1,\infty)$,
$L^{r,\tau}(\mathbb{R}^n)$ is a Banach function space
and hence a ball Banach function space
(see, for instance, \cite[p.\,87]{shyy2017}
and \cite[p.\,74]{g2014}).

By an argument similar to that used in
the proof of \cite[Theorem~4.23]{zyy20231}
with \cite[Theorems~3.1 and~3.11]{zyy20231} replaced by
Theorem~\ref{1622},
we obtain the following conclusion;
we omit the details.

\begin{theorem}\label{2047}
Let $\Omega\subset\mathbb{R}^n$ be a bounded
$(\varepsilon,\infty)$-domain
with $\varepsilon\in(0,1]$ and
$\{\rho_\nu\}_{\nu\in(0,\nu_0)}$
a $\nu_0$-{\rm RDATI} on $\mathbb{R}^n$
with $\nu_0\in(0,\infty)$.
Let $r,\tau\in(1,\infty)$ and $p\in[1,\min\{r,\tau\})$.
Then $f\in W^{1,L^{r,\tau}}(\Omega)$ if and only if
$f\in L^{r,\tau}(\Omega)$ and
$$
\liminf_{\nu\to0^+}
\left\|\left[\int_\Omega\frac{|f(\cdot)-f(y)|^p}{
|\cdot-y|^p}\rho_\nu(|\cdot-y|)\,dy
\right]^\frac{1}{p}\right\|_{L^{r,\tau}(\Omega)}<\infty;
$$
moreover, for such $f$,
$$
\lim_{\nu\to0^+}
\left\|\left[\int_\Omega\frac{|f(\cdot)-f(y)|^p}{
|\cdot-y|^p}\rho_\nu(|\cdot-y|)\,dy
\right]^\frac{1}{p}\right\|_{L^{r,\tau}(\Omega)}
=\left[\kappa(p,n)\right]^\frac{1}{p}
\left\|\,\left|\nabla f\right|\,\right\|_{L^{r,\tau}(\Omega)},
$$
where the constant
$\kappa(p,n)$ is the same as in \eqref{kappaqn}.
\end{theorem}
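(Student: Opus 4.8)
The plan is to obtain Theorem~\ref{2047} as a direct application of Theorem~\ref{1622} with $X(\mathbb{R}^n):=L^{r,\tau}(\mathbb{R}^n)$, following verbatim the scheme of \cite[Theorem~4.23]{zyy20231} but invoking Theorem~\ref{1622} in place of \cite[Theorems~3.1 and~3.11]{zyy20231}. Thus the whole task reduces to checking that, for $r,\tau\in(1,\infty)$ and $p\in[1,\min\{r,\tau\})$, the space $X(\mathbb{R}^n)=L^{r,\tau}(\mathbb{R}^n)$ satisfies the three structural hypotheses (i), (ii), and (iii) of Theorem~\ref{1622}. First I would recall that, when $r,\tau\in(1,\infty)$, the Lorentz space $L^{r,\tau}(\mathbb{R}^n)$ is a ball Banach function space (see, for instance, \cite[p.\,87]{shyy2017} and \cite[p.\,74]{g2014}), and that a short computation with decreasing rearrangements (using the identity $(|f|^{1/p})^{*}=(f^{*})^{1/p}$) yields the convexification formula
$$
\left[L^{r,\tau}(\mathbb{R}^n)\right]^{\frac{1}{p}}=L^{\frac{r}{p},\frac{\tau}{p}}(\mathbb{R}^n);
$$
since $p<\min\{r,\tau\}$, both exponents $\frac rp$ and $\frac\tau p$ lie in $(1,\infty)$, so $[L^{r,\tau}(\mathbb{R}^n)]^{1/p}$ is again a ball Banach function space, which is exactly (i).

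For (ii) I would use the classical duality identity $[L^{r,\tau}(\mathbb{R}^n)]'=L^{r',\tau'}(\mathbb{R}^n)$ with equivalent norms (valid because $r,\tau\in(1,\infty)$; see, e.g., \cite{Hunt} or \cite{g2014}), together with the standard fact that a Lorentz space $L^{a,b}(\mathbb{R}^n)$ has an absolutely continuous norm whenever $a,b\in[1,\infty)$. Since $r,\tau\in(1,\infty)$ forces $r',\tau'\in(1,\infty)$, it follows that both $L^{r,\tau}(\mathbb{R}^n)$ and $[L^{r,\tau}(\mathbb{R}^n)]'=L^{r',\tau'}(\mathbb{R}^n)$ have absolutely continuous norms, giving (ii). For (iii), the convexification formula and the same duality give $[X^{1/p}(\mathbb{R}^n)]'=[L^{r/p,\tau/p}(\mathbb{R}^n)]'=L^{(r/p)',(\tau/p)'}(\mathbb{R}^n)$ with $(r/p)'\in(1,\infty)$; since the Hardy--Littlewood maximal operator is bounded on every Lorentz space $L^{a,b}(\mathbb{R}^n)$ with $a\in(1,\infty)$ and $b\in(0,\infty]$ (see, for instance, \cite{g2014}), $\mathcal{M}$ is bounded on $[X^{1/p}(\mathbb{R}^n)]'$, which is (iii). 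Having verified (i)--(iii), Theorem~\ref{1622} applied to $X=L^{r,\tau}$ produces at once both the stated equivalence between membership in $W^{1,L^{r,\tau}}(\Omega)$ and finiteness of the $\liminf$, and the limit formula with constant $[\kappa(p,n)]^{1/p}$; the restriction $X(\Omega)=L^{r,\tau}(\Omega)$ is handled exactly as in the abstract theorem.

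I do not expect any genuine analytic obstacle, since the full strength lies in Theorem~\ref{1622}; the only point needing care is the bookkeeping of Lorentz exponents under convexification and under passage to the associate space, and in particular the absolute continuity of the norm of $[L^{r,\tau}(\mathbb{R}^n)]'$. This last property is precisely what fails at the endpoints $r=1$ or $\tau=1$, where $[L^{r,\tau}(\mathbb{R}^n)]'$ becomes a weak-type Lorentz space possessing no absolutely continuous norm; this is the reason the hypotheses demand $r,\tau\in(1,\infty)$ and the strict inequality $p<\min\{r,\tau\}$ rather than $p\le\min\{r,\tau\}$. Checking that these exponent constraints propagate consistently through all of (i)--(iii) is thus the main—purely routine—point of the argument, and it is why we may, as in \cite{zyy20231}, safely omit the computational details.
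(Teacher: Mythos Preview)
Your proposal is correct and follows essentially the same approach as the paper: the paper states that Theorem~\ref{2047} is obtained by an argument similar to \cite[Theorem~4.23]{zyy20231} with \cite[Theorems~3.1 and~3.11]{zyy20231} replaced by Theorem~\ref{1622}, and omits the details. Your verification of hypotheses (i)--(iii) of Theorem~\ref{1622} for $X=L^{r,\tau}$ via the convexification and duality identities for Lorentz spaces is exactly the routine check that the paper leaves implicit.
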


\begin{remark}
\begin{enumerate}
\item[\textup{(i)}]
To the best of our knowledge,
Theorem~\ref{2047} is new.
\item[\textup{(ii)}]
Let $r=1$ and $\tau\in[1,\infty)$
or let $r\in(1,\infty)$ and $\tau=1$.
In both cases, since $L^{r,\tau}(\mathbb{R}^n)$ might not be
a ball Banach function space and since
$[L^{r,\tau}(\mathbb{R}^n)]'$ does not have an absolutely
continuous norm, it is still unclear whether or not
Theorem~\ref{2047} holds true.
\end{enumerate}
\end{remark}

\subsection{Orlicz Spaces}\label{5.5}

A non-decreasing function $\Phi:\ [0,\infty)
\ \to\ [0,\infty)$ is called an \emph{Orlicz function} if
$\Phi(0)= 0$,
$\Phi(t)\in(0,\infty)$ for any $t\in(0,\infty)$,
and $\lim_{t\to\infty}\Phi(t)=\infty$.
An Orlicz
function $\Phi$ is said to be of \emph{lower}
(resp. \emph{upper}) \emph{type} $r$ for some
$r\in\mathbb{R}$ if there exists a constant
$C_{(r)}\in(0,\infty)$ such that,
for any $t\in(0,\infty)$ and
$s\in(0,1)$ [resp. $s\in[1,\infty)$],
$\Phi(st)\le C_{(r)} s^r\Phi(t)$.
Throughout this subsection, we always assume that
$\Phi:\ [0,\infty)\ \to\ [0,\infty)$
is an Orlicz function with both positive lower
type $r_{\Phi}^-$ and positive upper type
$r_{\Phi}^+$.
The \emph{Orlicz space $L^\Phi(\mathbb{R}^n)$}
is defined to be the set of all the
$f\in\mathscr{M}(\mathbb{R}^n)$
having the following finite quasi-norm
\begin{equation*}
\|f\|_{L^\Phi(\mathbb{R}^n)}:=\inf\left\{\lambda\in
(0,\infty):\ \int_{\mathbb{R}^n}\Phi\left(\frac{|f(x)|}
{\lambda}\right)\,dx\le1\right\}.
\end{equation*}
The Orlicz space was introduced by Birnbaum and Orlicz \cite{bo1931}
and Orlicz \cite{o}, which is widely used in
various branches of analysis.
For more studies on Orlicz spaces,
we refer the reader to \cite{b2021,dfmn2021,ns2014,rr2002,ylk2017}.
It is easy to show that $L^\Phi(\mathbb{R}^n)$
is a quasi-Banach function space
(see \cite[Section~7.6]{shyy2017})
and hence a ball quasi-Banach function space.
In particular,
when $1\leq r^-_\Phi\leq r^+_\Phi<\infty$,
$L^\Phi(\mathbb{R}^n)$ is a Banach function space
in the sense of Bennett and Sharpley \cite{bs1988}
and hence a ball Banach function space.

By an argument similar to that used in
the proof of \cite[Theorem~4.25]{zyy20231}
with \cite[Theorems~3.1 and~3.11]{zyy20231} replaced by
Theorem~\ref{1622},
we obtain the following conclusion;
we omit the details.

\begin{theorem}\label{2051}
Let $\Omega\subset\mathbb{R}^n$ be a bounded
$(\varepsilon,\infty)$-domain
with $\varepsilon\in(0,1]$ and
$\{\rho_\nu\}_{\nu\in(0,\nu_0)}$
a $\nu_0$-{\rm RDATI} on $\mathbb{R}^n$
with $\nu_0\in(0,\infty)$.
Let $\Phi$ be an Orlicz function with both
lower type $r^-_{\Phi}$
and upper type $r^+_\Phi$,
$1<r^-_{\Phi}\leq r^+_{\Phi}<\infty$,
and $p\in[1,r^-_{\Phi})$.
Then $f\in W^{1,L^\Phi}(\Omega)$ if and only if
$f\in L^{\Phi}(\Omega)$ and
$$
\liminf_{\nu\to0^+}
\left\|\left[\int_\Omega\frac{|f(\cdot)-f(y)|^p}{
|\cdot-y|^p}\rho_\nu(|\cdot-y|)\,dy
\right]^\frac{1}{p}\right\|_{L^{\Phi}(\Omega)}<\infty;
$$
moreover, for such $f$,
$$
\lim_{\nu\to0^+}
\left\|\left[\int_\Omega\frac{|f(\cdot)-f(y)|^p}{
|\cdot-y|^p}\rho_\nu(|\cdot-y|)\,dy
\right]^\frac{1}{p}\right\|_{L^{\Phi}(\Omega)}
=\left[\kappa(p,n)\right]^\frac{1}{p}
\left\|\,\left|\nabla f\right|\,\right\|_{L^{\Phi}(\Omega)},
$$
where the constant
$\kappa(p,n)$ is the same as in \eqref{kappaqn}.
\end{theorem}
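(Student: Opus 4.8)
The plan is to obtain Theorem~\ref{2051} as a direct application of Theorem~\ref{1622}, by checking that, under the hypotheses $1<r^-_{\Phi}\le r^+_{\Phi}<\infty$ and $p\in[1,r^-_{\Phi})$, the Orlicz space $X(\mathbb{R}^n):=L^{\Phi}(\mathbb{R}^n)$ satisfies assumptions (i), (ii), and (iii) of that theorem. This follows the same scheme as the proof of \cite[Theorem~4.25]{zyy20231}, now with Theorem~\ref{1622} in place of the $\mathbb{R}^n$-results used there; the genuine content is the Orlicz-function bookkeeping needed to translate the hypotheses on $\Phi$ into those of Theorem~\ref{1622}, and the identification of $W^{1,L^{\Phi}}(\Omega)$ (in the sense of Definition~\ref{2.7}(i)) with the inhomogeneous Orlicz--Sobolev space appearing in the statement.

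First I would record the convexification identity: $X^{\frac1p}(\mathbb{R}^n)=L^{\Theta}(\mathbb{R}^n)$ with $\Theta(t):=\Phi(t^{1/p})$ for $t\in[0,\infty)$, and $\Theta$ is again an Orlicz function, of lower type $r^-_{\Phi}/p$ and upper type $r^+_{\Phi}/p$ (this specializes, for $\Phi(t)=t^q$, to the familiar $[L^q]^{1/p}=L^{q/p}$). Since $p<r^-_{\Phi}$ forces $r^-_{\Phi}/p>1$, while $r^+_{\Phi}/p<\infty$, the fact recalled in Subsection~\ref{5.5} --- that an Orlicz function with lower type in $[1,\infty)$ and finite upper type generates a (ball) Banach function space (see \cite[Section~7.6]{shyy2017} and \cite{bs1988}) --- applied to $\Phi$ itself and to $\Theta$, shows that both $L^{\Phi}(\mathbb{R}^n)$ and $X^{\frac1p}(\mathbb{R}^n)=L^{\Theta}(\mathbb{R}^n)$ are ball Banach function spaces, which is assumption (i).

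For assumption (ii) I would use the standard identification $[L^{\Phi}(\mathbb{R}^n)]'=L^{\Phi^*}(\mathbb{R}^n)$ (with equivalent norms), $\Phi^*$ being the complementary Orlicz function, together with the type duality, under which lower type $a$ of a Young function corresponds to upper type $a/(a-1)$ of its complementary function and vice versa: $r^-_{\Phi}>1$ gives $\Phi^*$ finite upper type, and $r^+_{\Phi}<\infty$ gives $\Phi^*$ lower type $>1$. Hence $\Phi$ and $\Phi^*$ both have finite upper type, so both $L^{\Phi}(\mathbb{R}^n)$ and $L^{\Phi^*}(\mathbb{R}^n)$ have absolutely continuous norms (see \cite[Chapter~1]{bs1988} and \cite{rr2002}). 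For assumption (iii), by the convexification identity $[X^{\frac1p}(\mathbb{R}^n)]'=L^{\Theta^*}(\mathbb{R}^n)$; applying the type duality to $\Theta$, the function $\Theta^*$ has lower type $(r^+_{\Phi}/p)'>1$ and, because $p<r^-_{\Phi}$, finite upper type $(r^-_{\Phi}/p)'<\infty$, which is exactly the condition guaranteeing that the Hardy--Littlewood maximal operator is bounded on $L^{\Theta^*}(\mathbb{R}^n)$. With (i)--(iii) verified, Theorem~\ref{1622} applied to $X=L^{\Phi}$ yields both the claimed equivalence and the limit identity.

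The point needing care is precisely this index bookkeeping: tracking lower and upper types through $\frac1p$-convexification (which divides both indices by $p$) and through passage to the complementary function, and then invoking the correct boundedness criterion for $\mathcal{M}$ on an Orlicz space. The hypotheses $r^-_{\Phi}>1$ and $p<r^-_{\Phi}$ are calibrated so that $\Phi^*$, $\Theta$, and $\Theta^*$ all stay inside the admissible type ranges; this is also why the boundary cases cannot be treated by this argument --- for instance, when $r^-_{\Phi}=1$ the associate space $L^{\Phi^*}(\mathbb{R}^n)$ need not have an absolutely continuous norm, and when $p=r^-_{\Phi}$ the function $\Theta^*$ loses finite upper type --- which is consistent with the scope of Theorem~\ref{1622}.
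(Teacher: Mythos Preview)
Your proposal is correct and follows essentially the same approach as the paper: the paper obtains Theorem~\ref{2051} by invoking Theorem~\ref{1622} with $X=L^{\Phi}$ after verifying its hypotheses, exactly as in \cite[Theorem~4.25]{zyy20231} (the details of the Orlicz index bookkeeping you spell out are precisely what that reference contains, and the paper omits them here). Your explicit convexification computation $X^{1/p}=L^{\Theta}$ with $\Theta(t)=\Phi(t^{1/p})$, the type tracking through duality, and the resulting verification of (i)--(iii) of Theorem~\ref{1622} are all standard and accurate.
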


\begin{remark}
\begin{enumerate}
\item[\textup{(i)}]
To the best of our knowledge,
Theorem~\ref{2051} is completely new.
\item[\textup{(ii)}]
Let $\Phi$ be an Orlicz function with both
positive lower type $r^-_{\Phi}$
and positive upper type $r^+_\Phi$
such that $1=r^-_{\Phi}\leq r^+_{\Phi}<\infty$.
In this case, since $[L^\Phi(\mathbb{R}^n)]'$
does not have an absolutely continuous norm,
it is still
unclear whether or not Theorem~\ref{2051} holds true.
\end{enumerate}
\end{remark}

\subsection{Orlicz-Slice Spaces}\label{5.6}

Let $\Phi$
be an Orlicz function with both positive
lower type $r_{\Phi}^-$ and positive upper
type $r_{\Phi}^+$. For any given $t,r\in(0,\infty)$,
the \emph{Orlicz-slice space}
$(E_\Phi^r)_t(\mathbb{R}^n)$ is defined to be the set of all
the $f\in\mathscr{M}(\mathbb{R}^n)$ having the following finite quasi-norm
\begin{equation*}
\|f\|_{(E_\Phi^r)_t(\mathbb{R}^n)} :=\left\{\int_{\mathbb{R}^n}
\left[\frac{\|f\mathbf{1}_{B(x,t)}\|_{L^\Phi(\mathbb{R}^n)}}
{\|\mathbf{1}_{B(x,t)}\|_{L^\Phi(\mathbb{R}^n)}}\right]
^r\,dx\right\}^{\frac{1}{r}}.
\end{equation*}
These spaces were originally introduced in
\cite{zyyw2019} as a generalization of both
the slice space of Auscher and Mourgoglou
\cite{am2019,ap2017} and the Wiener amalgam space
in \cite{h2019,h1975,kntyy2007}.
For more studies on Orlicz-slice spaces,
we refer the reader to \cite{hkp2021,hkp2022,zyy2022,zyyw2019}.
By \cite[Lemma 2.28]{zyyw2019},
we find that the Orlicz-slice
space $(E_\Phi^r)_t(\mathbb{R}^n)$ is a
ball quasi-Banach function space.

By an argument similar to that used in
the proof of \cite[Theorem~4.27]{zyy20231}
with \cite[Theorems~3.1 and~3.11]{zyy20231} replaced by
Theorem~\ref{1622},
we obtain the following conclusion;
we omit the details.

\begin{theorem}\label{2055}
Let $\Omega\subset\mathbb{R}^n$ be a bounded
$(\varepsilon,\infty)$-domain
with $\varepsilon\in(0,1]$ and
$\{\rho_\nu\}_{\nu\in(0,\nu_0)}$
a $\nu_0$-{\rm RDATI} on $\mathbb{R}^n$
with $\nu_0\in(0,\infty)$.
Let $t\in(0,\infty)$, $r\in(1,\infty)$,
and $\Phi$ be an Orlicz function with
both positive lower type $r^-_{\Phi}$
and positive upper type $r^+_\Phi$.
Let $1<r^-_{\Phi}\leq r^+_{\Phi}<\infty$
and $p\in[1,\min\{r^-_{\Phi},\,r\})$.
Then $f\in W^{1,(E_\Phi^r)_t}(\Omega)$
if and only if
$f\in (E_\Phi^r)_t(\mathbb{R}^n)$ and
$$
\liminf_{\nu\to0^+}
\left\|\left[\int_\Omega\frac{|f(\cdot)-f(y)|^p}{
|\cdot-y|^p}\rho_\nu(|\cdot-y|)\,dy
\right]^\frac{1}{p}\right\|_{(E_\Phi^r)_t(\Omega)}<\infty;
$$
moreover, for such $f$,
$$
\lim_{\nu\to0^+}
\left\|\left[\int_\Omega\frac{|f(\cdot)-f(y)|^p}{
|\cdot-y|^p}\rho_\nu
(|\cdot-y|)\,dy
\right]^\frac{1}{p}\right\|_{(E_\Phi^r)_t(\Omega)}
=\left[\kappa(p,n)\right]^\frac{1}{p}
\left\|\,\left|\nabla f
\right|\,\right\|_{(E_\Phi^r)_t(\Omega)},
$$
where the constant
$\kappa(p,n)$ is the same as in \eqref{kappaqn}.
\end{theorem}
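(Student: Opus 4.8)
The plan is to obtain Theorem~\ref{2055} as an application of the general Bourgain--Brezis--Mironescu-type characterization established in Theorem~\ref{1622}, by verifying that the Orlicz-slice space $X(\mathbb{R}^n):=(E_\Phi^r)_t(\mathbb{R}^n)$, under the present hypotheses on $\Phi$, $r$, $t$, and $p$, satisfies the three assumptions (i), (ii), and (iii) of Theorem~\ref{1622}: that both $X(\mathbb{R}^n)$ and $X^{1/p}(\mathbb{R}^n)$ are ball Banach function spaces, that both $X(\mathbb{R}^n)$ and $X'(\mathbb{R}^n)$ have absolutely continuous norms, and that the Hardy--Littlewood maximal operator $\mathcal{M}$ is bounded on $[X^{1/p}(\mathbb{R}^n)]'$. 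This parallels the proof of \cite[Theorem~4.27]{zyy20231}, with \cite[Theorems~3.1 and~3.11]{zyy20231} there replaced by Theorem~\ref{1622}. Once (i)--(iii) are in hand, Theorem~\ref{1622} yields both the equivalence between $f\in W^{1,(E_\Phi^r)_t}(\Omega)$ and the conjunction of $f\in(E_\Phi^r)_t(\Omega)$ with the finiteness of the corresponding liminf, as well as the asymptotic identity with constant $\kappa(p,n)$.

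For assumption (i), I would first recall from \cite[Lemma~2.28]{zyyw2019} that $(E_\Phi^r)_t(\mathbb{R}^n)$ is always a ball quasi-Banach function space, and then observe that, since $1<r^-_\Phi\le r^+_\Phi<\infty$ and $r\in(1,\infty)$, it is in fact a ball Banach function space, the triangle inequality and the local-integrability condition in Definition~\ref{1659}(v) and (vi) being consequences of these type bounds. Next I would identify the $p$-convexification: setting $\Phi_p(u):=\Phi(u^{1/p})$ for $u\in[0,\infty)$, one checks that $\Phi_p$ is an Orlicz function with lower type $r^-_\Phi/p$ and upper type $r^+_\Phi/p$, and that
$$
\left[(E_\Phi^r)_t(\mathbb{R}^n)\right]^{1/p}=(E_{\Phi_p}^{r/p})_t(\mathbb{R}^n).
$$
Because $p\in[1,\min\{r^-_\Phi,\,r\})$, we have $r/p\in(1,\infty)$ and $r^-_\Phi/p\in(1,\infty)$, so $X^{1/p}(\mathbb{R}^n)$ is again an Orlicz-slice space of the same admissible type and hence a ball Banach function space, which completes the verification of (i).

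For assumption (ii), the condition $1<r^-_\Phi\le r^+_\Phi<\infty$ guarantees that the Orlicz space $L^\Phi(\mathbb{R}^n)$ has an absolutely continuous norm, and, since $r<\infty$, this property transfers to the slice space $(E_\Phi^r)_t(\mathbb{R}^n)$; for the associate space $X'(\mathbb{R}^n)$ I would use the known description of $[(E_\Phi^r)_t(\mathbb{R}^n)]'$ as an Orlicz-slice-type space built from the complementary Orlicz function of $\Phi$ (whose lower and upper types again lie strictly between $1$ and $\infty$ under the present hypotheses), together with the corresponding absolute continuity; these structural facts are available from \cite{zyyw2019,zyy2022}. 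For assumption (iii), using the identification $X^{1/p}(\mathbb{R}^n)=(E_{\Phi_p}^{r/p})_t(\mathbb{R}^n)$ it suffices to show that $\mathcal{M}$ is bounded on $[(E_{\Phi_p}^{r/p})_t(\mathbb{R}^n)]'$; since $r/p\in(1,\infty)$ and the lower type of $\Phi_p$ exceeds $1$, this follows from the boundedness of $\mathcal{M}$ on the associate space of an Orlicz-slice space in this index range (see \cite{zyyw2019}; see also \cite{hkp2021,hkp2022}). Assembling (i)--(iii) and invoking Theorem~\ref{1622} finishes the proof. The main obstacle is the bookkeeping concentrated in these last two steps: one needs the precise form of the associate space $[(E_\Phi^r)_t(\mathbb{R}^n)]'$, together with the exact range of Orlicz and integrability indices for which $\mathcal{M}$ is bounded on it and on its $1/p$-convexified counterpart, and one must check that the hypotheses $1<r^-_\Phi\le r^+_\Phi<\infty$, $r>1$, and $p<\min\{r^-_\Phi,r\}$ keep every auxiliary exponent strictly inside the permissible range; all of this is contained in the Orlicz-slice literature but must be stitched together with care.
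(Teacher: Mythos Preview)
Your proposal is correct and follows essentially the same approach as the paper: the paper itself says that Theorem~\ref{2055} is obtained by an argument similar to the proof of \cite[Theorem~4.27]{zyy20231} with \cite[Theorems~3.1 and~3.11]{zyy20231} replaced by Theorem~\ref{1622}, which is exactly the strategy you outline. Your sketch of the verification of assumptions (i)--(iii) (identifying the $p$-convexification as another Orlicz-slice space, checking absolute continuity of the norm and of the associate norm, and invoking the maximal-operator bounds from \cite{zyyw2019}) is the intended route.
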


\begin{remark}
\begin{enumerate}
\item[\textup{(i)}]
To the best of our knowledge,
Theorem~\ref{2055} is completely new.
\item[\textup{(ii)}]
Let $\Phi$ be an Orlicz function with both
positive lower type $r^-_{\Phi}$
and positive upper type $r^+_\Phi$
such that $1=r^-_{\Phi}\leq r^+_{\Phi}<\infty$.
In this case, since $[(E_\Phi^r)_t(\mathbb{R}^n)]'$
does not have an absolutely continuous norm,
it is still
unclear whether or not Theorem~\ref{2055} holds true.
\end{enumerate}
\end{remark}

\bigskip

\noindent  Chenfeng Zhu, Dachun Yang (Corresponding author) and Wen Yuan

\medskip

\noindent Laboratory of Mathematics
and Complex Systems (Ministry of Education of China),
School of Mathematical Sciences,
Beijing Normal University,
Beijing 100875, The People's Republic of China

\smallskip

\noindent{\it E-mails:} \texttt{cfzhu@mail.bnu.edu.cn} (C. Zhu)

\noindent\phantom{{\it E-mails:} }\texttt{dcyang@bnu.edu.cn} (D. Yang)

\noindent\phantom{{\it E-mails:} }\texttt{wenyuan@bnu.edu.cn} (W. Yuan)

\end{document}